\numberwithin{equation}{section}
\numberwithin{figure}{equation}
\theoremstyle{plain}
\newtheorem{lem}[equation]{Lemma}
\newtheorem{prop}[equation]{Proposition}
\newtheorem{thm}[equation]{Theorem}
\newtheorem{cor}[equation]{Corollary}
\newtheorem{que}[equation]{Question}
\newtheorem*{que*}{Question}
\newtheorem{ob}[equation]{Observation}
\theoremstyle{definition}
\newtheorem{definition}[equation]{Definition}
\newtheorem{remark}[equation]{Remark}
\newtheorem{example}[equation]{Example}
\newtheorem{claim}[equation]{Claim}
\newcommand*{\wpj}{\ensuremath{\mathsf{WProj}}}
\newcommand{\C}{{\mathsf C}}
\newcommand{\A}{{\mathcal A}}
\newcommand{\B}{{\mathcal B}}
\newcommand{\E}{\mathbb E}
\newcommand{\G}{{\mathcal G}}
\newcommand{\h}{{\mathcal H}}
\newcommand{\K}{{\mathcal K}}
\renewcommand{\L}{{\mathcal L}}
\newcommand{\n}{\mathcal{N}}
\renewcommand{\P}{{\mathcal P}}
\newcommand{\R}{\mathbb R}
\newcommand{\Z}{\mathbb Z}
\newcommand{\acts}{\curvearrowright}
\newcommand{\Ga}{\Gamma}
\newcommand{\isom}{\operatorname{Isom}}
\newcommand{\lra}{\longrightarrow}
\newcommand{\out}{\operatorname{Out}}
\newcommand{\aut}{\operatorname{Aut}}
\newcommand{\ra}{\rightarrow}
\newcommand{\stab}{\operatorname{Stab}}
\begin{document}

\title{Commensurability of groups quasi-isometric to RAAG's}
\author{Jingyin Huang}
\address{The Department of Mathematics and Statistics\\
McGill University\\
Burnside Hall, 805 Sherbrooke W.\\
Montreal, QC, H3A 0B9, Canada}

\email{jingyin.huang@mcgill.ca}
\date{\today}

\maketitle

\begin{abstract}
Let $G$ be a right-angled Artin group with defining graph $\Ga$ and let $H$ be a finitely generated group quasi-isometric to $G(\Ga)$. We show if $G$ satisfies (1) its outer automorphism group is finite; (2) $\Ga$ does not contain any induced 4-cycle; (3) $\Ga$ is star-rigid; then $H$ is commensurable to $G$. We show condition (2) is sharp in the sense that if $\Ga$ contains an induced 4-cycle, then there exists an $H$ quasi-isometric to $G$ but not commensurable to $G$. Moreover, one can drop condition (1) if $H$ is a uniform lattice acting on the universal cover of the Salvetti complex of $G$. As a consequence, we obtain a conjugation theorem for such uniform lattices. The ingredients of the proof include a blow-up building construction in \cite{cubulation} and a Haglund-Wise style combination theorem for certain class of special cube complexes. However, in most of our cases, relative hyperbolicity is absent, so we need new ingredients for the combination theorem.
\smallskip 

\noindent
\textbf{2010 AMS classification numbers}.  20F65, 20F67, 20F69 

\smallskip

\noindent 
\textbf{Keywords.} Quasi-isometric classification, commensurability classification, right-angled Artin groups, special cube complexes, combination theorem
\end{abstract}

\tableofcontents

\section{Introduction}

\subsection{Background and overview}
It is well-known that if a finitely generated group is quasi-isometric to a free group, then it is commensurable to a free group. In this paper, we seek higher dimensional version of this fact in the class of right-angled Artin groups. Recall that given a finite simplicial graph $\Ga$, the \textit{right-angled Artin group} (RAAG) $G(\Ga)$ with defining graph $\Ga$ is given by the following presentation:
\begin{center}
\{$v_i\in\textmd{Vertex}(\Ga)\ |\ [v_i,v_j]=1$ if $v_{i}$ and $v_{j}$ are joined by an edge\}.
\end{center}
This class of groups has deep connections with a lot of other objects. Wise has proposed a program for embedding certain groups virtually into RAAG's (\cite[Section~6]{wise2009research}). One highlight in this direction is the recent solution of virtual Haken conjecture, which relies on the embedding of fundamental groups of 3-dimensional hyperbolic manifolds into RAAG's \cite{agol2013virtual,wisestructure}. Other examples include Coxeter groups \cite{haglund2010coxeter}, certain random groups \cite{ollivier2011cubulating} and small cancellation groups \cite{wise2004cubulating}, certain classes of 3-manifold groups \cite{przytycki2012mixed,przytycki2013graph,liu2013virtual,hagen2013cocompactly}, hyperbolic free-by-cyclic groups \cite{hagen2013cubulating,hagen2014cubulating}, one relator groups with torsion and limit groups \cite{wisestructure} etc. 

A theory of special cube complexes has been developed along the way \cite{wise2002residual,haglund2008finite,MR2377497,haglund2012combination}, which can be viewed as a higher-dimensional analogue of earlier work of Stallings \cite{stallings1983topology} and Scott \cite{scott1978subgroups}. In particular, it offers a geometric way to study finite index subgroups of virtually special groups.

This paper is motivated by the following question, which falls into Gromov's program of classifying groups and metric spaces up to quasi-isometry. 
\begin{que}
\label{motivating question}
Let $H$ be a finitely generated group quasi-isometric to $G(\Gamma)$. When is $H$ commensurable to $G(\Ga)$? Which $G(\Ga)$ is rigid in the sense that any such $H$ quasi-isometric to $G(\Gamma)$ is commensurable to $G(\Ga)$?
\end{que}

This question naturally leads to studying finite index subgroups of $G(\Ga)$. The aforementioned theory of special cube complexes turns out to be one of our main ingredients. We will give a class of quasi-isometrically rigid RAAG's, based upon a series of previous works \cite{bestvina2008quasiflats,bks2,huang_quasiflat,huang2014quasi,cubulation}. 

Before going into detailed discussion about RAAG's, we would like to compare RAAG's with other objects studied under Gromov's program. Previously quasi-isometry classification or rigidity results of spaces and groups can be very roughly divided into the case where various notions of non-positive curvature conditions are satisfied (like relative hyperbolicity, $CAT(0)$, coarse median etc), and the case where the non-positive curvature condition is absent (like certain solvable groups and graph of groups etc). The non-positively curved world can be further divided into the hyperbolic case (including relative hyperbolic with respect to flats), and the higher rank case, where there are lot of higher dimensional flats in the space, and they have non-trivial intersection pattern. Most RAAG's belong to the last case. Other higher rank cases include (1) higher rank lattices in semisimple Lie groups \cite{kleiner1997rigidity,eskin1997quasi,eskin1998quasi}; (2) mapping class groups and related spaces \cite{eskin2015rigidity,bowditch2015large2,bowditch2015large1,bowditch2015large,behrstock2012geometry,hamenstaedt2005geometry}; (3) certain 3-manifold groups and their generalizations \cite{kapovich1997quasi,behrstock2008quasi,MR2727658,frigerio2011rigidity}. The rigidity of these spaces often depends on the complexity of the intersection pattern of flats inside the space. The classes of RAAG's studied in this paper will be more \textquotedblleft rigid\textquotedblright\ than (3), but more \textquotedblleft flexible\textquotedblright\ than (1) and (2), see \cite{bks2,huang2014quasi,cubulation} for a detailed discussion.

In general, quasi-isometries of $G(\Ga)$ may not be of bounded distance from isometries, even if they are equivariant with respect to a cobounded group action. However, it is shown in \cite{bks2,huang2014quasi} that when the outer automorphism group $\out(G(\Ga))$ is finite, every quasi-isometry is of bounded distance from a canonical representative which preserves standard flats of $G(\Ga)$. Based on this, we show in \cite{cubulation} that if $H$ is quasi-isometric to $G(\Ga)$ with $\out(G(\Ga))$ finite, then $H$ acts properly and cocompactly on a $CAT(0)$ cube complex $X$ which is closely related to the universal cover of the Salvetti complex (Section \ref{Salvetti complex}) of $G(\Ga)$. 

Thus the problem is related to the commensurability classification of uniform lattices in $\aut(X)$ for a $CAT(0)$ cube complex $X$. This is well-understood when $X$ is a tree \cite{bass1990uniform}. In the case of right-angled buildings, Haglund observed a relation between commensurability of lattices and separability of certain subgroups \cite{haglund2006commensurability}, this together with \cite{agol2013virtual} give commensurability results for some classes of uniform lattices in hyperbolic right-angled buildings. However, when $X$ is not hyperbolic, there exist uniform lattices with very exotic behaviour \cite{wise1996non,burger_mozes}. The cases we are interested in is not (relatively) hyperbolic, and the above literature will serve as a guideline for putting suitable conditions on $\Ga$.

We close this section with a summary of previously known examples and non-examples of classes of RAAG's which satisfy commensurability rigidity in the sense of Question \ref{motivating question}. The rigid ones include
\begin{itemize}
\item The free group $F_m$ of $m$-generators \cite{stallings1968torsion,dunwoody1985accessibility,karrass1973finite}, \cite[1.C]{gromov1996geometric}.
\item The free Abelian groups $\Z^{n}$ \cite{gromov1981groups,bass1972degree}.
\item $F_{m}\times \Z^{n}$ \cite{whyte2010coarse}.
\item Free products of free groups and free Abelian groups \cite{behrstock2009commensurability}.
\end{itemize}

The non-rigid classes of RAAG's include
\begin{itemize}
\item $F_m\times F_{\ell}$ with $m,\ell\ge 2$ \cite{wise1996non,burger_mozes}.
\item $G(\Ga)$ with $\Ga$ being a tree of diameter $\ge 3$ \cite{behrstock2008quasi}.
\end{itemize}

\subsection{Main results}
All graphs in this section are simple. A finite graph $\Gamma$ is \textit{star-rigid} if for any automorphism $\alpha$ of $\Gamma$, $\alpha$ is identity whenever it fixes a closed star of some vertex $v\in\Gamma$ point-wise. Two groups $H$ and $G$ are \textit{commensurable} if there exist finite index subgroups $H'\le H$, $G'\le G$ such that $H'$ and $G'$ are isomorphic. 
\begin{thm}
\label{rigidity}
Suppose $\Ga$ is a graph such that
\begin{enumerate}
\item $\Ga$ is star-rigid;
\item $\Ga$ does not contain induced 4-cycle;
\item $\out(G(\Ga))$ is finite.
\end{enumerate}
Then any finite generated group quasi-isometric to $G(\Ga)$ is commensurable to $G(\Ga)$.
\end{thm}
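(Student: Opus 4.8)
The proof will proceed in three main stages: first, use the results of \cite{bks2,huang2014quasi,cubulation} to reduce the problem from an arbitrary finitely generated group $H$ quasi-isometric to $G(\Ga)$ to a uniform lattice acting on a $CAT(0)$ cube complex $X$ closely related to the universal cover $\widetilde{S(\Ga)}$ of the Salvetti complex; second, use condition (2)—no induced 4-cycle—together with a blow-up building construction to equip the quotient $H\backslash X$ with a structure of a compact special cube complex whose hyperplane stabilizers and incidence data mimic those of $S(\Ga)$; third, apply a Haglund–Wise style combination theorem to conclude that $H$ virtually embeds into (and in fact is virtually isomorphic to) $G(\Ga)$, using star-rigidity (condition (1)) to pin down the combinatorial isomorphism type.

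\smallskip

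First I would invoke \cite{cubulation}: since $\out(G(\Ga))$ is finite, any $H$ quasi-isometric to $G(\Ga)$ acts properly and cocompactly on a $CAT(0)$ cube complex $X$ built from the pattern of standard flats; the action factors through a faithful cobounded action on $X$, so $H$ is a uniform lattice in $\aut(X)$. The task becomes: show any such uniform lattice is commensurable to $G(\Ga)$, i.e., has a finite-index subgroup isomorphic to a finite-index subgroup of $G(\Ga)$. Passing to a finite-index torsion-free subgroup $H'\le H$ (which exists since $H'$ acts freely on a $CAT(0)$ cube complex, hence is a CAT(0) group — more precisely one uses that lattices in $\aut(X)$ are virtually torsion-free in this cube-complex setting), we get a compact nonpositively curved cube complex $Y=H'\backslash X$. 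The heart of the argument is to show $Y$ is \emph{special} (in the sense of Haglund–Wise) and moreover that its hyperplane combinatorics force $\pi_1(Y)$ to be a finite-index subgroup of $G(\Ga)$. This is where conditions (1) and (2) enter: the no-induced-4-cycle hypothesis controls the local structure of $X$—links of vertices contain no "square" configurations of the problematic type—which, via the blow-up building construction of \cite{cubulation}, lets one identify $X$ itself (not merely up to quasi-isometry) with a blow-up of $\widetilde{S(\Ga)}$, and star-rigidity rules out the automorphisms of $\Ga$ that would otherwise produce non-commensurable "twisted" lattices (this is exactly the phenomenon behind the $F_m\times F_\ell$ non-examples of Wise and Burger–Mozes).

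\smallskip

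The combination-theorem step is the technical core. One decomposes $Y$ along a canonical collection of hyperplanes into pieces that are (virtually) Salvetti complexes of induced subgraphs of $\Ga$—these are the "vertex groups"—and shows the resulting graph-of-spaces is special by verifying the four pathologies (self-crossing, one-sidedness, self-osculation, inter-osculation of hyperplanes) are absent. In the standard Haglund–Wise framework one leans on relative hyperbolicity to run a Bestvina–Feighn / Dehn-filling style combination; here, as the abstract emphasizes, relative hyperbolicity is absent, so the new input must be a direct cubical combination result: using the absence of induced 4-cycles to guarantee that the hyperplanes coming from different pieces do not inter-osculate, so that the pieces glue along convex subcomplexes with no "wrapping." The main obstacle, as I see it, is precisely this: proving separability of the edge subgroups (equivalently, the hyperplane-carrier subgroups) inside $\pi_1(Y)$ without hyperbolicity, so that one can pass to a finite cover in which the gluing untwists. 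I would expect to prove this by establishing that $\pi_1(Y)$ is virtually special first—perhaps via a bootstrapping argument showing $X$ is a wallspace with enough walls, or by directly producing the needed finite covers combinatorially from the blow-up building—and then using canonical completion/retraction à la Haglund–Wise to get a finite-index subgroup of $\pi_1(Y)$ that maps isomorphically onto a finite-index subgroup of $G(\Ga)$. Once that finite cover exists, star-rigidity ensures the induced isomorphism of defining graphs is the identity, yielding commensurability with $G(\Ga)$ on the nose.
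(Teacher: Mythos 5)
There is a genuine gap at what you yourself call the technical core. Your plan for obtaining specialness/separability without relative hyperbolicity is ``establish that $\pi_1(Y)$ is virtually special first---perhaps via a bootstrapping argument\dots or by directly producing the needed finite covers combinatorially,'' which is a restatement of the theorem to be proved rather than a mechanism. The paper's actual mechanism is quite specific and none of it appears in your sketch: one inducts on the number of vertices of $\Ga$; for a chosen vertex $u$ one decomposes the quotient of the blow-up building as a graph of spaces over the tree dual to the $u$-labelled hyperplanes, with vertex spaces the $\Lambda$- and $St(u)$-components and edge spaces the $lk(u)$-components; the no-induced-4-cycle hypothesis enters not through ``hyperplanes from different pieces do not inter-osculate'' but through two things: (i) intersections of conjugates of edge groups inside a vertex group are virtually abelian (Lemma \ref{intersection}), and (ii) after collapsing all tori --- i.e.\ passing to the quotient $|\B|/H$ of the Davis realization of the right-angled building --- one gets a \emph{special, Gromov-hyperbolic} complex (Lemmas \ref{trivial holonomy and special} and \ref{hyperbolicity of building quotient}), to which the Haglund--Wise connected intersection theorem can legitimately be applied. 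This is what controls wall projections of edge spaces to be branched tori (Corollary \ref{projections are circles}); a \emph{modified} completion/retraction (Section \ref{subsec_modified completeions and retractions}) is then needed because the usual canonical completion distorts circle lengths, and the finite cover is assembled by matching covers of vertex and edge spaces with control on torus data and holonomy (Section \ref{subsec_matching}). Your proposal never confronts the holonomy issue (the factor actions on the branched lines need not be by translations), which is precisely what has to be untwisted; asserting that the pieces ``glue with no wrapping'' because 4-cycles are absent does not address it.

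Two further concrete misstatements. First, you claim a finite-index torsion-free subgroup exists because $H$ ``acts freely on a $CAT(0)$ cube complex, hence is a CAT(0) group'': CAT(0) groups are not known to be virtually torsion-free, and for uniform lattices on $CAT(0)$ cube complexes this is not automatic; the paper does not assume it, but works with the orbifold quotient (a complex of groups, via $Y(\Ga)\times E$) and obtains the torsion-free finite-index subgroup as an \emph{output} of the inductive cover-matching construction (Claim \ref{induction claim}). Second, you attribute the Wise/Burger--Mozes phenomenon to star-rigidity; those irreducible lattices on products of trees are exactly what motivates condition (2), not condition (1). Star-rigidity is used earlier and differently: via Lemma \ref{label-preserving} it yields a finite-index subgroup whose induced action on the extension complex $\P(\Ga)$ is label-preserving, which is the hypothesis needed to run the blow-up construction equivariantly and, at the very end, to invoke the conjugation-to-left-translations theorem (Theorem \ref{conjugate to left translation}) identifying the subgroup with a finite-index subgroup of $G(\Ga)$.
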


One may find it helpful to think about the case when $\Ga$ is a pentagon. Condition (2) is sharp in the following sense.

\begin{thm}
\label{non-rigid RAAG's}
Suppose $\Ga$ is a graph which contains an induced 4-cycle. Then there exists a finitely generated group $H$ which is quasi-isometric to $G(\Ga)$ but not commensurable to $G(\Ga)$.
\end{thm}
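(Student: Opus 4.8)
The plan is to produce a finitely generated group $H$ that acts properly and cocompactly on a CAT(0) cube complex quasi-isometric to the universal cover $\widetilde{S}(\Ga)$ of the Salvetti complex of $G(\Ga)$, so that $H$ is automatically quasi-isometric to $G(\Ga)$, while arranging that $H$ has \emph{no} residually finite subgroup of finite index. This already implies what we want: right-angled Artin groups are residually finite, and a group possessing a residually finite subgroup of finite index is itself residually finite, so every group commensurable to $G(\Ga)$ is residually finite; hence such an $H$ cannot be commensurable to $G(\Ga)$. Thus the whole task reduces to a construction.

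The exotic behaviour is supplied by the induced $4$-cycle $C\subseteq\Ga$ itself, say on vertices $a,b,c,d$ in cyclic order. Then $G(C)=F(a,c)\times F(b,d)\cong F_2\times F_2$ and $\widetilde{S}(C)$ is a product $T\times T'$ of regular trees. By Burger--Mozes \cite{burger_mozes} (or Wise \cite{wise1996non}), $\aut(T\times T')$ contains cocompact lattices $W$ that are infinite with no proper finite-index subgroup: take a torsion-free virtually simple one and pass to its simple finite-index subgroup. Such a $W$ acts freely and cocompactly on $T\times T'\cong\widetilde{S}(C)$ and is quasi-isometric to $F_2\times F_2$. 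On the other hand, by the blow-up description of $\widetilde{S}(\Ga)$ from \cite{cubulation}, $\widetilde{S}(\Ga)$ is assembled, along hyperplane carriers, from blocks isomorphic to $\widetilde{S}(C)\cong T\times T'$ together with lower-complexity pieces coming from the rest of $\Ga$; equivalently, after passing to a finite-index subgroup if necessary, $G(\Ga)$ is the fundamental group of a graph (or complex) of groups $\mathcal{G}$ one of whose vertex groups is a finite-index subgroup $P\le G(C)$, the incident edge groups sitting inside $P$ as standard product subgroups (conjugates of the two free factors, their intersections, and the associated $\Z^2$'s).

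Next I would graft $W$ into such a decomposition. Choosing $P$ and a compatible marking, one matches the pattern of incident peripheral subgroups inside $T\times T'$ for the $P$-action with that for the $W$-action (both are cocompact lattices of the same product of trees), then replaces the vertex group $P$ by $W$ while re-using the edge-group inclusions, obtaining a new graph of groups $\mathcal{G}'$; set $H:=\pi_1(\mathcal{G}')$, so that $W\le H$ as a vertex group. Quasi-isometry invariance is checked on the associated trees of spaces $Z\to\mathcal{G}$ and $Z'\to\mathcal{G}'$: they have the same underlying Bass--Serre complex, the same edge-spaces, and the same non-$C$ vertex-spaces, and differ only in that each $P$-vertex-space is replaced by the compact complex $W\backslash(T\times T')$, which has the same universal cover and a matching peripheral pattern; this produces a quasi-isometry $Z\to Z'$ respecting the decompositions, whence $H$ is quasi-isometric to $G(\Ga)$ (using that $G(\Ga)$ is quasi-isometric to any finite-index subgroup). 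For the commensurability obstruction: since $W$ is infinite with no proper finite-index subgroup, $W$ is contained in every finite-index subgroup $H'\le H$; any homomorphism from $H'$ to a finite group restricts to a homomorphism of the infinite simple group $W$, hence is trivial on $W$, so $H'$ is not residually finite. Therefore no finite-index subgroup of $H$ is residually finite, and by the first paragraph $H$ is not commensurable to $G(\Ga)$.

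The main obstacle is the middle step: producing the blow-up / graph-of-groups decomposition of $G(\Ga)$ adapted to the induced $4$-cycle in a way that permits the substitution of $W$ for $P$ \emph{compatibly}, and then verifying that this substitution leaves the quasi-isometry type unchanged. When $\Ga$ is not a join, the block $\widetilde{S}(C)$ is attached to the rest of $\widetilde{S}(\Ga)$ along hyperplane carriers rather than as a direct factor, so one must keep careful track of which finite-index subgroups of $F_2\times F_2$, and which cocompact lattices of $T\times T'$, carry the required peripheral pattern, and of how the pieces reassemble coarsely into a space quasi-isometric to $\widetilde{S}(\Ga)$. This bookkeeping is precisely where the blow-up building technology of \cite{cubulation} is needed.
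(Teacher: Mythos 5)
Your reduction of non-commensurability to non-residual-finiteness (RAAGs are residually finite, and residual finiteness passes to and from finite-index subgroups, so it is a commensurability invariant) matches the paper. But the heart of the proof --- the actual construction of $H$ --- is missing from your proposal. You yourself flag the ``middle step'' (exhibiting a finite-index subgroup $P\le G(C)$ as a vertex group of a graph-of-groups decomposition of $G(\Ga)$ with controlled peripheral pattern, then grafting a virtually simple Burger--Mozes lattice $W$ in place of $P$) as the main obstacle, and it is a genuine gap rather than bookkeeping: in general $G(\Ga)$ admits no one-step decomposition of that shape adapted to the induced $4$-cycle $C$ (deleting the vertices of $\Ga\setminus C$ one at a time only yields a hierarchy, with edge groups living in the intermediate groups); when vertices outside $C$ have large links the peripheral subgroups incident to the $G(C)$-piece can be as large as $F_k\times\Z$ or even all of $G(C)$; and the substitution of $W$ for $P$ requires both cocompact subgroups of $W$ realizing exactly that peripheral pattern and a proof that the reassembled tree of spaces is quasi-isometric to $X(\Ga)$ --- a statement of Behrstock--Neumann type that you assert but do not prove. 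As written, the group $H$ is never constructed.

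The paper's route is different at precisely these points, and your choice of $W$ is incompatible with it. Instead of cutting $G(\Ga)$ apart, the paper extends an exotic action on a single $C$-component $Y\subset X(\Ga)$ (so $Y\cong T_4\times T_4$) over all of $X(\Ga)$ by forming the canonical completion $Z=\C(Y,S(\Ga))$ of the local isometry $Y\to S(\Ga)$: the universal cover of $Z$ is $X(\Ga)$, the action $H\acts Y$ extends to a free action on $Z$, and $G=\pi_1(Z/H)$ then acts geometrically on $X(\Ga)$ (hence is quasi-isometric to $G(\Ga)$) and contains the non-residually-finite group $H$. This extension mechanism requires the lattice on $T_4\times T_4$ to act by \emph{label-preserving} automorphisms, and the Burger--Mozes/Wise lattices --- in particular the virtually simple ones you propose --- are not label-preserving; this is exactly why the paper devotes Section \ref{subsec_label-preserving action} to constructing a label-preserving irreducible lattice (modifying Wise's example and promoting it from $T_3\times T_3$ to $T_4\times T_4$), and why non-residual-finiteness is deduced not from simplicity but from irreducibility of a label-preserving geometric action, via separability of centralizers forcing reducibility. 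So your strategy is in the right spirit (exotic lattice on the $4$-cycle piece, propagate to $\Ga$, contradict residual finiteness), but the proposal leaves its central construction unexecuted, and the specific route you sketch cannot be completed without essentially redoing the paper's Section \ref{sec_failure of commensurability}.
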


It follows that there are plenty of RAAG's with finite outer automorphism group which are not rigid in the sense of Question \ref{motivating question}. This is quite different from the conclusion in the internal quasi-isometry classification of RAAG's \cite{bks2,huang2014quasi}.

\begin{remark}
It is natural to ask how much portion of RAAG's which are rigid in the sense of Question \ref{motivating question} are characterized by Theorem \ref{rigidity}. This is related to a graph theoretic question as follows. There is a 1-1 correspondence between finite simplicial graphs up to graph isomorphisms and RAAG's up to group isomorphisms \cite{droms1987isomorphisms}. Thus it makes sense to talk about random RAAG's by considering random graphs. It was shown in \cite{charney2012random,day2012finiteness} that a random RAAG satisfies (3). Moreover, since a finite random graph is asymmetric, it satisfies (1). However, we ask whether (1) and (3) are also generic properties among finite simplicial graphs which do not have induced 4-cycles. A positive answer to this question, together with Theorem \ref{rigidity} and Theorem \ref{non-rigid RAAG's}, would mean almost all RAAG's which are rigid in the sense of Question \ref{motivating question} are characterized by Theorem \ref{rigidity}.
\end{remark}

Condition (3) of Theorem \ref{rigidity} is motivated by the study of asymptotic geometry of RAAG's. It turns out that $G(\Ga)$ satisfies a form of vertex rigidity (\cite[Theorem 1.6]{bks2} and \cite[Theorem 4.15]{huang2014quasi}) if and only if $\out(G(\Ga))$ is finite. The motivation for condition (1) and condition (2) is explained after Corollary \ref{arithmeticity}.

Let $S(\Ga)$ be the Salvetti complex of $G(\Ga)$ (Section \ref{Salvetti complex}) and let $X(\Ga)$ be the universal cover of $S(\Ga)$. The following is a simpler version of Theorem \ref{rigidity} in the case of lattices in $\aut(X(\Ga))$.

\begin{thm}
\label{lattice}
Suppose $\Ga$ is a graph such that
\begin{enumerate}
\item $\Ga$ is star-rigid;
\item $\Ga$ does not contain induced 4-cycle.
\end{enumerate}
Let $H$ be a group acting geometrically on $X(\Ga)$ by automorphisms. Then $H$ and $G(\Ga)$ are commensurable. Moreover, let $H_1, H_2\le\aut(X(\Ga))$ be two uniform lattices. Then there exists $g\in\aut(X(\Ga))$ such that $gH_1 g^{-1}\cap H_2$ is of finite index in both $gH_1 g^{-1}$ and $H_2$.
\end{thm}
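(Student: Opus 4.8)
The plan is to reduce Theorem~\ref{lattice} to a combination theorem for special cube complexes, which is the technical heart of the paper. First I would recall from \cite{cubulation} that if $H$ acts geometrically on $X(\Ga)$ by automorphisms, then $H$ acts properly and cocompactly on a $CAT(0)$ cube complex closely modeled on $X(\Ga)$; since we now assume the action is directly on $X(\Ga)$, the starting point is cleaner. The key invariant to exploit is the collection of \emph{standard flats} and their associated hyperplanes in $X(\Ga)$: because $\Ga$ has no induced $4$-cycle, the intersection pattern of the maximal standard flats is rigid enough that an automorphism of $X(\Ga)$ is determined, up to the action of $G(\Ga)$, by combinatorial data on a single vertex. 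The star-rigidity hypothesis is exactly what upgrades this: an element of $\aut(X(\Ga))$ fixing the star of a vertex in the link-at-infinity sense must be trivial on a large portion, which forces $\aut(X(\Ga))$ to have a controlled structure relative to $G(\Ga)$.

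Next I would produce a finite-index subgroup $H' \le H$ together with a $G(\Ga)$-equivariant comparison map. The strategy is: since $X(\Ga)$ is special (it covers the Salvetti complex), any group acting geometrically on it by automorphisms is virtually special, and one wants to find a common finite-index "special model." Concretely, I would pass to a torsion-free finite-index subgroup $H'$ (which exists by virtual specialness / Selberg-type arguments for cube complex groups), so that $X(\Ga)/H'$ is a compact special cube complex whose universal cover is $X(\Ga)$. Then I would invoke the blow-up building construction from \cite{cubulation} and a Haglund--Wise style combination theorem to glue together the local pieces — here the local pieces are controlled by the stars of vertices in $\Ga$, and condition (2) ensures that these pieces assemble without the obstructions that a $4$-cycle would create (the $4$-cycle corresponds to an $F_2 \times F_2$ subgroup, the classic source of non-rigidity as in \cite{wise1996non,burger_mozes}). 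The output is that $X(\Ga)/H'$ and the Salvetti complex $S(\Ga)$ share a common finite cover, hence $H'$ and a finite-index subgroup of $G(\Ga)$ are isomorphic, giving commensurability.

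For the "moreover" statement, let $H_1, H_2 \le \aut(X(\Ga))$ be uniform lattices. By the first part, each $H_i$ is commensurable to $G(\Ga)$; the point is to realize the abstract commensuration geometrically inside $\aut(X(\Ga))$. I would argue that a finite-index subgroup $H_i' \le H_i$ acts on $X(\Ga)$ with quotient a compact special cube complex, and that the isomorphism $H_1' \to G(\Ga)' \cong H_2''$ (for suitable finite-index subgroups) is induced by a cellular isometry of $X(\Ga)$: this is where one uses that $X(\Ga)$ is the universal cover and that both quotients are special cube complexes with the \emph{same} universal cover, so by covering space theory plus the rigidity of the flat/hyperplane pattern, the deck-type identification upgrades to an element $g \in \aut(X(\Ga))$ with $g H_1' g^{-1} = H_2''$ (or at least commensurable). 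Then $g H_1 g^{-1} \cap H_2$ contains $g H_1' g^{-1} \cap H_2''$ which has finite index in both $g H_1 g^{-1}$ and $H_2$.

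The main obstacle I expect is the combination theorem itself: as the abstract flags, in the (relatively) hyperbolic setting one would run Haglund--Wise directly, but here $X(\Ga)$ has a rich pattern of higher-dimensional flats with non-trivial intersections and no hyperbolicity to lean on, so the standard combination machinery does not apply off the shelf. The new ingredient must control how separability of the relevant edge/wall subgroups interacts with the flat structure — roughly, one needs that the canonical completion/retraction arguments still produce finite covers compatible with \emph{all} the standard flats simultaneously, not just one at a time. Verifying that star-rigidity plus the no-induced-$4$-cycle condition suffices to make this gluing canonical (so that independent choices on different vertex-stars can be made consistent) is the delicate part; once that is in place, the rest is assembling finite covers and chasing the resulting isomorphisms through covering-space theory.
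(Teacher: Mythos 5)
There is a genuine gap, and it sits right at the start of your second paragraph. You write that "since $X(\Ga)$ is special (it covers the Salvetti complex), any group acting geometrically on it by automorphisms is virtually special," and you then pass to a torsion-free finite-index $H'$ with $X(\Ga)/H'$ special. This is circular: virtual specialness of $H$ is precisely what the theorem (via Corollary \ref{good cover}) must establish, and it is false without hypothesis (2) — Theorem \ref{label-preserving non-rigid RAAG's} produces uniform lattices on $X(\Ga)$ that are not even residually finite when $\Ga$ has an induced $4$-cycle, so specialness of the universal cover gives you nothing about the quotient. Likewise there is no "Selberg-type" guarantee of a torsion-free finite-index subgroup for cube complex lattices; that too is part of the output, not the input.

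The second, related omission is the reduction from arbitrary automorphisms to label-preserving ones, which is the actual content of the paper's proof of this theorem. Corollary \ref{good cover} (the combination theorem you correctly identify as the heart of the matter) applies only to \emph{label-preserving} actions, and a general element of $\aut(X(\Ga))$ can permute labels in an essential way. The paper handles this by splitting the vertices of $\Ga$ into type I (those $v$ with $lk(v)\subset St(w)$ for some non-adjacent $w$; their links are cliques, so these are "tree-like" directions) and type II, proving that automorphisms send type II standard geodesics to type II standard geodesics (Lemma \ref{standard geodesic line}), using star-rigidity to extract a finite-index subgroup acting label-preservingly on the type II part, and then treating the type I directions by a separate graph-of-spaces decomposition over $\bar S(\Ga)$ whose tree-like vertex pieces are handled with Bass--Kulkarni tree lattice theory (Theorem \ref{tree lattice} and Corollary \ref{existence of admissible labelling on tree}), producing an \emph{admissible labelling} invariant under a finite-index subgroup. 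Your appeal to "rigidity of the flat/hyperplane pattern" and star-rigidity "controlling $\aut(X(\Ga))$ relative to $G(\Ga)$" does not substitute for this: $\aut(X(\Ga))$ is typically non-discrete (indeed the commensurator of a lattice is dense, Corollary \ref{arithmeticity}), so no such global control exists. Once the admissible labelling and orientation are in hand, the conjugating element $g$ in the "moreover" statement falls out of Corollary \ref{conjugation and admissible labelling} and Lemma \ref{conjugation and admissible orientation}, which conjugate a finite-index subgroup of each $H_i$ into $G(\Ga)$ itself; your plan to upgrade an abstract isomorphism of quotient complexes to an isometry via covering space theory is not needed and would itself require justification.
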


Let $H\le \aut(X(\Ga))$. Recall that the \textit{commensurator} of $H$ in $\aut(X(\Ga))$ is
$\{g\in\aut(X(\Ga))\mid gHg^{-1}\cap H\ \textmd{is of finite index in}\ H\ \textmd{and}\ gHg^{-1}\}$. Then Theorem \ref{lattice} and \cite[Theorem I]{haglund2008finite} imply the following arithmeticity result.

\begin{cor}
\label{arithmeticity}
Let $\Ga$ be as in Theorem \ref{lattice}. Then the commensurator of any uniform lattice in $\aut(X(\Ga))$ is dense in $\aut(X(\Ga))$.
\end{cor}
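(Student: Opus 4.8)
The plan is to deduce the corollary from \cite[Theorem I]{haglund2008finite}, applied to the standard cocompact special action of $G(\Ga)$ on $X(\Ga)$, and then to bootstrap from $G(\Ga)$ to an arbitrary uniform lattice by means of Theorem \ref{lattice}. First I would pin down the topology: equip $A:=\aut(X(\Ga))$ with the permutation topology coming from its action on the vertex set of $X(\Ga)$. Since $S(\Ga)$ is compact, $X(\Ga)$ is locally finite, so $A$ is a locally compact totally disconnected group, the pointwise stabilizer of any finite subcomplex is a compact open subgroup of $A$, and a subgroup of $A$ is a uniform lattice exactly when it acts on $X(\Ga)$ freely and cocompactly. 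In this language, ``$\mathrm{Comm}_A(\Lambda)$ is dense in $A$'' unwinds to the concrete statement: for every $g\in A$ and every finite subcomplex $F\subseteq X(\Ga)$ there is some $c\in\mathrm{Comm}_A(\Lambda)$ with $c|_F=g|_F$.

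Next I would handle the model lattice. The RAAG $G(\Ga)$ acts freely, cocompactly and specially on $X(\Ga)=\widetilde{S(\Ga)}$, since the Salvetti complex $S(\Ga)$ is special; hence \cite[Theorem I]{haglund2008finite} applies and yields that $\mathrm{Comm}_A(G(\Ga))$ is dense in $A$. To transfer this to an arbitrary uniform lattice $\Lambda\le A$, I would invoke the conjugation part of Theorem \ref{lattice} (with $H_1=G(\Ga)$, $H_2=\Lambda$): there is $g_0\in A$ such that $g_0 G(\Ga) g_0^{-1}\cap\Lambda$ has finite index in both, so $\Lambda$ and $g_0 G(\Ga) g_0^{-1}$ are commensurable in the honest sense. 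Since the commensurator of a subgroup of $A$ depends only on its commensurability class and satisfies $\mathrm{Comm}_A(g\Delta g^{-1})=g\,\mathrm{Comm}_A(\Delta)\,g^{-1}$, one gets $\mathrm{Comm}_A(\Lambda)=g_0\,\mathrm{Comm}_A(G(\Ga))\,g_0^{-1}$, which is dense because conjugation by $g_0$ is a self-homeomorphism of $A$. (A variant uses only the first assertion of Theorem \ref{lattice}: $\Lambda$ is abstractly commensurable to the RAAG $G(\Ga)$, hence virtually special; after checking that the $\Lambda$-action on $X(\Ga)$ is itself virtually special, one may apply \cite[Theorem I]{haglund2008finite} to $\Lambda$ directly.)

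I expect the real work to have been absorbed into Theorem \ref{lattice} and into Haglund's theorem, so that what is left is essentially bookkeeping: verifying that the permutation topology is the intended one in which ``uniform lattice'' and ``dense'' are to be read, that $\mathrm{Comm}_A$ is indeed a commensurability invariant and conjugation-equivariant, and --- the one genuine hypothesis check --- that the action fed into \cite[Theorem I]{haglund2008finite} (the standard action of $G(\Ga)$, or the virtually special action of $\Lambda$ obtained above) meets its freeness, cocompactness and specialness requirements. No new geometric input beyond Theorem \ref{lattice} is needed; the substance of the corollary lies entirely in that theorem together with Haglund's result.
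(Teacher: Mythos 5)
Your proposal is correct and is essentially the paper's own argument: the paper deduces Corollary \ref{arithmeticity} precisely by combining Theorem \ref{lattice} with \cite[Theorem I]{haglund2008finite}, and your expansion (density of the commensurator of $G(\Ga)$ from Haglund's theorem, then transfer to an arbitrary uniform lattice via the conjugate-commensurability supplied by Theorem \ref{lattice} together with the commensurability-invariance and conjugation-equivariance of the commensurator, conjugation being a homeomorphism of $\aut(X(\Ga))$) is exactly the intended bookkeeping. The only nitpick is your aside that uniform lattices in $\aut(X(\Ga))$ are precisely the subgroups acting freely and cocompactly --- they are the discrete (equivalently, properly acting, with finite stabilizers) cocompact subgroups --- but none of your steps actually uses that characterization, so it does not affect the proof.
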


Recall that edges in $X(\Ga)$ can be labelled by vertices of $\Ga$ (see Section \ref{Salvetti complex}). Condition (1) is to guarantee that every $H$ has label-preserving finite index subgroup. We caution the reader that this condition is not equivalent to the \textquotedblleft type preserving\textquotedblright\ condition introduced in \cite{haglund2006commensurability} in the studying of right-angled buildings, though these two conditions have similar flavour. 

Theorem \ref{lattice} is a consequence of the following result.

\begin{thm}
\label{label-preserving rigidity}
Suppose $\Ga$ is a graph which does not contain induced 4-cycle. Let $H$ be a group acting geometrically on $X(\Ga)$ by label-preserving automorphisms. Then $H$ is commensurable to $G(\Ga)$.
\end{thm}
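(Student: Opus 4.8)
The plan is to realize $H$ as the fundamental group of a compact special cube complex built by a combination-theorem argument, and then promote specialness to a virtual isomorphism with $G(\Ga)$. Since $H$ acts geometrically on $X(\Ga)$ preserving edge-labels, the quotient $Y = X(\Ga)/H$ is a compact nonpositively curved cube complex whose edges are labelled by vertices of $\Ga$ in a way compatible with the RAAG relations; equivalently $Y$ comes equipped with a ``local isometry to $S(\Ga)$'' at the level of hyperplanes, so the first step is to record the local combinatorial structure: each hyperplane carrier of $Y$ looks like the corresponding piece of $S(\Ga)$, and the way hyperplanes of different labels cross is governed by edges of $\Ga$. The absence of an induced $4$-cycle in $\Ga$ is exactly what rules out the ``square of trees'' obstruction (the Burger--Mozes / Wise phenomenon): it forces the set of hyperplanes carrying a fixed label to have a tree-like, rather than a product-of-trees, intersection pattern, which is the geometric input that will let a combination theorem run.

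Next I would set up the combination theorem. The idea is to decompose $Y$ (or a finite cover of it) along hyperplanes into pieces indexed by the ``stars'' of vertices of $\Ga$ — each piece being a cube complex that maps locally isometrically into the Salvetti complex of a star subgraph $\Star(v)$, which is a RAAG of the form $\Z \times (\text{smaller RAAG})$ and hence virtually special with well-understood separability properties. Then one glues these special pieces back together along their common hyperplanes. In the Haglund--Wise framework one would invoke relative hyperbolicity of the fundamental group of the total complex relative to the edge groups to deduce that the combined complex is virtually special; here, crucially, there is no relative hyperbolicity, so I expect the main obstacle to be establishing the separability statements (in particular double-coset separability of the hyperplane subgroups, and the canonical completion/retraction needed to kill self-osculations and inter-osculations) by hand, using the specific geometry of $X(\Ga)$ rather than a black-box combination theorem. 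This is presumably where the ``new ingredients'' advertised in the abstract enter: one needs a replacement for the relatively hyperbolic combination theorem that exploits the lack of induced $4$-cycles to control how carriers of hyperplanes osculate, probably via an analysis of the intersection pattern of standard flats in $X(\Ga)$ in the spirit of \cite{bks2,huang2014quasi,cubulation}.

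Once $H$ is shown to be virtually the fundamental group of a compact special cube complex $Z$, I would finish as follows. Virtual specialness gives a local isometry of a finite cover $\widehat{Z}$ into some Salvetti complex $S(\Delta)$, hence an injection of a finite-index subgroup $H' \le H$ into $G(\Delta)$ with convex-cocompact (quasiconvex) image. Now I use that $H'$ still acts geometrically on $X(\Ga)$ preserving labels together with the structure of the hyperplane labelling to identify $\Delta$ with $\Ga$ up to the combinatorics that matter: the edge-labels of $X(\Ga)/H'$ by vertices of $\Ga$, and the fact that label-$v$ hyperplanes and label-$w$ hyperplanes cross iff $v,w$ are adjacent in $\Ga$, force the special structure to be modelled on $\Ga$ itself, so $H'$ embeds as a finite-index subgroup of $G(\Ga)$. (At this last stage one may need to pass to a further finite-index subgroup so that the image is genuinely finite-index rather than merely quasiconvex; here one uses that the action is cocompact on all of $X(\Ga)$, so no hyperplane is ``missed''.) This yields that $H$ and $G(\Ga)$ share a common finite-index subgroup, i.e. they are commensurable, which is the assertion of Theorem~\ref{label-preserving rigidity}. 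The hard part, to reiterate, is the non-relatively-hyperbolic combination theorem in the middle step; the first and last steps are largely bookkeeping with the label structure inherited from $X(\Ga)$.
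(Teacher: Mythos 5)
There is a genuine gap: your middle step --- the combination theorem without relative hyperbolicity --- is exactly the heart of the proof, and you leave it as ``presumably where the new ingredients enter'' rather than supplying those ingredients. Concretely, the paper does not decompose $X(\Ga)/H$ into pieces indexed by all the stars of $\Ga$; it inducts on the number of vertices of $\Ga$, picks a single vertex $u$, and uses the HNN-type splitting of $G(\Ga)$ over $G(lk(u))$ to decompose the quotient as a graph of spaces whose vertex spaces are $(\Ga\setminus\{u\})$-components and $St(u)$-components and whose edge spaces are $lk(u)$-components; the induction hypothesis makes the vertex spaces virtually special. The substitute for relative hyperbolicity is then twofold. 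First, one shows (Lemma \ref{intersection}, Corollary \ref{projections are circles}) that because $\Ga$ has no induced $4$-cycle, the wall projection of one $lk(u)$-component onto another is contained in a disjoint union of branched tori --- malnormality fails only ``by tori.'' Second, one collapses those tori by passing to the Davis realization $|\B|$ of the right-angled building via the canonical projection; the no-induced-$4$-cycle hypothesis makes the universal cover of $|\B|/H$ Gromov-hyperbolic (Lemma \ref{hyperbolicity of building quotient}), so the Haglund--Wise connected intersection theorem applies there and is pulled back. On top of this one needs the modified completions and retractions of Section \ref{subsec_modified completeions and retractions} to preserve the lengths of core circles (the canonical completion enlarges circles, Remark \ref{larger circle}), and then the matching argument of Section \ref{subsec_matching} to glue compatible finite covers of the vertex and edge spaces. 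None of this is recoverable from double-coset separability alone, which is what you propose to establish ``by hand.''

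A secondary issue is your endgame. Specialness of a finite cover gives a local isometry into $S(\Delta)$ where $\Delta$ is the crossing graph of the \emph{hyperplanes of the cover}, which is in general much larger than $\Ga$, and the resulting embedding of $H'$ into $G(\Delta)$ is typically of infinite index; the labelling by vertices of $\Ga$ does not ``force $\Delta=\Ga$.'' The paper instead deduces commensurability from specialness directly: specialness is equivalent to the factor actions on standard geodesics being conjugate to translations (Lemma \ref{orientation and specialness}), which lets one choose a parallelism- and $H'$-invariant orientation of edges and then conjugate $H'$ into $G(\Ga)$ acting by left translations (Theorem \ref{conjugate to left translation}, Lemma \ref{conjugation and admissible orientation}). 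So the first and last steps are not merely bookkeeping in the form you describe, and the middle step is asserted rather than proved.
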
 

For a more general version of Theorem \ref{label-preserving rigidity}, see Theorem \ref{good cover}.
 
The no induced 4-cycle condition is motivated by the examples of exotic groups acting geometrically on the product of two trees \cite{wise1996non,burger_mozes}. It is not hard to see that $X(\Ga)$ contains an isometrically embedded product of two trees of infinitely many ends if and only if $\Ga$ contains an induced 4-cycle. It turns out that one can modify their examples such that the exotic action on the product of two trees extends to an exotic action on $X(\Ga)$, which gives a converse to Theorem \ref{label-preserving rigidity}.

\begin{thm}
\label{label-preserving non-rigid RAAG's}
Let $\Ga$ be a graph which contains an induced 4-cycle. Then there exists a torsion free group $H$ acting geometrically on $X(\Ga)$ by label-preserving automorphisms such that $H$ is not residually finite. In particular, $H$ and $G(\Ga)$ are not commensurable.
\end{thm}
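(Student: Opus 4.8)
The plan is to import a non-residually finite cocompact lattice on a product of two trees and spread the rest of the right-angled Artin structure over it. Fix an induced $4$-cycle $C$ in $\Ga$ with consecutive vertices $a,b,c,d$; then $G(C)=\langle a,c\rangle\times\langle b,d\rangle\cong F_2\times F_2$, so $X(C)$ is the product $T\times T'$ of two $4$-regular trees, sitting inside $X(\Ga)$ as a convex subcomplex. Let $L(\Ga)$ be the link of the unique vertex of $S(\Ga)$: its vertices are the symbols $v^{+},v^{-}$ for $v\in V(\Ga)$, its edges are the pairs $v^{\varepsilon}w^{\delta}$ with $vw$ an edge of $\Ga$, and it is the flag complex on this graph. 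It is standard that a simply connected non-positively curved cube complex carrying an edge-labelling by $V(\Ga)$ together with an orientation of each edge, all of whose vertex links are isomorphic as labelled complexes to $L(\Ga)$, must be isomorphic to $X(\Ga)$: such data induce a combinatorial map to $S(\Ga)$ which is a bijection on each vertex link, hence a covering map, hence an isomorphism on universal covers.

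\smallskip

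By \cite{wise1996non} (see also \cite{burger_mozes}) there is a compact non-positively curved square complex $W_0$ with $\widetilde{W_0}\cong T\times T'$ and with $\pi_1(W_0)$ not residually finite; being the fundamental group of a cube complex, $\pi_1(W_0)$ is torsion free. The horizontal $1$-skeleton of $W_0$ is $4$-regular, so by Petersen's $2$-factor theorem its edges can be $2$-coloured by $\{a,c\}$ with each colour class a disjoint union of cycles; orienting those cycles, and doing the same vertically with the colours $\{b,d\}$, yields an edge-labelling of $W_0$ by $\{a,b,c,d\}$ for which every vertex link is $L(C)$ and $\widetilde{W_0}\cong X(C)$ as labelled complexes. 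Now build a cube complex $Y\supseteq W_0$ by promoting the link of each vertex $p$ of $W_0$ from $L(C)$ to $L(\Ga)$: attach at $p$ a loop for each vertex of $\Ga$ outside $C$; a square for each edge $vw$ of $\Ga$ not already present, glued along its label word $vwv^{-1}w^{-1}$ (read along the relevant edges and loops at $p$, and at the other endpoint of the $v$-edge when $v\in\{a,b,c,d\}$); and a $k$-cube for each clique of $\Ga$ of size $k\ge 3$, every such clique meeting $V(\Ga)\setminus C$ since $C$ is triangle free. Then $Y$ is compact, its edges are labelled by $V(\Ga)$, every vertex link equals $L(\Ga)$, and in particular $Y$ is non-positively curved.

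\smallskip

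By the characterisation above, $\widetilde{Y}\cong X(\Ga)$ as labelled complexes, so $H:=\pi_1(Y)$ is torsion free and acts freely, cocompactly and by label-preserving automorphisms on $X(\Ga)$. Since $C$ is an induced subgraph, $L(C)$ is a full subcomplex of $L(\Ga)$, so the inclusion $W_0\hookrightarrow Y$ is a local isometry of non-positively curved cube complexes and is therefore $\pi_1$-injective; hence $\pi_1(W_0)\le H$. A group with a non-residually finite subgroup is not residually finite, so $H$ is not residually finite. Finally $G(\Ga)$ is residually finite, and any group commensurable to a residually finite group is residually finite; hence $H$ is not commensurable to $G(\Ga)$.

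\smallskip

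The external input — and the source of the pathology — is the existence of a non-residually finite cocompact lattice on a product of two $4$-regular trees, which is the mechanism behind the non-rigidity of $F_2\times F_2$ and is provided by \cite{wise1996non,burger_mozes}. The technical heart is the second paragraph: one must check that the cells can be attached to $W_0$ coherently, so that \emph{every} vertex link becomes precisely $L(\Ga)$ with no missing or spurious simplices and hence $Y$ is genuinely non-positively curved; the easier point that the $2$-factor labelling really makes $\widetilde{W_0}$ equal $X(C)$ with its standard labelling is also needed. Everything else is formal.
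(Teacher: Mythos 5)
Your reduction to a non-residually finite lattice on a product of trees is the right instinct and matches the paper's strategy in outline, but the middle step --- relabelling $W_0$ so that $\pi_1(W_0)$ becomes label-preserving --- cannot work, and in fact defeats itself. Suppose the Petersen $2$-factorization and the choice of orientations really produced an edge-labelling of the \emph{compact} complex $W_0$ by $\{a,b,c,d\}$ with every vertex link isomorphic, as an oriented labelled complex, to $L(C)$, and suppose in addition (as your appeal to "a combinatorial map to $S(\Ga)$" silently requires) that every square of $W_0$ then reads $vwv^{-1}w^{-1}$. These two conditions say precisely that the tautological map $W_0\to S(C)$ is a combinatorial map which is bijective on every vertex link, i.e.\ a finite covering map. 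Then $\pi_1(W_0)$ would be a finite-index subgroup of $F_2\times F_2$, hence residually finite --- contradicting the defining property of $W_0$. If, on the other hand, the squares are \emph{not} all commutator squares (and nothing in Petersen's theorem controls this: the $2$-factorization of the horizontal graph is blind to which pairs of horizontal edges are opposite sides of a square), then the lifted labelling of $\widetilde{W_0}$ is not constant on parallelism classes, whereas the reference labelling of $X(C)$ is; since any cubical isomorphism preserves parallelism, $\widetilde{W_0}$ is then \emph{not} label-isomorphic to $X(C)$ and $\pi_1(W_0)$ does not act label-preservingly. Either way the construction fails. (For the same reason your "standard" recognition criterion is false as stated: one can label and orient each horizontal tree $T_4\times\{q\}\subset T_4\times T_4$ independently of $q$ and obtain all vertex links equal to $L(C)$ without the complex being label-isomorphic to $X(C)$.)

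The conceptual point being missed is that for a torsion-free label-preserving lattice $H\le\aut(X(C))$ the labelling descends to the quotient $X(C)/H$ but the \emph{orientation} need not; demanding an oriented labelled-link isomorphism on the compact quotient is strictly stronger and, as above, forces residual finiteness. This is exactly why the paper cannot simply quote \cite{wise1996non,burger_mozes} (whose complexes are neither on $T_4\times T_4$ nor label-preserving, as the paper notes for the Janzen--Wise example), and instead (i) builds a bespoke irreducible lattice on $T_3\times T_3$, (ii) converts it into a \emph{label-preserving} irreducible lattice on $T_4\times T_4$ by the edge-doubling trick of Lemma \ref{T_3}, where the doubling is what manufactures a parallelism-respecting labelling, and (iii) proves separately, via separability of centralizers, that a label-preserving \emph{irreducible} lattice cannot be residually finite. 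Your final step (enlarging $W_0$ to a complex with links $L(\Ga)$) is essentially the canonical completion $\C(\,\cdot\,,S(\Ga))$ used in the paper and would be fine, but it is moot until a genuine label-preserving, non-residually finite lattice on $X(C)$ is in hand.
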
 

Note that even in the case where $\Ga$ is a 4-cycle, the above theorem is not completely trivial, since the examples mentioned above in \cite{wise1996non,burger_mozes} are not label-preserving. Also Theorem \ref{non-rigid RAAG's} follows from Theorem \ref{label-preserving non-rigid RAAG's}.

\subsection{Sketch of proof and organization of the paper}
We refer to Section \ref{Salvetti complex} for our notations. Let $G(\Ga)$ be a RAAG with finite outer automorphism group and let $H$ be a group quasi-isometric to $G(\Ga)$. For simplicity we assume $H$ is torsion free and $H$ acts geometrically on $X(\Ga)$. In general, $H$ admits a geometric model which is quite similar to $X(\Ga)$, which is discussed in Section \ref{sec_blow-up building} and Section \ref{sec_geometric model}.

\textit{Step 1:} We orient edges in $S(\Ga)$ and label them by vertices of $\Ga$. This lifts to $G(\Ga)$-equivariant orientation and labelling of edges in $X(\Ga)$. We are done if $H$ happens to preserve both the orientation and labelling. By a simple observation (Lemma \ref{label-preserving}), we can assume $H$ acts on $G(\Gamma)$ in a label-preserving way by passing to a finite index subgroup. However, the issue with orientation is more serious.

We induct on the complexity of the underlying graph, and assume Theorem \ref{label-preserving rigidity} holds for any proper subgraph of $\Ga$. Pick a vertex $v\in\Ga$ and let $\Gamma'\subset\Gamma$ be the induced subgraph spanned by vertices in $\Ga\setminus\{v\}$. Then there is a canonical embedding $S(\Ga')\to S(\Ga)$. Note that $G(\Gamma)$ is an HNN-extension of $G(\Gamma')$ along the subgroup $G(lk(v))\subset G(\Ga')$. Let $T$ be the associated Bass-Serre tree. Alternatively, $T$ is obtained from $X(\Ga)$ by collapsing edges which are in the lifts of $S(\Ga')$. 

Since $H$ is label-preserving, there is an induced action $H\curvearrowright T$. Up to passing to a subgroup of index 2, we assume $H$ acts on $T$ without inversion. This induces a graph of groups decomposition of $H$ and a graph of spaces decomposition of $K=X(\Gamma)/H$. Each vertex group acts geometrically on $X(\Ga')$ by label-preserving automorphisms (since the universal cover of each vertex space is isometric to $X(\Gamma')$), hence it is commensurable to $G(\Gamma')$ by the induction assumption. It follows that each vertex space has a finite sheet cover which is a special cube complex. At this point, the reader is not required to know what is exactly a special cube complex. One may perceive it as a cube complex with some nice combinatorial features and we will explain the relevant properties later.

Each cover of $K$ has an induced graph of spaces structure. We claim there exists a finite sheet cover $\bar{K}\to K$ such that each vertex space of $\bar{K}$ is a special cube complex. It is not hard to deduce the above theorem from this claim. The relation between specialness and commensurability is discussed in Section \ref{subsec_special and commensurability}.

Any edge group of $T/H$ acts geometrically on $X(lk(v))$. In general they could be very complicated, however, their intersections are controlled as follows.
\begin{lem}
\label{intersection}
Suppose $\Ga$ has no induced 4-cycle. Then the largest intersection of edge groups and their conjugates in a vertex group is virtually a free Abelian group. 
\end{lem}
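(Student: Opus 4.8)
The plan is to translate everything into the geometry of $X(\Ga')$ and reduce the statement to a single point: when $\Ga$ has no induced $4$-cycle, the combinatorial bridge between two distinct standard subcomplexes of type $lk(v)$ is a flat. First I would pin down what the edge subgroups of a vertex group look like. Work in the simplified situation of the sketch, where $H$ acts geometrically on $X(\Ga)$ by label-preserving automorphisms (the general geometric model is handled in the same way). Fix a vertex $w$ of $T$; it is a type-$\Ga'$ standard subcomplex $X_0\subset X(\Ga)$, which with its labelling is isometric to $X(\Ga')$, and the vertex group $V=\stab_H(X_0)$ acts properly and cubically on $X_0$. The edges of $T$ at $w$ are the $v$-hyperplanes of $X(\Ga)$ meeting $X_0$; such a hyperplane meets $X_0$ along a type-$lk(v)$ standard subcomplex of $X_0$, and each type-$lk(v)$ standard subcomplex $Y\subset X_0$ is flanked by exactly two of them, $\mathfrak h_Y^{+}$ and $\mathfrak h_Y^{-}$. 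One checks $\stab_V(\mathfrak h_Y^{+})=\stab_V(\mathfrak h_Y^{-})$, since an element of $V$ stabilizing $\mathfrak h_Y^{+}$ stabilizes its gate $Y$ in $X_0$ and hence fixes each of $\mathfrak h_Y^{\pm}$. So every edge subgroup of $V$, and every $V$-conjugate of one, has finite index in some $\stab_V(Y)$, with honestly distinct edge subgroups contained in the stabilizers of distinct $Y$'s; and since a standard subcomplex of a fixed type is determined by any of its vertices, distinct type-$lk(v)$ standard subcomplexes of $X_0$ are disjoint. It therefore suffices to show: for distinct type-$lk(v)$ standard subcomplexes $Y_1\ne Y_2$ of $X_0$, the group $D:=\stab_V(Y_1)\cap\stab_V(Y_2)$ is virtually free abelian of rank $\le\dim X(\Ga)$ — this uniform bound gives the ``largest'' formulation, and any intersection of several edge subgroups lies in some such $D$.

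The only place the hypothesis enters is the following. Let $F\subset Y_1$ be the combinatorial projection of $Y_2$ onto $Y_1$, a convex subcomplex preserved by $D$. Translating inside $X(\Ga')$ we may take $Y_1=X_{lk(v)}$ and $Y_2=wX_{lk(v)}$ with $w$ of minimal length in $wG(lk(v))$; since $Y_1\ne Y_2$ we have $w\notin G(lk(v))$, so some vertex $r$ occurring in a reduced word for $w$ lies outside $\Star(v)$. A short computation with reduced words shows that $F$ is a standard subcomplex of $Y_1$ whose type is contained in $lk(v)\cap lk(r)$. But $lk(v)\cap lk(r)$ is a clique: two non-adjacent vertices $p,q$ in it would make $\{v,p,r,q\}$ span an induced $4$-cycle $v-p-r-q-v$, since its only possible chords $pq$ and $vr$ are both absent. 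Hence the type of $F$ is a clique, so $F$ is a standard flat, isometric to $\R^{k}$ with its standard cubical structure for some $k\le\dim X(\Ga)$. (Equivalently, no induced $4$-cycle forbids an isometrically embedded product of two infinite trees, and that is exactly what makes the bridge flat.)

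To finish, $D$ preserves $F$ and acts on it properly and by cubical automorphisms, because $V$ does so on $X(\Ga')$. Since $V$ is commensurable to the torsion-free residually finite group $G(\Ga')$, I may pass to a finite-index torsion-free subgroup and assume $D$ torsion-free; then the action of $D$ on $F\cong\R^{k}$ is free, so $D$ embeds into the cubical automorphism group of the standard cubulation of $\R^{k}$, which is $\Z^{k}\rtimes W$ with $W$ finite. Therefore $D\cap\Z^{k}$ has finite index in $D$ and is free abelian of rank $j\le k\le\dim X(\Ga)$, so $D$ is virtually $\Z^j$, as required.

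The hard part will be the first step rather than the last: carefully identifying the edge subgroups of a vertex group as the stabilizers $\stab_V(Y)$, and verifying that the two $v$-hyperplanes flanking a given $Y$ give the same edge subgroup, so that the only ``large'' intersections of honestly distinct edge subgroups are the $\stab_V(Y_1)\cap\stab_V(Y_2)$. After that the no-$4$-cycle hypothesis does its work in the two-line clique argument, and the remaining ingredients — gates, combinatorial projections and convexity in $CAT(0)$ cube complexes, and the computation of the type of $F$ — are standard.
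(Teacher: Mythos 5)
Your proof is correct, and its combinatorial heart is exactly the paper's: two non-adjacent vertices $p,q\in lk(v)\cap lk(r)$ with $r\notin St(v)$ would span the induced $4$-cycle $v$--$p$--$r$--$q$. Where you differ is in packaging. The paper never isolates a proof of this lemma; its content is realized downstairs, in Corollary \ref{projections are circles}(3), as the statement that the \emph{wall projection} $\wpj_{\bar K}(\bar B\to\bar A)$ of one $lk(v)$-component onto another in a suitable finite cover is a disjoint union of (branched) tori, and getting that requires the connected-intersection theorem of Haglund--Wise, hence hyperbolicity of the collapsed building $|\B|/H$ and a tower of covers. You instead work entirely upstairs in $X(\Ga')$ with gates/combinatorial projections: the intersection $\stab_V(Y_1)\cap\stab_V(Y_2)$ preserves the gate $F$ of $Y_2$ in $Y_1$, whose support lies in $lk(v)\cap lk(r)$ because every hyperplane crossing both $Y_1$ and $Y_2$ must cross every hyperplane separating them, at least one of which is labelled by some $r\notin St(v)$. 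This is cleaner and fully adequate for the group-theoretic statement of Lemma \ref{intersection}, though it does not yield the finer "retraction image in a finite cover is a torus" statement the paper actually needs in Section \ref{sec_construction of the finite cover}; so the two arguments are genuinely complementary rather than interchangeable.

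Two small points to tighten. First, in your "short computation with reduced words" you should take $w$ of minimal length in the \emph{double} coset $G(lk(v))\,w\,G(lk(v))$ (or argue via separating hyperplanes as above); with only right-coset minimality the particular letter $r$ you name need not be dual to a hyperplane separating $Y_1$ from $Y_2$, which is what the containment of the support of $F$ in $lk(r)$ really uses. Second, your claim that an element stabilizing $\mathfrak h_Y^{+}$ "hence fixes each of $\mathfrak h_Y^{\pm}$" deserves a word (it stabilizes $Y$, hence the $v$-line factor of the parallel set $Y\times\ell$, and since it preserves the side of $\ell$ containing $\mathfrak h_Y^{+}$ it cannot swap the two flanking hyperplanes); in any case only commensurability of the edge group with $\stab_V(Y)$ is needed. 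Neither issue affects the validity of the argument.
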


\textit{Step 2:} For simplicity, we assume $K$ only has two vertex spaces $L$ and $R$, and one edge space $E\subset K$ such that the two ends of $E\times[0,1]$ are attached isometrically to locally convex subcomplexes $E_L\subset L$ and $E_R\subset R$ respectively. Let $L_1=L\cup (E\times [0,1/2])$ and $R_1=R\cup (E\times [1/2,1])$.

It follows from the work of Haglund and Wise \cite{haglund2012combination} that if $\pi_{1}(K)$ is hyperbolic, and if $\pi_{1}(E_L)$ and $\pi_{1}(E_R)$ are malnormal in $\pi_{1}(L)$ and $\pi_{1}(R)$ respectively, then $K$ has the desired finite sheet cover. Later the malnormality is dropped in \cite{wisestructure} by using the cubical small cancellation theory, and hyperbolicity is relaxed to relative hyperbolicity with respect to Abelian subgroups. However, in general it is easy to give counterexamples without any hyperbolicity assumption. Moreover, these works did not take quasi-isometry invariance into consideration.

In most of our cases, both relative hyperbolicity and malnormality fail. However, $K$ has more structure than a generic special cube complex and malnormality does not fail in a terrible way (Lemma \ref{intersection}). Due to lack of hyperbolicity, we will get around the cubical small cancellation or Dehn filling argument, and use a different collapsing argument based on the blow-up building construction in \cite{cubulation}, which also applies to group quasi-isometric to $G(\Ga)$.

Let $L'\to L$ be a finite sheet special cover. It also induces a finite sheet cover $L'_1\to L_1$. Similarly we define $R'$ and $R'_1$. Usually there are more than one lifts of $E_L$ in $L'$, each lift gives rise to a half-tube in $L'_1$. It suffices to match half-tubes in $L'_1$ with half-tubes in $R'_1$. Suppose there are bad half-tubes in $L'_1$ which do not match up with half-tubes in $R'_1$. It is natural to ask whether it is possible to pass to finite sheet covers of $L'_1$ such that bad half-tubes are modified in the cover while good half-tubes remain unchanged.

One ideal situation is the following. Suppose $A_{1},A_{2}\subset L'$ are elevations of $E_L$ and suppose there exists a retraction $r:L'\to A_{1}$ such that $r_{\ast}(\pi_{1}(A_{2}))$ is trivial. Then there exists cover $L''\to L'$ which realizes any further cover of $A_{1}$ without changing $A_{2}$. We want to achieve at least some weaker version of this ideal situation. Since $L'$ is a special cube complex, $L'$ has a finite sheet cover which retracts onto $A_{1}$. This is constructed in \cite{MR2377497}, in which it is called the \textit{canonical retraction}. Then we need to look at the images of the lifts of $A_2$ under this retraction. Due to the failure of malnormality, $r_{\ast}(\pi_{1}(A_{2}))$ is in generally non-trivial, however, Lemma \ref{intersection} suggested that it is reasonable to control the retraction image such that it is not more complicated than a torus.

The rough idea is that we first collapse all the tori in $K$. Then the tubes become collapsed tubes and the previous statement is equivalent to that the projection of a collapsed tube to another collapsed tube is contractible. The reader may notice that if we collapse all the tori in $K$, then the resulting space is a point. In order to make this idea work, we first exploded $K$ with respect to the intersection pattern of the tori in $K$, then collapse the tori. The resulting space $K'$ is a space which encodes the intersection pattern of tori in $K$. This is done in the setting of blow-up building developed in \cite{cubulation}. Moreover, this also works in the case when $H$ is a group quasi-isometric to $X(\Ga)$.

By killing certain holonomy in $K$ (see Section \ref{sec_branched complexes with trivial holonomy}), $K'$ becomes a special cube complex. Moreover, one deduce that $\pi_1(K')$ is Gromov-hyperbolic from the no induced 4-cycle condition (see Section \ref{subsec_wall projection}). Now we are in a situation to apply the work of Haglund and Wise on hyperbolic special cube complex.

\textit{Step 3:} While matching the tubes, we also need to keep track of finer information about these retraction tori, such as the length of the circles in the tori, and how other circles retracts onto a particular circle. It turns out that the construction in \cite{MR2377497} does not quite preserve this information since the canonical completion is too large in some sense (Remark \ref{larger circle}). We need a modified version of completion and retraction, which is discussed in Section \ref{subsec_modified completeions and retractions}.

\textit{Step 4:} Given that the retraction images are tori, and the retraction preserves finer combinatorial information of tori in $K$, we construct the desired cover of $K$ in Section \ref{subsec_matching}. The argument is a modified version of \cite[Section 6]{haglund2012combination}. 

\textit{Step 5:} We show how to drop the finite outer automorphism condition in the case of group acting geometric on $X(\Ga)$. We will explode $K$ in a different way and decompose it into suitable vertex spaces and edge spaces. See Section \ref{sec_uniform lattice}.

\textit{Step 6:} When there is an induced 4-cycle in $\Ga$, the largest intersection of edges groups and their conjugates in a vertex group may contain a free group of rank 2. In this case it is impossible for $K$ to be virtually special in general. The counterexamples are given in Section \ref{sec_failure of commensurability}.

\subsection{Acknowledgement} This paper would not be possible without the helpful discussions with B. Kleiner. In particular, he pointed out a serious gap in the author's previous attempt to prove a special case of the main theorem. Also the author learned Lemma \ref{normal subgroup} from him. The author thanks D. T. Wise for pointing out the reference \cite{haglund2006commensurability} and X. Xie for helpful comments and clarifications.

\section{Preliminaries}

\subsection{Right-angled Artin groups and Salvetti complexes}
\label{Salvetti complex}
We refer to \cite{charney2007introduction} for background on right-angled Artin groups. Throughout this section, $\Ga$ will be a finite simplicial graph.

\begin{definition}[Salvetti complex]
Denote the vertex set of $\Gamma$ by $\{v_{i}\}_{i=1}^{m}$. We associated each $v_i$ with a standard circle $\Bbb S^{1}_{v_{i}}$ and choose a base point $p_{i}\in \Bbb S^{1}_{v_{i}}$. Let $\Bbb T^{m}=\Pi_{i=1}^{m} \Bbb S^{1}_{v_{i}}$. Then $\Bbb T^{m}$ has a natural cube complex structure. Then $\Delta$ gives rise to a subcomplex $T_{\Delta}=\Pi_{v_{i}\in v(\Delta)}\Bbb S^{1}_{v_{i}}\times \Pi_{v_{i}\notin v(\Delta)}\{p_{i}\}$. Then $S(\Gamma)$ is defined to be the subcomplex of $\Bbb T^{m}$ which is the union of all $T_{\Delta}$'s with $\Delta$ ranging over all clique subgraphs of $\Gamma$.  
\end{definition}

$S(\Gamma)$ is a non-positively curved cube complex whose 2-skeleton is the presentation complex of $G(\Gamma)$, so $\pi_{1}(S(\Gamma))\cong G(\Gamma)$. The closure of each $k$-cell in $S(\Gamma)$ is a $k$-torus, which is called a \textit{standard torus}. A standard torus of dimension 1 is also called a \textit{standard circle}. The \textit{dimension} of $G(\Gamma)$ is the dimension of $S(\Gamma)$. Recall that $\Ga'\subset\Ga$ is an \textit{induced subgraph} if vertices of $\Ga'$ are adjacent in $\Ga'$ if and only if they are adjacent in $\Ga$. Each induced subgraph $\Ga'\subset\Ga$ gives rise to an isometric embedding $S(\Ga')\to S(\Ga)$. The universal cover of $S(\Gamma)$ is a $CAT(0)$ cube complex, which we denote by $X(\Ga)$. We label standard circles of $S(\Ga)$ by vertices of $\Gamma$, and this lifts to a $G(\Ga)$-invariant edge labelling of $X(\Gamma)$. 

\begin{definition}($\Ga'$-components)
\label{components}
Let $K$ be a cube complex. Suppose edges of $K$ are labelled by vertices of $\Ga$. Pick a induced subgraph $\Ga'\subset\Ga$. A \textit{$\Ga'$-component} $L\subset K$ is a subcomplex such that
\begin{enumerate}
\item $L$ is connected.
\item Each edge in $L$ is labelled by a vertex in $\Ga'$. Moreover, for each vertex $v\in\Ga'$, there exists an edge in $L$ which is labelled by $v$.
\item $L$ is maximal with respect to (1) and (2).
\end{enumerate}
Here $\Ga'$ is called the \textit{support} of $L$.
\end{definition}

Note that an $\emptyset$-component is a vertex in $K$. If $\Ga'$ is a complete graph of $k$ vertices, $\Ga'$-components of $X(\Ga)$ are isometrically embedded $k$ dimensional Euclidean spaces. They are called \textit{standard flats}. When $k=1$, we also call them \textit{standard geodesics}. Vertices in $X(\Ga)$ are understood to be $0$-dimensional standard flats. Also note that in order to define $\Ga'$-components in $S(\Ga)$ and $X(\Ga)$, the second part of Definition \ref{components} (2) is not necessary, since for each vertex $x$ in $S(\Ga)$ or $X(\Ga)$, and for each vertex $v\in\Ga$, there exists a $v$-labelled edge containing $x$. However, we will encounter other complexes later where this property is not true.

\begin{lem}
\label{convexity of components}
Suppose $K$ is non-positively curved and parallel edges of $K$ are labelled by the same vertex. Then each $\Ga'$-component $L$ is locally convex. In particular, if $K$ is $CAT(0)$, then $L$ is $CAT(0)$.
\end{lem}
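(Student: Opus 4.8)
The plan is to verify local convexity via Gromov's link condition: a connected subcomplex $L$ of a non-positively curved cube complex $K$ is locally convex if and only if, at every vertex $x\in L$, the link $\mathrm{Lk}(x,L)$ is a full subcomplex of $\mathrm{Lk}(x,K)$, i.e. whenever a simplex of $\mathrm{Lk}(x,K)$ has all its vertices in $\mathrm{Lk}(x,L)$, the whole simplex lies in $\mathrm{Lk}(x,L)$. So fix a vertex $x\in L$ and a cube $C$ of $K$ containing $x$, all of whose edges at $x$ lie in $L$; I must show $C\subseteq L$. The vertices of $\mathrm{Lk}(x,L)$ correspond exactly to edges of $L$ at $x$, and by the defining maximality of $\Ga'$-components together with property (2) of Definition \ref{components}, every edge of $K$ at $x$ that lies in $L$ is labelled by a vertex of $\Ga'$; conversely I will need to see that the presence of enough such edges forces the cube into $L$.

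First I would record the key structural point: since parallel edges of $K$ carry the same label, the labels of the edges of any cube $C$ at a vertex $x$ are pairwise distinct (two edges of a cube at a common vertex span a square, hence are parallel to the opposite pair, but more to the point any two edges of $C$ at $x$ lie in a common square only when... — actually the clean statement is that opposite edges of a square are parallel, so all edges of $C$ in a fixed parallelism class at $x$ coincide; distinctness of the edges at $x$ forces distinctness unless two edges are parallel, which cannot happen for two distinct edges sharing the vertex $x$ in a cube). Thus $C$ determines a set $S$ of $\dim C$ distinct labels, all lying in $\Ga'$ by hypothesis. Now I would build the subcomplex $L'$ obtained by adjoining $C$ to $L$ (and taking the connected subcomplex generated): it is connected, every edge is labelled by a vertex of $\Ga'$ (the new edges of $C$ are, being parallel to edges of $C$ at $x$), and it satisfies (2); by maximality of $L$ we get $L'=L$, hence $C\subseteq L$. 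This gives the full-subcomplex condition at $x$, and since $x$ was arbitrary, $L$ is locally convex in $K$. The final sentence of the statement is then immediate: if $K$ is $CAT(0)$, a locally convex connected subcomplex of a $CAT(0)$ space is convex, hence itself $CAT(0)$ (a convex subset of a $CAT(0)$ space is $CAT(0)$).

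The main obstacle, and the only place requiring care, is the link/maximality argument: I must make sure that adjoining a single cube $C$ whose edges at $x$ all lie in $L$ does not accidentally introduce edges with labels outside $\Ga'$ — this is exactly why the hypothesis "parallel edges are labelled by the same vertex" is essential, since it guarantees the new edges of $C$ (all parallel to edges of $C$ through $x$, which lie in $L$ and hence are $\Ga'$-labelled) are themselves $\Ga'$-labelled. One should also double-check the degenerate cases: $\Ga'=\emptyset$ (then $L$ is a single vertex, trivially locally convex), and the case where $C$ already meets $L$ in more than just $x$. Neither causes trouble, but I would state the parallelism bookkeeping as a short preliminary observation rather than leaving it implicit. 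Everything else — the equivalence of local convexity with the full-link condition, and convexity implying $CAT(0)$ — is standard and can be cited.
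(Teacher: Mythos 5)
Your proof is correct and follows essentially the same route as the paper: the paper also reduces local convexity to the full-link criterion (if edges of $L$ at a vertex span a cube of $K$, that cube lies in $L$) and concludes by combining the hypothesis that parallel edges share a label with the maximality of $L$. Your write-up just spells out the parallelism bookkeeping and the $CAT(0)$ consequence in more detail.
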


\begin{proof}
It suffices to check for each vertex $x\in L$, if a collection of edges $\{e_i\}_{i=1}^{n}$ emanating from $x$ span a cube in the ambient complex, then this cube is in $L$. However, this follows from the fact that parallel edges have the same label, and the maximality of $L$.
\end{proof}

The following object was first introduced in \cite{kim2013embedability}. 

\begin{definition}[extension complex]
\label{extension complex}
The \textit{extension complex} $\P(\Ga)$ of a finite simplicial graph $\Ga$ is defined as follows. The vertices of $\mathcal{P}(\Gamma)$ are in 1-1 correspondence with the parallel classes of standard geodesics in $X(\Gamma)$. Two distinct vertices $v_{1},v_{2}\in\mathcal{P}(\Gamma)$ are connected by an edge if and only if there is a standard geodesic $l_{i}$ in the parallel class associated with $v_{i}$ ($i=1,2$) such that $l_{1}$ and $l_{2}$ span a standard 2-flat. Then $\mathcal{P}(\Gamma)$ is defined to be the flag complex of its 1-skeleton, namely we build $\mathcal{P}(\Gamma)$ inductively from its 1-skeleton by filling a $k$-simplex whenever we see the $(k-1)$-skeleton of a $k$-simplex. 

Since each complete subgraph in the 1-skeleton of $\mathcal{P}(\Gamma)$ gives rise to a collection of mutually orthogonal standard geodesics lines in $X(\Ga)$, there is a 1-1 correspondence between $k$-simplexes in $\mathcal{P}(\Gamma)$ and parallel classes of standard $(k+1)$-flats in $X(\Gamma)$. For standard flat $F\subset X(\Gamma)$, we denote the simplex in $\mathcal{P}(\Gamma)$ which corresponds to standard flats parallel to $F$ by $\Delta(F)$. 
\end{definition}

Each vertex $v\in\P(\Ga)$ is labelled a vertex of $\Ga$ in the following way. Pick standard geodesic $\ell\subset X(\Ga)$ such that $\Delta(\ell)=v$ and pick edge $e\subset\ell$. We label $v$ by the label of $e$. Note that this does not depend on the choice of $\ell$ in the parallel class, and the edge inside $\ell$. This labelling induces a map from the 0-skeleton of $\P(\Ga)$ to vertices in $\Ga$, which can be extended to a simplicial map $\P(\Ga)\to F(\Ga)$, here $F(\Ga)$ is the flag complex of $\Ga$.

We refer to \cite[Section 2.3.3]{kleiner1997rigidity} for the definition and properties of the \textit{parallel set} of a convex subset in a $CAT(0)$ space.

\begin{definition}
\label{v-parallel set}
For vertex $v\in\P(\Ga)$, the \textit{$v$-parallel set}, which we denote by $P_v$, is the parallel set of a standard geodesic $\ell\subset X(\Ga)$ such that $\Delta(\ell)=v$. Note that $P_v$ does not depend on the choice of the standard geodesic $\ell$ in the parallel class.
\end{definition}

\begin{definition}
\label{definition of links}
Pick induced subgraph $\Ga'\subset\Ga$. The \textit{link} of $\Ga'$, denoted by $lk(\Ga')$, is the induced subgraph spanned by vertices which are adjacent to every vertex in $\Ga'$. The \textit{closed star} of $\Ga'$, denoted by $St(\Ga')$, is the induced subgraph spanned by vertices in $\Ga'$ and $lk(\Ga')$.

Let $K$ be a polyhedral complex and pick $x\in K$. The \textit{geometrical link} of $x$ in $K$, denoted by $Lk(x,K)$, is the object defined in \cite[Chapter I.7.15]{bridson1999metric}.
\end{definition}

In general, these two notion of links do not agree on graphs.

\begin{lem}
\label{parallel set}
$($\cite[Lemma 3.4]{huang2014quasi}$)$
Let $K$ be a $\Ga'$-component in $X(\Ga)$. Then the parallel set $P_{K}$ of $K$ is exactly the $St(\Ga')$-component containing $K$. Moreover, $P_K$ admits a splitting $P_K=K\times K^{\perp}$, where $K^{\perp}$ is isomorphic to a $lk(\Ga')$-component.  
\end{lem}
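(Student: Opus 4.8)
The plan is to identify both $P_K$ and the $St(\Ga')$-component containing $K$ with a common product $K\times K^{\perp}$. First I would let $Q$ be the $St(\Ga')$-component of $X(\Ga)$ containing $K$; it is well defined because every vertex of $X(\Ga)$ has an incident edge of each label. By Lemma~\ref{convexity of components} $Q$ is locally convex, hence convex, so $Q$ is a $CAT(0)$ cube complex isomorphic to $X(St(\Ga'))$. Since $lk(\Ga')$ consists of the vertices adjacent to every vertex of $\Ga'$, the graph $St(\Ga')$ is the join of the induced subgraphs on $\Ga'$ and on $lk(\Ga')$, so $G(St(\Ga'))\cong G(\Ga')\times G(lk(\Ga'))$ and hence $X(St(\Ga'))\cong X(\Ga')\times X(lk(\Ga'))$ as cube complexes. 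An edge labelled in $\Ga'$ cannot change the $X(lk(\Ga'))$-coordinate (since $\Ga'\cap lk(\Ga')=\varnothing$), so each $\Ga'$-component of $Q$ is a slice $X(\Ga')\times\{w\}$; thus $K=X(\Ga')\times\{w_0\}$, and setting $K^{\perp}=\{x_0\}\times X(lk(\Ga'))$ for a vertex $x_0$ with $(x_0,w_0)\in K$ gives $Q=K\times K^{\perp}$ with $K^{\perp}$ a $lk(\Ga')$-component. The ``moreover'' clause will then follow once $P_K=Q$ is established.

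The inclusion $Q\subseteq P_K$ is easy: for each vertex $w$ of $X(lk(\Ga'))$ the slice $K\times\{w\}$ is convex in $Q$, hence in $X(\Ga)$, and is parallel to $K=K\times\{w_0\}$ because the distance between them is the constant $d(w,w_0)$; so $K\times\{w\}\subseteq P_K$, and taking the union, $Q\subseteq P_K$. For the reverse inclusion, take $y\in P_K$; the Sandwich Lemma for parallel convex subsets of a $CAT(0)$ space gives an isometrically embedded flat strip $R=K\times[0,d]\subseteq X(\Ga)$ with $K\times\{0\}=K$ and $y\in K\times\{d\}$. Fix a vertex $a$ of $K$ and look at the fiber $\sigma=\{a\}\times[0,d]$, a geodesic perpendicular to $K$ at $a$ that parallel-transports over all of $K$; the claim is that $\sigma\subseteq Q$, and then, since $Q$ is convex and $K\times\{d\}$ is the convex hull of $\{(a,d):a\text{ a vertex of }K\}$, we conclude $y\in K\times\{d\}\subseteq Q$.

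To prove $\sigma\subseteq Q$ I would show that $\{t\in[0,d]:\sigma(t)\in Q\}$ is open (it is clearly closed and contains $0$). Suppose $\sigma(t_0)\in Q$. Projecting to $K$ inside $R$ shows $\sigma(t_0)$ has $K$-coordinate $a$, so its minimal cube of $X(\Ga)$ lies in $Q=K\times K^{\perp}$ and carries labels $\Delta_0\subseteq lk(\Ga')$. The sub-strip $K\times[t_0,d]$ exhibits $\{a\}\times[t_0,d]$ as perpendicular, at $\sigma(t_0)$, to the $\Ga'$-slice of $Q$ through $\sigma(t_0)$ and as transporting over it. The key link computation is that, near $\sigma(t_0)$, the link $Lk(\sigma(t_0),X(\Ga))$ is the spherical join of an $(|\Delta_0|-1)$-sphere with the ``octahedralized'' flag complex on $\{v^{+},v^{-}\}_{v\in lk(\Delta_0)}$, in which two such vertices are joined iff the corresponding vertices of $\Ga$ are adjacent; flatness of the sub-strip forces the initial direction of $\{a\}\times[t_0,d]$ to be joined to the direction of every $\Ga'$-labelled edge of $K$ at $a$, and since a vertex of $\Ga$ adjacent to all of $\Ga'$ lies in $lk(\Ga')$, all of its directions are then labelled in $lk(\Ga')$. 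Hence $\{a\}\times[t_0,d]$ enters the $lk(\Ga')$-component $\{a\}\times K^{\perp}\subseteq Q$, so $\sigma(t)\in Q$ for $t$ near $t_0$. The main obstacle is precisely this link computation and its bookkeeping at the (generally non-vertex) points $\sigma(t_0)$: one must check that $\Delta_0\subseteq lk(\Ga')$, that each $v\in\Ga'$ contributes a direction to the transverse link, and that being joined to all $\Ga'$-directions in the spherical join really does force the $lk(\Ga')$-labelling via the polar-set structure of the join. Note that the no-induced-$4$-cycle hypothesis plays no role here; only Lemma~\ref{convexity of components} and the combinatorics of vertex links in $X(\Ga)$ are used.
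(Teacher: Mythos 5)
The paper does not prove this lemma --- it is quoted from \cite[Lemma 3.4]{huang2014quasi} --- so your argument can only be assessed on its own terms. Its overall architecture is sound: the identification of the $St(\Ga')$-component $Q$ with $X(\Ga')\times X(lk(\Ga'))$, the inclusion $Q\subseteq P_K$ via the parallel slices (taking all points $w$ of $X(lk(\Ga'))$, not only vertices), and the reduction of $P_K\subseteq Q$ to showing that the perpendicular geodesic $\sigma=\{a\}\times[0,d]$ of the sandwich strip stays in $Q$ by an open-and-closed argument are all correct. Your bookkeeping at $\sigma(t_0)$ is also fine: since the nearest-point projection of $\sigma(t_0)$ to $K$ is $a$ (computed in the strip, hence in $X(\Ga)$), the point $\sigma(t_0)$ lies in the slice $\{a\}\times K^{\perp}$ and its minimal cube carries a clique of labels $\Delta_0\subseteq lk(\Ga')$.

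The genuine gap is the openness step, which you flag as ``the main obstacle'' but do not carry out, and whose mechanism as you describe it would not suffice. Knowing that the direction $u=\sigma'(t_0^{+})$ makes angle $\pi/2$ with (i.e., lies in the ``polar set'' of) each link vertex $v^{\pm}$, $v\in\Ga'$, does \emph{not} force every vertex of the minimal simplex of $u$ to be adjacent to all of $\Ga'$: if $w$ is non-adjacent to some $v\in\Ga'$, then $d(w^{\delta},v^{\pm})\ge\pi$ in the all-right spherical flag complex, which is perfectly compatible with a point of the open star of $w^{\delta}$ lying at distance exactly $\pi/2$ from $v^{\pm}$. What you actually need is the stronger information carried by the flat sub-rectangles $e\times[t_0,t_0+\epsilon]$ for the two $v$-edges $e$ of $K$ at $a$: these are convex, so their links at $\sigma(t_0)$ are geodesic arcs of length $\pi/2$ in $Lk(\sigma(t_0),X(\Ga))$ from $v^{\pm}$ to $u$, and a geodesic of length at most $\pi/2$ issuing from a vertex of an all-right spherical flag complex remains in the closed star of that vertex (because the open $\pi/2$-ball about a vertex is its open star). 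Hence $u\in\bigcap_{v\in\Ga',\,\epsilon=\pm}\overline{St}(v^{\epsilon})$; since no simplex contains both $v^{+}$ and $v^{-}$, this intersection is exactly $S^{|\Delta_0|-1}\ast(\textmd{octahedralized flag complex on } lk(\Delta_0)\cap lk(\Ga'))=Lk(\sigma(t_0),\{a\}\times K^{\perp})$, so $\sigma$ enters a cube of $\{a\}\times K^{\perp}\subseteq Q$. With this closed-star fact supplied in place of the orthogonality/polar-set reasoning, your proof closes.
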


In particular, for vertex $v\in\P(\Ga)$, $P_v$ is the $St(\bar{v})$-component that contains $\ell$, where $\ell$ is a standard geodesic with $\Delta(\ell)=v$ and $\bar{v}\in\Ga$ is the label of edges in $\ell$. 
\subsection{Special cube complex}
We refer to Section 2.A and Section 2.B of \cite{haglund2012combination} for background about cube complexes and hyperplanes. An edge is \textit{dual} to a hyperplane if it has nonempty intersection with the hyperplane. A hyperplane $h$ is \textit{2-sided} if it has a small neighbourhood which is a trivial bundle over $h$. Two edges are \textit{parallel} if they are dual to the same hyperplane.

\label{subsec_special cube complex}
Let $X$ be a cube complex and pick a two-sided hyperplane $h\subset X$. Then the parallelism induces a well-defined orientation for edges dual to $h$. For each vertex $v\in X$, let $g_{v}$ be the graph made of all dual edges of $h$ that contains $v$ ($g_v$ could be empty). Then the geometric link $Lk(v,g_{v})$ is a discrete graph and each of its vertices is either incoming or outcoming depending on the orientations of edges.

We say $h$ \textit{self-osculates} if 
\begin{enumerate}
\item $h$ is two-sided and embedded;
\item there exists vertex $v\in X$ such that $Lk(v,g_{v})$ has more than one points.
\end{enumerate}
In this case, $h$ \textit{directly self-osculates} if there exist vertex $v\in X$ such that $Lk(v,g_{v})$ has at least two vertices which are both incoming or both outcoming, otherwise, $h$ \textit{indirectly self-osculates}. For example, a circle with one edge has an indirectly self-osculating hyperplane.

Let $h_{1}$ and $h_{2}$ be a pair of embedded two-sided hyperplanes. Then they \textit{interosculate} if there exist edges $a_{i},b_{i}$ dual to $h_{i}$ $(i=1,2)$, and vertices $v_{a}\in a_{1}\cap a_{2}$, $v_{b}\in b_{1}\cap b_{2}$ such that 
\begin{enumerate}
\item there exist a vertex in $Lk(v_{a},a_{1})$ and a vertex in $Lk(v_{a}, a_{2})$ which are adjacent in $Lk(v_{a},X)$ (note that if $a_1$ is not embedded, then $Lk(v_{a},a_{1})$ has two points);
\item there exist a vertex in $Lk(v_{b},b_{1})$ and a vertex in $Lk(v_{b}, b_{2})$ which are not adjacent in $Lk(v_{b},X)$.
\end{enumerate}

\begin{definition}
\label{special}
A non-positively curved cube complex $X$ is \textit{special} if
\begin{enumerate}
\item Each hyperplane is 2-sided and embedded.
\item No hyperplane directly self-osculate.
\item No two hyperplanes interosculate.
\end{enumerate}
$X$ is \textit{directly special} if $X$ is special and no hyperplane of $X$ self-osculate.
\end{definition}

\begin{thm}
\cite[Lemma 2.6]{haglund2012combination}
Let $X$ be a non-positively cube complex. Then $X$ is special if and only if there exists a local-isometry $X\to S(\Gamma)$ for some Salvetti complex $S(\Gamma)$.
\end{thm}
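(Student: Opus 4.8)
The plan is to establish the two implications independently: for ``$X$ special $\Rightarrow$ local isometry to a Salvetti complex'' I will build the target directly from the combinatorics of the hyperplanes of $X$, and for the converse I will check that $S(\Gamma)$ is itself special and that specialness is inherited along local isometries.

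\emph{Forward direction: the construction.} Assume $X$ is special. Since every hyperplane of $X$ is two-sided, I would first fix a coorientation of each hyperplane $h$, which coherently orients all edges dual to $h$. Let $\Gamma$ be the graph whose vertices are the hyperplanes of $X$, with an edge joining $h_1\ne h_2$ exactly when they cross in $X$ (if $X$ is compact then $\Gamma$ is finite; in general take $\Gamma$ countable). Define $f\colon X\to S(\Gamma)$ by collapsing the $0$-skeleton of $X$ onto the unique vertex of $S(\Gamma)$ and sending each edge dual to $h$, traversed positively, once positively around the standard circle $\mathbb{S}^1_h$. Because $X$ is special no hyperplane self-intersects, so the two pairs of opposite edges of any square are dual to \emph{distinct} hyperplanes; more generally the edges along an $n$-cube of $X$ are dual to $n$ pairwise-crossing hyperplanes, hence to an $n$-clique of $\Gamma$, which bounds a standard $n$-torus of $S(\Gamma)$. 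Sending each such cube onto that torus by the obvious product map extends $f$ to a well-defined, non-degenerate combinatorial map.

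\emph{Forward direction: that $f$ is a local isometry, which I expect to be the main obstacle.} Since $S(\Gamma)$ is non-positively curved, it remains to check that for every vertex $x\in X$ the induced map $Lk(x,X)\to Lk(f(x),S(\Gamma))$ is injective with full image; as $X$ is non-positively curved its links are flag, so injectivity together with fullness on edges suffices. The target link is the octahedral flag complex of $\Gamma$: two vertices $h^{+},h^{-}$ per hyperplane, with $\{h_i^{\epsilon_i}\}$ spanning a simplex iff the $h_i$ are distinct and pairwise cross. For injectivity: if two distinct edge-germs at $x$ had the same image, they would lie on edges $e_1\ne e_2$ dual to a common hyperplane $h$, both incident to $x$ with the same induced sign there -- precisely a direct self-osculation of $h$, which is excluded (the two germs of a single loop dual to $h$ carry opposite signs, using that $h$ is embedded, so cause no collision). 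For fullness on edges: if germs of edges $e_1,e_2$ at $x$, dual to $h_1,h_2$, have images spanning an edge of the target link, then $h_1\ne h_2$ and they cross somewhere in $X$; a square realizing that crossing supplies clause (1) of the interosculation condition, and if $e_1,e_2$ did not already span a square at $x$ they would supply clause (2), yielding an interosculation of $h_1,h_2$, which is excluded. Hence $e_1,e_2$ span a square and their germs are joined by an edge of $Lk(x,X)$. I expect this to be the delicate step -- not in idea, but in getting the link combinatorics and the coorientation bookkeeping exactly right and in handling the degenerate (loop) cases cleanly.

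\emph{Converse direction.} Suppose $g\colon X\to S(\Gamma)$ is a local isometry. First I would record that $S(\Gamma)$ is itself special: it has one vertex $p$, each hyperplane $H_v$ is two-sided and embedded, the edges dual to $H_v$ at $p$ form the single loop $\mathbb{S}^1_v$ whose two germs have opposite signs (so $H_v$ does not directly self-osculate), and $H_v,H_w$ cross iff $v,w$ are adjacent, so no two interosculate. Since $g$ is combinatorial it sends squares to squares coherently and hence induces a map $h\mapsto\tilde h$ from hyperplanes of $X$ to hyperplanes of $S(\Gamma)$; pulling the coorientation of $\tilde h$ back along $g$ gives one for $h$, so every hyperplane of $X$ is two-sided. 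For the remaining axioms I would use that injectivity and fullness of the link maps of $g$ force any square present in $S(\Gamma)$ between the images of two edge-germs at a vertex to be already present in $X$; consequently a non-embedded hyperplane of $X$, a directly self-osculating one, or an interosculating pair would push forward under $g$ to the same pathology in $S(\Gamma)$ -- treating ``non-embedded'' first so that the hypotheses (``embedded, two-sided'') of the remaining definitions apply to the images. As $S(\Gamma)$ has none of these, neither does $X$, so $X$ is special.
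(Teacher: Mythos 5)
This statement is not proved in the paper at all: it is quoted verbatim from Haglund--Wise \cite[Lemma 2.6]{haglund2012combination}, so there is no internal proof to compare against. Your argument is correct and is essentially the standard one from that reference: the characteristic map to the Salvetti complex of the crossing graph, with injectivity of the link maps coming from the absence of direct self-osculation and fullness on edges from the absence of interosculation, and, conversely, specialness of $S(\Gamma)$ together with the fact that a local isometry is injective with full image on links, so that a self-crossing, a direct self-osculation, or an interosculating pair in $X$ would push forward to the same pathology in $S(\Gamma)$. The only caveat worth noting is that the Salvetti complex is defined in this paper only for finite graphs, so your construction of $\Gamma$ as the crossing graph requires finitely many hyperplanes (e.g.\ $X$ compact, which is the only setting in which the paper uses the statement); your parenthetical about the countable case goes beyond the paper's definitions but matches the general form of the cited lemma.
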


\begin{lem}
\label{directly special}
\cite{haglund2012combination} Let $X$ be a compact special cube complex. Then $X$ has a finite cover which is directly special. Moreover, being directly special is preserved under passing to any further cover.
\end{lem}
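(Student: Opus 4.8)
The plan is to reduce everything to one model computation on Salvetti complexes. By the theorem quoted above, $X$ admits a local isometry $f\colon X\to S(\Ga)$ for some finite simplicial graph $\Ga$; in particular $f$ labels the edges of $X$ by the vertices of $\Ga$ and, using that hyperplanes of $S(\Ga)$ are $2$-sided, orients them compatibly. First I construct a directly special finite cover $\widehat{S}\to S(\Ga)$; then, writing $\rho\colon G(\Ga)\to Q$ for the homomorphism to a finite group $Q$ that defines $\widehat S$, I let $\widehat X\to X$ be the cover corresponding to $\ker(\rho\circ f_{\ast})\le\pi_{1}(X)$. Since $f_{\ast}(\ker(\rho\circ f_{\ast}))\subseteq\ker\rho$, the map $f$ lifts to $\widehat f\colon\widehat X\to\widehat S$, and $\widehat f$ is again a local isometry, being a lift of the local isometry $f$ through the two covering maps $\widehat X\to X$ and $\widehat S\to S(\Ga)$. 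Also $[\widehat X:X]=|\mathrm{im}(\rho\circ f_{\ast})|\le|Q|<\infty$. The last ingredient is the elementary fact that a local isometry onto a directly special cube complex has directly special domain; granting it, $\widehat X$ is directly special. The same fact yields the ``moreover'' clause at once: if $X$ is directly special and $X'\to X$ is any covering, then $X'\to X$ is a local isometry onto a directly special complex, hence $X'$ is directly special.

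For the cover of $S(\Ga)$, set $Q:=\bigoplus_{v}(\Z/2)e_{v}$, the direct sum having one summand for each vertex $v$ of $\Ga$, and let $\rho\colon G(\Ga)\to Q$ send each standard generator $v$ to $e_{v}$; let $\widehat S\to S(\Ga)$ be the cover associated to $\ker\rho$. I claim $\widehat S$ is directly special, and I would verify this in the following concrete model. Identify the vertex set of $\widehat S$ with $Q$; then for each vertex $v$ of $\Ga$ the $v$-labelled edges of $\widehat S$ are $\{\epsilon^{v}_{q}\}_{q\in Q}$, where $\epsilon^{v}_{q}$ joins $q$ to $q+e_{v}$, and the squares of $\widehat S$ are indexed by pairs $(q,\{v,w\})$ with $\{v,w\}$ an edge of $\Ga$. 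Consequently the parallelism class of $\epsilon^{v}_{q}$ is exactly $\{\epsilon^{v}_{q+\eta}:\eta\in\operatorname{span}\{e_{w}:w\in lk(v)\}\}$. The two $v$-labelled edges through a vertex $q$ are $\epsilon^{v}_{q}$ and $\epsilon^{v}_{q+e_{v}}$; they lie on the same hyperplane if and only if $e_{v}\in\operatorname{span}\{e_{w}:w\in lk(v)\}$, which fails because $\{e_{u}\}$ is a basis of $Q$ while $v\notin lk(v)$ ($\Ga$ being simple). Hence each hyperplane of $\widehat S$ is dual to exactly one edge at each of its vertices, which gives both embeddedness and the absence of any (direct or indirect) self-osculation. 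Two-sidedness is inherited from $S(\Ga)$ via orientations. Finally, any $v$-labelled edge and $w$-labelled edge of $\widehat S$ sharing a vertex span a square indexed by the corresponding edge $\{v,w\}$ of $\Ga$ — this already holds in $S(\Ga)$, and spanning a square lifts through a covering — so no two crossing hyperplanes of $\widehat S$ osculate, whence no two interosculate. Thus $\widehat S$ is directly special.

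The main obstacle is really just the care needed in the two lemmas used above. The pushforward lemma is routine: a local isometry restricts to an injection on each cube and to an injection of each vertex link onto a full subcomplex of the target link, so non-embeddedness, self-osculation (and whether it is direct or indirect, since orientations are preserved), and interosculation all transport injectively from the domain to the target — one needs only to observe that two distinct hyperplanes of the domain cannot have the same image when the target is special. The genuinely load-bearing point, and where the hypothesis is used, is the verification that $\widehat S$ is directly special: $S(\Ga)$ itself is special but not directly special, since each standard circle produces an indirectly self-osculating hyperplane, and what removes this in $\widehat S$ is precisely the linear-algebra statement $e_{v}\notin\operatorname{span}\{e_{w}:w\in lk(v)\}$, i.e.\ that $\Ga$ has no loop at $v$; passing to the $2$-cover ``unrolls'' each standard circle so that at every vertex the two edges of any given label are forced onto distinct hyperplanes.
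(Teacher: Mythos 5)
The paper does not prove this lemma; it simply cites Haglund--Wise, and your argument is precisely the standard one from that source (the $(\Z/2)^{V(\Ga)}$-cover of $S(\Ga)$ pulled back through the characteristic local isometry $X\to S(\Ga)$, followed by the observation that direct specialness pulls back under local isometries). The construction and the verification that $\widehat S$ is directly special are correct, as is the reduction of the ``moreover'' clause to the pushforward lemma. One sentence in your last paragraph is imprecise: it is \emph{not} true that two distinct hyperplanes of the domain must have distinct images under a local isometry to a special complex (distinct lifts of a hyperplane under a covering map already give a counterexample). What your interosculation argument actually needs is only that the two hyperplanes $h_1,h_2$ in question, which \emph{cross}, cannot have the same image --- and that does hold, since equal images would force the image hyperplane to self-intersect, contradicting specialness of the target. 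With that correction the proof is complete.
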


\subsection{Canonical completions and retractions}
\label{subset_the canonical comletion}
We follow \cite[Section 3]{haglund2012combination}.

Given compact special cube complexes $A$ and $X$, and a locally isometry $A\to X$, one can construct a finite cover of $X$ that contains a copy of $A$. This finite cover is called the \textit{canonical completion} of the pair $(A,X)$ and is denoted by $\mathsf{C}(A,X)$. Moreover, there is a \textit{canonical retraction} map $r:\mathsf{C}(A,X)\to A$. Now we describe the construction of $\mathsf{C}(A,X)$ and the retraction map $r$.

\textit{Case 1:} $X$ is a circle with a single edge. Then each connected component of $A$ is either a point, or a circle, or a path. We attach an extra edge to each component which is not a circle to make it a circle. The resulting space is denoted by $\mathsf{C}(A,X)$. It is clear that $\mathsf{C}(A,X)$ is finite cover of $X$ and $A\subset\mathsf{C}(A,X)$. Moreover, there is a retraction $\mathsf{C}(A,X)\to A$ by sending each extra edge to the component of $A$ it was attached along.

\textit{Case 2:} $X$ is a wedge of finitely many circles $\{c_{i}\}_{i=1}^{n}$. Let $A_{i}$ be the inverse image of $c_{i}$ under $A\to X$. We define $\mathsf{C}(A,X)$ to be the union of all $\mathsf{C}(A_{i},c_{i})$'s identified along their vertex sets, and define the canonical retraction $r:\mathsf{C}(A,X)\to A$ to be the map induced by $\mathsf{C}(A_{i},c_{i})\to A_{i}$. It is still true that $\mathsf{C}(A,X)\to X$ is a finite sheet covering map and $A\subset\mathsf{C}(A,X)$.

\text{Case 3:} $X$ is the Salvetti complex of some RAAG. Then the 1-skeleton $X^{1}$ of $X$ is a wedge of circles. It turns out that there is a natural way to attach higher dimensional cells to $\mathsf{C}(A^{1},X^{1})$ to obtain a non-positively curved cube complex $\mathsf{C}(A,X)$ (see \cite[Theorem 2.6]{bou2015residual}). Moreover, the covering map $\mathsf{C}(A^{1},X^{1})\to X^{1}$ and the retraction $\mathsf{C}(A^{1},X^{1})\to A^{1}$ extent to a covering map $\mathsf{C}(A,X)\to X$ and a retraction $\mathsf{C}(A,X)\to A$. The following is immediate from our construction.

\begin{lem}
\label{product and canonical completion}
For $i=1,2$, let $B_i\to X_i$ be a local isometry from $B_i$ to a Salvetti complex. Then the canonical completion $\C(B_1\times B_2,X_1\times X_2)$ with respect to the product of these two local isometries is naturally isomorphic (as cube complexes) to $\C(B_1,X_1)\times\C(B_2,X_2)$.
\end{lem}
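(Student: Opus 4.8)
The plan is to unwind the construction of the canonical completion from Section \ref{subset_the canonical comletion} one case at a time, and observe that each step of the construction commutes with taking products. Write $X = X_1\times X_2$ and recall that $X$ is again a Salvetti complex: if $X_i = S(\Ga_i)$ then $X = S(\Ga_1 * \Ga_2)$, where $\Ga_1 * \Ga_2$ is the join. Write $B = B_1\times B_2$ and let $f = f_1\times f_2 : B\to X$ be the product local isometry. First I would dispose of the $1$-skeletal case. The $1$-skeleton $X^1$ is a wedge of circles $\{c_i\}$, one for each vertex of $\Ga_1*\Ga_2$, i.e. the circles coming from $X_1^1$ together with the circles coming from $X_2^1$. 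The inverse image in $B$ of a circle $c_i$ coming from $X_1$ is $(f_1^{-1}(c_i))\times (B_2^0)$, a disjoint union of copies of the corresponding subcomplex $A_i\subset B_1$, indexed by the vertices of $B_2$; symmetrically for circles coming from $X_2$. Applying Case 1 of the construction (attaching an extra edge to each non-circle component) to each of these preimages, and then assembling along $0$-skeleta as in Case 2, produces exactly the complex $\C(B_1^1,X_1^1)\times B_2^0$ wedged appropriately with $B_1^0\times\C(B_2^1,X_2^1)$; a direct check of vertex sets shows this is the $1$-skeleton of $\C(B_1^1,X_1^1)\times\C(B_2^1,X_2^1)$, and that the retractions and covering maps agree factorwise.

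Next I would handle the passage to higher cells. By Case 3, $\C(B,X)$ is obtained from $\C(B^1,X^1)$ by attaching a cube spanned by a collection of edges emanating from a vertex whenever those edges map to a cube of $X$ and their images span a cube of $X$; this is the flag-completion procedure of \cite[Theorem 2.6]{bou2015residual}, and it is canonical. So it suffices to observe: (i) cubes of $X = X_1\times X_2$ are exactly products of a cube of $X_1$ with a cube of $X_2$ (equivalently, cliques of $\Ga_1*\Ga_2$ are disjoint unions of a clique of $\Ga_1$ and a clique of $\Ga_2$); (ii) a collection of edges at a vertex $(u_1,u_2)$ of $\C(B_1^1,X_1^1)\times\C(B_2^1,X_2^1)$ that maps to and spans a cube of $X$ splits as a collection of edges at $u_1$ spanning a cube of $\C(B_1^1,X_1^1)$ mapping to a cube of $X_1$, together with the analogous collection at $u_2$; and (iii) the cube attached by the completion to the product complex along such a collection is the product of the cube attached to $\C(B_1^1,X_1^1)$ with the cube attached to $\C(B_2^1,X_2^1)$. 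Items (i)–(iii) together give an isomorphism $\C(B,X)\cong\C(B_1,X_1)\times\C(B_2,X_2)$ of cube complexes extending the $1$-skeletal isomorphism. That this isomorphism intertwines the covering maps to $X$ and $X_1\times X_2$, and the canonical retractions to $B$ and $B_1\times B_2$, is immediate since on $1$-skeleta these maps are built factorwise and the higher-dimensional extensions in Case 3 are determined by the $1$-skeletal data.

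The one genuinely delicate point — the step I expect to be the main obstacle — is (ii), the claim that the cube-spanning condition for the product local isometry factors as a conjunction of the cube-spanning conditions for the two factors. This is where one must be careful that "mapping to a cube of $X$ and spanning a cube" is not merely a condition on the images (which trivially factors by (i)) but also a condition in the source complex $\C(B^1,X^1)$ itself, namely that the relevant faces have already been filled; and one must know that $\C(B^1,X^1)$ has indeed been identified with $\C(B_1^1,X_1^1)\times\C(B_2^1,X_2^1)$ as a \emph{complex} (including its higher cells, inductively) before the condition can be checked at the next dimension. I would therefore phrase the argument as an induction on the dimension of cubes being attached: assuming the $k$-skeleton of $\C(B,X)$ has been identified with the $k$-skeleton of the product, a $(k+1)$-cube is attached to the product precisely when its boundary $k$-sphere is present — which by the induction hypothesis and the product structure of cubes in $X$ happens exactly when the corresponding lower-dimensional data is present in each factor — so the $(k+1)$-skeletons agree as well. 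Granting the structural facts about products of Salvetti complexes and the combinatorial description of the completion in \cite{bou2015residual}, the rest is a bookkeeping verification, so I would keep the write-up brief and refer to those sources for the underlying machinery.
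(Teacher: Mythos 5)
Your proposal is correct and follows essentially the same route as the paper, which offers no separate argument but simply declares the lemma immediate from the construction of $\C(\cdot,\cdot)$ in Cases 1--3; your write-up is a careful unwinding of exactly that construction (1-skeleton factorwise, then the higher cubes determined by the covering/cube-completion property from \cite{bou2015residual}), including the correct observation that cubes of $S(\Ga_1)\times S(\Ga_2)=S(\Ga_1\ast\Ga_2)$ split as products of cubes of the factors. The inductive treatment of the cube-attachment step and the check that the covering maps and retractions are intertwined are exactly the bookkeeping the paper leaves to the reader.
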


\textit{Case 4:} $X$ is any compact special cube complex. Let $\Gamma$ be a graph such that its vertex set corresponds to the hyperplanes in $X$, and two vertex are adjacent if the corresponding hyperplanes cross each other. Such graph is called the \textit{intersection graph} of $X$. Suppose $R=S(\Gamma)$. Since $X$ is special, there is a local isometry $X\to R$, which induces a local isometry $A\to R$. We define $\mathsf{C}(A,X)$ to be the pull-back of the covering map $\mathsf{C}(A,R)\to R$ which fits into the following commuting diagram: 
\begin{center}
$\begin{CD}
@. \mathsf{C}(A,X)                         @>>>        \mathsf{C}(A,R)\\
@. @VVV                                   @VVV\\
A @>>>X          @>>>        R
\end{CD}$
\end{center}

Recall that an $i$-cube in $\mathsf{C}(A,X)$ can be represented by a pair of $i$-cubes in $\mathsf{C}(A,R)$ and $X$ that are mapped to the same $i$-cube in $S(\Gamma)$. Thus there exists a naturally embedded copy of $A$ in $\mathsf{C}(A,X)$ by considering $A\to \mathsf{C}(A,R)$ and $A\to X\to R$. The component of $\mathsf{C}(A,X)$ which contains this copy of $A$ is called the \textit{main component}. The canonical retraction is defined to be the composition $\mathsf{C}(A,X)\to\mathsf{C}(A,R)\to A$.

We recall the following notion from \cite[Section 2.1]{caprace2011rank}. It is also called a projection-like map in \cite{haglund2012combination}.
\begin{definition}
\label{cubical map}
A cellular map between cube complexes is \textit{cubical} if its restriction $\sigma\to\tau$ between cubes factors as $\sigma\to\eta\to\tau$, where the first map $\sigma\to\eta$ is a natural projection onto a face of $\sigma$ and the second map $\eta\to\tau$ is an isometry. 
\end{definition}

Note that if the inverse image of each edge in $S(\Gamma)$ under $A\to S(\Gamma)$ is a disjoint union of vertices and single edges, then the retraction $\mathsf{C}(A^{1},R^{1})\to A^{1}$ is cubical. Hence the canonical retraction $\mathsf{C}(A,X)\to A$ is cubical. For example, the assumption is satisfied when $X$ is directly special.

\begin{definition}
\cite[Definition 3.14]{haglund2012combination}  Let $X$ denote a cube complex. Let $A$ and $B$ be subcomplexes of $X$. We define $\wpj_{X} (A\to B)$, the \textit{wall projection} of $A$ onto $B$ in $X$, to be the union of $B^{0}$ together with all cubes of $B$ whose edges are all parallel to edges of $A$.
\end{definition}

\begin{lem}
\label{wpj}
\cite[Lemma 3.16]{haglund2012combination}  Let $A$ and $D$ be locally convex subcomplex of a directed special cube complex $B$. Let $\hat{D}$ denote the preimage of $D$ in $\mathsf{C}(A,B)$, and let $r:\mathsf{C}(A,B)\to A$ be the canonical retraction map. Then $r(\hat{D})\subset\wpj_{B}(D\to A)$.
\end{lem}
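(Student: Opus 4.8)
\emph{Proposed proof.} The plan is to reduce the inclusion $r(\hat D)\subseteq\wpj_B(D\to A)$ to a single assertion about how $r$ moves individual edges, using that $r$ is cubical. Since $B$ is directly special, the canonical retraction $r\colon\C(A,B)\to A$ is cubical (as recorded after Definition \ref{cubical map}). Hence $r(\hat D)$ is a subcomplex of $A$, and every cube of $r(\hat D)$ has the form $r(\hat c)$ for a cube $\hat c$ of $\hat D$, with $r|_{\hat c}$ factoring as the projection $\hat c\to f$ onto a face $f\subseteq\hat c$ followed by an isometry $f\to r(\hat c)$; in particular every edge of the cube $r(\hat c)$ equals $r(\hat a)$ for some edge $\hat a$ of $\hat c$, and $r$ carries every vertex of $\hat D$ into $A^0\subseteq\wpj_B(D\to A)$. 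So it suffices to prove the following claim $(\star)$: if $\hat a$ is an edge of $\hat D$ and $r(\hat a)$ is an edge (not a vertex), then $r(\hat a)$ is parallel in $B$ to an edge of $D$. Indeed, granting $(\star)$, every edge of every cube $r(\hat c)$ of $r(\hat D)$ is parallel in $B$ to an edge of $D$, so $r(\hat c)$ is a cube of $A$ all of whose edges are parallel to edges of $D$, i.e.\ $r(\hat c)\subseteq\wpj_B(D\to A)$, which together with the vertex statement gives $r(\hat D)\subseteq\wpj_B(D\to A)$.

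To prove $(\star)$, let $p\colon\C(A,B)\to B$ be the covering and $\bar a=p(\hat a)$; since $\hat D=p^{-1}(D)$ we have $\bar a\subseteq D$, so it is enough to show $r(\hat a)$ is parallel to $\bar a$ in $B$. Recall from Case~4 of the construction that $\C(A,B)$ is the pullback of $\C(A,R)\to R$ along the canonical local isometry $q\colon B\to R=S(\Gamma)$, with $\Gamma$ the intersection graph of $B$, and that $r$ is the composite $\C(A,B)\to\C(A,R)\xrightarrow{r_R}A$. Two structural facts drive the argument. First, $q$ matches the hyperplanes of $B$ bijectively with the vertices of $\Gamma$, hence with the hyperplanes of the Salvetti complex $R$, and each hyperplane of a Salvetti complex is dual to exactly one edge; consequently two edges of $B$ are parallel if and only if they have the same image under $q$. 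Second, by the wedge-of-circles description (Cases~2 and~3), $r_R$ sends every edge of $\C(A,R)$ lying over a standard circle $\mathbb{S}^1_h\subseteq R$ into the portion of $A\hookrightarrow\C(A,R)$ lying over $\mathbb{S}^1_h$. Now write the edge $\hat a$ of the pullback as a pair $(\bar a,\tilde a)$ with $\tilde a$ an edge of $\C(A,R)$, and let $h$ be the hyperplane of $B$ dual to $\bar a$; then both $\bar a$ and $\tilde a$ lie over the edge $\mathbb{S}^1_h$ of $R$. By the second fact $r(\hat a)=r_R(\tilde a)$ again lies over $\mathbb{S}^1_h$, so $q(r(\hat a))=\mathbb{S}^1_h=q(\bar a)$, and by the first fact $r(\hat a)$ is parallel to $\bar a$ in $B$. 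This establishes $(\star)$ and hence the lemma.

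The step I expect to need the most care is $(\star)$ — the assertion that the canonical retraction never pushes an edge off its own hyperplane. This is precisely where it matters that $\C(A,B)$ is built relative to the Salvetti complex of the \emph{intersection graph} of $B$: for a general local isometry $B\to S(\Gamma')$ one would only conclude that $r(\hat a)$ and $\bar a$ carry the same $\Gamma'$-label, which is strictly weaker than being parallel in $B$. The hypothesis that $B$ is directly special enters only to ensure $r$ is cubical, so that $r(\hat D)$ is a subcomplex whose cubes can be analysed edge-by-edge; if $B$ were merely special one would first replace it by the directly special finite cover provided by Lemma \ref{directly special}.
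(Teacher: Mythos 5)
Your proof is correct, and it uses exactly the mechanism behind this lemma: reduce to the edge claim $(\star)$ via cubicality of $r$, then verify $(\star)$ from the pullback description of $\C(A,B)$ over the Salvetti complex of the intersection graph, where "same image edge" is equivalent to "parallel in $B$". The paper itself only cites Haglund--Wise here without reproving it, but your argument coincides with the proof the paper gives for the analogous modified-retraction statement (Lemma \ref{modified wpj}, Case 1): an edge of the pullback is a pair $(b_1,b_2)$ lying over the same edge of the Salvetti complex, and since the retraction preserves that edge, its image edge is parallel in $B$ to $b_1\subset D$.
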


Let $A$ be a locally convex subcomplex in a special cube complex $X$. It is natural to ask what is the inverse image of $A$ under the covering $\mathsf{C}(A,X)\to X$. This usually depends on how $A$ sits inside $X$.

\begin{definition}
\cite[Definition 3.10]{haglund2012combination}
Let $D\to C$ be a combinatorial map between cube complexes. Then a hyperplane of $D$ is mapped to a hyperplane of $C$. Hence there is an induced map $V_{D}\to V_{C}$ between the set of hyperplanes in $D$ and $C$ respectively. The map $D\to C$ is \textit{wall-injective} if $V_{D}\to V_{C}$ in injective.
\end{definition}

\begin{lem}
\cite[Lemma 3.13]{haglund2012combination}
Let $X$ be a special cube complex and let $A\subset X$ be a wall-injective locally convex subcomplex. Then the preimage of $A$ in $\mathsf{C}(A,X)$ is canonically isomorphic to $\mathsf{C}(A,A)$.
\end{lem}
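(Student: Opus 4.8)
\emph{Overall approach.} The plan is to unwind the recursive definition of $\mathsf{C}(-,-)$ given in Cases~1--4 and reduce the lemma to a chain of canonical identifications of fibre products. Everything will follow from the fact that over a Salvetti complex the canonical completion is built by a local rule: the $1$-skeleton is assembled circle-by-circle from the pieces $\mathsf{C}(A_v,c_v)$, and the higher cells are then added in the unique way that turns this $1$-skeleton cover into a cube-complex cover (\cite[Theorem~2.6]{bou2015residual}). Observe that local convexity of $A$ is used only to guarantee that the inclusion $A\hookrightarrow X$ is a local isometry, so that $\mathsf{C}(A,X)$ is defined at all; wall-injectivity is the hypothesis that does the real work.

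\emph{The chain of identifications.} Let $\Ga_X$ be the intersection graph of $X$ and $R=S(\Ga_X)$. The preimage of the subcomplex $A$ under the covering $\mathsf{C}(A,X)\to X$ is the fibre product $A\times_X\mathsf{C}(A,X)$. By Case~4, $\mathsf{C}(A,X)=X\times_R\mathsf{C}(A,R)$, so transitivity of fibre products gives $A\times_X\mathsf{C}(A,X)=A\times_R\mathsf{C}(A,R)$, where $A\to R$ is the composite local isometry. Now wall-injectivity identifies the hyperplanes of $A$ bijectively with the set $L$ of hyperplanes of $X$ meeting $A$, hence with a subset of $V(\Ga_X)$; let $\Ga'$ be the induced subgraph of $\Ga_X$ on $L$. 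Every edge of $A$ carries a label in $L$, and every square of $A$ is also a square of $X$, hence is dual to a pair of hyperplanes whose images cross in $X$ and therefore correspond to an edge of $\Ga'$; likewise for higher cubes and cliques. Thus $A\to R$ factors through $S(\Ga')\hookrightarrow R$, the latter being a local isometry because $\Ga'$ is induced, so $A\times_R\mathsf{C}(A,R)=A\times_{S(\Ga')}\bigl(\mathsf{C}(A,R)|_{S(\Ga')}\bigr)$; and the local nature of the construction identifies $\mathsf{C}(A,R)|_{S(\Ga')}$ with $\mathsf{C}(A,S(\Ga'))$ (restricting over $S(\Ga')$ just discards the circles $c_v$ with $v\notin L$, whose contribution is disjoint from the image of $A$ anyway). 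Finally, $\Ga_A$ and $\Ga'$ share the vertex set $L$, and the edges of $\Ga_A$ (pairs of hyperplanes crossing in $A$) form a subset of those of $\Ga'$ (pairs crossing in $X$); hence $S(\Ga_A)\subset S(\Ga')$ with the same $1$-skeleton, the map $A\to S(\Ga')$ factors through $S(\Ga_A)$ (each cube of $A$ being dual to a clique of $\Ga_A$), and a second application of locality yields $\mathsf{C}(A,S(\Ga'))|_{S(\Ga_A)}=\mathsf{C}(A,S(\Ga_A))$. Chaining, $A\times_X\mathsf{C}(A,X)=A\times_{S(\Ga_A)}\mathsf{C}(A,S(\Ga_A))$, which is $\mathsf{C}(A,A)$ by Case~4 applied to $\mathrm{id}_A$. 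Each step is a canonical isomorphism respecting the distinguished copy of $A$, so the composite is the asserted isomorphism.

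\emph{Main obstacle.} The delicate point is the ``local nature of the construction'' invoked twice: for a local isometry $A\to S(\Ga_1)$ factoring through a subcomplex $S(\Ga_0)\subset S(\Ga_1)$ --- with $\Ga_0\subset\Ga_1$ either an induced subgraph (first use) or a spanning subgraph (second use) --- the restriction of the cover $\mathsf{C}(A,S(\Ga_1))\to S(\Ga_1)$ over $S(\Ga_0)$ should be canonically $\mathsf{C}(A,S(\Ga_0))$. I would prove this straight from the Case~3 construction: the $1$-skeletons agree because they are assembled from the same pieces $\mathsf{C}(A_v,c_v)$ over the common circles, and the higher cells agree because the filling procedure of \cite{bou2015residual} is carried out torus-by-torus over the cliques of the defining graph and so cannot see the ambient graph. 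Two bookkeeping points also need attention: one should fix that $\mathsf{C}(A,X)$ and $\mathsf{C}(A,A)$ denote the full fibre products (so that no component is discarded and the chain of canonical maps is literally an isomorphism), and one should pin down where wall-injectivity is essential --- without it the hyperplanes of $A$ may fold together in $R$, so that $\Ga_A$ is not even a subgraph of $\Ga_X$ on $|L|$ vertices and the identification already breaks at the second step.
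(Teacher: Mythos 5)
The paper does not prove this lemma; it is quoted verbatim from \cite{haglund2012combination} (Lemma 3.13 there), so there is no in-paper argument to compare against. Your proof is correct and is essentially the standard one: reduce everything to completions over Salvetti complexes via transitivity of fibre products, use wall-injectivity to identify the hyperplanes of $A$ with a subset $L$ of the hyperplanes of $X$ so that the characteristic map $A\to S(\Ga_X)$ factors through $S(\Ga_A)\subset S(\Ga')\subset S(\Ga_X)$, and then invoke locality of the completion construction. The one point you rightly flag — that restricting the cover $\C(A,S(\Ga_1))\to S(\Ga_1)$ over a subcomplex $S(\Ga_0)$ through which $A$ factors recovers $\C(A,S(\Ga_0))$ — is a genuine verification, but your sketch is adequate: the $1$-skeleta agree circle-by-circle by Cases 1--2, and the higher cells agree because a cover of a torus $T_\Delta$ is determined by its restriction to $T_\Delta^{1}$ (the map $\pi_1(T_\Delta^{1})\to\pi_1(T_\Delta)$ being surjective), so the filling of \cite{bou2015residual} is forced torus-by-torus. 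Your closing remarks correctly locate where wall-injectivity enters (without it $\Ga_A$ need not inject into $\Ga_X$ and the factorization fails) and correctly insist that both $\C(A,X)$ and $\C(A,A)$ be read as full, possibly disconnected, pullbacks, which matches the convention of Case 4 and of Remark \ref{larger circle}.
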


\begin{remark}
\label{larger circle}
Let $A$ be a special cube complex. Usually $\mathsf{C}(A,A)$ is much larger than $A$. For example, let $A$ be a circle made of $n$-edges for $n\ge 3$. Then $\mathsf{C}(A,A)$ is the disjoint union of a circle of length $n$ and a circle of length $n(n-1)$.
\end{remark}

\begin{lem}
\label{wall-injective}
\cite[Corollary 3.11]{haglund2012combination}
Let $A\to X$ be a local-isometry. If the canonical retraction $\C(A,X)\to A$ is cubical, then $A$ is wall-injective in $\C(A,X)$. In particular, $A$ is wall-injective in $\C(A,X)$ when $X$ is directly special. 
\end{lem}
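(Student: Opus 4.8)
The plan is to reduce the statement to a purely local property of cubical maps: a cubical map carries each parallelism class of edges either entirely into a single parallelism class of the target, or entirely onto vertices. Granting this, wall-injectivity of $A$ in $\C(A,X)$ is immediate, because the canonical retraction $r\colon\C(A,X)\to A$ fixes the embedded copy of $A$ pointwise, so it cannot collapse an edge of $A$, and two edges of $A$ that became parallel in $\C(A,X)$ would be forced to stay parallel in $A$.

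First I would record the local property. By Definition~\ref{cubical map}, $r$ restricts on each square $Q$ of $\C(A,X)$ as a projection onto a face of $Q$ followed by an isometry; hence $r(Q)$ is a square, an edge, or a vertex, and in each case one pair of opposite edges of $Q$ maps onto a pair of opposite edges of $r(Q)$ (possibly the same edge) while the complementary pair is crushed to the two endpoints. Two consequences for a pair $a,a'$ of opposite edges of $Q$: either $r(a)$ and $r(a')$ are both edges and then they are dual to a common hyperplane of $A$ (opposite edges of $r(Q)$, or one and the same edge), or both $r(a)$ and $r(a')$ are vertices; moreover $r(a)$ is an edge if and only if $r(a')$ is, since if $r(a)$ is an edge then $a$ lies in the non-collapsed pair of $Q$, whence so does $a'$.

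Next I would propagate this along a ladder. Recall that two edges $e,e'$ of $\C(A,X)$ are dual to the same hyperplane exactly when there is a chain $e=a_0,a_1,\dots,a_m=e'$ with $a_{i-1},a_i$ opposite edges of a square $Q_i$ for each $i$. Applying the previous paragraph inductively along this chain shows that either every $r(a_i)$ is a vertex, or every $r(a_i)$ is an edge and all of them are dual to a single hyperplane of $A$. In particular, if $e,e'$ are parallel in $\C(A,X)$ and $r(e)$ is an edge, then $r(e')$ is an edge dual to the same hyperplane of $A$ as $r(e)$. Now suppose $h_1\ne h_2$ are hyperplanes of $A$ with the same image hyperplane in $\C(A,X)$, and pick edges $e_i$ of $A$ dual to $h_i$; then $e_1,e_2$ are dual to a common hyperplane of $\C(A,X)$, hence parallel there, while $r(e_i)=e_i$ because $r$ is a retraction onto $A$, so by the chain argument $e_1,e_2$ would be dual to the same hyperplane of $A$ — contradicting $h_1\ne h_2$. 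Thus $V_A\to V_{\C(A,X)}$ is injective, i.e.\ $A$ is wall-injective. For the final clause, when $X$ is directly special the preimage in $A$ of each edge of the associated Salvetti complex is a disjoint union of vertices and single edges, so the canonical retraction is cubical (as noted after Definition~\ref{cubical map}), and the first part applies.

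The step I expect to require the most care is the propagation along the ladder, and specifically making sure the ``edge or vertex'' dichotomy is transported even through squares that $r$ collapses; the point that makes it work is the observation recorded above — that if $r$ sends one edge of a square to an edge, that edge necessarily lies in the surviving pair, so its opposite is sent to a parallel edge rather than to a vertex. Everything else is formal once the cubical structure of $r$ and the chain characterization of parallelism are in hand.
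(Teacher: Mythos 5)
Your proof is correct, and it is essentially the standard argument behind the cited result: a cubical map induces a well-defined partial map on hyperplanes (each hyperplane of $\C(A,X)$ is either collapsed or sent to a single hyperplane of $A$, which is exactly your ladder-propagation step), and since the canonical retraction restricts to the identity on the embedded copy of $A$, the composite $V_A\to V_{\C(A,X)}\to V_A$ is the identity, forcing $V_A\to V_{\C(A,X)}$ to be injective. The paper itself gives no proof but simply cites \cite[Corollary 3.11]{haglund2012combination}, and your reconstruction (including the reduction of the directly special case to cubicality of the retraction, as noted after Definition \ref{cubical map}) matches that argument.
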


\begin{definition}
\label{elevation}
\cite[Definition 3.17]{haglund2012combination}  Let $\pi:\bar{X}\to X$ denote a covering map. Let $A\subset X$ denote a connected subspace. An \textit{elevation} of $A$ to $\bar{X}$ is a connected component of the preimage of $A$ under $\bar{X}\to X$. If we choose base points $p\in A$ and $\bar{p}\in\bar{X}$ with $\pi(\bar{p})=p$, then the \textit{based elevation} of $A$ is the component of the preimage of $A$ that contains $\bar{p}$.

If $f:A\to X$ is any continuous map for $A$ connected. We define an elevation $\bar{A}$ of $A$ to be a cover of $A$ corresponding to the subgroup $f^{-1}_{\ast}(\pi_{1}(\bar{X}))$ (this depends on our choice of base points in $A$, $X$ and $\bar{X}$). Then $\bar{A}$ fits into the following commuting diagram:
\begin{center}
$\begin{CD}
\bar{A}                           @>>>        \bar{X}\\
@VVV                                   @VVV\\
A          @>>>        X
\end{CD}$
\end{center}
\end{definition}

\begin{remark}
The above two notions of elevation are consistent in the following sense. Suppose $f:A\to X$ is continuous and let $C$ be the mapping cylinder of $f$. Since $C$ is homotopic to $X$, the covering map $\bar{X}\to X$ induces a covering map $\bar{C}\to C$. Then elevations of $A\to X$ corresponds to components in the inverse image of $A$ in $\bar{C}\to C$. 

Alternatively, let $\tilde{A}\to A$ be the pull-back of the $\bar{X}\to X$. Then an elevation of $A\to X$ corresponds to a connected component in $\tilde{A}$.
\end{remark}

\section{The structure of blow-up building}
\label{sec_blow-up building}
\subsection{The blow-up building} We briefly review the Davis realization of right-angled buildings and the construction of blow-up buildings in \cite{cubulation}. The reader can find detailed proofs of the statements mentioned below in \cite{cubulation}.

Let $\P$ be the poset of standard flats in $X(\Ga)$. Recall that an \textit{interval} of $\P$ is a subset of form $I_{a,b}=\{x\in \P\mid a\le x\le b\}$ for some $a,b\in\P$. Since every interval of $\P$ is a Boolean lattice of finite rank, one can construct a cube complex $|\B|$ whose poset of cubes is isomorphic to poset of intervals in $\P$ (\cite[Proposition A.38]{abramenko2008buildings}). $|\B|$ is called the \textit{Davis realization of the right-angled building associated with $G(\Ga)$} and it is a $CAT(0)$ cube complex \cite{davis1994buildings}.

More precisely, there is a 1-1 correspondence between $k$-cubes in $|\B|$ and intervals of form $I_{F_1,F_2}$ where $F_1\subset F_2$ are two standard flats in $X(\Ga)$ with $\dim(F_2)-\dim(F_1)=k$. In particular, vertices of $|\B|$ correspond to standard flats in $X(\Ga)$. We label each vertex of $|\B|$ by the support (Definition \ref{components}) of the corresponding standard flat (if a vertex of $|\B|$ corresponds to a $0$-dimensional standard flat, then it is labelled by the empty set), and the \textit{rank} of this vertex is defined to be the number of vertices in its label. Moreover, the vertex set of $|\B|$ inherits a partial order from $\P$. 

We label each edge of $|\B|$ by the unique vertex of $\Ga$ in the symmetric difference of the labels of its two endpoints. Note that two parallel edges are labelled by the same vertex of $\Ga$, hence there is an induced labelling of hyperplanes in $|\B|$. If two hyperplanes cross, then their labels are adjacent in $\Ga$.

Let $K\subset X(\Ga)$ be a $\Ga'$-component and let $\P_K\subset \P$ be the sub-poset of standard flats inside $K$. Then $\P_K$ gives rise to a convex subcomplex $|\B|_{K}\subset |\B|$. By definition, $|\B|_K$ is isomorphic to the Davis realization of the right-angled building associated with $G(\Ga')$.

There is an induced action $G(\Ga)\acts|\B|$ which preserves the labellings of vertices and edges. The action is cocompact, but not proper - the stabilizer of a cube is isomorphic to $\Z^{n}$ where $n$ is the rank of the minimal vertex in this cube. The following construction is motivated by the attempt to blow-up $|\B|$ with respect to this data of stabilizers (in a possibly non-equivariant way). See Theorem \ref{inverse image are flats} for a precise statement.

\begin{definition}[branched lines and flats]
A \textit{branched line} is constructed from a copy of $\Bbb R$ with finitely many edges of length 1 attached to each integer point. We also require that the valence of each vertex in a branched line is bounded above by a uniform constant. This space has a natural simplicial structure and is endowed with the path metric. A \textit{branched flat} is a product of finitely many branched lines.

Let $\beta$ be a branched line. We call those vertices with valence $=1$ in $\beta$ the \textit{tips} of $\beta$, and the collection of all tips is denoted by $t(\beta)$. The copy of $\Bbb R$ in $\beta$ is called the \textit{core} of $\beta$. For a branched flat $F$, we define $t(F)$ to be the product of the tips of its factors, and the \textit{core} of $F$ to be the product of the cores of its factors.
\end{definition}

\begin{definition}[blow-up building]
\label{construction}
The following construction is a special case of \cite[Section 5.2, Section 5.3]{cubulation}. Let $\P(\Ga)$ be the extension complex. Pick a vertex $v\in\P(\Ga)$, we associate $v$ with a branched line $\beta_{v}$. Moreover, for each standard geodesic line $\ell$ with $\Delta(\ell)=v$ (see Definition \ref{extension complex}), we associate a bijection $f_{v,\ell}$ from the 0-skeleton $\ell^{(0)}$ of $\ell$ to $t(\beta_{v})$ such that
if two standard lines $\ell,\ell'$ are parallel, then $f_{v,\ell'}=f_{v,\ell}\circ p$, where $p:\ell'^{(0)}\to \ell^{(0)}$ is the map induced by parallelism. These $f_{v,\ell}$'s are called \textit{blow-up data}.

We associate each standard flat $F\subset X(\Ga)$ with a space $\beta_F$ as follows. If $F$ is a 0-dimensional standard flat (i.e. $\Delta(F)=\emptyset$), then $\beta_F$ is a point. Suppose $\Delta(F)\neq\emptyset$. Let $F=\prod_{v\in\Delta(F)}\ell_{v}$ be a product decomposition of $F$, here $v$ is a vertex in $\Delta(F)$, and $\ell_{v}\subset F$ is a standard geodesic line with $\Delta(\ell_{v})=v$. Then $\beta_{F}=\prod_{v\in\Delta(F)}\beta_{v}$.

For standard flats $F'\subset F$, suppose $F'=\prod_{v\in\Delta(F')}\ell_{v}\times\prod_{v\notin\Delta(F')}\{x_{v}\}$ ($x_{v}$ is a vertex in $\ell_{v}$). Then we define an isometric embedding $\beta_{F'}\hookrightarrow \beta_F$ as follows:
\begin{center}
$\beta_{F'}=\prod_{v\in\Delta(F')}\beta_v\cong \prod_{v\in\Delta(F')}\beta_{v}\times\prod_{v\notin\Delta(F')}\{f_{v,\ell_{v}}(x_{v})\}\hookrightarrow \prod_{v\in\Delta(F)}\beta_{v}=\beta_F$.
\end{center}
One readily verify that the above construction gives a functor from the poset $\P$ to the category of branched flats with isometric embeddings as morphisms. Let $Y(\Ga)$ be the space obtained by gluing the collection of all $\beta_F$'s according the isometric embeddings defined as above. The following properties follows from functorality of $F\to \beta_F$.
\begin{enumerate}
\item Each $\beta_F$ is embedded in $Y(\Ga)$. It is called a \textit{standard branched flat}. Then core of a standard branched flat is called a \textit{standard flat}. There is a 1-1 correspondence between standard flats in $X(\Ga)$ and standard flats in $Y(\Ga)$.
\item $\beta_{F_1}\cap \beta_{F_2}=\beta_{F_{1}\cap F_{2}}$ \cite[Lemma 8.1]{cubulation}. Thus if the cores of $\beta_{F_1}$ and $\beta_{F_2}$ have nontrivial intersection, then $\beta_{F_1}=\beta_{F_2}$. In particular, different standard flats in $Y(\Ga)$ are disjoint.
\item We can glue all the $f_{v,\ell}$'s together to obtain an injective map $f:X(\Ga)^{(0)}\to Y(\Ga)$ such that $f$ map the vertex set of each standard flat bijectively to the tips of a standard branched flat. The image of $f$ is exactly the collection of $0$-dimensional standard flats in $Y(\Ga)$.
\end{enumerate}
$Y(\Ga)$ is called the \textit{blow-up building of type $\Ga$} and it is a $CAT(0)$ cube complex (\cite{cubulation}). We claim for each vertex $x\in Y(\Ga)$, there is a unique standard flat containing $x$. Then this induces a map $\L_{1}:Y(\Ga)^{(0)}\to\P$ (recall that $\P$ is the poset of standard flats in $X(\Ga)$). The claim is clear when $\Ga$ is a clique (in this case $Y(\Ga)$ is a product of branched lines). In general we can choose a standard branched flat containing $x$ and find a standard flat inside which contains $x$. Since different standard flats in $Y(\Ga)$ are disjoint, thus the uniqueness follows. We label each vertex $x\in Y(\Ga)$ by the support of $\L_1(x)$. The \textit{rank} of $x$ is the number of vertices in this clique.
\end{definition}

We record the following simple observation.
\begin{lem}
\label{label and core}
Pick standard flat $F\subset X(\Ga)$, then for any vertex $v$ in the core of $\beta_F$, we have $\L_1(v)=F$. Conversely, if a vertex $v\in Y(\Ga)$ satisfies $\L_1(v)=F$, then $v$ is in the core of $\beta_{F}$.
\end{lem}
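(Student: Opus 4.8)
The plan is to read off both implications directly from the way $\L_1$ was defined in Definition \ref{construction}. Recall that, by construction, $\L_1(x)$ is the standard flat of $X(\Ga)$ that corresponds, under the $1$-$1$ correspondence in property (1) of Definition \ref{construction}, to the unique standard flat of $Y(\Ga)$ containing $x$. The first step is to observe that this correspondence is nothing but $G\mapsto\operatorname{core}(\beta_G)$: surjectivity is precisely the definition of the standard flats of $Y(\Ga)$, and injectivity follows from properties (2) and (3) of Definition \ref{construction}, since if $\operatorname{core}(\beta_{G_1})=\operatorname{core}(\beta_{G_2})$ then $\beta_{G_1}=\beta_{G_2}$ by (2), whence $f(G_1^{(0)})=t(\beta_{G_1})=t(\beta_{G_2})=f(G_2^{(0)})$ and $G_1=G_2$ by injectivity of $f$.

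Next I would prove the first assertion. If $v$ lies in the core of $\beta_F$, then $\operatorname{core}(\beta_F)$ is a standard flat of $Y(\Ga)$ containing $v$; since distinct standard flats of $Y(\Ga)$ are pairwise disjoint by property (2) of Definition \ref{construction}, it is the \emph{unique} standard flat of $Y(\Ga)$ through $v$. By the description of $\L_1$ recalled above, $\L_1(v)$ is therefore the standard flat of $X(\Ga)$ matching $\operatorname{core}(\beta_F)$, namely $F$.

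For the converse, suppose $\L_1(v)=F$. Unwinding the definition of $\L_1$ once more, the unique standard flat of $Y(\Ga)$ containing $v$ must be the one matching $F$ under the correspondence, which is $\operatorname{core}(\beta_F)$; hence $v\in\operatorname{core}(\beta_F)$, as desired.

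I do not anticipate any genuine obstacle here: the statement is essentially a bookkeeping translation between \textquotedblleft$\L_1(v)=F$\textquotedblright\ and \textquotedblleft$v$ lies in $\operatorname{core}(\beta_F)$\textquotedblright\ through the definitions, and the only ingredients required — the $1$-$1$ correspondence of property (1), disjointness of the standard flats of $Y(\Ga)$, and injectivity of $f$ — are all already established in Definition \ref{construction}. The one point that merits an explicit word is the identification of that abstract $1$-$1$ correspondence with the concrete map $G\mapsto\operatorname{core}(\beta_G)$, handled in the first step above.
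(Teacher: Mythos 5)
Your proof is correct and takes essentially the same route as the paper, which records Lemma \ref{label and core} as a simple observation following directly from the definition of $\L_1$ in Definition \ref{construction} (uniqueness of the standard flat of $Y(\Ga)$ through a vertex, plus the correspondence $F\mapsto\operatorname{core}(\beta_F)$). Your only addition is the explicit check that the 1-1 correspondence in Definition \ref{construction}(1) really is $F\mapsto\operatorname{core}(\beta_F)$, using property (2) and the injectivity of $f$, which is harmless and correct bookkeeping.
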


\begin{definition}[edge labelling and hyperplane labelling of $Y(\Ga)$]
\label{edge and hyperplane labelling}
We label each edge of $Y(\Ga)$ by a vertex of $\Ga$ as follows. First we define a map $\L_2$ from the collection of edges of $Y(\Ga)$ to vertices of $\P(\Ga)$. Pick edge $e\subset Y(\Ga)$, then $e$ is contained in a standard branched flat $\beta_F$. There is a unique product factor $\beta_v$ of $\beta_F$ such that $e$ is parallel to some edge in $\beta_v$. We define $\L_2(e)=v$. There may be several $\beta_F$'s that contain $e$, however they will give rise to the same $v$. $\L_2$ actually induces an edge-labelling of $Y(\Ga)$ by vertices of $\Ga$, since vertices of $\P(\Ga)$ are labelled by the vertices of $\Ga$ (see Section \ref{Salvetti complex}).

Since every cube of $Y(\Ga)$ is contained in some $\beta_F$, the opposite edges of a 2-cube are mapped to the same vertex under $\L_2$. Thus $\L_2(e_1)=\L_2(e_{2})$ if $e_{1}$ is parallel to $e_{2}$. This induces labelling of hyperplanes in $Y(\Ga)$ by vertices of $\Ga$. Note that if two hyperplanes in $Y(\Ga)$ cross, then their labels are adjacent in $\Ga$.
\end{definition}

It follows from Lemma \ref{convexity of components} that each $\Ga'$-component in $Y(\Ga)$ or $|\B|$ is a convex subcomplex. 

\begin{example}
\label{label of products}
Let $\Ga$ be a clique and $\{v_{i}\}_{i=1}^{n}$ be its vertex set. Then we can identify vertices of $\P(\Ga)$ with vertices of $\Ga$. In this case, $Y(\Ga)\cong\prod_{i=1}^{n}\beta_{v_i}$ where $\beta_{v_i}$ is a branched line. Let $p_i:Y(\Ga)\to \beta_{v_i}$ be the projection map. For vertex $x\in Y(\Ga)$, then $v_i$ is in the label of $x$ if and only if $p_i(x)$ is in the core of $\beta_{v_i}$. For edge $e\subset Y(\Ga)$, the label of $e$ is the unique $v_i$ such that $p_i(e)$ is an edge.
\end{example}

Let $e\subset Y(\Ga)$ be an edge and $v_1,v_2$ be its endpoints. If $v_1$ and $v_2$ have the same rank, then $\L_1(v_1)=\L_1(v_2)$, hence they are labelled by the same clique in $\Ga$, and the label of $e$ belongs to this clique. Otherwise, one of $\L_1(v_1)$ and $\L_1(v_2)$ is a codimension one flat in another. Thus $v_1$ and $v_2$ are labelled by two cliques such that one is contained in another. The label of $e$ is the unique vertex in the difference of these two cliques. Again, to prove these statements, it suffices to consider the case when $\Ga$ is a clique. The general case reduces to this special case by considering a standard branched flat containing $e$. Moreover, we have the following consequence.

\begin{lem}\
\label{labelling diference}
\begin{enumerate}
\item Pick an edge $e$ in $Y(\Ga)$ $($or $|\B|)$. Then the label of $e$ is contained in the label of at least one of its endpoints.
\item Pick an edge path $\omega$ connecting vertices $x$ and $y$ in $Y(\Ga)$ $($or $|\B|)$. Then for every vertex in the symmetric difference of the labellings of $x$ and $y$, there exists an edge in $\omega$ labelled by this vertex.
\end{enumerate}
\end{lem}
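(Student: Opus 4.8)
The plan is to reduce everything to the case where $\Ga$ is a clique, exactly as in the discussion immediately preceding the statement, and then compute directly in a product of branched lines. First I would prove part (1). Pick an edge $e\subset Y(\Ga)$ (the argument for $|\B|$ is analogous, using the edge-labelling of $|\B|$ by the vertex of $\Ga$ in the symmetric difference of the endpoint labels). By construction $e$ lies in some standard branched flat $\beta_F$, and $\beta_F=\prod_{v\in\Delta(F)}\beta_v$ is a product of branched lines. Using the projections $p_v:\beta_F\to\beta_v$ as in Example \ref{label of products}, there is a unique $v_0\in\Delta(F)$ with $p_{v_0}(e)$ an edge, and $v_0$ is the label of $e$; for every other $v$, $p_v(e)$ is a single vertex. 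Now the label of an endpoint $x$ of $e$ is the support of $\L_1(x)$, and by Lemma \ref{label and core} (applied inside $\beta_F$, or rather its analogue for products of branched lines) $v\in\L_1(x)$ precisely when $p_v(x)$ lies in the core of $\beta_v$. Since $p_{v_0}(e)$ is an edge of the branched line $\beta_{v_0}$, at least one of its two endpoints is a non-tip vertex, hence lies on the core; that endpoint $x$ then has $v_0$ in its label. This gives (1).

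For part (2), I would argue vertex-by-vertex. Fix a vertex $w$ in the symmetric difference of the labels of $x$ and $y$; say $w\in\L_1(x)$ but $w\notin\L_1(y)$ (the other case is symmetric). Consider the map $Y(\Ga)^{(0)}\to\{\text{core},\text{non-core}\}$ recording, for a fixed $w$, whether the $w$-coordinate is on the core — but this is only defined relative to a branched flat, so instead I would track the following coarser invariant along $\omega$: at each vertex $z$ of the path, whether $w\in\L_1(z)$ or not. Walking along $\omega=(x=z_0,z_1,\dots,z_k=y)$, the value of ``$w\in\L_1(z_i)$?'' starts true and ends false, so there is an edge $z_{i}z_{i+1}$ where it changes. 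For that edge, $w$ lies in exactly one of $\L_1(z_i),\L_1(z_{i+1})$, so $w$ is in the symmetric difference of the endpoint labels; by the rank/codimension analysis preceding the lemma (again reducing to a clique and consulting a standard branched flat containing $z_iz_{i+1}$), the two endpoints of this edge have labels one contained in the other, and the label of the edge is exactly the unique vertex of the difference, namely $w$. Thus $\omega$ contains an edge labelled $w$; since $w$ was arbitrary in the symmetric difference, (2) follows.

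The main obstacle I anticipate is purely bookkeeping: making the reduction ``it suffices to consider $\Ga$ a clique'' rigorous requires choosing, for each edge under consideration, a standard branched flat $\beta_F$ containing it, checking that $\L_1$ and $\L_2$ restricted to $\beta_F$ agree with the intrinsic labels of the clique-RAAG picture on $\beta_F\cong\prod_{v\in\Delta(F)}\beta_v$, and verifying this is independent of the chosen $\beta_F$ — all of which is already asserted in the surrounding text and in Lemma \ref{label and core}, so I would simply cite those. The one genuine point needing care in (2) is that the ``$w\in\L_1(z)$'' invariant is globally defined on all of $Y(\Ga)^{(0)}$ (it only depends on $\L_1(z)$, which is intrinsic), so the telescoping argument along $\omega$ makes sense without reference to any single branched flat; only the final identification of the label of the critical edge is local, and there the clique reduction applies cleanly.
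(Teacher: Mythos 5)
Your argument is correct and matches the paper's: the lemma is stated there as a consequence of the discussion immediately preceding it, which makes exactly your reduction to the clique case via a standard branched flat and establishes that the labels of the two endpoints of any edge are either equal or nested cliques differing precisely by the edge's label, from which (1) and the telescoping argument for (2) follow. Your part (1) is phrased slightly more explicitly through the projections $p_v$ and the observation that every edge of a branched line has an endpoint on the core, but this is the same computation the paper performs via comparing the ranks of the endpoints.
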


\begin{definition}
\label{def_beta_K}
Pick a $\Ga'$-component $K\subset X(\Ga)$. We define $\beta_{K}=\cup_{F}\beta_F$ with $F$ ranging over standard flats in $K$. 
\end{definition}

We recall the following properties of $\beta_{K}$, where (1) follows from \cite[Lemma 5.18]{cubulation} and (2) follows from \cite[Corollary 5.16]{cubulation}.
\begin{lem}\
\label{product decomposition of standard components}
\begin{enumerate}
\item The subcomplex $\beta_{K}$ is convex in $Y(\Ga)$, and it is the convex hull of $f(K^{(0)})$ ($f$ is introduced in Definition \ref{construction}).
\item Suppose $\Ga'$ admits a join decomposition $\Ga'=\Ga'_1\circ\Ga'_2$. For $i=1,2$, pick $\Ga'_{i}$-component $K_i\subset K$ and let $\pi_i:\beta_K\to \beta_{K_i}$ be the $CAT(0)$ projection (\cite[Proposition II.2.4]{MR1744486}). Then $\pi_1\times \pi_2:\beta_{K}\to \beta_{K_1}\times\beta_{K_2}$ is a cubical isomorphism. Similarly, $|\B|_{K}\cong|\B|_{K_1}\times|\B|_{K_2}$.
\end{enumerate}
\end{lem}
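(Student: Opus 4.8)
For part~(1), the plan is to show that $\beta_K$ is precisely the $\Ga'$-component of $Y(\Ga)$ containing $f(K^{(0)})$; its convexity then follows from Lemma~\ref{convexity of components} (recall that parallel edges of $Y(\Ga)$ carry equal labels by Definition~\ref{edge and hyperplane labelling}, so every component of $Y(\Ga)$ in the sense of Definition~\ref{components} is convex). First, $\beta_K$ is connected: an edge $e\subset K$ has both endpoints sent by $f$ to tips of the branched line $\beta_\ell\subset\beta_K$, where $\ell\supset e$ is the standard geodesic through $e$, so connectedness of $K$ propagates. Every edge of $\beta_K$ is labelled by a vertex of $\Ga'$, since each $\beta_F$ with $F\subset K$ has $\Delta(F)$ supported in $\Ga'$, and every vertex of $\Ga'$ occurs as a label. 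The crucial point is maximality: given $w\in\beta_K$, set $F_0=\L_1(w)$; by Lemma~\ref{label and core}, $w$ lies in the core of $\beta_{F_0}$, and unwinding Definition~\ref{construction} shows $F_0\subset K$ and that any edge $e$ of $Y(\Ga)$ at $w$ lies in some $\beta_{F'}$ with $F'\supset F_0$, moving along a single coordinate of $\beta_{F'}=\prod_{v\in\Delta(F')}\beta_v$. If that coordinate's label lies in $\Ga'$, the corresponding standard geodesic $\ell$ meets $F_0\subset K$, hence $\ell\subset K$; since $\Ga'$ is induced, the label of $\ell$ is adjacent in $\Ga'$ to the support of $F_0$, so $F_0$ and $\ell$ span a standard flat $F''\subset K$, and $e\subset\beta_{F''}\subset\beta_K$. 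Thus no $\Ga'$-labelled edge at a point of $\beta_K$ can leave $\beta_K$, so $\beta_K$ is a $\Ga'$-component, hence convex. Finally, a direct computation from $\beta_F=\prod_v\beta_v$, using that a branched line is the convex hull of its tips, gives $\beta_F=\operatorname{Hull}\bigl(f(F^{(0)})\bigr)$ for each standard flat $F$; together with $f(K^{(0)})\subset\beta_K$ and convexity of $\beta_K$ this yields $\operatorname{Hull}\bigl(f(K^{(0)})\bigr)\subseteq\beta_K=\bigcup_{F\subset K}\beta_F\subseteq\operatorname{Hull}\bigl(f(K^{(0)})\bigr)$, hence equality.

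For part~(2), I would begin from the canonical splitting $K=K_1\times K_2$ induced by the join $\Ga'=\Ga'_1\circ\Ga'_2$ (which follows from Lemma~\ref{parallel set}), under which a standard flat of $K$ is exactly a product $F_1\times F_2$ with $F_i\subset K_i$ standard, and $\Delta(F_1\times F_2)=\Delta(F_1)\ast\Delta(F_2)$. Definition~\ref{construction} then gives canonical isomorphisms $\beta_{F_1\times F_2}\cong\beta_{F_1}\times\beta_{F_2}$ that are compatible with all the gluing maps, by functoriality of $F\mapsto\beta_F$; hence $\beta_K=\bigcup_{F_1,F_2}(\beta_{F_1}\times\beta_{F_2})=\bigl(\bigcup_{F_1}\beta_{F_1}\bigr)\times\bigl(\bigcup_{F_2}\beta_{F_2}\bigr)=\beta_{K_1}\times\beta_{K_2}$. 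Under this identification $\beta_{K_i}$ is a fibre of the product, and the $CAT(0)$ nearest-point projection of a $CAT(0)$ product onto a fibre is the coordinate projection, so $\pi_1\times\pi_2$ becomes the identity map of $\beta_{K_1}\times\beta_{K_2}$, a cubical isomorphism. The same argument on posets handles $|\B|$: $\Ga'=\Ga'_1\circ\Ga'_2$ gives $G(\Ga')\cong G(\Ga'_1)\times G(\Ga'_2)$, the poset of standard flats of $K$ is the product of those of $K_1$ and $K_2$, and a cube complex whose poset of cubes is a product of posets is the corresponding product of cube complexes, whence $|\B|_K\cong|\B|_{K_1}\times|\B|_{K_2}$.

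The step I expect to be the main obstacle is the maximality claim in part~(1)---that $\beta_K$ cannot be enlarged by adjoining a $\Ga'$-labelled edge---since this is where the local combinatorics of the blow-up construction of Definition~\ref{construction} and the induced-subgraph hypothesis are both genuinely needed; once it is in place, everything else is bookkeeping with the labellings $\L_1$, $\L_2$ and the functoriality of $F\mapsto\beta_F$. One could of course bypass this by importing \cite[Lemma~5.18]{cubulation} and \cite[Corollary~5.16]{cubulation} directly, at the cost of not being self-contained.
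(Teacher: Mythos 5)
Your argument is necessarily a different route from the paper's, because the paper does not prove this lemma at all: it imports part (1) from \cite[Lemma 5.18]{cubulation} and part (2) from \cite[Corollary 5.16]{cubulation}. As a self-contained substitute your outline is sound, and the key step you isolate (maximality of $\beta_K$ among connected subcomplexes with edges labelled in $\Ga'$, so that Lemma \ref{convexity of components} applies) is indeed where the work lies; your mechanism for it is the right one: $w\in\beta_K$ lies in the core of $\beta_{F_0}$ for $F_0=\L_1(w)\subset K$, any cube at $w$ lies in some $\beta_{F'}$ with $F_0\subset F'$ (because the coordinates of $\Delta(F_0)$ at $w$ are core vertices, forcing $F'\cap F_0=F_0$ via $\beta_{F'}\cap\beta_{F_0}=\beta_{F'\cap F_0}$), and a $\Ga'$-labelled direction of $F'$ together with $F_0$ spans a standard flat $F''\subset K$ with the relevant cells inside $\beta_{F''}$.

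Three points deserve tightening. First, Definition \ref{components} is about subcomplexes, so maximality is not settled by edges alone: you must also rule out a larger component with the same vertices but extra cubes. Your own argument does this with no new ideas (run it for the cube spanned by several coordinates of $\Delta(F')$, taking $F''$ in the directions $\Delta(F_0)$ together with all those coordinates), but as written only the edge case is treated. Second, "a branched line is the convex hull of its tips" requires tips to occur cofinally along both ends of the core; Definition \ref{construction} as stated does not force this, though the branched lines actually used (Definition \ref{an equivariant construction}, via Proposition \ref{prop_intro_semiconjugacy}, which have no valence-2 vertices) do satisfy it, so you should either record this hypothesis or restrict to that setting. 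Third, in part (2) the phrase "compatible with all the gluing maps, by functoriality" hides the real ingredient: when $F\subset F'$ the embedding $\beta_F\hookrightarrow\beta_{F'}$ fixes tips $f_{v,\ell_v}(x_v)$ in the dropped coordinates, and these flats live in parallel copies of $K_1$ and $K_2$ rather than in the chosen components, so the identification $\beta_K\cong\beta_{K_1}\times\beta_{K_2}$ uses precisely the parallelism condition $f_{v,\ell'}=f_{v,\ell}\circ p$ built into the blow-up data; naming that makes the "bookkeeping" honest. With these repairs your proof is correct, and the final reductions (coordinate projection equals nearest-point projection onto a fibre, and intervals of a product poset being products of intervals for the $|\B|$ statement) are fine.
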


\begin{lem}\
\label{parallel set in blow-up building}
Pick vertex $v\in\P(\Ga)$. Let $P_v$ be as in Definition \ref{v-parallel set}.
\begin{enumerate}
\item The subcomplex $\beta_{P_v}$ admits a natural $CAT(0)$ cubical product decomposition 
\begin{equation}\label{branched product decomposition}
\beta_{P_v}\cong \beta_v\times \beta^{\perp}_v.
\end{equation}
\item Let $e\subset Y(\Ga)$ be an edge such that $v=\L_2(e)$. Then the hyperplane $h_e$ dual to $e$ can be identified as the product factor $\beta^{\perp}_v$ in (\ref{branched product decomposition}).
\end{enumerate}
\end{lem}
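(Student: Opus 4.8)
The plan is to read off both parts from the product decompositions of $\beta$-subcomplexes under join decompositions (Lemma \ref{product decomposition of standard components}(2)) together with the description of parallel sets in $X(\Ga)$ (Lemma \ref{parallel set}); after that everything reduces to bookkeeping with the labellings $\L_1,\L_2$ and the extension complex.

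For (1) the argument is short. Let $\ell\subset X(\Ga)$ be a standard geodesic with $\Delta(\ell)=v$ and let $\bar v\in\Ga$ be the common label of its edges. By Lemma \ref{parallel set} and the sentence following it, $P_v$ is, as a subcomplex of $X(\Ga)$, the $St(\bar v)$-component containing $\ell$, and it splits as $P_v=\ell\times K^{\perp}$ with $K^{\perp}$ a $lk(\bar v)$-component of $P_v$. Since $St(\bar v)$ is the join of the single vertex $\bar v$ with $lk(\bar v)$, Lemma \ref{product decomposition of standard components}(2), applied with $K=P_v$, $K_1=\ell$ and $K_2=K^{\perp}$, gives a cubical isomorphism $\beta_{P_v}\cong\beta_\ell\times\beta_{K^{\perp}}$. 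By Definition \ref{construction} the standard branched flat over the $1$-dimensional standard flat $\ell$ is the branched line $\beta_v$, i.e.\ $\beta_\ell=\beta_v$; setting $\beta_v^{\perp}:=\beta_{K^{\perp}}$ yields (\ref{branched product decomposition}). Independence of $\beta_v^{\perp}$ from the auxiliary choices follows from independence of $P_v$, but this is not needed below.

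For (2), fix an edge $e$ with $\L_2(e)=v$. First I would show $h_e\subset\beta_{P_v}$. Every cube of $Y(\Ga)$ lies in some standard branched flat. Let $Q$ be any cube dual to $h_e$; then $Q$ contains an edge $e'$ parallel to $e$, so $\L_2(e')=v$, and if $Q\subset\beta_F$ then the definition of $\L_2$ forces $v$ to be a vertex of the simplex $\Delta(F)$, i.e.\ $F$ contains a standard geodesic of class $v$. Hence the support of $F$ --- the clique of $\Ga$ underlying $\Delta(F)$ --- contains $\bar v$ and so lies in $St(\bar v)$, which puts $F$ inside the $St(\bar v)$-component $P_v$; therefore $Q\subset\beta_F\subset\beta_{P_v}$. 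As $Q$ was arbitrary, $h_e\subset\beta_{P_v}$. Now I would invoke the decomposition $\beta_{P_v}=\beta_v\times\beta_v^{\perp}$ from (1). Since $\beta_v$ is a branched line, hence a tree, every hyperplane of the product is either $\{m\}\times\beta_v^{\perp}$ for $m$ a midpoint of an edge of $\beta_v$, or $\beta_v\times\{g\}$ for $g$ a hyperplane of $\beta_v^{\perp}$. A hyperplane of the second kind is dual only to edges pointing in the $\beta_v^{\perp}=\beta_{K^{\perp}}$ direction, whose $\L_2$-labels are carried by standard geodesics lying in $K^{\perp}$ and hence have $\Ga$-label in $lk(\bar v)$, in particular are different from $v$. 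Since $h_e$ carries $\L_2$-label $v$ and lies in $\beta_{P_v}$, it must be of the first kind, $h_e=\{m\}\times\beta_v^{\perp}$, and the $CAT(0)$ projection $\beta_{P_v}\to\beta_v^{\perp}$ restricts to an isomorphism $h_e\to\beta_v^{\perp}$, which is the required identification.

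The main obstacle is the label computation used in the second half of (2): one must verify that a hyperplane of $\beta_{P_v}$ of the form $\beta_v\times\{g\}$ genuinely carries $\L_2$-label different from $v$. This amounts to checking that the product structure on $\beta_{P_v}$ furnished by Lemma \ref{product decomposition of standard components}(2) is compatible with the local product structures on the standard branched flats $\beta_F\subset\beta_{P_v}$ and with the edge labelling $\L_2$ --- equivalently, that the $CAT(0)$ projection $\beta_{P_v}\to\beta_v$ restricts on each $\beta_F$ to the coordinate projection onto its $\beta_v$-factor, and that the slices $\{x\}\times\beta_v^{\perp}$ are precisely the $\beta$-subcomplexes attached to the $lk(\bar v)$-components of $P_v$. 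Granting the properties of the $\beta_K$'s recorded in Definition \ref{construction} and Lemma \ref{product decomposition of standard components} (convexity, behaviour under joins, the standard-geodesic correspondence), I expect this to be a routine unwinding of the construction rather than a genuinely new difficulty.
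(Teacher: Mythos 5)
Your proposal is correct and follows essentially the same route as the paper: part (1) via Lemma \ref{product decomposition of standard components}(2) applied to the join $St(\bar v)=\{\bar v\}\circ lk(\bar v)$ with the standard $\{\bar v\}$-components identified with $\beta_v$, and part (2) by showing every edge with $\L_2$-label $v$ lies in some $\beta_F$ with $v\in\Delta(F)\subset P_v$ and then reading off the hyperplane from the product structure. The compatibility check you flag as a possible obstacle is indeed routine and the paper passes over it without comment.
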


\begin{proof}
Suppose $v$ is labelled by $\bar{v}\in\Ga$. Note that $P_v$ is a $St(\bar{v})$-component in $X(\Ga)$. By construction, any standard $\{\bar{v}\}$-component in $\beta_{P_v}$ can be naturally identified with $\beta_{v}$. Let $\beta_{P_v}\cong \beta_v\times \beta^{\perp}_v$ be the product decomposition induced by the join decomposition $St(\bar{v})=\{\bar{v}\}\circ lk(\bar{v})$ as in Lemma \ref{product decomposition of standard components} (2). 

To see (2), recall that if $e'$ is parallel to $e$, then $\L_2(e')=\L_2(e)$. Thus it suffices to prove if an edge $e'\subset Y(\Ga)$ satisfies $\L(e')=v$, then $e'\subset\beta_{P_v}$. To see this, note that by the definition of $\L_2$, there exists standard flat $F \subset X(\Ga)$ such that $e' \subset\beta_{F}$ and $v\in\Delta(F)$. Thus $F\subset P_v$ and $e'\subset \beta_{P_v}$.
\end{proof}

\subsection{Canonical projection and standard components}
If we collapse each standard flat in $Y(\Ga)$, then the resulting space is $|\B|$. More precisely, we define the \textit{canonical projection} $\pi:Y(\Ga)\to |\B|$ as follows. Let $\L_1:Y(\Ga)^{(0)}\to \P\cong |\B|^{(0)}$ be as in Definition \ref{construction}. We claim $\L_1$ can be extended to a cubical map, which defines $\pi$ (such extension, if exists, must be unique). When $\Ga$ is a clique, $Y(\Ga)$ is a product of branched line, then it is clear that $\L_1$ can be extended to a map which collapses the core of each factors. In general, for each standard flat $F\subset X(\Ga)$, we can extend $\L_1|_{\beta^{(0)}_F}$ to a map $\pi_{F}:\beta_F\to |\B|_F$. The uniqueness of extension implies we can piece together these $\pi_F$'s to define $\pi$. By definition, $\pi$ preserves the rank and labelling of vertices, and it induces a bijection between vertices of rank 0 in $Y(\Ga)$ and $|\B|$.

Pick an edge $e\subset Y(\Ga)$, $e$ is \textit{vertical} if $\pi(e)$ is a point, otherwise $e$ is \textit{horizontal}. In the latter case, $e$ and $\pi(e)$ have the same label. Note that $e$ is vertical if and only if its two endpoints of $e$ have the same rank. Edges parallel to a vertical (or horizontal) edge are also vertical (or horizontal), thus it makes sense to talk about vertical (or horizontal) hyperplanes. It is shown in \cite{cubulation} that $\pi$ is a restriction quotient in the sense that if one start with the wall space associated with $Y(\Ga)$ and pass to a new wall space by forgetting all the vertical hyperplanes, then the new dual cube complex is $|\B|$. The following is a consequence of Theorem 4.4, Corollary 5.15 and the discussion in the beginning of Section 5.2 of \cite{cubulation}.

\begin{thm}
\label{inverse image are flats}
For any cube $\sigma\subset|\B|$ and an interior point $x\in\sigma$, $\pi^{-1}(x)$ is isomorphic to $\E^{n}$ where $n$ is the rank of the minimal vertex in $\sigma$. Conversely, if $Y$ is any $CAT(0)$ cube complex such that there exists a cubical map $\pi:Y\to |\B|$ which satisfies the property in the previous sentence, then $Y$ can be constructed via Definition \ref{construction}. 
\end{thm}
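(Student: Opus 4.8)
The plan is to treat the two assertions separately: the direct statement reduces to an explicit computation inside a product of branched lines, while the converse is proved by reconstructing the blow-up data of Definition \ref{construction} from $\pi$.

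\emph{The direct statement.} Write $\sigma\leftrightarrow I_{F_1,F_2}$ with $F_1\subset F_2$ standard flats; then the minimal vertex of $\sigma$ is $F_1$, so $n=\dim F_1$. Since $\sigma$ lies in the convex subcomplex $|\B|_{F_2}$, I would first prove $\pi^{-1}(|\B|_{F_2})=\beta_{F_2}$. Indeed, any cube $\tau$ of $Y(\Ga)$ with $\pi(\tau)\subset|\B|_{F_2}$ lies in some standard branched flat $\beta_{F'}$, and $\emptyset\neq\pi(\tau)\subset|\B|_{F_2}\cap|\B|_{F'}$ forces $F_2\cap F'\neq\emptyset$; using the product decomposition $\beta_{F'}\cong\prod_{v\in\Delta(F')}\beta_v$ of Lemma \ref{product decomposition of standard components} one sees $\tau\subset\beta_{F_2\cap F'}\subset\beta_{F_2}$. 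Now $\pi$ restricted to $\beta_{F_2}$ is, by construction, the product of the core-collapsing maps $\beta_v\to S_v$, where each $S_v$ is the Davis realization of $\Z$ (an infinite star) and $\beta_v\to S_v$ crushes the core of the branched line to the central vertex. Hence for $x$ in the interior of a product cube $\prod_v\sigma_v$ of $|\B|_{F_2}$ the fibre is $\prod_v(\text{fibre over } x \text{ in } S_v)$, which is $\E^1$ in the coordinates where $\sigma_v$ is the central vertex and a single point otherwise; the number of central coordinates is exactly the rank of the minimal vertex of $\prod_v\sigma_v$, namely $\dim F_1=n$. Thus $\pi^{-1}(x)\cong\E^n$.

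\emph{The converse.} Given a $CAT(0)$ cube complex $Y$ and a cubical map $\pi\colon Y\to|\B|$ with the stated fibre property, I would reconstruct the data of Definition \ref{construction}. For $v\in\P(\Ga)$ and a standard geodesic $\ell$ with $\Delta(\ell)=v$, let $w_\ell$ be the corresponding rank-$1$ vertex of $|\B|$ and let $\beta_{v,\ell}\subset Y$ be the preimage of the union of $w_\ell$ with the relatively open edges of $|\B|$ joining $w_\ell$ to the rank-$0$ vertices below it. By hypothesis $\pi^{-1}(w_\ell)\cong\E^1$, and the preimage of each such edge is a disjoint union of edges, each joining a vertex of this line to a rank-$0$ vertex; so $\beta_{v,\ell}$ is a branched line with core $\pi^{-1}(w_\ell)$ (the uniform valence bound being either a standing hypothesis on $Y$ or read off from local finiteness). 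For $\ell\parallel\ell'$ the vertices $w_\ell$ and $w_{\ell'}$ lie in a common product region of $|\B|$ over which $\pi$ restricts compatibly, so $\beta_{v,\ell}$ and $\beta_{v,\ell'}$ are parallel in $Y$; the induced identification of their tip sets is the blow-up datum $f_{v,\ell}$, and we let $\beta_v$ be the common isomorphism type. For a standard flat $F$ with $\Delta(F)=\{v_1,\dots,v_k\}$ I would set $\beta_F:=\pi^{-1}(|\B|_F)$, prove $\beta_F\cong\prod_{i=1}^k\beta_{v_i}$, and check that for $F\subset F'$ the inclusion $\beta_F\hookrightarrow\beta_{F'}$ induced by $|\B|_F\subset|\B|_{F'}$ agrees with the one in Definition \ref{construction}. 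Finally, every cube $\tau$ of $Y$ satisfies $\pi(\tau)\subset|\B|_{F_2}$ with $F_2$ the top flat of the interval of the cube $\pi(\tau)$, hence $\tau\subset\beta_{F_2}$; so $Y=\bigcup_F\beta_F$ glued exactly as in Definition \ref{construction}, i.e.\ $Y\cong Y(\Ga)$.

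\emph{Main obstacle.} The crux is the product splitting $\pi^{-1}(|\B|_F)\cong\prod_{i=1}^k\beta_{v_i}$: one must upgrade the cube-by-cube product structure of the fibres to a genuine global product decomposition of the $CAT(0)$ cube complex $\pi^{-1}(|\B|_F)$. This requires the hyperplane combinatorics of $CAT(0)$ cube complexes — the $k$ label-classes of horizontal hyperplanes over $|\B|_F$ being pairwise non-self-crossing, crossing transversally between classes, and not osculating incorrectly — together with the disjointness of distinct standard flats, property (2) of Definition \ref{construction}. This is essentially the content of Theorem 4.4 and Corollary 5.15 of \cite{cubulation}, which I would invoke, so that the present theorem becomes a matter of assembling these pieces.
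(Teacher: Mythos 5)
The paper offers no proof of this theorem beyond the one-line citation of Theorem 4.4, Corollary 5.15 and Section 5.2 of \cite{cubulation}, and your proposal is consistent with that: your explicit fibre computation for the direct statement (reducing to the clique case $\beta_{F_2}\to|\B|_{F_2}$, a product of core-collapsing maps $\beta_v\to S_v$) is correct, and for the converse you correctly identify the global product splitting of $\pi^{-1}(|\B|_F)$ — and the compatibility of the reconstructed tip-bijections across a parallel class — as the crux, deferring it to the same results of \cite{cubulation} that the paper invokes. So the approach is essentially the same, with your write-up supplying more detail than the paper does.
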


\begin{definition}
\label{standard components}
A \textit{standard $\Ga'$-component} of $Y(\Ga)$ (or $|\B|$) is a $\Ga'$-component which contains a vertex of rank $0$. 
\end{definition}

\begin{lem}
\label{standard and label of vertices}
Let $K$ be a $\Ga'$-component in $|\B|$ $($or $Y(\Ga))$. Then the following are equivalent.
\begin{enumerate}
\item $K$ is standard.
\item There exists one vertex in $K$ whose label is contained in $\Ga'$.
\item The label of each vertex in $K$ is contained in $\Ga'$.
\end{enumerate}
\end{lem}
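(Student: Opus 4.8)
The plan is to prove the chain of implications $(1) \Rightarrow (3) \Rightarrow (2) \Rightarrow (1)$, using the labelling structure and the key feature that a standard $\Ga'$-component contains a rank-$0$ vertex. First, for $(1) \Rightarrow (3)$: by Definition \ref{standard components}, $K$ contains some vertex $x$ of rank $0$, that is, with label $\emptyset \subset \Ga'$. Given any other vertex $y \in K$, choose an edge path $\omega$ in $K$ from $x$ to $y$; since $K$ is a $\Ga'$-component, every edge of $\omega$ is labelled by a vertex of $\Ga'$. By Lemma \ref{labelling diference}(2), every vertex in the symmetric difference of the labels of $x$ and $y$ is the label of some edge of $\omega$, hence lies in $\Ga'$; since the label of $x$ is empty, this symmetric difference is exactly the label of $y$, so the label of $y$ is contained in $\Ga'$. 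The implication $(3) \Rightarrow (2)$ is trivial, as $K$ is nonempty.

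The main work is $(2) \Rightarrow (1)$. Suppose some vertex $x \in K$ has label $\Lambda \subset \Ga'$. I would argue that there is a rank-$0$ vertex in $K$ reachable from $x$ by a path inside $K$. The idea: using Lemma \ref{label and core} (and the corresponding statement for $|\B|$), $x$ lies in the core of the standard branched flat $\beta_F$ (resp. the cube face of $|\B|$) where $F = \L_1(x)$ has support $\Lambda$. Inside $\beta_F$ — which is a product of branched lines indexed by the vertices of $\Lambda$ — one can walk from $x$ to a tip in each factor, i.e. to a rank-$0$ vertex $x_0$; the edges traversed are labelled by vertices of $\Lambda \subset \Ga'$, so this path stays in the $\Ga'$-component $K$ (using that $K$ is maximal and locally convex, Lemma \ref{convexity of components}). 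Hence $x_0 \in K$ is a rank-$0$ vertex, so $K$ is standard. The same argument works verbatim in $|\B|$ with $\beta_F$ replaced by the appropriate Boolean-lattice cube, or one can reduce the $Y(\Ga)$ case to the clique case as is done repeatedly in this section.

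The step I expect to be the main obstacle is verifying that the path from $x$ down to the tip $x_0$ inside $\beta_F$ actually remains in $K$ rather than merely in the ambient complex — one must check that each intermediate edge is labelled by a vertex of $\Lambda$ (immediate from Example \ref{label of products}, since projecting to each branched-line factor shows the traversed edges carry labels among the factors of $\Lambda$) and that each intermediate vertex lies in the same $\Ga'$-component, which follows because $K$ is the \emph{maximal} connected subcomplex all of whose edges are $\Ga'$-labelled containing $x$, together with convexity of $\Ga'$-components. A minor subtlety is the case $\Lambda = \emptyset$, where $x$ is already rank $0$ and there is nothing to prove. Once these routine checks are dispatched, the equivalence follows.
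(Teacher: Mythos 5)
Your proposal is correct and follows essentially the same route as the paper: the label-propagation step via Lemma \ref{labelling diference}(2) along a path in $K$, and the key step of locating the given vertex in the core of the branched flat $\beta_{\L_1(x)}$ (equivalently, the minimal standard branched flat containing it), whose edges are all labelled in the label of $x$ by Example \ref{label of products}, so that maximality of $K$ forces it (and hence a tip, i.e.\ a rank-$0$ vertex) into $K$. The only difference is the cosmetic reordering of the cycle of implications, and your appeal to local convexity is unnecessary but harmless.
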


\begin{proof}
$(1)\Rightarrow (2)$ is trivial ($K$ contains a vertex of rank 0). $(2)\Rightarrow(3)$ follows from Lemma \ref{labelling diference} (2) and the connectedness of $K$. For $(3)\Rightarrow(1)$, pick vertex $x\in K$ labelled by $\Ga_x\subset\Ga'$ and let $\beta_{F}$ be the minimal standard branched flat containing $x$. Then $x$ is in the core of $\beta_{F}$ by minimality. Then the discussion in Example \ref{label of products} implies the label of each edge of $\beta_{F}$ is in $\Ga_x$. Thus $\beta_{F}\subset K$ and in particular $K$ contains a rank 0 vertex.
\end{proof}

\begin{lem}
\label{correspondence of standard components}
The map $\pi$ induces a one to one correspondence between standard $\Ga'$-components in $|\B|$ and $Y(\Ga)$ in the following sense.
\begin{enumerate}
\item If $K\subset|\B|$ is a standard $\Ga'$-component, then $\pi^{-1}(K)$ is a standard $\Ga'$-component.
\item If $L\subset Y(\Ga)$ is a standard $\Ga'$-component, then $\pi(L)$ is a standard $\Ga'$-component.
\end{enumerate}
\end{lem}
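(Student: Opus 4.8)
The plan is to prove both statements by exploiting the characterization of standardness from Lemma \ref{standard and label of vertices}, together with the fact that $\pi$ preserves vertex labels and induces a bijection between rank $0$ vertices. For part (1), suppose $K\subset|\B|$ is a standard $\Ga'$-component. First I would show that $\pi^{-1}(K)$ is contained in a single $\Ga'$-component of $Y(\Ga)$: since $\pi$ is cubical and preserves edge labels on horizontal edges while collapsing vertical edges, every edge of $\pi^{-1}(K)$ either maps to an edge of $K$ (hence is labelled by a vertex of $\Ga'$) or is vertical. A vertical edge has both endpoints of the same rank, and by Lemma \ref{labelling diference}(1) its label lies in the label of one endpoint; since $\pi^{-1}(K)$ lies over $K$ whose vertices all have labels in $\Ga'$ (Lemma \ref{standard and label of vertices}), and $\pi$ preserves labels, every vertex of $\pi^{-1}(K)$ has label in $\Ga'$, so every edge of $\pi^{-1}(K)$ is labelled in $\Ga'$. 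Thus $\pi^{-1}(K)$ sits inside a single $\Ga'$-component $L$ of $Y(\Ga)$. To see $L\subseteq \pi^{-1}(K)$, note that $\pi(L)$ is connected, consists of edges labelled in $\Ga'$, and meets $K$; since $K$ is a maximal such subcomplex (and the image of a $\Ga'$-edge is either a $\Ga'$-edge or a point, keeping us inside the same $\Ga'$-component downstairs) we get $\pi(L)\subseteq K$, hence $L\subseteq\pi^{-1}(K)$, so $L=\pi^{-1}(K)$. Finally $L$ is standard: $K$ contains a rank $0$ vertex, $\pi$ is a bijection on rank $0$ vertices, so $L$ contains one too — or alternatively invoke Lemma \ref{standard and label of vertices}(3)$\Rightarrow$(1) directly since all labels of $L$ lie in $\Ga'$.

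For part (2), let $L\subset Y(\Ga)$ be a standard $\Ga'$-component. By Lemma \ref{standard and label of vertices}, every vertex of $L$ has label contained in $\Ga'$, and $L$ contains a rank $0$ vertex $x_0$. Since $\pi$ preserves labels, every vertex of $\pi(L)$ has label in $\Ga'$, and $\pi(x_0)$ is a rank $0$ vertex of $\pi(L)$. Now $\pi(L)$ is connected; I claim it is a subcomplex all of whose edges are horizontal with label in $\Ga'$ — indeed vertical edges of $L$ collapse, and the surviving horizontal edges keep their labels. To conclude $\pi(L)$ is a \emph{full} $\Ga'$-component rather than a proper subcomplex, I would use the standard branched flat / core description: given $x_0$ of rank $0$ in $L$ with label a clique of $\Ga$, the discussion of Example \ref{label of products} and the proof of Lemma \ref{standard and label of vertices} show that $L$ contains the entire standard branched flat structure forcing the $\Ga'$-component upstairs to be ``as large as possible,'' and $\pi$ of a maximal such object is again maximal; concretely, if $\pi(L)$ were properly contained in a $\Ga'$-component $K$ of $|\B|$, then $\pi^{-1}(K)$ would be a $\Ga'$-component of $Y(\Ga)$ (by part (1), since $K$ is standard — it contains the rank $0$ vertex $\pi(x_0)$) strictly containing $L$, contradicting maximality of $L$. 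Hence $\pi(L)=K$ is a standard $\Ga'$-component, and the correspondences in (1) and (2) are mutually inverse.

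The main obstacle I anticipate is the verification that $\pi^{-1}(K)$ is \emph{connected} (so that it is genuinely a single $\Ga'$-component, not a union of several). This is where one must use more than formal label-bookkeeping: one needs that the fibers of $\pi$ over points of $K$ are connected (indeed they are Euclidean spaces by Theorem \ref{inverse image are flats}) and that these fibers are ``compatibly glued'' along $K$. I would argue this by lifting edge-paths: given a vertex $y\in\pi^{-1}(K)$, project down, connect $\pi(y)$ to a rank $0$ vertex in $K$ by an edge-path in $K^{(1)}$, and lift this path step by step through $Y(\Ga)$ using that horizontal edges of $|\B|$ lift to horizontal edges of $Y(\Ga)$ (this is part of $\pi$ being a restriction quotient, cf. the discussion before Theorem \ref{inverse image are flats}), ending in the core of a standard branched flat which is connected to $x_0$ within $L$. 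Once connectivity is in hand, maximality and the label characterization make the rest essentially bookkeeping.
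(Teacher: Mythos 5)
Your proof follows essentially the same route as the paper's: part (1) via label bookkeeping (every vertex of $\pi^{-1}(K)$ has label in $\Ga'$, hence every edge does by Lemma \ref{labelling diference}, which is the content of the paper's contradiction argument and of Lemma \ref{standard and label of vertices}) plus maximality, and part (2) by placing $\pi(L)$ inside a standard $\Ga'$-component $K'$ and applying part (1) to get $L=\pi^{-1}(K')$. The one point you flag as the main obstacle — connectedness of $\pi^{-1}(K)$ — is exactly where the paper reaches for an external tool: it cites \cite[Theorem 4.4]{cubulation}, by which preimages under the restriction quotient $\pi$ of convex subcomplexes are convex (and $K$ is convex by Lemma \ref{convexity of components}), so connectedness is immediate; your path-lifting sketch is in the right spirit but the convexity citation is the cleaner way to close that step.
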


\begin{proof}
We prove (1). It follows from \cite[Theorem 4.4]{cubulation} that $\pi^{-1}(K)$ is convex, in particular connected. Now we verify Definition \ref{components} (2). Suppose there exists an edge in $\pi^{-1}(K)$ labelled by $v\notin\Ga'$. Then there is an endpoint of this edge labelled by a clique $C\subset\Ga$ such that $v\in C$. Let $x\in K$ be the $\pi$-image of this endpoint. Then $x$ is also labelled by $C$. Pick a rank 0 vertex $y\in K$. Then the connectedness of $K$ implies we can join $x$ and $y$ by an edge path $\omega\subset K$. Since $y$ is labelled by the empty set, Lemma \ref{labelling diference} (2) implies at least one edge in $\omega$ is labelled by $v$, which is a contradiction. The second part of Definition \ref{components} (2) is clear. The maximality of $\pi^{-1}(K)$ follows from the maximality of $K$.

To see (2), note that each edge of $\pi(L)$ is labelled by a vertex in $\Ga'$, and $\pi(L)$ contains a rank 0 vertex. Let $K'$ be the standard $\Ga'$-component containing $\pi(L)$. By (1), $\pi^{-1}(K')$ is a standard $\Ga'$-component containing $L$, thus $L=\pi^{-1}(K')$ and $K'=\pi(L)$.
\end{proof}

\begin{lem}
\label{characterization of standard components}
For each $\Ga'$-component $K\subset X(\Ga)$, $|\B|_{K}$ and $\beta_K$ are standard $\Ga'$-components. Conversely, each standard $\Ga'$-component in $|\B|$ or $Y(\Ga)$ must arise in such a way. In particular, each vertex of rank 0 is contained in some standard $\Ga'$-component.
\end{lem}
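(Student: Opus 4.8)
The plan is to prove the statement first for the Davis realization $|\B|$ and then transfer it to $Y(\Ga)$ through the canonical projection $\pi$. Two elementary facts about $\Ga'$-components (Definition \ref{components}) will be used throughout. First, in any of these complexes two $\Ga'$-components that share a vertex must coincide: their union is again connected, its edges are still labelled by vertices of $\Ga'$, and it still contains an edge of every label occurring in $\Ga'$, so the maximality clause applies to each. Second, every vertex $x$ of $X(\Ga)$ lies in a --- by the first fact, unique --- $\Ga'$-component, because in $X(\Ga)$ every vertex of $\Ga$ labels an edge incident to $x$, so the union of all connected $\Ga'$-labelled subcomplexes through $x$ is itself a $\Ga'$-component. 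Granting the first assertion of the lemma, the other two follow at once: a rank-$0$ vertex of $|\B|$ (resp.\ $Y(\Ga)$) is a $0$-dimensional standard flat $\{x\}$ (resp.\ the point $f(x)$) for a vertex $x$ of $X(\Ga)$, and if $K$ denotes the $\Ga'$-component of $X(\Ga)$ containing $x$, then $|\B|_K$ (resp.\ $\beta_K$) is a standard $\Ga'$-component containing it; moreover if $M$ is any standard $\Ga'$-component containing this vertex, then $M$ and $|\B|_K$ (resp.\ $\beta_K$) are standard $\Ga'$-components sharing a vertex, hence equal by the first fact.

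So the real task is to show that $|\B|_K$ and $\beta_K$ are standard $\Ga'$-components whenever $K\subset X(\Ga)$ is a $\Ga'$-component. I would treat $|\B|_K$ first. It is connected because it is convex. For Definition \ref{components} (2), note that every cube of $|\B|_K$ is an interval $I_{F_1,F_2}$ with $F_1\subseteq F_2\subseteq K$, so all edge-labels occurring in it lie in the support of $F_2$, which is a subgraph of $\Ga'$ since $F_2\subset K$; conversely, each $v\in\Ga'$ labels an edge of $K$, hence a standard geodesic $\ell_v\subset K$, and for any vertex $p$ of $\ell_v$ the edge $I_{\{p\},\ell_v}$ lies in $|\B|_K$ and carries the label $v$. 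The one substantial point is maximality. Suppose $L$ is a connected $\Ga'$-labelled subcomplex properly containing $|\B|_K$. Since $|\B|_K$ is convex it is a full subcomplex, so $L$ contains a vertex outside $|\B|_K$, and connectedness of the $1$-skeleton yields an edge $e\subset L$ with one endpoint a flat $F_u\in\P_K$ and the other a flat $F_w\notin\P_K$. If $\dim F_w<\dim F_u$ then $F_w\subset F_u\subseteq K$, contradicting $F_w\notin\P_K$. If $\dim F_w>\dim F_u$, then the support of $F_w$ equals that of $F_u$ together with the label of $e$ and so lies in $\Ga'$; hence $F_w\cup K$ is connected, $\Ga'$-labelled, and still contains an edge of every label in $\Ga'$, so $F_w\subseteq K$ by maximality of $K$, again a contradiction. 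Thus $|\B|_K$ is a $\Ga'$-component, and it is standard because any vertex of $K$ is a $0$-dimensional standard flat in $K$, giving a rank-$0$ vertex of $|\B|_K$.

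I would then pass to $Y(\Ga)$. By the previous paragraph and Lemma \ref{correspondence of standard components} (1), $\pi^{-1}(|\B|_K)$ is a standard $\Ga'$-component of $Y(\Ga)$, so it is enough to identify it with $\beta_K$. Both are convex subcomplexes --- $\beta_K$ by Lemma \ref{product decomposition of standard components} (1), and $\pi^{-1}(|\B|_K)$ as a $\Ga'$-component of $Y(\Ga)$ (Lemma \ref{convexity of components}) --- so it suffices to compare their vertex sets. If $v\in\beta_K$, then $v$ lies in the core of the minimal standard branched flat $\beta_{F'}$ containing it, where $F'$ is a standard flat with $F'\subseteq K$; hence $\L_1(v)=F'\in\P_K$, i.e.\ $\pi(v)\in|\B|_K$. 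Conversely, if $\pi(v)=\L_1(v)=F\in\P_K$, then by Lemma \ref{label and core} the vertex $v$ lies in the core of $\beta_F\subseteq\beta_K$. Therefore $\beta_K=\pi^{-1}(|\B|_K)$ is a standard $\Ga'$-component, and together with the first paragraph this completes the argument. (Alternatively, one can rerun the second paragraph verbatim for $\beta_K$, reading off the labels of the edges of each $\beta_F$ from Example \ref{label of products}.)

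The step I expect to be the main obstacle is the maximality argument for $|\B|_K$: one must verify that enlarging a flat $F_u\subset K$ across an edge whose label already belongs to $\Ga'$ cannot leave $K$, which is exactly where the maximality of $K$ as a $\Ga'$-component --- not merely the fact that its edges are $\Ga'$-labelled --- is used. Everything else is manipulation of the definitions and of the correspondences already recorded above.
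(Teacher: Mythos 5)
Your proof is correct, and it reaches the statement by a somewhat different route than the paper's. The paper argues directly in $Y(\Ga)$ (declaring the $|\B|$ case ``similar''): since every edge of $\beta_K$ is labelled in $\Ga'$, $\beta_K$ sits inside some standard $\Ga'$-component $L$, and the reverse inclusion $L\subseteq\beta_K$ is obtained by walking along an edge path in $L$ and using Lemma~\ref{standard and label of vertices} and Lemma~\ref{label and core}: the flats $\L_1(v_i)$ attached to consecutive vertices of the path intersect and have support in $\Ga'$, so they all lie in the single $\Ga'$-component $K$, whence every vertex $v$ of $L$ lies in $\beta_{\L_1(v)}\subset\beta_K$; the converse is exactly your uniqueness-of-components argument with a rank $0$ vertex. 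You instead verify Definition~\ref{components} for $|\B|_K$ directly in the poset model --- your frontier-edge maximality check, which uses the maximality of $K$ inside $X(\Ga)$ in the same way the paper's path argument does --- and then transfer to $Y(\Ga)$ via Lemma~\ref{correspondence of standard components}(1) together with the identification $\beta_K=\pi^{-1}(|\B|_K)$, proved by comparing vertex sets through $\L_1$ and Lemma~\ref{label and core} and then invoking convexity (Lemma~\ref{product decomposition of standard components}(1), Lemma~\ref{convexity of components}) to upgrade equality of vertex sets to equality of subcomplexes. There is no circularity in this: Lemma~\ref{correspondence of standard components} precedes the present lemma and its proof does not rely on it. What each route buys: yours makes explicit the $|\B|$ case, which the paper leaves as ``similar'', and records the useful identity $\beta_K=\pi^{-1}(|\B|_K)$ as a byproduct; the paper's argument stays entirely inside $Y(\Ga)$, needs no appeal to $\pi$ or to fullness of convex subcomplexes, and is a bit shorter. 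In both arguments the substantive point is the one you flag: the maximality of $K$ as a $\Ga'$-component of $X(\Ga)$ is what prevents the flats recorded by $\L_1$ (equivalently, by the poset $\P_K$) from escaping $K$ when one moves across an edge whose label already lies in $\Ga'$.
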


\begin{proof}
We prove the above statement in $Y(\Ga)$, the case of $|\B|$ is similar. The definition of $\beta_{K}$ implies each edge of $\beta_{K}$ is labelled by a vertex in $\Ga'$, thus there exists a standard $\Ga'$-component $L\subset Y(\Ga)$ containing $\beta_{K}$. Let $\omega\subset L$ be an edge path from a vertex in $\beta_{F}$ to an arbitrary vertex in $L$ and let $\{v_{i}\}_{i=1}^{n}$ be consecutive vertices in $\omega$. Let $F_i=\L_1(v_i)$ (see Definition \ref{construction} for the map $\L_1$). Then $F_{i}\cap F_{i+1}\neq\emptyset$ and the support of each $F_i$ is in $\Ga'$ (by Lemma \ref{standard and label of vertices}). So all $F_i$'s are in the same $\Ga'$-component of $X(\Ga)$. Thus for each vertex $v\in L$, $F_v=\L_1(v)\subset K$. It follows from Lemma \ref{label and core} that $v\in\beta_{F_v}\subset\beta_K$. Thus $\beta_{K}=L$. To see the converse, pick a rank 0 vertex $x$ in an arbitrary standard $\Ga'$-component $L'$. Let $K'\subset X(\Ga)$ be the $\Ga'$-component containing $\L_1(x)$. Then $\beta_{K'}\cap L'\neq\emptyset$. Since $\beta_{K'}$ and $L'$ are both standard $\Ga'$-components, they are the same complex.
\end{proof}

\begin{lem}
\label{intersection of standard components}
For $i=1,2$, let $L_i$ be a standard $\Ga_i$-component in $|\B|$ $($or $Y(\Ga))$ and let $K_i\subset X(\Ga)$ be the associated $\Ga_i$-component. Then $L_1\cap L_2\neq\emptyset$ if and only if $K_1\cap K_2\neq\emptyset$. In this case, $L_1\cap L_2$ is a standard $\Ga_1\cap\Ga_2$-component.
\end{lem}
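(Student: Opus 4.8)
The plan is to reduce everything to the corresponding statement for $\Ga'$-components of $X(\Ga)$, using the machinery already assembled (Lemma~\ref{characterization of standard components}, Lemma~\ref{correspondence of standard components}, and Lemma~\ref{standard and label of vertices}). First I would treat the case of $Y(\Ga)$; the case of $|\B|$ is identical after replacing $\beta_{K}$ by $|\B|_K$ throughout. By Lemma~\ref{characterization of standard components} we may assume $L_i = \beta_{K_i}$ where $K_i \subset X(\Ga)$ is a $\Ga_i$-component, so there is nothing to choose: the assignment $L_i \mapsto K_i$ is the canonical one.

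For the ``only if'' direction, suppose $L_1 \cap L_2 \neq \emptyset$ and pick a vertex $x$ in the intersection. By Lemma~\ref{standard and label of vertices}, the label $\Ga_x$ of $x$ is contained in both $\Ga_1$ and $\Ga_2$, so it is contained in $\Ga_1 \cap \Ga_2$. Now $\L_1(x)$ is a standard flat of $X(\Ga)$ whose support is $\Ga_x$; using Lemma~\ref{label and core}, $x$ lies in the core of $\beta_{\L_1(x)}$, and since $\beta_{\L_1(x)} \subset \beta_{K_i}$ forces $\L_1(x) \subset K_i$ for $i=1,2$ (this is exactly how $\beta_{K}$ was built in Definition~\ref{def_beta_K}), we get $\L_1(x) \subset K_1 \cap K_2$, so $K_1 \cap K_2 \neq \emptyset$. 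Conversely, if $K_1 \cap K_2 \neq \emptyset$, then (since $\Ga_i$-components of $X(\Ga)$ that meet do so in a $\Ga_1\cap\Ga_2$-component — a standard fact about labelled components of $X(\Ga)$, following from Lemma~\ref{convexity of components} and the maximality clause of Definition~\ref{components}) there is a $\Ga_1\cap\Ga_2$-component $K_0 \subset K_1 \cap K_2$; then $\beta_{K_0} \subset \beta_{K_1} \cap \beta_{K_2} = L_1 \cap L_2$ by monotonicity of $K \mapsto \beta_K$, so the intersection is nonempty.

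It remains to identify $L_1 \cap L_2$. Pick any $\Ga_1 \cap \Ga_2$-component $K_0 \subset K_1 \cap K_2$ as above; I claim $\beta_{K_0} = L_1 \cap L_2$. The inclusion $\beta_{K_0} \subseteq L_1 \cap L_2$ is the monotonicity observation just used. For the reverse inclusion, let $y$ be any vertex of $L_1 \cap L_2$. By Lemma~\ref{standard and label of vertices} the label of $y$ lies in $\Ga_1 \cap \Ga_2$, and by the argument in the ``only if'' direction $\L_1(y) \subset K_1 \cap K_2$. Since $K_0$ was an arbitrary $\Ga_1\cap\Ga_2$-component inside $K_1 \cap K_2$, I need to know $\L_1(y)$ lands in that particular $K_0$; to pin this down I would instead define $K_0$ after the fact, i.e. first note that $L_1 \cap L_2$ is connected (it is an intersection of convex subcomplexes of the $CAT(0)$ cube complex $Y(\Ga)$, hence convex, hence connected), so all the standard flats $\L_1(y)$ as $y$ ranges over vertices of $L_1 \cap L_2$ lie in a single $\Ga_1\cap\Ga_2$-component $K_0$ of $X(\Ga)$ — here I would run the same edge-path/symmetric-difference argument as in Lemma~\ref{characterization of standard components}, using Lemma~\ref{labelling diference}(2) to see consecutive flats along a path in $L_1\cap L_2$ meet. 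Then $y \in \beta_{\L_1(y)} \subset \beta_{K_0}$ by Lemma~\ref{label and core}, giving $L_1 \cap L_2 \subseteq \beta_{K_0}$. Finally, $\beta_{K_0}$ is a standard $\Ga_1\cap\Ga_2$-component by Lemma~\ref{characterization of standard components}, which is exactly the claim; and since $L_1 \cap L_2$ contains a rank~$0$ vertex (any vertex of $\beta_{K_0}$ in the core of a $0$-dimensional flat), this is consistent with Definition~\ref{standard components}.

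The step I expect to be the main obstacle is the last one: showing $L_1 \cap L_2$ is not just contained in, but equal to, a single standard $\Ga_1 \cap \Ga_2$-component. The nonemptiness equivalence is essentially bookkeeping with labels, but ruling out that $L_1 \cap L_2$ could be strictly smaller than $\beta_{K_0}$, or could fail to be a single component (rather than a disjoint union), requires genuinely using convexity of $\Ga'$-components in the $CAT(0)$ cube complex (Lemma~\ref{convexity of components}) together with the fact that $\beta_{K_1}, \beta_{K_2}$ are convex (noted after Definition~\ref{edge and hyperplane labelling}) so that their intersection is connected; the maximality clause of Definition~\ref{components} then forces equality. One should also double-check the purely graph-theoretic input that $\Ga_1$- and $\Ga_2$-components of $X(\Ga)$ intersect in a $(\Ga_1 \cap \Ga_2)$-component, which I would either cite or dispatch quickly via the same convexity-plus-maximality reasoning.
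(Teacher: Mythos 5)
Your proposal is correct and follows essentially the same route as the paper: use Lemma~\ref{standard and label of vertices} (labels of vertices in $L_1\cap L_2$ lie in $\Ga_1\cap\Ga_2$) together with Lemma~\ref{label and core} and the minimal standard branched flat to get a rank~$0$ vertex and a standard flat in $K_1\cap K_2$, then identify $L_1\cap L_2$ with a standard $\Ga_1\cap\Ga_2$-component via Lemma~\ref{characterization of standard components}. The only difference is that you spell out, using convexity of the components and the path argument, the final equality $L_1\cap L_2=\beta_{K_0}$, which the paper dismisses as clear.
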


\begin{proof}
We prove the lemma in $Y(\Ga)$, the case of $|\B|$ is similar. It is clear from the definitions that $K_1\cap K_2\neq\emptyset$ implies $L_1\cap L_2\neq\emptyset$. To see the conserve, pick vertex $x\in L_1\cap L_2$, then the argument in Lemma \ref{standard and label of vertices} implies that the minimal standard branched flat containing $x$ is inside both $L_1$ and $L_2$. In particular, $L_1\cap L_2$ contains a vertex of rank 0, which implies $K_1\cap K_2\neq\emptyset$. By Lemma \ref{characterization of standard components}, there exists a standard $\Ga_1\cap \Ga_2$-component $N$ containing this rank 0 vertex. It is clear that $N\subset L_1\cap L_2$ and $L_1\cap L_2\subset N$.
\end{proof}

Let $L$ be a (not necessarily standard) $\Ga_L$-component in $Y(\Ga)$ and let $x\in L$ be a vertex. Suppose $\Ga^{\perp}_{L}$ is the clique spanned by vertices in $\Ga_x\setminus \Ga_L$, where $\Ga_x$ is the label of $x$. Then Lemma \ref{labelling diference} (2) implies $\Ga^{\perp}_{L}$ is contained in the label of each vertex of $L$, and Lemma \ref{labelling diference} (1) implies $\Ga$ contains the graph join $\Ga_L\circ \Ga^{\perp}_{L}$. 

The proof of Lemma \ref{standard and label of vertices} implies $x$ is connected to a rank $0$ vertex $x_0$ via an edge-path such that the label of each edge in this path is contained in $\Ga^{\perp}_{L}$. Let $M$ be the $\Ga_L\circ \Ga^{\perp}_{L}$-component containing $x_0$ and let $M=N\times N^{\perp}$ be the product decomposition in Lemma \ref{product decomposition of standard components} (2). Note that $L\subset M$ and $L$ has trivial projection to $N^{\perp}$. Thus $L$ must be of form $\{y\}\times N$ for some vertex $y\in N^{\perp}$, and $L$ is standard if and only if $y$ is of rank 0. We can characterize standard $\Ga'$-components in $|\B|$ in a similar way. 

\begin{lem}\
\label{Ga'-component}
\begin{enumerate}
\item Any $\Ga'$-component in $Y(\Ga)$ or $|\B|$ is parallel to (hence isomorphic to) a standard $\Ga'$-component.
\item If $\Ga'$ is a subgraph satisfying $\Ga'\nsubseteq lk(v)$ for any vertex $v\in\Ga\setminus\Ga'$, then each $\Ga'$-component is standard.
\item If $e_1$ and $e_2$ are two edges in $Y(\Ga)$ or $|B|$ emanating from the same vertex, then they span a square if and only if their labels are adjacent in $\Ga$.
\end{enumerate}
\end{lem}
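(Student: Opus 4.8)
The plan is to prove the three parts in the stated order, with (2) and (3) resting on (1) and on the structural discussion in the paragraph immediately preceding the statement.

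\emph{Part (1).} I would work in $Y(\Ga)$; the case of $|\B|$ is identical, using the analogous decomposition noted at the end of that paragraph. Let $L$ be a $\Ga'$-component with support $\Ga_L=\Ga'$. By that discussion, $L$ lies in a standard $\Ga_L\circ\Ga^\perp_L$-component $M$; by Lemma \ref{product decomposition of standard components}(2) this $M$ splits as $M\cong N\times N^\perp$ with $N$ a standard $\Ga_L$-component and $N^\perp$ a standard $\Ga^\perp_L$-component; and $L=\{y\}\times N$ for some vertex $y\in N^\perp$. Since $N^\perp$ is standard it has a rank-$0$ vertex $y_0$, so $\{y_0\}\times N$ is again a standard $\Ga'$-component, and $L=\{y\}\times N$ is parallel to it inside $M$, hence inside $Y(\Ga)$, by the product structure. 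As parallel convex subcomplexes of a $CAT(0)$ cube complex are canonically isomorphic as cube complexes, (1) follows.

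\emph{Part (2).} For a $\Ga'$-component $L$, the clique $\Ga^\perp_L$ spanned by $\Ga_x\setminus\Ga_L$ is independent of the choice of $x\in L$, since by Lemma \ref{labelling diference}(2) the labels of any two vertices of $L$ agree outside $\Ga_L=\Ga'$. If $L$ is standard it has a rank-$0$ vertex, which forces $\Ga^\perp_L=\emptyset$; conversely if $\Ga^\perp_L=\emptyset$ then the label of every vertex of $L$ lies in $\Ga'$, so $L$ is standard by Lemma \ref{standard and label of vertices}. Hence a non-standard $\Ga'$-component $L$ has a vertex $w\in\Ga^\perp_L$; but $w\notin\Ga'$ because $\Ga^\perp_L$ is spanned by vertices of $\Ga_x\setminus\Ga'$, and $\Ga$ contains the join $\Ga'\circ\Ga^\perp_L$, so $\Ga'\subseteq lk(w)$ — contradicting the hypothesis. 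Therefore every $\Ga'$-component is standard.

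\emph{Part (3).} The ``only if'' direction is immediate: the two edges of a square are dual to two distinct crossing hyperplanes, crossing hyperplanes of $Y(\Ga)$ (resp.\ $|\B|$) carry adjacent labels by construction, and each edge has the same label as its dual hyperplane. For ``if'', suppose $e_1,e_2$ emanate from a common vertex with labels $a$ and $b$ adjacent in $\Ga$, so $a\neq b$. Let $L$ be the (unique) maximal connected subcomplex containing $e_1$ all of whose edge labels lie in $\{a,b\}$; it contains $e_2$ as well, hence realizes both $a$ and $b$, so it is the $\{a,b\}$-component through $e_1,e_2$. Since $\{a,b\}$ is a clique, part (1) together with Lemmas \ref{characterization of standard components} and \ref{product decomposition of standard components}(2) identifies $L$, via a label-preserving cubical isomorphism, with a product of two branched lines whose factors carry the labels $a$ and $b$ (for $|\B|$, with a product of two trees). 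Under this isomorphism $e_1$ becomes an edge of the first factor and $e_2$ an edge of the second factor, sharing a vertex; their product is a square in the product complex, and its preimage is a square in $L\subset Y(\Ga)$ (resp.\ $|\B|$) with sides $e_1,e_2$.

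I do not expect a serious obstacle here: the content of (1) is essentially already in the preceding paragraph, and (2), (3) are short deductions. The point requiring the most care is in (1) and (3), namely that the product decompositions invoked are the label-sorting ones — that an $a$-labelled edge genuinely lies in the $a$-factor and a $b$-labelled edge in the $b$-factor — so that the square produced in (3) really has $e_1$ and $e_2$ as sides. This is built into the construction $\beta_F=\prod_{v\in\Delta(F)}\beta_v$ and the edge labelling of $Y(\Ga)$ (Definition \ref{edge and hyperplane labelling}), but it must be spelled out to keep the argument for (3) non-circular.
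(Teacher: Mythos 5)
Your proposal is correct and follows essentially the same route as the paper: parts (1) and (2) are extracted from the structural discussion preceding the lemma (the $L=\{y\}\times N$ description inside the standard $\Ga_L\circ\Ga^{\perp}_L$-component), and the ``if'' direction of (3) is obtained by identifying the $\{a,b\}$-component through $e_1,e_2$ with a product of two branched lines (resp.\ two trees of diameter 2) via (1) and Lemma \ref{product decomposition of standard components}(2). Your extra care about the label-sorting of the product factors in (3) is a point the paper leaves implicit but is entirely consistent with its argument.
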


\begin{proof}
It remains to prove the if direction of (3). Let $L$ be the $\Ga'$-component containing $e_1$ and $e_2$ where $\Ga'$ is the edge spanned by the label of $e_1$ and $e_2$. Then (1) implies $L$ is a product of two branched lines in the case of $Y(\Ga)$, and is a product of two infinite trees of diameter 2 in the case of $|\B|$, thus (3) follows. Alternatively, one can also prove (3) directly by looking at the link of vertices, see \cite[Section 5.4]{cubulation}.
\end{proof}



\section{A geometric model for groups q.i. to RAAGs}
\label{sec_geometric model}
In \cite{cubulation}, it was shown that if a group $H$ is quasi-isometric to $G(\Ga)$ such that $\out(G(\Ga))$ is finite, then $H$ acts geometrically on a $CAT(0)$ cube complex which is very similar to $X(\Ga)$. In this section, we briefly recall the construction in \cite{cubulation}, and prove several further properties of this construction.

\subsection{Quasi-action on RAAG's}
\label{subsec_quasi action}
We pick an identification between $G(\Ga)$ and the 0-skeleton $X^{(0)}(\Ga)$ of $X(\Ga)$. It follows from \cite{huang2014quasi} that quasi-isometries between RAAG's with finite outer-automorphism group are at bounded distance from a canonical representative that respects standard flats, in the following sense.

\begin{definition}
\label{def_flat_preserving}
A quasi-isometry $\phi:X^{(0)}(\Ga)\ra X^{(0)}(\Ga)$ is {\em flat-preserving} if it is a bijection and for every standard flat $F\subset X(\Ga)$ there is a standard flat $F'\subset X(\Ga)$ such that $\phi$ maps the $0$-skeleton of $F$ bijectively onto the $0$-skeleton of $F'$. The standard flat $F'$ is uniquely determined, and we denote it by $\phi_*(F)$.
\end{definition}

\begin{thm}[Vertex rigidity \cite{huang2014quasi,bks2}]
\label{thm_intro_vertex_rigidity}
Suppose $\out(G(\Ga))$ is finite and $G(\Ga)\not\simeq\Z$. Let $\phi:X^{(0)}(\Ga)\ra X^{(0)}(\Ga)$ be an $(L,A)$-quasi-isometry. Then there is a unique flat-preserving quasi-isometry $\bar\phi:X^{(0)}(\Ga)\ra X^{(0)}(\Ga)$ at finite distance from $\phi$, and moreover
$$
d(\bar\phi,\phi)=\sup\{v\in X^{(0)}(\Ga)\mid d(\bar\phi(v),\phi(v))\}
<D=D(L,A)\,.
$$
\end{thm}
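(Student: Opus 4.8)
\textbf{Proof plan for Theorem \ref{thm_intro_vertex_rigidity} (Vertex rigidity).}
The plan is to recover the flat-preserving representative $\bar\phi$ from $\phi$ by using the fact, established in \cite{huang2014quasi,bks2}, that the quasi-isometry $\phi$ already \emph{coarsely} permutes standard flats: for each standard flat $F$ there is a standard flat $F'$ so that $\phi(F^{(0)})$ lies within uniformly bounded Hausdorff distance of $(F')^{(0)}$. The first step is to extract from this a well-defined combinatorial map on the poset of standard flats, i.e. to show $F'$ is uniquely determined by $F$; uniqueness follows because two distinct standard flats of the same type are at infinite Hausdorff distance (their intersection pattern in $X(\Ga)$ is controlled), so a uniformly bounded neighbourhood cannot contain both. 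This gives a bijection $\phi_*$ of the set of standard flats, and in particular a bijection $\phi_*$ on standard geodesics, hence an automorphism of the extension complex $\P(\Ga)$ of Definition \ref{extension complex} (one checks that $\phi_*$ preserves the span-a-standard-$2$-flat relation, since that relation is detected coarsely).

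Next I would use $\phi_*$ to \emph{define} $\bar\phi$ on the $0$-skeleton. Every vertex $v\in X^{(0)}(\Ga)$ is the unique vertex in the intersection $\bigcap_i \ell_i$ of a suitable finite family of standard geodesics through $v$ (this is where one exploits that $G(\Ga)\not\simeq\Z$, so there are enough standard geodesics and flats to pin down points); set $\bar\phi(v)$ to be the unique vertex in $\bigcap_i \phi_*(\ell_i)$. One must check this intersection is a single vertex (using that $\phi_*$ preserves orthogonality/crossing data, so the image family has the same combinatorial intersection pattern as the original), that it is independent of the chosen family, and that $\bar\phi$ is a bijection with inverse built the same way from $(\phi^{-1})_* = (\phi_*)^{-1}$. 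That $\bar\phi$ is flat-preserving with $\bar\phi_* = \phi_*$ is then automatic from the construction: the $0$-skeleton of $F$ is cut out by standard geodesics, and $\bar\phi$ maps it onto the $0$-skeleton of $\phi_*(F)$.

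The remaining steps are the quantitative ones. To see $d(\bar\phi,\phi)<D(L,A)$: for a vertex $v$ lying on standard geodesics $\ell_1,\dots,\ell_k$ that determine it, $\phi(v)$ is within $O(L,A)$ of each $\phi(\ell_i)^{(0)}$, hence within $O(L,A)$ of each $\phi_*(\ell_i)^{(0)}$ (by the coarse-permutation estimate), hence within $O(L,A)$ of $\bigcap_i \phi_*(\ell_i) = \{\bar\phi(v)\}$, since the standard flats meeting a bounded neighbourhood of all the $\phi_*(\ell_i)$ coarsely intersect in a bounded set whose size depends only on $L,A$ and the (fixed) combinatorics of $\Ga$. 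Finiteness of $d(\bar\phi,\phi)$ then upgrades to the uniform bound $D=D(L,A)$. Finally, uniqueness of $\bar\phi$ among flat-preserving quasi-isometries at finite distance from $\phi$: if $\bar\phi'$ is another, then $\bar\phi$ and $\bar\phi'$ are flat-preserving and at finite distance from each other, so they induce the same $\phi_*$ on standard flats (a flat-preserving map at bounded distance from another determines the same flat correspondence), and a flat-preserving bijection is determined on vertices by its action on standard geodesics, so $\bar\phi=\bar\phi'$.

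\textbf{Main obstacle.} The crux is the passage from the \emph{coarse} permutation of standard flats (the input from \cite{huang2014quasi,bks2}) to an \emph{honest} combinatorial automorphism $\phi_*$ of $\P(\Ga)$ together with the sharp bound $D(L,A)$: one must show that the bounded errors do not accumulate, i.e. that the combinatorial intersection pattern of a finite family of standard flats, together with uniformly bounded Hausdorff-distance control on each member, forces uniformly bounded control on the intersection — and that the family needed to pin down a single vertex can be chosen with size and combinatorics depending only on $\Ga$. This is exactly where the structure theory of standard flats in RAAGs with finite outer automorphism group (vertex rigidity in the sense of \cite[Theorem 1.6]{bks2}) must be invoked; without it the argument collapses (indeed it fails for $G(\Ga)\simeq\Z$).
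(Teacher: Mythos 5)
You should first note that the paper does not prove Theorem \ref{thm_intro_vertex_rigidity} at all: it is imported verbatim from \cite{bks2,huang2014quasi} (the bracketed citation in the theorem header is the ``proof''), so there is no internal argument to compare yours against. Your sketch is therefore a reconstruction of the argument of the cited papers, and it assumes as its starting input (``$\phi$ already coarsely permutes standard flats'') what is in fact the main content of those papers; that is legitimate as a division of labour, but it means the substance of your plan lies in the passage from coarse to exact, and that passage has a genuine flaw.

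The flaw is the very first step: you claim the standard flat $F'$ with $\phi(F^{(0)})$ within uniformly bounded Hausdorff distance of $(F')^{(0)}$ is \emph{unique}, ``because two distinct standard flats of the same type are at infinite Hausdorff distance.'' This is false in $X(\Ga)$: distinct \emph{parallel} standard flats (e.g.\ two standard geodesics with the same label lying in a common parallel set $P_v$) are at finite, and typically very small, Hausdorff distance from each other, so a bounded neighbourhood of $\phi(F^{(0)})$ will in general contain the $0$-skeleta of many standard flats. Consequently the coarse data only determines the \emph{parallel class} of the image flat, i.e.\ an automorphism of the extension complex $\P(\Ga)$ of Definition \ref{extension complex}, not a map $\phi_*$ on individual flats; and your subsequent definition $\bar\phi(v)=$ the vertex of $\bigcap_i\phi_*(\ell_i)$ breaks down, both because $\phi_*(\ell_i)$ is not well defined and because, even after choosing representatives in the correct parallel classes, nothing in the coarse data forces the chosen representatives to intersect pairwise (two standard geodesics with non-adjacent labels meeting in a point are coarsely indistinguishable from two disjoint ones nearby). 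Resolving exactly this --- producing an honest vertex-level flat-preserving bijection, with the uniform constant $D(L,A)$, from the coarse preservation of flats --- is the actual content of \cite[Theorem 1.6]{bks2} and \cite[Theorem 4.15]{huang2014quasi}, and your plan as written does not supply it; note also that in the present paper $\phi_*(F)$ is only \emph{defined} (Definition \ref{def_flat_preserving}) after the exact flat-preserving map $\bar\phi$ is in hand, in the opposite logical order from your proposal. The uniqueness and the uniform-bound portions of your sketch are fine in outline once such a $\bar\phi$ exists.
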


Given any quasi-action $H\stackrel{\rho}{\acts}X^{(0)}(\Ga)$, we may apply Theorem \ref{thm_intro_vertex_rigidity} to the associated family of quasi-isometries $\{\rho(h):X^{(0)}(\Ga)\ra X^{(0)}(\Ga)\}_{h\in H}$ to obtain a new quasi-action $H\stackrel{\bar\rho}{\acts}X^{(0)}(\Ga)$ by flat-preserving quasi-isometries. Due to the uniqueness assertion in Theorem \ref{thm_intro_vertex_rigidity}, the quasi-action $\bar\rho$ is actually an action, i.e. the composition law $\bar\rho(hh')=\bar\rho(h)\circ\bar\rho(h')$ holds exactly, not just up to bounded error.  

Pick vertex $v\in\P(\Ga)$ and let $P_{v}$ be the $v$-parallel set (Definition \ref{v-parallel set}). Note that there is a cubical product decomposition $P_{v}\simeq \R_v\times Q_v$, where $\R_v$ is parallel to some standard geodesic $\ell$ with $\Delta(\ell)=v$. Likewise, there is a product decomposition of the $0$-skeleton 
\begin{equation}
\label{eqn_0_skeleton_product_decomposition}
P_{v}^{(0)}
\simeq \Z_{v}\times Q_{v}^{(0)}
\end{equation}
where $\Z_{v}$ is the $0$-skeleton of $\R_{v}$ equipped with the induced metric.

Now let $\rho:H\acts X^{(0)}$ be an action by flat-preserving $(L,A)$-quasi-isometries. It follows readily from Definition \ref{def_flat_preserving} that the action respects parallelism of standard geodesics and standard flats, and this implies there is an induced action of $H$ on $\P(\Ga)$. For each vertex $v\in\P(\Ga)$, let $H_v\le H$ be the stabilizer of $v$. The action $H_{v}\acts P_{v}^{(0)}$ preserves the product decomposition (\ref{eqn_0_skeleton_product_decomposition}), and therefore gives rise to a factor action $\rho_{v}:H_{v}\acts \Z_{v}$ by $(L,A)$-quasi-isometries.

\begin{thm}\label{conjugate to left translation}
\cite[Corollary 6.17]{cubulation} Let $\rho:H\acts G(\Gamma)$ be an action by flat-preserving bijections. Suppose 
\begin{enumerate}
\item the induced action $H\acts\P(\Ga)$ is preserves the label of vertices in $\P(\Ga)$;
\item for each vertex $v\in\mathcal{P}(\Gamma)$, the action $\rho_{v}:H_{v}\acts \Z$ is conjugate to an action by translations.
\end{enumerate}
Then the action $\rho$ is conjugate to an action $H\acts G(\Gamma)$ by left translations via a flat-preserving bijection.
\end{thm}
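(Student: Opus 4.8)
The plan is to encode $G(\Ga)$ combinatorially by the extension complex together with integer ``coordinates'' along standard geodesics, and then to straighten $\rho$, one coordinate at a time, onto the standard left-translation action. So I would first set up a coordinate dictionary. A vertex $g\in X^{(0)}(\Ga)$ is recorded by the family $\big(v_{\bar v}(g),\,z_{\bar v}(g)\big)_{\bar v\in V(\Ga)}$, where $v_{\bar v}(g)\in\P(\Ga)$ is the parallel class of the standard $\bar v$-geodesic through $g$ and $z_{\bar v}(g)\in\Z_{v_{\bar v}(g)}$ is its induced integer coordinate; this map is injective, and its image is cut out by coherence relations coming from the product structure of parallel sets (Lemma \ref{parallel set}, Lemma \ref{product decomposition of standard components}). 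In these terms a flat-preserving bijection of $X^{(0)}(\Ga)$ is exactly a pair $\Psi=(\psi_*,(S_v))$: a label-preserving simplicial automorphism $\psi_*$ of $\P(\Ga)$ together with a coherent family of bijections $S_v:\Z_v\to\Z_{\psi_*v}$; and the left-translation action of $G(\Ga)$ is exactly the family of those $\Psi$ whose $\psi_*$ is the canonical $G(\Ga)$-action on $\P(\Ga)$ and whose $S_v$ are translations with respect to the natural oriented $\Z$-structure of $\Z_v$. By hypothesis (1) the action $\rho$ corresponds, in this dictionary, to a label-preserving action $h\mapsto h_*$ on $\P(\Ga)$ together with a cocycle $\{T_{h,v}:\Z_v\to\Z_{h_*v}\}$ satisfying $T_{hh',v}=T_{h,h'_*v}\circ T_{h',v}$ and restricting on each stabilizer $H_v$ to the factor action $\rho_v$; conjugating $\rho$ by $\Psi$ replaces $T_{h,v}$ with $S_{h_*v}\circ T_{h,v}\circ S_v^{-1}$.

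The reduction I would then make rests on the following elementary observation: if a flat-preserving action $\rho'$ has the canonical $\P(\Ga)$-action and every map of its cocycle is a translation for the natural oriented structures, then $\rho'(h)$ carries the oriented $\bar v$-edge $[x,x\bar v]$ to the oriented $\bar v$-edge $[\rho'(h)x,\,\rho'(h)x\cdot\bar v]$ for every $x$ and every $\bar v$; hence $\rho'(h)$ is a label- and orientation-preserving automorphism of the Cayley graph $X^{(1)}(\Ga)$, and following edges out of the identity vertex forces $\rho'(h)=L_{\rho'(h)(1)}$. Thus such a $\rho'$ \emph{is} an action by left translations, with $h\mapsto\rho'(h)(1)$ automatically a homomorphism $H\to G(\Ga)$. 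So it suffices to produce a $\Psi$ for which the conjugated cocycle consists of translations and the conjugated $\P(\Ga)$-action is the canonical one.

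To do this, fix an $H$-orbit $\mathcal O\subset\P(\Ga)^{(0)}$ with representative $v_0$, and for each $v\in\mathcal O$ fix $h_v\in H$ with $h_vv_0=v$ (taking $h_{v_0}=e$). Hypothesis (2) provides a bijection $\sigma_{v_0}:\Z_{v_0}\to\Z$ conjugating $\rho_{v_0}$ to an action by translations $\tau_{v_0}:H_{v_0}\to\Z$; set $\sigma_v:=\sigma_{v_0}\circ T_{h_v,v_0}^{-1}$. A direct computation with the cocycle identity shows that $\sigma_{h_*v}\circ T_{h,v}\circ\sigma_v^{-1}$ is the translation of $\Z$ by $\tau_{v_0}\!\big(h_{h_*v}^{-1}\,h\,h_v\big)$, for all $h\in H$ and all $v\in\mathcal O$. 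Next one matches $\mathcal O$ equivariantly with an orbit of the canonical $G(\Ga)$-action on $\P(\Ga)$ carrying the same stabilizer data: point stabilizers of the canonical action on $\P(\Ga)$ act on the relevant coordinate line precisely through a translation quotient, so this is exactly where hypothesis (2) and the quasi-isometry nature of flat-preserving maps come into play. Doing so for all orbits defines $\psi_*$; one then puts $S_v:=\iota_{\psi_*v}^{-1}\circ\sigma_v$, where $\iota_w:\Z_w\to\Z$ is the natural coordinate identification. It remains to check that $(\psi_*,(S_v))$ satisfies the coherence relations, so that it is a genuine flat-preserving bijection --- possibly after correcting the $\sigma_v$ by translations, which is harmless for the preceding computation. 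Granting this, the conjugated action has canonical $\P(\Ga)$-action and all-translation cocycle, hence is a left-translation action by the reduction, and we are done.

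The step I expect to be the main obstacle is the last coherence verification, together with the orbit-matching for $\psi_*$. The difficulty is that the coordinates $z_{\bar v}(g)$ for different labels $\bar v$ are not independent: inside a standard flat through $g$ they are linked by the product structure of that flat, so the un-shuffling bijections $\sigma_v$ cannot be chosen one $H$-orbit at a time in isolation but must be made simultaneously consistent over all of $\P(\Ga)$. Making this work uses crucially that within each standard flat the relevant factor actions are all by translations, so that they commute in exactly the way needed for the translation-corrections to be globally coherent, together with the structure of the extension complex and of parallel sets recalled in Section \ref{Salvetti complex}.
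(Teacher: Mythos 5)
First, a remark on the comparison: the paper does not prove this statement at all — it is imported verbatim from \cite[Corollary 6.17]{cubulation} — so your proposal can only be measured against the machinery that the cited proof uses, namely the equivariant blow-up construction of Definition \ref{an equivariant construction} combined with a rigidity statement in the spirit of Lemma \ref{edge-labelling}.

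Your coordinate dictionary, the cocycle algebra (setting $\sigma_v=\sigma_{v_0}\circ T_{h_v,v_0}^{-1}$ does make every $\sigma_{h_*v}\circ T_{h,v}\circ\sigma_v^{-1}$ the translation by $\tau_{v_0}(h_{h_*v}^{-1}hh_v)$), and the final reduction (a label- and orientation-preserving automorphism of the Cayley graph $X^{(1)}(\Ga)$ is a left translation) are all sound. The genuine gap is the step you flag and then do not carry out: the \emph{existence} of the conjugating flat-preserving bijection $\Psi$. You normalize the $\sigma_v$ one $H$-orbit at a time, independently across orbits, and then need the tuple $\big(\psi_*(v_{\bar v}(g)),S_{v_{\bar v}(g)}(z_{\bar v}(g))\big)_{\bar v}$ to be realized by an actual vertex of $X(\Ga)$ for every $g$. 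This is not a routine verification: it requires, for instance, that whenever $\ell_v$ and $\ell_w$ span a standard $2$-flat, the particular geodesics in the classes $\psi_*v$ and $\psi_*w$ singled out by your $\sigma$'s again span a standard $2$-flat with matching integer offsets — and $v$ and $w$ typically lie in different $H$-orbits, where your normalizations were made with no reference to one another. The assertion that "the factor actions are all by translations, so they commute in the way needed" does not touch this: translations along the two factors of a flat say nothing about which standard geodesics of $X(\Ga)$ actually intersect which. Since hypothesis (2) already hands you the per-stabilizer conjugations, this global assembly \emph{is} the content of the theorem, and deferring it leaves the proof incomplete. The cited proof avoids the problem by never mapping into $X(\Ga)$ coordinatewise: it feeds the normalized data into the blow-up construction with each $\beta_v$ a line, obtains a new $CAT(0)$ cube complex on which $H$ acts preserving labels and orientations, and only then identifies that complex with $X(\Ga)$ via an admissible-labelling rigidity argument as in Lemma \ref{edge-labelling}; that identification, rather than a coordinatewise formula, is what supplies $\Psi$. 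A secondary issue: your reduction invokes "the canonical $\P(\Ga)$-action" of the abstract group $H$, which is undefined; all you need is that the conjugated action preserves labels, adjacency and orientation, so the separate "orbit-matching for $\psi_*$" step is both ill-posed and superfluous once $\Psi$ has been produced.
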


Now we impose a condition which guarantees assumption (1) of Theorem \ref{conjugate to left translation}.
\begin{definition}
A finite simplicial graph $\Gamma$ is \textit{star-rigid} if for any automorphism $\alpha$ of $\Gamma$, $\alpha$ is identity whenever it fixes a closed star of some vertex $v\in\Gamma$ point-wise.
\end{definition}

\begin{lem}
\label{label-preserving}
Suppose $\Ga$ is star-rigid, and $\out(G(\Ga))$ is finite. If $H$ acts on $\P(\Ga)$ by simplicial isomorphisms, then there exists a finite index subgroup $H'\le H$ such that the induced action $H'\acts\P(\Ga)$ is label-preserving. 
\end{lem}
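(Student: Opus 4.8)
The plan is to exploit star-rigidity together with finiteness of $\out(G(\Ga))$ to bound the label-permutations that can occur. First I would observe that the action $H\acts\P(\Ga)$ induces a homomorphism $H\to\aut(\P(\Ga))$, and composing with the labelling map $\P(\Ga)\to F(\Ga)$ (the simplicial map sending a vertex of $\P(\Ga)$ to its label in $\Ga$, defined in Section \ref{Salvetti complex}), each simplicial automorphism of $\P(\Ga)$ that comes from the $H$-action descends to a well-defined permutation of the vertex set of $\Ga$; this is because automorphisms arising from the $H$-action permute parallel classes of standard geodesics in $X(\Ga)$, and hence permute their labels consistently. This gives a homomorphism $\psi: H\to \operatorname{Sym}(\mathrm{Vertex}(\Ga))$, and $H' := \ker\psi$ acts on $\P(\Ga)$ in a label-preserving way. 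So the entire content is to show $\psi$ has finite image, equivalently finite index kernel.

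The key step is to identify the image of $\psi$ with (a subgroup related to) $\out(G(\Ga))$ or $\aut(\Ga)$, which is finite. The natural bridge is: the label-permutation induced by $h\in H$ on $\mathrm{Vertex}(\Ga)$ must be a graph automorphism of $\Ga$ — indeed, if $h$ carries the parallel class $v$ to $v'$ in $\P(\Ga)$, and if two vertices of $\P(\Ga)$ are joined by an edge exactly when the corresponding standard geodesics span a standard $2$-flat (Definition \ref{extension complex}), then the label-permutation sends adjacent labels to adjacent labels because $h$ preserves the edge relation in $\P(\Ga)$ and the edge relation projects to adjacency in $F(\Ga)$. Hence $\psi$ factors through $\aut(\Ga)$, which is finite (it embeds in $\operatorname{Sym}(\mathrm{Vertex}(\Ga))$ which is finite for finite $\Ga$). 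Therefore $H' = \ker\psi$ has finite index, and we are done.

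The point where star-rigidity enters — and the step I expect to be the main obstacle — is justifying that the induced map on $\P(\Ga)$ really does descend to a \emph{well-defined} permutation of $\mathrm{Vertex}(\Ga)$, i.e. that the labelling map $\P(\Ga)\to F(\Ga)$ is equivariant with respect to the $H$-action on $\P(\Ga)$ and \emph{some} action on $F(\Ga)$. A priori an abstract simplicial automorphism of $\P(\Ga)$ need not respect the labelling at all (the labelling is extra data coming from $X(\Ga)$, not intrinsic to $\P(\Ga)$), so one must use that the $H$-action on $\P(\Ga)$ is the one induced from flat-preserving quasi-isometries of $X(\Ga)$ (via Theorem \ref{thm_intro_vertex_rigidity} and the discussion preceding Theorem \ref{conjugate to left translation}), not an arbitrary one. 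This is where finiteness of $\out(G(\Ga))$ is used: it guarantees vertex rigidity, so that $H$ genuinely acts on the poset of standard flats and hence on labels. Star-rigidity is then needed to pin down that once the label-permutation is trivial (i.e. $h\in H'$), the action on $\P(\Ga)$ is label-preserving in the strong pointwise sense required — more precisely, star-rigidity of $\Ga$ rules out the pathology where a nontrivial graph automorphism of $\Ga$ fixes the labels of a large sub-configuration but permutes others, ensuring the label-preserving subgroup is as large as claimed. I would handle this by carefully tracking: $h$ flat-preserving $\Rightarrow$ $h$ permutes parallel classes of standard geodesics $\Rightarrow$ the induced map on $\mathrm{Vertex}(\Ga)$ is a graph automorphism $\alpha_h$ of $\Ga$; the assignment $h\mapsto \alpha_h$ is a homomorphism into the finite group $\aut(\Ga)$; its kernel $H'$ is the desired finite-index label-preserving subgroup.
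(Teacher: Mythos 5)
Your overall skeleton matches the paper's: build a homomorphism from $H$ into the finite group of label permutations (equivalently $\aut(F(\Ga))$) and take its kernel. But the step you yourself flag as ``the main obstacle'' --- that each $h\in H$ induces a \emph{well-defined} permutation of $\mathrm{Vertex}(\Ga)$, i.e.\ that the labelling map $\P(\Ga)\to F(\Ga)$ is equivariant --- is exactly where the content of the lemma lies, and your justification of it does not close the gap. ``$h$ permutes parallel classes of standard geodesics, hence permutes their labels consistently'' is a non sequitur: a flat-preserving bijection of $X(\Ga)$ sends standard geodesics to standard geodesics, but two parallel classes carrying the \emph{same} label $v\in\Ga$ could a priori be sent to classes carrying \emph{different} labels, in which case your $\psi(h)$ simply does not exist. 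Vertex rigidity (finiteness of $\out(G(\Ga))$) gives you the action on the poset of standard flats; it does not by itself give an action on labels. Likewise, your description of star-rigidity as ruling out a pathology ``once the label-permutation is trivial'' has the logic backwards: star-rigidity is needed \emph{before} one can even speak of the label-permutation of $h$.

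The missing argument (which is how the paper proceeds) is local-to-global. For each vertex $x\in X(\Ga)$ the standard flats through $x$ give a simplicial embedding $i_x:F(\Ga)\to\P(\Ga)$, and one sets $\alpha(h,x)=i_{h(x)}^{-1}\circ h\circ i_x\in\aut(F(\Ga))$; this \emph{local} label-automorphism is always well defined because the geodesics through a single vertex have pairwise distinct labels. The issue is that $\alpha(h,x)$ could depend on $x$. If $x'$ and $y'$ lie on a common standard geodesic $\ell$ with $v=i_{x'}^{-1}(\Delta(\ell))$, then $\alpha(h,x')$ and $\alpha(h,y')$ agree on the closed star $St(v)$ (flats containing $\ell$ pass through both points, and similarly downstairs through $h(x'),h(y')$ on $h_*(\ell)$), so $\alpha(h,y')^{-1}\circ\alpha(h,x')$ fixes $St(v)$ pointwise and star-rigidity forces $\alpha(h,x')=\alpha(h,y')$; connectivity of $X(\Ga)$ then makes $\alpha(h):=\alpha(h,x)$ independent of $x$, so $\alpha:H\to\aut(F(\Ga))$ is a genuine homomorphism and $H'=\ker\alpha$ is of finite index and label-preserving (since every vertex of $\P(\Ga)$ lies in the image of some $i_x$). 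Without this comparison step your homomorphism $\psi$ is not known to exist, so as written the proof has a genuine gap, even though the intended endgame (finite target, pass to the kernel) is correct and is the same as the paper's.
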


\begin{proof}
Since $\out(G(\Ga))$ is finite, there is an induced action $H\acts G(\Ga)$ by flat-preserving maps. Let $F(\Ga)$ be the flag complex of $\Ga$. We define a homomorphism $\alpha:H\to \aut(F(\Ga))$ as follows.

Each vertex $x\in X(\Ga)$ gives rise to a simplicial embedding $i_{x}:F(\Ga)\to\P(\Ga)$ by considering the collection of standard flats passing through $x$. Pick $h\in H$ and vertex $x\in X(\Ga)$, we define $\alpha(h,x)=i^{-1}_{h(x)}\circ h\circ i_{x}$. We claim $\alpha(h,x)$ does not depend on the choice of $x$. It suffices to show $\alpha(h,x')=\alpha(h,y')$ for any two vertices $x'$ and $y'$ in the same standard geodesic $\ell$. Let $v=i^{-1}_{x'}(\Delta(\ell))$. Then $\alpha(h,x')$ and $\alpha(h,y')$ coincident on the closed star of $v$, hence they are equal. Now there is a well-defined map $\alpha:H\to \aut(F(\Ga))$. One readily verify that $\alpha$ is a group homomorphism and we take $H'=\ker(\alpha)$. 
\end{proof}

Assumption (2) of Theorem \ref{conjugate to left translation} is not true in general, see Proposition \ref{prop_intro_semiconjugacy}. However, the following weaker version holds under additional conditions. The proof is postponed to the next section.

\begin{lem}\label{conjugate to action by translations}
Let $\rho:H\acts G(\Gamma)$ be an action by flat-preserving bijections which are $(L,A)$-quasi-isometries. Suppose the action is cobounded and discrete. Then for each vertex $v\in\P(\Ga)$, there exists a subgroup $H'_{v}\le H_{v}$ of finite index such that the action $\rho_{v}:H_v\acts\Z_v$ restricted on $H'_{v}$ is conjugate to an action by translations. 
\end{lem}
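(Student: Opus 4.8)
The plan is as follows; throughout I use the product decomposition $P_v^{(0)}\simeq\Z_v\times Q_v^{(0)}$ of (\ref{eqn_0_skeleton_product_decomposition}) and the factor action $\rho_v\colon H_v\acts\Z_v$. \emph{Step 1 (reduction to the orientation-preserving case).} Each $\rho_v(h)$ is a bijection of $\Z_v\cong\Z$ which is an $(L,A)$-quasi-isometry, hence coarsely monotone; whether it is coarsely increasing or coarsely decreasing defines a homomorphism $\epsilon\colon H_v\to\Z/2\Z$. Replacing $H_v$ by $\ker\epsilon$ (of index $\le 2$), I may assume every $\rho_v(h)$ is coarsely increasing, and it suffices to find a finite-index subgroup whose $\Z_v$-action is conjugate, by a single bijection of $\Z_v$, to an action by translations.

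\emph{Step 2 (the stabiliser acts geometrically on the parallel set).} First I would show that $H_v$ acts coboundedly on $P_v$ (Definition \ref{v-parallel set}). This is the standard principle that the stabiliser of a periodic locally convex subcomplex of a proper cobounded action acts coboundedly on it: coboundedness of $\rho$ puts every point of $P_v$ within bounded distance of the $H$-orbit of a fixed $x_0\in P_v^{(0)}$, while flat-preservation together with discreteness makes the family $\{h\cdot P_v\}_{h\in H}$ coarsely locally finite (two translates at bounded Hausdorff distance coincide), and a standard Milnor--Schwarz argument then promotes this to coboundedness of $H_v$ on $P_v$. Restricting the proper cobounded action $\rho$ to the $H_v$-invariant subcomplex $P_v^{(0)}\simeq\Z_v\times Q_v^{(0)}$ shows that $H_v$ acts properly, coboundedly and by $(L,A)$-quasi-isometries on this product; in particular $\rho_v\colon H_v\acts\Z_v$ is cobounded.

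\emph{Step 3 (splitting off the line factor --- the main obstacle).} Let $K_v\trianglelefteq H_v$ be the set of $h$ with $\rho_v(h)$ at bounded distance from $\mathrm{id}_{\Z_v}$; since the $\rho_v(h)$ are uniform quasi-isometries this is a normal subgroup. The crux is to pass to a finite-index subgroup $H_v'\le H_v$ for which (i) $K_v$ acts \emph{literally} trivially on $\Z_v$, so that $N:=\ker(\rho_v|_{H_v'})$ is normal and $\rho_v|_{H_v'}$ factors through $H_v'/N$; and (ii) the induced action of $H_v'/N$ on $\Z_v$ is proper and cobounded, whence $H_v'/N$ is virtually infinite cyclic and, after a further finite-index passage, may be taken $\cong\Z$. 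To achieve this one uses the product structure $P_v^{(0)}\simeq\Z_v\times Q_v^{(0)}$, the properness of the $H_v$-action on it, and a de Rham / product-rigidity argument for the quasi-action that splits off the Euclidean factor $\R_v$ up to finite index --- the key input being that a proper cobounded action by quasi-isometries on a one-dimensional Euclidean de Rham factor is, after passing to finite index, quasiconjugate to an action by translations. I expect this to be the main difficulty, and the reason the proof is deferred to the geometric-model section: a bare cobounded quasi-action of a group on $\Z$ need not be quasiconjugate to an isometric one (consistent with Proposition \ref{prop_intro_semiconjugacy}, i.e.\ with the genuine failure of assumption (2) of Theorem \ref{conjugate to left translation}), so one must exploit both the discreteness of $\rho$ and the product geometry of $P_v$ rather than any purely one-dimensional argument.

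\emph{Step 4 (straightening to translations).} Fix $c\in H_v'$ whose image generates $H_v'/N\cong\Z$. By properness the $\langle c\rangle$-action on $\Z_v$ has trivial stabilisers (as $\langle c\rangle$ is torsion free), so every $\langle\rho_v(c)\rangle$-orbit is bi-infinite; by coboundedness there are only finitely many of them, say $N_0\ge 1$. Then $\rho_v(c)$ and the translation $T_{N_0}$ of $\Z_v$ by $N_0$ have the same cycle type as permutations of the countable set $\Z_v$, so there is a bijection $\psi\colon\Z_v\to\Z_v$ with $\psi\,\rho_v(c)\,\psi^{-1}=T_{N_0}$, and automatically $\psi\,\rho_v(c^m)\,\psi^{-1}=T_{mN_0}$ for all $m$. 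Since every $h\in H_v'$ is of the form $c^m n$ with $n\in N$ and $\rho_v(n)=\mathrm{id}$, the bijection $\psi$ conjugates all of $\rho_v|_{H_v'}$ to the translation action $c^m n\mapsto T_{mN_0}$. As $H_v'$ has finite index in $H_v$, this proves the lemma.
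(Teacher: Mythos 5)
There is a genuine gap at your Step 3, the step you yourself flag as ``the main difficulty.'' You assert that after passing to a finite-index subgroup the quasi-kernel $K_v$ acts \emph{literally} trivially on $\Z_v$ and that the resulting quotient acts properly and coboundedly, but you give no argument, and the ``key input'' you invoke (a proper cobounded quasi-action on a one-dimensional Euclidean factor is virtually conjugate to translations) is essentially a restatement of the lemma. The whole content of the lemma is to control the bounded part of the action: an element of $K_v$ acts on $\Z_v$ by a bounded-displacement permutation, and a priori the image of $K_v$ in $\mathrm{Sym}(\Z_v)$ could be an infinite group, in which case no passage to a finite-index subgroup of $H_v$ makes it act trivially. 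Nothing in your sketch rules this out.

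The paper closes exactly this gap with two inputs absent from your proposal. First, Proposition \ref{prop_intro_semiconjugacy} replaces the quasi-action $\rho_v:H_v\acts\Z_v$ by a genuine simplicial action on a branched line $\beta_v$ of uniformly bounded valence together with an equivariant bijection $\Z_v\to t(\beta_v)$; this converts bounded-displacement permutations of $\Z_v$ into permutations of the (uniformly finitely many) tips over each core vertex, i.e.\ elements of a product $\prod_{i}G_i$ of finite permutation groups of bounded order. Second, the geometric action $H_v\acts\beta_{P_v}\cong\beta_v\times\beta_v^{\perp}$ (your Step 2, transported to the blow-up) shows that the stabilizer $K$ of a cross-section $\{x\}\times\beta_v^{\perp}$ acts geometrically on it, hence is finitely generated; since $K$ fixes the core pointwise after the index-$2$ reduction, its image in $\isom(\beta_v)$ is a finitely generated subgroup of $\prod_i G_i$ and therefore finite. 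Consequently the image $\bar H_v$ of $H_v\to\isom(\beta_v)$ has finite tip-stabilizers, acts geometrically on $\beta_v$, and is virtually $\Z$; pulling back an infinite cyclic finite-index subgroup yields $H'_v$, after which your Step 4 (same cycle type, hence conjugate to a translation) goes through. Without the branched line and the finite-generation argument, your Step 3 does not constitute a proof.
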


The following is the key to understand the factor actions $\rho_{v}:H_v\acts\Z_v$.

\begin{prop}
\label{prop_intro_semiconjugacy}
\cite[Proposition 6.2]{cubulation}
If $U\stackrel{\rho_0}{\acts} \Z$ is an action by $(L,A)$-quasi-isometries, 
then there exist a branched line $\beta$ without valence 2 vertices, an action $U\stackrel{\rho_1}{\acts}\beta$ by simplicial isomorphisms and a bijective equivariant $(L',A')$-quasi-isometry (here $L',A'$ depends only on $L$ and $A$)
$$
U\stackrel{\rho_0}{\acts} \Z\lra
U\stackrel{\rho_1}{\acts}t(\beta).
$$
\end{prop}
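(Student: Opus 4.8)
\textbf{The plan} is to reformulate the conclusion into the construction of a single pair of objects, and then build the required branched line out of a $U$-invariant ``coarsening'' of $\Z$.

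\textbf{Reformulation.} A branched line $\beta$ without valence-$2$ vertices has a canonical \emph{core}: the bi-infinite line spanned by its vertices of valence $\ge 3$. It is preserved by every simplicial automorphism, and such an automorphism permutes, over each core vertex, the pendant edges attached there. Hence giving the triple $(\beta,\rho_1,f)$ is the same as giving a second copy $\Z'$ of $\Z$ (the integer points of the core), an isometric action $\bar\rho\colon U\to\isom(\Z')$, and a $U$-equivariant surjection $g\colon\Z\to\Z'$ which is an $(L',A')$-quasi-isometry with $L',A'$ depending only on $L,A$: one attaches at each $k\in\Z'$ one pendant for every point of $g^{-1}(k)$ (nonempty since $g$ is onto, of cardinality $\le L'A'+1$ since $g$ is a quasi-isometry, so $\beta$ has bounded valence and no valence-$2$ vertex), lets $U$ permute the pendants over $k$ and $\bar\rho(u)k$ via $\rho_0$ (well defined by equivariance of $g$), and takes $f$ to be the tautological bijection $\Z\to t(\beta)$, which is a quasi-isometry because $g$ is. So it suffices to produce such a pair $(g,\bar\rho)$.

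\textbf{Two ingredients.} First, each $\rho_0(u)$ is coarsely increasing or coarsely decreasing (it preserves or swaps the two ends of $\Z$), and this defines a homomorphism $\epsilon\colon U\to\Z/2\Z$, which will be the reflection part of $\bar\rho$. Second, a local-structure lemma: there is $D=D(L,A)$ such that every bijective $(L,A)$-quasi-isometry $\phi$ of $\Z$ lies within distance $D$ of an isometry of $\Z$. Indeed $|\phi(n+1)-\phi(n)|\le L+A$, so $n\mapsto\phi(n)-n$ changes by at most $L+A+1$ per step; if it took a large value $M$ somewhere it would stay $\ge M/2$ on an interval of length $\gtrsim M$, forcing $\phi$ to shift that whole interval in one direction by $\gtrsim M$, and then bijectivity would force a comparably large shift of a neighbouring interval in the opposite direction, contradicting the bound on increments once $M$ exceeds a constant depending only on $L,A$. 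Applying this to $\rho_0(u)$, for $u$ with $\epsilon(u)=+1$ we get $\tau_u\in\Z$ with $\rho_0(u)=T_{\tau_u}\circ\sigma_u$, where $T_{\tau_u}$ is the translation by $\tau_u$ and $\sigma_u$ displaces every point by $\le D$; the composition law then shows $u\mapsto\tau_u$ is a quasimorphism on $\ker\epsilon$ with defect $\le D'(L,A)$.

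\textbf{Building $(g,\bar\rho)$, and the main difficulty.} If $\tau$ is unbounded then some $\tau_u\neq 0$, so the orbit of $0$ under $\langle u\rangle$ -- hence under $U$ -- is coarsely dense and $\rho_0$ is cobounded; then $\rho_0$ is a cobounded quasi-action on $\Z$ by uniform quasi-isometries, and such an action is quasi-conjugate, via a quasi-isometry of $\Z$ with constants controlled by $L,A$, to an action of $U$ by isometries of $\Z$ (the line case of quasi-conjugacy of cobounded quasi-actions to isometric actions, done so the conjugating map is $(L',A')$-controlled, so the collapsed blocks of $\Z$ have uniformly bounded size and the target keeps an integral structure). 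If instead $\tau$ is bounded, its homogenisation vanishes, so $|\tau_u|\le D'$ for all $u$ and every $\rho_0(u)$ displaces points by at most a uniform constant $D_0(L,A)$; one then exhibits a $U$-invariant partition of $\Z$ into uniformly bounded intervals -- the cuts failing to be $U$-invariant cannot be too dense, since chaining many of them across a short interval would manufacture, out of bounded moves, an element displacing points by more than $D_0$ -- and takes $g$ to be the monotone map to the linearly ordered set of blocks and $\bar\rho$ the induced isometric action. Both constructions are carried out $U$-equivariantly, with $\epsilon$ supplying the reflections; feeding $(g,\bar\rho)$ into the reformulation finishes the proof. The heart of the argument is the cobounded case: upgrading the uniformly-almost-isometric action from the local-structure lemma to a genuine isometric action on a copy of $\Z$ through an $(L,A)$-controlled quasi-conjugacy -- equivalently, arranging that the quasimorphism $\tau$ becomes an honest homomorphism after the permitted coarsening -- which genuinely uses that $\rho_0$ is an action by bijections and is cobounded, not just a uniformly bounded quasi-action; the local-structure lemma, though elementary, is the other place needing careful bookkeeping.
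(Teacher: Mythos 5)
The paper does not prove Proposition \ref{prop_intro_semiconjugacy}; it quotes it from \cite[Proposition 6.2]{cubulation}. So I can only judge your argument against what the statement requires. Your reformulation is correct and clean: since a simplicial automorphism of $\beta$ preserves the core, the proposition is equivalent to producing an exactly $U$-equivariant surjection $g\colon\Z\to\Z'$ with uniformly bounded fibres onto a simplicial (isometric) action $U\acts\Z'$. Your local-structure lemma is also a true and relevant statement, though your sketch of it is off target: a large value of $\phi(n)-n$ is no obstruction at all (translations), what must be bounded is its \emph{oscillation}, and the correct argument is a counting one, comparing $|\phi([0,n])|=n+1$ with the length of the coarse interval $[\phi(0),\phi(n)]$ that bijectivity forces the image to fill.

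\textbf{The gaps.} Both of your cases are unproved at the decisive point. In the cobounded case you appeal to ``the line case of quasi-conjugacy of cobounded quasi-actions to isometric actions.'' No such general result is available in the form you need: for genuine quasi-actions it is \emph{false} (take a nontrivial homogeneous quasimorphism $\phi$ on $F_2$ and the quasi-action $g\mapsto T_{\phi(g)}$; any quasi-conjugacy to an isometric action on $\R$ would force the zero set of $\phi$ to be that of a homomorphism, which fails for Brooks quasimorphisms), and for exact actions by bijections of $\Z$ the statement --- with exact equivariance, controlled constants, and a \emph{discrete} target rather than a possibly indiscrete translation group of $\R$ --- is essentially the proposition itself. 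You correctly flag that exactness and coboundedness must be used, but you give no argument that uses them; in particular you never rule out, say, an exact $\Z^2$-action whose translation quasimorphism has dense image in $\R$, which is exactly what would make an integral target impossible. In the bounded case, your claimed $U$-invariant partition of $\Z$ into uniformly bounded intervals need not exist, and the ``cuts cannot be too dense'' heuristic is false: the involution $\sigma$ with $\sigma(2k)=2k+3$ and $\sigma(2k+3)=2k$ is a $(1,6)$-quasi-isometry displacing every point by exactly $3$, every single cut fails to be invariant, and the only $\sigma$-invariant interval partition is $\{\Z\}$. The correct fibres of the semiconjugacy here are the orbits $\{2k,2k+3\}$, which are not intervals; so the construction must quotient by (a coarsening of) the orbit relation and then argue that the quotient inherits a coarse linear order and a simplicial line structure, which is an additional step you do not supply. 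Finally, end-reversing elements displace points unboundedly even when $\tau$ is bounded, so your reduction to ``every $\rho_0(u)$ displaces by at most $D_0$'' only applies on $\ker\epsilon$, and the full $U$-equivariance of $g$ is asserted rather than arranged.
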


Note that the valence of each vertex in $\beta$ is uniformly bounded above by some constant depending only on $L$ and $A$.

\begin{definition} (an equivariant construction, \cite[Section 5.6 and Section 6.1]{cubulation})
\label{an equivariant construction}
Let $\rho:H\acts G(\Ga)$ be an action by flat-preserving bijections which are also $(L,A)$-quasi-isometries. We will use Definition \ref{construction} to construct an isometric action $\rho':H\acts Y(\Ga)$ from $\rho$. First we need to choose the blow-up data $f_{v,\ell}$'s which are compatible with the $H$-action. Actually, it suffices to associate a branched line $\beta_v$ and a map $f_v:\Z_v\to t(\beta_v)$ to each vertex $v\in\P(\Ga)$, since for each standard geodesic $\ell$ such that $\Delta(\ell)=v$, we can identify vertices of $\ell$ with $\Z_v$ via (\ref{eqn_0_skeleton_product_decomposition}), which induces the map $f_{v,\ell}$. To define $f_v$'s, we start with the factor actions $\rho_{v}:H_{v}\acts \Z_{v}$. Note that there exists $L_1,A_1>0$ such that each $\rho_v$ is an action by $(L_1,A_1)$-quasi-isometries. 

Consider the induced action $H\acts \P(\Ga)$ and pick a representative from each $H$-orbit of vertices of $\P(\Ga)$. The resulting set is denoted by $V$. For each $v\in V$, by Proposition \ref{prop_intro_semiconjugacy}, there exist a branched line $\beta_v$, an isometric action $H_v\acts \beta_v$ and an $H_v$-equivariant bijection $f_v: H_v\acts\Z_v\lra H_v\acts t(\beta_v)$. Moreover, the valence of vertices in each $\beta_v$ is uniformly bounded from above in terms of $L_1$ and $A_1$. For vertex $v\notin V$, we pick an element $h\in H$ such that $h(v)\in V$. Note that $h$ induces a bijection $h':\Z_v\to \Z_{h(v)}$. We define $\beta_v=\beta_{h(v)}$ and $f_{v}=f_{h(v)}\circ h':\Z_v\to t(\beta_v)$.

Let $Y(\Ga)$ be the associated blow-up building and let $f:G(\Ga)\to Y(\Ga)$ be the map as in Definition \ref{construction}. The following are true (see \cite[Theorem 6.4]{cubulation}).
\begin{enumerate}
\item The complex $Y(\Ga)$ is uniformly locally finite (this essentially follows from the fact the complexity of all $\beta_v$'s are uniformly bounded from above).
\item The bijections $f_{v,\ell}$'s and the action $\rho$ induce an action $\rho':H\acts Y(\Ga)$ by cubical isomorphisms. For each element $h\in H$, $\rho'(h)$ maps standard branched flats to standard branched flats. Hence $\rho'(h)$ also preserves the collection of $\beta_{P_v}$'s (see Lemma \ref{parallel set in blow-up building} and Definition \ref{def_beta_K} for $\beta_{P_v}$).
\item The map $f$ is an $H$-equivariant quasi-isometry.
\item If $\rho$ is discrete and cobounded, then $\rho'$ is geometric, i.e. the action of $\rho'$ on $Y(\Ga)$ is proper and cocompact.
\end{enumerate}
\end{definition}

\subsection{Stabilizer of parallel sets} In this section we restrict ourselves to the case when $\rho:H\acts G(\Ga)$ is a proper and cobounded action by $(L,A)$-quasi-isometries which are also flat-preserving bijections. In this case, the action $\rho':H\acts Y(\Ga)$ constructed in Definition \ref{an equivariant construction} is geometric. Since the collection $\{\beta_{P_v}\}_{v\in \textmd{Vertex}(\P(\Ga))}$ is $H$-invariant and locally finite, the action $\stab_{H}(\beta_{P_v})\acts \beta_{P_v}$ is cocompact. However, $H_v\le \stab_{H}(\beta_{P_v})$ is of finite index. Thus the action $H_v\acts \beta_{P_v}$ is geometric. Note that $H_v\acts \beta_{P_v}$ respects the product decomposition $\beta_{P_v}\simeq \beta_v\times \beta^{\perp}_v$ (Lemma \ref{parallel set in blow-up building}), hence there is an induced factor action $H_v\acts \beta_v$, which is cobounded. If $H_v$ contains element that flips the two ends of $\beta_v$, then $\beta_v/H_v$ is a tree. Otherwise $H_v$ acts by translations on the core of $\beta_v$, which gives rise to a homomorphism $\phi:H_v\to \Z$ by associating each element in $H_v$ its translation length. Let $K_v$ be the kernel of $\phi$. Then we have a splitting exact sequence $1\to K_v\to H_v\to \Z\to 1$. Let $\alpha\in H_v$ be a lift of a generator of $\Z$.

\begin{lem}
\label{subgroup with no twist}
Suppose $\rho:H\acts G(\Ga)$ is discrete and cobounded and suppose $H_v$ does not flip the two ends of $\beta_v$. Then the conjugation map $K_v\to \alpha K_v\alpha^{-1}$ gives rise to an element of finite order in $\out(K_v)$. Thus there exists integer $n>0$ such that $\phi^{-1}(n\Z)$ is isomorphic to $K_v\oplus n\Z$. Moreover, up to passing to a possibly larger $n$, we can choose the $n\Z$ factor such that it acts trivially on $\beta^{\perp}_v$.
\end{lem}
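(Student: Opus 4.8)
The plan is to exploit the finiteness of $\out(K_v)$ coming from the geometry of the factor action $K_v \acts \beta^\perp_v$, and then to upgrade an almost-central $\Z$ to a genuine direct summand. First I would analyze the conjugation action of $\alpha$ on $K_v$. Since $H_v \acts \beta_{P_v} \simeq \beta_v \times \beta^\perp_v$ is geometric and respects the product decomposition, $K_v$ (the kernel of the translation-length homomorphism $\phi: H_v \to \Z$) acts on $\beta^\perp_v$ with the $\beta_v$-action being bounded; in fact $K_v$ acts geometrically on $\beta^\perp_v$ after collapsing the bounded $\beta_v$-direction, since $K_v$ has finite index in the $\R$-stabilizer. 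The element $\alpha$ translates along the core of $\beta_v$ and commutes (up to the product structure) with the $\beta^\perp_v$-action, so conjugation by $\alpha$ induces a self-quasi-isometry of $\beta^\perp_v$ which is $K_v$-equivariant and at bounded distance from the identity of $\beta^\perp_v$. Hence the outer automorphism of $K_v$ determined by $\mathrm{ad}_\alpha$ is represented by a quasi-isometry of bounded displacement; by a standard argument (boundedness of displacement forces the automorphism to be inner up to finite order, since $K_v$ acts properly cocompactly on $\beta^\perp_v$ and the group of bounded-displacement isometries is finite modulo $K_v$) this outer class has finite order in $\out(K_v)$.

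Next I would turn the finite order statement into a splitting. Suppose $\mathrm{ad}_\alpha$ has order $m$ in $\out(K_v)$, so $\alpha^m$ conjugates $K_v$ by an inner automorphism, say $\mathrm{ad}_{k_0}$ with $k_0 \in K_v$. Then $\gamma := k_0^{-1}\alpha^m$ centralizes $K_v$, and $\phi(\gamma) = \phi(\alpha^m) = m$ times a generator, so $\gamma$ still maps to a nonzero element of $\Z$. Now $\langle K_v, \gamma\rangle$ is a subgroup of $H_v$ containing $K_v$ with $\gamma$ central in it, and $\langle K_v, \gamma\rangle / K_v \cong \Z$ is free, so the extension $1 \to K_v \to \langle K_v,\gamma\rangle \to \Z \to 1$ splits with central image; thus $\langle K_v, \gamma\rangle \cong K_v \oplus \Z\gamma$. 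Taking $n = m \cdot [\text{index correction}]$ one gets $\phi^{-1}(n\Z) \cong K_v \oplus n\Z$, where the $n\Z$ factor is generated by $\gamma$ (or an appropriate power). The only subtlety is matching indices: $\phi^{-1}(m\Z) = K_v \cdot \langle \alpha^m \rangle = K_v \cdot \langle k_0 \gamma \rangle$, and since $k_0 \in K_v$ this equals $K_v \cdot \langle \gamma \rangle = K_v \oplus \langle\gamma\rangle$, so in fact $n = m$ works directly.

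For the last sentence — arranging that the $n\Z$ factor acts trivially on $\beta^\perp_v$ — I would observe that $\gamma$ acts on $\beta^\perp_v$ as an isometry commuting with the cocompact $K_v$-action (up to bounded distance, as above; but since the $H_v$-action is cellular and respects the product decomposition exactly, $\gamma$ acts as an honest isometry of $\beta^\perp_v$). This isometry lies in the centralizer of $K_v$ inside $\isom(\beta^\perp_v)$, which is a compact group (as $K_v$ acts cocompactly), hence a finite group since $\isom$ of a locally finite cube complex is totally disconnected; so some power $\gamma^{n'}$ acts trivially on $\beta^\perp_v$. Replacing $n$ by $nn'$ and the generator $\gamma$ by $\gamma^{n'}$ gives the $n\Z$ factor acting trivially on $\beta^\perp_v$ while still splitting off as a direct summand of $\phi^{-1}(n\Z)$ (the splitting passes to finite-index subgroups of the $\Z$ factor).

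I expect the main obstacle to be the first step: making precise why conjugation by $\alpha$ gives a bounded-displacement quasi-isometry of $\beta^\perp_v$ and hence a finite-order outer automorphism of $K_v$. This requires being careful that the product decomposition $\beta_{P_v} \simeq \beta_v \times \beta^\perp_v$ is genuinely $H_v$-equivariant (which is Lemma \ref{parallel set in blow-up building} together with item (2) of Definition \ref{an equivariant construction}), so that the induced factor actions are honest actions and $\alpha$'s conjugation on $K_v$ is visible geometrically as translation in the $\beta_v$-direction composed with a $K_v$-equivariant isometry of $\beta^\perp_v$ at bounded distance from the identity — and then invoking the discreteness of the action to conclude finiteness.
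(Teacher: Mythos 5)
Your overall architecture matches the paper's (show $C_\alpha$ has finite order in $\out(K_v)$ using the factor action on $\beta^\perp_v$; split off a central $\Z$; pass to a further power so that the $\Z$ factor acts trivially on $\beta^\perp_v$), and your Step 2 — producing $\gamma=k_0^{-1}\alpha^m$ centralizing $K_v$ and checking $\phi^{-1}(m\Z)=K_v\oplus\langle\gamma\rangle$ — is correct. However, the justifications you give for Steps 1 and 3 both rest on false claims, and in both places the paper's actual mechanism is Lemma \ref{normal subgroup} (the normal-subgroup lemma of Kleiner), which you never invoke.

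In Step 1 you assert that conjugation by $\alpha$ is realized on $\beta^\perp_v$ by a quasi-isometry at bounded distance from the identity. Writing $\alpha=(\alpha_1,\alpha_2)$ with respect to $\beta_{P_v}\cong\beta_v\times\beta^\perp_v$, the relevant map is conjugation by $\alpha_2$ inside $\aut(\beta^\perp_v)$, and $\alpha_2$ need not have bounded displacement: already for $H_v=F_2\times\Z$ acting on $T\times\R$ with $\alpha=(g_0,+1)$ for $g_0$ a hyperbolic element of $F_2$, the induced self-map of $T$ is conjugation by $g_0$, which moves points unboundedly far (here the conclusion still holds, but for a different reason). What is true, and what the paper uses, is that $h(K_v)$ is normal in $h(H_v)$ and acts geometrically on the vertex set of $\beta^\perp_v$, where $h:H_v\to\aut(\beta^\perp_v)$ is the factor homomorphism; Lemma \ref{normal subgroup} then gives $[h(H_v):h(K_v)]<\infty$, from which the finite order of the outer class follows. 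In Step 3 you claim the centralizer of $h(K_v)$ in $\isom(\beta^\perp_v)$ is compact, hence finite, so some power of $\gamma$ acts trivially on $\beta^\perp_v$. This fails whenever $h(K_v)$ has infinite centralizer, e.g.\ when $\beta^\perp_v$ has a Euclidean factor: take $\Ga$ an edge, $H_v=\Z^2$ acting on $\E^2$ by translations, $K_v=\langle(0,1)\rangle$, $\alpha=(1,1)$. Then $\gamma=\alpha$ is already central, but $h(\gamma)$ is an infinite-order translation of $\beta^\perp_v\cong\R$ and no power of $\gamma$ acts trivially. The correct move is not to take a power of $\gamma$ alone but to correct it by an element of $K_v$: by the finite-index statement above there exist $m>0$ and $\beta\in K_v$ with $h(\gamma^m)=h(\beta)$, and then $\gamma^m\beta^{-1}$ lies in $\ker h$, i.e.\ acts trivially on $\beta^\perp_v$, and generates the new $\Z$ factor (in the example, $\gamma\beta^{-1}=(1,0)$).
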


\begin{proof}
Since $H_v$ preserves the product structure of $\beta_{P_v}$, we have a homomorphism $h:H_v\to \aut(\beta^{\perp}_v)$. Moreover, the following diagram commutes:
\begin{center}
$\begin{CD}
K_v                           @>C_{\alpha}>>        K_v\\
@VhVV                                   @VhVV\\
h(K_v)          @>C_{h(\alpha)}>>        h(K_v)
\end{CD}$
\end{center}
Here $C_{\alpha}$ and $C_{h(\alpha)}$ denote conjugation by $\alpha$ and $h(\alpha)$ respectively. Note that the action $h(K_v)\acts \beta^{\perp}_v$ is geometric. We apply Lemma \ref{normal subgroup} below to the vertex set of $\beta^{\perp}_v$ to deduce that $h(K_v)$ is of finite index in $h(H_v)$. Thus $C_{h(\alpha)}$ gives rise to a finite order element in $\out(h(K_v))$. Since $h:K_v\to h(K_v)$ has finite kernel, $C_{\alpha}$ is of finite order in $\out(K_v)$. Then we can find integer $n>0$ such that $\phi^{-1}(n\Z)=K_v\oplus\langle n\alpha\rangle$. Apply Lemma~\ref{normal subgroup} to the vertex set of $\beta^{\perp}_v$ again, we know there exists integer $m>0$ such that $h(mn\alpha)\in h(K_v)$. Suppose $h(mn\alpha)=h(\beta)$ for $\beta\in K_v$. Then $\phi^{-1}(mn\Z)=K_v\oplus\langle (mn\alpha)\beta^{-1}\rangle$, where $(mn\alpha)\beta^{-1}$ acts trivially on $\beta^{\perp}_v$.
\end{proof}

\begin{lem}[B. Kleiner]\label{normal subgroup}
Suppose $Z$ is a metric space such that every $r$-ball contains at most $N=N(r)$ elements. Assume that $A \acts Z$ is a faithful action by $(L,A)$-quasi-isometries, and $B$ is a normal subgroup of $A$ that acts discretely cocompactly on $Z$. Then $B$ is of finite index in $A$.
\end{lem}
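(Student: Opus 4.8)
The plan is to exploit the contrast between the cocompactness of the $B$-action and the coarse homogeneity forced by the $A$-action, using the metric bound $N(r)$ as the quantitative engine. First I would fix a basepoint $z_0 \in Z$ and let $R$ be a cocompactness constant for $B \acts Z$, i.e. every point of $Z$ lies within distance $R$ of the orbit $Bz_0$; discreteness of the $B$-action together with the bound $N(r)$ guarantees that $Bz_0$ is an $R$-net and that the orbit map $B \to Bz_0$ is a quasi-isometry (here one uses that stabilizers are finite, since the action is discrete and $r$-balls are finite). Now suppose for contradiction that $[A:B] = \infty$. Since $B \trianglelefteq A$, the group $A$ acts on the coset space $A/B$, and one gets infinitely many distinct cosets $a_1 B, a_2 B, \dots$. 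The key observation is that for each $a_i$, the orbit $a_i(Bz_0)$ is again an $R'$-net in $Z$ for a uniform $R'$ (because $a_i$ is an $(L,A)$-quasi-isometry and $a_i(Bz_0) = a_i B a_i^{-1} \cdot a_i(z_0) = B \cdot a_i(z_0)$, using normality — so $a_i(Bz_0)$ is literally a $B$-orbit, hence an $R$-net).

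Next I would track the points $a_i(z_0)$. Two elements $a_i, a_j$ lie in the same coset of $B$ if and only if $a_j^{-1} a_i \in B$, in which case $a_i(z_0)$ and $a_j(z_0)$ lie in the same $B$-orbit. So infinitely many distinct cosets produce points $a_i(z_0)$ lying in pairwise distinct $B$-orbits. The contradiction should come from a pigeonhole/counting argument: I would compare the number of $B$-orbits meeting a fixed ball $B(z_0, \rho)$ against the number of points of a single orbit in a larger ball. Concretely, since each $B$-orbit $B\cdot a_i(z_0)$ is an $R$-net, it meets $B(z_0,\rho)$ whenever $\rho \geq R$; thus all infinitely many of these distinct orbits meet $B(z_0, R)$. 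But $B(z_0,R)$ contains at most $N(R)$ points, and distinct $B$-orbits through distinct points of $B(z_0,R)$ are at most $N(R)$ in number — a contradiction once we produce more than $N(R)$ cosets. To make this airtight I would need to rule out that two of the $a_i(z_0)$, while in different cosets, could still land in the same $B$-orbit — but that is exactly what cannot happen: $B\cdot a_i(z_0) = B\cdot a_j(z_0)$ forces $a_j^{-1} b a_i \in \stab(z_0)$ for some $b \in B$, and since $\stab(z_0)$ is finite and $B$ normal, this confines $a_j^{-1} a_i$ to finitely many cosets, not a single one; so one must be slightly more careful and instead bound the number of cosets by $N(R) \cdot |\stab(z_0)|$, still a finite number, still contradicting infinitude.

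The main obstacle, and the step requiring the most care, is the passage from "$B$ acts discretely cocompactly" to genuinely usable quantitative statements: that the orbit $Bz_0$ is an $R$-net with finite point-stabilizers, and that the counting argument correctly accounts for the finite stabilizer ambiguity. The hypothesis that every $r$-ball is finite (with bound $N(r)$) is what converts "discrete" into "finite stabilizers" and "locally finite orbit", and it is what lets the pigeonhole conclusion be phrased in terms of an honest finite cardinality $N(R)$. Once these finiteness facts are in hand, the argument is: infinitely many cosets $\Rightarrow$ infinitely many $B$-orbits (up to finite multiplicity) meeting $B(z_0, R)$ $\Rightarrow$ infinitely many points in $B(z_0,R)$ $\Rightarrow$ contradiction with $N(R) < \infty$. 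I would present it in that order, isolating the net/finiteness facts as a preliminary paragraph so the counting contradiction reads cleanly. Faithfulness of the $A$-action is needed only to ensure $A$ embeds in its image so that "index" is meaningful on the group level rather than merely on the level of the action; I would remark on this at the end.
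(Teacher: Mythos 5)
There is a genuine gap at the crux of your counting argument. Your plan reduces ``infinitely many cosets $a_iB$'' to ``infinitely many distinct $B$-orbits $B\cdot a_i(z_0)$ meeting a fixed finite ball,'' and to control the multiplicity of the map $a_iB\mapsto B\cdot a_i(z_0)$ you invoke the finiteness of $\stab(z_0)$. But the element $a_j^{-1}ba_i$ you produce lies in $A$, not in $B$ (indeed it cannot lie in $B$, since $a_i$ and $a_j$ are in different cosets), so the stabilizer you need is $\stab_A(z_0)$. The hypotheses give discreteness only for the $B$-action; they say nothing about point stabilizers of the $A$-action, and these can be infinite for a faithful action by uniform quasi-isometries on a space with finite balls. (For instance, on $Z=\Z$ the group generated by all transpositions of adjacent integers acts faithfully by $(1,2)$-quasi-isometries with infinite point stabilizers.) What your argument actually requires is that $\stab_A(z_0)$ has finite image in $A/B$; since $B$ together with $\stab_A(z_0)$ could a priori exhaust most of $A$, establishing that is essentially equivalent to the lemma itself, so the argument is circular at exactly the point where it needs to close.

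The paper's proof goes a different route that sidesteps $A$-stabilizers entirely, and it is worth seeing why normality is used more heavily there. Given infinitely many coset representatives $\alpha_i$ moving the basepoint a bounded amount (as in your setup), one first notes $B$ is finitely generated (discrete cocompact action on a bounded-geometry space), then uses normality to pigeonhole the conjugates $\alpha_ib_\lambda\alpha_i^{-1}\in B$ of the finitely many generators: discreteness of $B$ leaves only finitely many candidates for each conjugate, so after passing to a subfamily and adjusting, infinitely many $\alpha_i$ commute with all of $B$. A quasi-isometry commuting with a discrete cocompact group is determined by its restriction to a finite ball, of which there are only finitely many possibilities; faithfulness then forces two of the $\alpha_i$ to coincide, contradicting that they represent distinct cosets. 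If you want to salvage your approach, you would need to replace the appeal to $|\stab_A(z_0)|<\infty$ with an argument of this rigidity type, at which point you have essentially reproduced the paper's proof.
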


\begin{proof}
Suppose $B$ is of infinite index in $A$. Let $\{\alpha_i\}_{i\in I}\subset A$ be a collection of elements such that different $\alpha_i$'s are in different $B$-cosets. Pick a base point $p\in Z$. Since the $B$-action is cocompact, we can assume there exists $D>0$ such that $d(\alpha_i(p),p)<D$ for all $i\in I$.

Note that $B$ is finitely generated from our assumption. Let $\{b_{\lambda}\}_{\lambda\in\Lambda}$ be a finite generating for $B$. For each $b_{\lambda}$, there exists $D'>0$ such that $d(\alpha_ib_{\lambda}\alpha^{-1}_i(p),p)<D'$ for all $i\in I$. Since $\alpha_ib_{\lambda}\alpha^{-1}_i\in B$, by the discreteness of $B$-action, we can assume without loss of generality that $\alpha_ib_{\lambda}\alpha^{-1}_i=\alpha_jb_{\lambda}\alpha^{-1}_j$ for any $i\neq j$. Since there are only finitely many $b_{\lambda}$'s, we can also assume the equality holds for all $b_{\lambda}$'s. Thus $\alpha_ib\alpha^{-1}_i=\alpha_jb\alpha^{-1}_j$ for any $b\in B$ and $i,j\in I$. We can further assume without loss of generality that $\alpha_ib\alpha^{-1}_i=b$ for any $i\in I$ and $b\in B$.

Since $\alpha_i$ commutes with each element of $B$, by the cocompactness of $B$-action, there exists $R>0$ such that $\alpha_i$ is completely determined by its behavior on the $R$-ball centred at $p$. However, $d(\alpha_i(p),p)<D$ for all $i\in I$, so there are only finitely many ways to define $\alpha_i$. Thus there exists a pair $i\neq j$ such that $\alpha_i=\alpha_j$, which yields a contradiction.
\end{proof}

\begin{proof}[Proof of Lemma \ref{conjugate to action by translations}]
Up to passing to a subgroup of index 2, we assume $H_v$ does not flip the ends of $\beta_v$. Let $h:H_v\to\isom(\beta_v)$ be the homomorphism induced by the action $H_v\acts\beta_v$ and let $\bar{H}_v=\textmd{Im\ }h$. We claim the action $\bar{H}_{v}\acts\beta_v$ is geometric. Assuming the claim, we deduce that $\bar{H}_v$ has a finite index subgroup $A$ isomorphic to $\Z$. Define $H'_{v}=h^{-1}(A)$ and the lemma follows.

It suffices to show the subgroup of $\bar{H}_v$ which stabilizes a tip of $\beta_v$ is finite. Pick $x\in t(\beta_v)$. Then the subgroup $K\le H_v$ which stabilizes $\{x\}\times \beta^{\perp}$ acts geometrically on itself, thus $K$ is finitely generated. Since $H_v$ does not flip the ends of $\beta_v$, $K$ acts trivially on the core of $\beta_v$. Thus $h(K)$ is a finitely generated subgroup of $\prod_{i\in I}G_{i}$, here each $G_{i}$ is the permutation group of $n_{i}$ points and there is a uniform upper bound for all the $n_{i}$'s. Hence $h(K)$ is finite.
\end{proof}

\subsection{Special cube complexes and blow-up buildings}
\label{subsec_special and commensurability}
Suppose $\Ga$ is star-rigid and $\out(G(\Ga))$ is finite. Let $H$ be a finitely generated group quasi-isometric to $G(\Ga)$ and let $\rho:H\acts G(\Ga)$ be the associated action by flat-preserving maps. By Lemma \ref{label-preserving}, we can assume the induced action $H\acts\P(\Ga)$ preserves the labelling of vertices (up to passing to a finite index subgroup). Then Definition \ref{an equivariant construction} gives rise to a geometric action of $H$ on a blow-up building $Y(\Ga)$, moreover, this action preserves the labelling of edges.

\begin{lem}
\label{orientation and specialness}
Suppose there exists a torsion free subgroup $H'\le H$ of finite index. Then the following are equivalent:
\begin{enumerate}
\item For each vertex $v\in\mathcal{P}(\Gamma)$, the restricted action $\rho_{v}:H'_{v}\acts \Z_v$ is conjugate to an action by translations ($H'_v\le H'$ is the stabilizer of $v$).
\item $Y(\Ga)/H'$ is a special cube complex.
\end{enumerate}
If either (1) or (2) is true, then $H'$ is isomorphic to a finite index subgroup of $G(\Ga)$, hence $H$ is commensurable to $G(\Ga)$.
\end{lem}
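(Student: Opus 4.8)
The plan is to prove the equivalence (1)$\Leftrightarrow$(2) by first translating condition (1) into a statement about the actions $H'_v\acts\beta_v$, then matching that statement against the three defining conditions of a special cube complex; the final commensurability assertion will then fall out of Theorem~\ref{conjugate to left translation}. For the reformulation, recall that $f_v\colon\Z_v\to t(\beta_v)$ is an $H'_v$-equivariant bijection and that every core vertex of $\beta_v$ carries at least one tip (there are no valence-$2$ vertices), so a simplicial isometry of $\beta_v$ fixing $t(\beta_v)$ pointwise is the identity. Using this I would show that (1) holds iff for every $v\in\P(\Ga)$: (i) no element of $H'_v$ exchanges the two ends of $\beta_v$, so $H'_v$ acts on the core of $\beta_v$ by translations with a translation-length homomorphism $\phi_v\colon H'_v\to\Z$; and (ii) the kernel $K_v$ of $\phi_v$ acts trivially on $\beta_v$. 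For ``(1)$\Rightarrow$(i),(ii)'': an end-exchanging element would have a square acting on $t(\beta_v)$ with only finite orbits (it preserves the level of each tip) while being conjugate to a power of a nontrivial translation of $\Z$, which is impossible; once (i) holds, comparing $\phi_v$ with the homomorphism $\psi_v$ coming from the conjugacy to translations gives $\ker\psi_v=K_v$, and $\ker\psi_v$ visibly acts trivially on $t(\beta_v)$, hence on $\beta_v$. Conversely, given (i),(ii), $H'_v/K_v\cong\Z$ acts freely and cocompactly on the countable set $t(\beta_v)\cong\Z_v$, so that action is conjugate to a translation action of $\Z$, which is (1).

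For (1)$\Rightarrow$(2), assuming (i),(ii), I would equip $Y(\Ga)$ with two $H'$-invariant structures. First, an orientation of all edges: orient the core edges of each $\beta_v$ consistently toward a chosen end (possible and $H'$-equivariant by (i), transporting across the $H'$-orbit of $v$) and orient each tip edge away from the core. Second, a refinement of the $\Ga$-edge-labelling to a labelling by the vertices of a finite graph $\Gamma$: keep the old label on core edges and give each $H'_v$-orbit of tip edges of $\beta_v$ a new label (finitely many by cocompactness), declaring two labels of $\Gamma$ adjacent exactly when two edges of $Y(\Ga)$ with these labels at a common vertex span a square. Using Lemma~\ref{Ga'-component}(3) (adjacency of $\Ga$-labels detects squares), Lemma~\ref{label and core} (the standard flat through a vertex is unique), the fact that each $\Delta(F)$ is a clique of $\P(\Ga)$ with pairwise distinct $\Ga$-labels, and (ii) (so distinct tips off a common core vertex lie in distinct $H'_v$-orbits, hence get distinct refined labels), one checks that adjacency in $\Gamma$ detects squares in $Y(\Ga)$ and that at each vertex each refined label occurs on at most one incoming and at most one outgoing edge. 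Since all this data is $H'$-invariant and $H'$ acts freely on the $CAT(0)$ cube complex $Y(\Ga)$, it descends to the compact non-positively curved cube complex $Y(\Ga)/H'$ and defines a cubical map $Y(\Ga)/H'\to S(\Gamma)$ sending each edge to the loop of its refined label with the chosen orientation. Local injectivity on links follows from the ``at most one in/out'' property, and fullness of the link images follows from flagness of links in $Y(\Ga)/H'$ together with the fact that adjacency in $\Gamma$ detects squares; hence the map is a local isometry and $Y(\Ga)/H'$ is special.

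For (2)$\Rightarrow$(1) I would argue by contraposition: if (1) fails, then for some $v$ either $H'_v$ exchanges the ends of $\beta_v$, or $K_v$ moves a tip. In the first case, tracking how an end-exchanging element permutes the vertical hyperplanes $h_e\cong\beta_v^{\perp}$ (with $\L_2(e)=v$) shows that $Y(\Ga)/H'$ has either a one-sided hyperplane (when some reversal axis is an edge midpoint) or a \emph{directly} self-osculating hyperplane (when all reversal axes are vertices) --- directness occurring because such an element carries one hyperplane to another while reversing its co-orientation, so the $H'$-consistent co-orientation makes the two dual edges at the meeting vertex point the same way. In the second case, a tip-moving element of $K_v$ identifies two distinct tip edges of $\beta_v$ at a common vertex, both oriented away from the core, again producing a directly self-osculating hyperplane. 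Either way $Y(\Ga)/H'$ is not special. Finally, assuming (1) --- which by the above is also implied by (2) --- the standing assumption that $H'\acts\P(\Ga)$ is label-preserving together with (1) are exactly the hypotheses of Theorem~\ref{conjugate to left translation} for $H'$, so $\rho|_{H'}$ is conjugate, via a flat-preserving bijection, to an action of $H'$ on $G(\Ga)$ by left translations; this action is given by a homomorphism $\tau\colon H'\to G(\Ga)$ which is injective (its kernel is finite by discreteness of $\rho$, hence trivial since $H'$ is torsion-free) and has image of finite index (by coboundedness of $\rho$). Thus $H'\cong\tau(H')$ is a finite-index subgroup of $G(\Ga)$, and since $[H:H']<\infty$, $H$ is commensurable to $G(\Ga)$.

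The most delicate step is the cube-complex bookkeeping in the $(1)\Rightarrow(2)$ direction --- establishing the two combinatorial properties of the refined labelling --- and, in the $(2)\Rightarrow(1)$ direction, the careful tracking of co-orientations needed to see that an end-exchange or a tip-move forces a \emph{direct} rather than merely indirect self-osculation downstairs. A recurring point requiring attention throughout is that every construction (the orientation, the refined labels, the map to $S(\Gamma)$) must be genuinely $H'$-equivariant so that it passes to the quotient $Y(\Ga)/H'$.
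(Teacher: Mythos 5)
Your argument is correct and follows essentially the same route as the paper: you recast condition (1) as torsion-freeness of the image of $H'_v$ in $\isom(\beta_v)$, produce an $H'$-invariant, parallelism-compatible edge orientation to obtain specialness, derive one-sided or directly self-osculating hyperplanes in $Y(\Ga)/H'$ from end-exchanging or tip-moving elements for the converse, and invoke Theorem \ref{conjugate to left translation} for the final commensurability statement. The only real difference is cosmetic: for (1)$\Rightarrow$(2) you certify specialness by constructing a local isometry to a Salvetti complex over a refined labelling graph, while the paper verifies the hyperplane conditions of Definition \ref{special} directly; these are equivalent by \cite[Lemma 2.6]{haglund2012combination}.
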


\begin{proof}
(1)$\Rightarrow$(2): Let $H'_{v}\acts\beta_v$ be the restricted action on the associated branched line, then (1) implies the image of $H'_v\to \isom(\beta_v)$ is isomorphic to $\Z$. Thus there exists an $H'_v$-invariant orientation of edges such that for any two adjacent edges of $\beta_v$ which are in the same $H'_v$ orbit, the orientation on them avoid following two configurations:
\begin{center}
\includegraphics[scale=0.5]{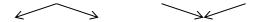}
\end{center}
For each $H'$-orbit of vertices in $\P(\Ga)$, we pick a representative $v$ and assign an $H'_v$-equivariant orientation of edges in $\beta_v$ which satisfies the above condition. This induces orientation of edges in $\beta_{P_v}$ which are parallel to the $\beta_v$ factor (see (\ref{branched product decomposition})). The $H'$-orbits of these oriented edges cover the 1-skeleton of $Y(\Ga)$ (see (2) of Lemma \ref{parallel set in blow-up building}), thus we have an $H'$-invariant edge orientation of $Y(\Ga)$. This orientation respects parallelism of edges, thus each hyperplane of $Y(\Ga)/H'$ is 2-sided. Since $H'$ preserves the edge-labelling of $Y(\Ga)$, hyperplanes of $Y(\Ga)/H'$ do not self-intersect. Inter-osculation is ruled out by Lemma \ref{Ga'-component} (3). The above condition for orientation of edges of $\beta_v$ implies hyperplanes of $Y(\Ga)/H'$ do not directly self-osculate.

(2)$\Rightarrow$(1): Let $\bar{H}'_v$ be the image of $H'_v\to \isom(\beta_v)$. It suffices to show $\bar{H}'_v$ is isomorphic to $\Z$. The proof of Lemma \ref{conjugate to action by translations} implies $\bar{H}'_v$ has a finite index subgroup isomorphic to $\Z$. Thus it suffices to show $\bar{H}'_v$ is torsion-free. Suppose the contrary is true, either there is an element of $\bar{H}'_v$ that flips the two endpoints of an edge in $\beta_v$, which gives rise to a 1-sided hyperplane in $Y(\Ga)/H'$, or there exists an element of $\bar{H}'_v$ that permutes edges in the closed star of some vertex of $\beta_v$ in a non-trivial way, which gives rise to a directly self-osculating hyperplane in $Y(\Ga)/H'$.

The last statement of the lemma follows from Theorem \ref{conjugate to left translation}.
\end{proof}

\section{Branched complexes with trivial holonomy}
\label{sec_branched complexes with trivial holonomy}
In this section, we look at more properties of the quotient of a blow-up building by a proper and cocompact group action. In particular, we introduce a notion of trivial holonomy for such quotient, under which condition one can collapse tori in the quotient to obtain another special cube complex.

\subsection{The branched complex}
\label{subsec_branched complex basics}
Let $Y(\Ga)$ be a blow-up building as in Definition \ref{construction} and let $|\B|$ be the Davis realization of the right-angled building associated with $G(\Ga)$. In the light of Section \ref{subsec_special and commensurability}, we assume $H$ acts geometrically on $Y(\Ga)$ by label-preserving cubical isomorphisms. A \textit{branched complex of type $\Ga$} is the orbifold $K(\Ga)=Y(\Ga)/H$. It is \textit{torsion free} if $H$ is torsion free (in this case the action $H\acts Y(\Ga)$ is free), and it is \textit{special} if $H$ is torsion free and $K(\Ga)$ is a special cube complex. If $H$ is torsion free, and $\Ga$ is a clique or a point, then the corresponding branched complex is also called a \textit{branched torus} or a \textit{branched circle}. The \textit{core} of a branched torus is the quotient of the core of $Y(\Ga)$.

\begin{lem}
\label{rank and label preserving}
$H$ preserves the rank and label of vertices in $Y(\Ga)$. 
\end{lem}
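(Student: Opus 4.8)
The plan is to reduce the statement to a purely combinatorial characterization of the label of a vertex, phrased only in terms of the cube complex and its edge-labelling --- data that $H$ preserves by hypothesis. Since the rank of a vertex is by definition the number of vertices in its label, it then suffices to show that the label is $H$-invariant. The characterization I would aim for is: for a vertex $x\in Y(\Ga)$ and a vertex $v\in\mathrm{Vertex}(\Ga)$, one has $v\in(\text{label of }x)$ if and only if $x$ lies on some bi-infinite combinatorial geodesic of $Y(\Ga)^{(1)}$ all of whose edges carry the label $v$. A label-preserving cubical isomorphism manifestly preserves the right-hand condition for each $v$, so this characterization gives the lemma at once.

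To prove the characterization I would first record an elementary fact about branched lines: in a branched line the core equals the union of all bi-infinite geodesics. Indeed a geodesic that traverses a branch edge terminates at a valence-$1$ tip and cannot be continued, so every bi-infinite geodesic lies in the core; conversely the core is itself a bi-infinite geodesic, being convex in the branched line (a path joining two core vertices that leaves the core would have to enter and then return from a branch, which is impossible). In particular any automorphism of a branched line preserves the core. Next I would note that if $x$ has an edge labelled $v$ then the $\{v\}$-component $L$ containing $x$ (Definition \ref{components}) is unique, and by Lemma \ref{Ga'-component}(1) together with Lemma \ref{characterization of standard components} it is isomorphic, via parallelism, to a standard $\{v\}$-component, hence is a branched line; moreover $L$ is convex in $Y(\Ga)$ by Lemma \ref{convexity of components}. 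Combining these two observations, the right-hand side of the characterization holds if and only if $x$ lies on the core of $L$.

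It then remains to match ``$x$ lies on the core of $L$'' with ``$v\in(\text{label of }x)$''. For this I would invoke the structure theory of $\Ga'$-components developed just after Lemma \ref{intersection of standard components}: setting $\Ga_L^{\perp}$ to be the clique spanned by $(\text{label of }x)\setminus\{v\}$, the graph $\Ga$ contains the join $\{v\}\circ\Ga_L^{\perp}$, and $L$ embeds as $\{y\}\times N$ inside a standard $(\{v\}\circ\Ga_L^{\perp})$-component $M\cong N\times N^{\perp}$ (Lemma \ref{product decomposition of standard components}(2)), where $N$ is a standard $\{v\}$-component and the isomorphism $L\cong N$ carries core to core. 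Writing $x=(n,y)$ and applying Lemma \ref{labelling diference}(2) to edge-paths running inside the two factors of $M$ from $x$ to a rank-$0$ vertex, one finds that the label of $x$ is the union of the $N$-label of $n$ --- which equals $\{v\}$ exactly when $n$ lies on the core of $N$, and is empty otherwise --- with the $N^{\perp}$-label of $y$, which is contained in $\Ga_L^{\perp}$ and hence omits $v$. Therefore $v\in(\text{label of }x)$ if and only if $x$ lies on the core of $L$, completing the characterization and the proof.

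I expect the last step to be the main obstacle: the notion ``$x$ lies on the core of $L$'' is intrinsic to the single branched line $L$, whereas ``$v\in(\text{label of }x)$'' refers to the global labelling of $Y(\Ga)$, and bridging the two requires the parallel-set product decomposition together with the (mild) complication that $L$ itself need not be a standard component, so one must keep careful track of which product factor is responsible for the label $v$.
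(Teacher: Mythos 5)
Your proof is essentially correct, but it takes a genuinely different route from the paper's. The paper first observes that the label of one endpoint of an edge is determined by the label of the other endpoint, the label of the edge, and the two ranks, thereby reducing everything to rank-preservation; it then uses the global join decomposition $\Ga=\Ga_1\circ\cdots\circ\Ga_n$ to reduce to a join-irreducible factor, handles the discrete case by hand, and invokes an external result (\cite[Theorem 5.24]{cubulation}) for the non-discrete irreducible case. You instead give an intrinsic characterization of ``$v$ belongs to the label of $x$'' purely in terms of the labelled $1$-skeleton (membership of $x$ in the core of its $\{v\}$-component), which a label-preserving automorphism manifestly preserves. Your argument is self-contained modulo the lemmas already stated in Section 3, avoids the external citation, and yields a reusable combinatorial description of the vertex labels; the paper's argument is shorter on the page but outsources the real content.

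Two soft spots, both repairable. First, to know that the core of a $\{v\}$-component $L$ is a bi-infinite geodesic of $Y(\Ga)^{(1)}$ (and not merely of $L$) you need $L$ to be \emph{combinatorially} convex, whereas Lemma \ref{convexity of components} only gives local convexity in the $CAT(0)$ sense; this is standard for connected locally convex subcomplexes of $CAT(0)$ cube complexes, but you should either cite it or, more simply, replace the geodesic characterization by the equivalent intrinsic one ``$x$ has at least two $v$-labelled edges'' (equivalently, $x$ is a non-tip vertex of the branched line $L$), which sidesteps convexity entirely. Second, the asserted formula ``label of $(n,y)$ equals the union of the factor labels'' for the local product $M=N\times N^{\perp}$ is more than you need and is not quite immediate; what your argument actually requires is only that $v$ lies in $\Ga_x$ if and only if $v$ lies in $\Ga_n$, and this does follow from Lemma \ref{labelling diference}(2) applied to a path from $x$ to $n$ inside the $N^{\perp}$-slice, whose edge labels lie in $\Ga_L^{\perp}$ and hence omit $v$, combined with the fact that core vertices (resp.\ tips) of the standard $\{v\}$-component $N$ have label $\{v\}$ (resp.\ $\emptyset$) by Lemma \ref{label and core}. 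Phrased that way, both implications close cleanly.
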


\begin{proof}
The discussion before Lemma \ref{labelling diference} implies for any edge $e\subset Y(\Ga)$ with its endpoints denoted by $v_1$ and $v_2$, one can deduce the label of $v_2$ from the label of $v_1$, label of $e$, and the rank of $v_1$ and $v_2$. Thus it suffices to prove $H$ preserves the rank of vertices in $Y(\Ga)$.

Let $\Ga=\Ga_1\circ\Ga_2\circ\cdots\circ\Ga_n$ be the join decomposition of $\Ga$ and pick a standard $\Ga_{i}$-component $Y_i$ in $Y(\Ga)$. Then Lemma \ref{product decomposition of standard components} (2) implies that $\prod_{i=1}^{n}p_{i}:Y(\Ga)\to\prod_{i=1}^{n} Y_i$ is a cubical isomorphism, where $p_i:Y(\Ga)\to Y_i$ is the $CAT(0)$ projection. For each $i$, all $\Ga_i$-components are parallel to each other (see the discussion before Lemma \ref{Ga'-component}), and $H$ permutes these $\Ga_i$-components (since $H$ is label-preserving). Thus $H$ respects the product decomposition $Y(\Ga)\cong\prod_{i=1}^{n} Y_i$ and induce trivial permutation of the factors. Note that the label of a vertex $y\in Y(\Ga)$ is the clique spanned by the union of labels of $p_{i}(y)$ for $1\le i\le n$. Thus it suffices to prove the lemma when $n=1$. If $\Ga$ is discrete, then $Y(\Ga)$ is a tree. The label-preserving condition implies $H$ preserves rank 0 vertices. If $\Ga$ is not discrete, then \cite[Theorem 5.24]{cubulation} implies $H$ preserves the rank of all vertices in $Y(\Ga)$.
\end{proof}

It follows from the above lemma that the action $H\acts Y(\Ga)$ descends to an action $H\acts|\B|$ through the canonical projection $\pi:Y(\Ga)\to |\B|$. Moreover, the action $H\acts|\B|$ preserves the labelling and rank of vertices.

\begin{lem}
\label{finite index subgroup without inversion}
There is a finite index subgroup $H'\le H$ such that $H'\acts Y(\Ga)$ is an action without inversion in the sense that if $h\in H'$ fixes a cube in $Y(\Ga)$, then it fixes the cube pointwise.
\end{lem}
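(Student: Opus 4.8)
The plan is to realize the obstruction to acting without inversion as a finite-dimensional representation and pass to its kernel. First I would recall the standard cube-complex trick: the action $H \curvearrowright Y(\Ga)$ is an action by cubical isomorphisms on a \emph{uniformly locally finite} $CAT(0)$ cube complex (uniform local finiteness is part of Definition \ref{an equivariant construction}(1)). The barycentric subdivision $Y(\Ga)'$ is then a simplicial complex on which $H$ acts simplicially, and an element $h \in H$ acts without inversion on $Y(\Ga)$ precisely when it acts without inversion on $Y(\Ga)'$ in the simplicial sense, i.e.\ whenever $h$ stabilizes a simplex of $Y(\Ga)'$ it fixes that simplex pointwise. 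So it suffices to produce a finite-index subgroup acting on $Y(\Ga)'$ without simplicial inversions.

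The key point is that an inversion can only occur \emph{inside a single cube}: if $h$ fixes a cube $\sigma \subset Y(\Ga)$ setwise, it permutes the $2^{\dim \sigma}$ vertices of $\sigma$. Because $Y(\Ga)$ is $CAT(0)$, any element stabilizing $\sigma$ fixes its barycenter, hence fixes the convex hull of the $h$-orbit of any vertex; in any case the relevant datum is the permutation $h$ induces on the vertices of $\sigma$ together with the way it acts on the (finitely many) cubes through the fixed barycenter. The cleanest way to package this: fix a point $p \in Y(\Ga)^{(0)}$, and for $R$ large consider the finite subcomplex $B_R(p)$; by uniform local finiteness $B_R(p)$ has a bounded number of cells, so $\operatorname{Aut}(B_R(p))$ is a finite group. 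Since the action is proper, each cube stabilizer is finite, and since it is cocompact there are only finitely many $H$-orbits of cubes; choose $R$ so large that for every cube $\sigma$ there is $h_\sigma \in H$ with $h_\sigma \sigma \subset B_R(p)$. Then define $\varphi : H \to \operatorname{Sym}(\text{cells of } B_R(p))$ — more precisely, restrict attention to the (finite) group $F$ of cellular automorphisms of the star of $p$ that extend the combinatorial data — this is not quite a homomorphism on all of $H$, so instead I would use the following standard argument.

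The clean argument: since the action is proper and cocompact, $K(\Ga) = Y(\Ga)/H$ is a compact orbifold, and the orbifold fundamental group sequence, or more elementarily the deck-transformation description, shows the cube-stabilizers inject into a fixed finite group. Concretely, define $N$ to be the least common multiple of the orders of the (finitely many conjugacy classes of, hence finitely many isomorphism types of) cube stabilizers, and more to the point: an inversion on a cube $\sigma$ forces a \emph{nontrivial} element in the finite group $\operatorname{Aut}(\sigma) \cong (\Z/2)^{\dim\sigma} \rtimes \operatorname{Sym}(\dim\sigma)$ of combinatorial automorphisms. Consider the (finite, by uniform local finiteness and cocompactness) set $\mathcal C$ of $H$-orbits of cubes; for each orbit pick a representative $\sigma_i$ with (finite) stabilizer $H_{\sigma_i}$, which maps to $\operatorname{Aut}(\sigma_i)$. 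Since $H$ is finitely generated (it acts geometrically) and residual finiteness is \emph{not} available a priori, I would instead invoke: $H$ acts geometrically, hence properly discontinuously cocompactly, on the $CAT(0)$ complex $Y(\Ga)$, so by a theorem of Bridson (\cite[III.$\Gamma$, Theorem 1.1]{bridson1999metric}, or the cube-complex version) — actually the cleanest route is: the barycentric subdivision $Y(\Ga)''$ (subdividing \emph{twice}) has the property that $H$ acts on it \emph{without inversions already}, because after two barycentric subdivisions no group of simplicial automorphisms can invert a simplex. This is the standard fact that two barycentric subdivisions kill inversions, and it requires no hypothesis on $H$ at all.

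Thus the honest plan is the two-subdivision argument, with the subtlety that we must then check $Y(\Ga)''$ is still "cubical enough": since we only use $Y(\Ga)''$ as an auxiliary simplicial object to detect inversions, and "fixes a cube of $Y(\Ga)$ $\Rightarrow$ fixes it pointwise" is implied by "fixes a simplex of $Y(\Ga)''$ $\Rightarrow$ fixes it pointwise" (a fixed cube of $Y(\Ga)$ has its barycenter fixed, and the simplices of $Y(\Ga)''$ refining $\sigma$ and incident to $\hat\sigma$ encode each vertex of $\sigma$ separately), the result follows. \textbf{The main obstacle} is precisely this last translation: verifying that "no inversion on the twice-subdivided simplicial complex" genuinely implies the cube-complex statement "$h$ fixes a cube setwise $\Rightarrow$ pointwise", which amounts to checking that each vertex of each cube $\sigma$ is separated from the others by the simplicial structure of $Y(\Ga)''$ near $\hat\sigma$ — routine but needing care. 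Alternatively, if one prefers a finite-index subgroup that literally preserves the cubical (not subdivided) structure, the main work is producing a homomorphism from $H$ to a finite group detecting inversions; this can be done by mapping $H$ to the automorphism group of $K(\Ga)$'s "cube-type colored" first barycentric subdivision and taking the kernel of the induced action on the finite set of (cell, local-inversion-datum) pairs, using properness and cocompactness to see this set is finite and the kernel is finite-index.
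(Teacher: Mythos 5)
There is a genuine gap, and it sits exactly where you flagged ``the main obstacle.'' The fact that a simplicial action on a barycentric subdivision has no inversions (already true after \emph{one} subdivision: a simplex of $Y(\Ga)'$ is a chain of cubes of distinct dimensions, so any automorphism stabilizing it fixes each barycenter) is a statement about the \emph{subdivided} complex and does not transfer back to $Y(\Ga)$. The standard counterexample is $\Z/2$ flipping a single edge $e$: the flip acts without inversions on the subdivision (it fixes the midpoint and swaps the two half-edges' outer vertices, stabilizing no simplex of the subdivision setwise except ones it fixes pointwise), yet it inverts $e$ itself. Note also that your subdivision route never passes to a finite-index subgroup (``it requires no hypothesis on $H$ at all''), so if the translation worked it would prove that \emph{every} cubical action is inversion-free with $H'=H$ --- which is false, e.g.\ for $D_\infty\acts\R$. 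Your fallback sketches do not close the gap either: you concede the map to $\operatorname{Sym}(\text{cells of }B_R(p))$ is not a homomorphism, and the final ``kernel of the induced action on (cell, local-inversion-datum) pairs'' is precisely the homomorphism to a finite group whose existence is the content of the lemma; it is asserted, not constructed.

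A telling symptom is that your argument never uses the standing hypothesis that $H$ is \emph{label-preserving}, which is what the paper's proof leans on. The actual argument: for each vertex $u\in\Ga$, the $u$-labelled hyperplanes are pairwise disjoint (crossing hyperplanes have adjacent labels and $\Ga$ is simple), so the dual cube complex of the wall space $(Y(\Ga),\h_u)$ is a tree $T$ on which $H$ acts because $H$ permutes $\h_u$; passing to the index-$\le 2$ subgroup preserving the bipartition of $T$ kills all flips of $u$-labelled edges. Intersecting these subgroups over the finitely many $u\in\Ga$ yields $H'$ flipping no edge of $Y(\Ga)$, and since $H'$ is label-preserving and the edge-directions of a cube carry distinct labels, fixing a cube setwise forces fixing it pointwise. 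If you want to salvage your approach, the missing ingredient is a genuine homomorphism from $H$ to a finite group detecting inversions; the paper manufactures one label at a time out of the tree dual to each family of same-labelled hyperplanes.
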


\begin{proof}
Pick vertex $u\in\Ga$. Let $\h_u$ be the collection of hyperplanes in $Y(\Ga)$ that are labelled by $u$ (see Definition~\ref{edge and hyperplane labelling}). Then distinct elements in $\h_u$ have empty intersection. It follows that the dual cube complex to the wall space $(Y(\Ga),\h_u)$ is a tree, which we denote by $T$. In other words, $T$ has a vertex for each connected component of $Y(\Ga)\setminus \h_u$, and an edge when one travels from one component to another by crossing a hyperplane.

Since $H$ is label-preserving, it permutes elements in $\h_u$. Thus there is an induced action $H\acts T$. Up to passing to an index 2 subgroup, we assume $H$ acts on $T$ without inversion. Thus $H\acts Y(\Ga)$ does not flip any edge in $Y(\Ga)$ labelled by $u$. By repeating the above argument for each vertex in $\Ga$, there exists a finite index subgroup $H'\le H$ which does not flip any edge in $Y(\Ga)$. Since $H'$ is label-preserving, it acts on $Y(\Ga)$ without inversion.
\end{proof}

For comparison, we look at the action of $H\acts|\B|$. It preserves rank of vertices and label of edges, hence it is already an action without inversion.

From now on, we assume $H$ acts on $Y(\Ga)$ without inversion. Then the cube complex structure on $Y(\Ga)$ descends to $K(\Ga)$, and there is a well-defined labelling of edges of the $Y(\Ga)/H$ by vertices of $\Ga$. Moreover, each vertex of $K(\Ga)$ has a well-defined rank and label by Lemma \ref{rank and label preserving}. Similarly, we can define cube complex structure on $|\B|/H$ as well as labelling of vertices and edges and rank of vertices. 

As cube complexes, $K(\Ga)$ and $|\B|/H$ may not be non-positively curved. For example, let $\Ga$ be an edge, then $X(\Ga)\cong \E^2$. Suppose $H$ acts on $\E^{2}$ by translations generated by $(1,1)$ and $(1,-1)$. Then we can obtain $|\B|/H$ by taking two unit squares and identify them along two consecutive edges.

\begin{lem}
\label{inverse image of Ga'-component}
Let $L$ be a $\Ga'$-component in $|\B|/H$ and let $K$ be a connected component of $p^{-1}(L)$, where $p:|\B|\to |\B|/H$. Let $H_K\le H$ be the stabilizer of $K$. Then the natural map $i:K/H_K\to L$ is a cubical isomorphism. In particular, $p$ maps $\Ga'$-components to $\Ga'$-components. Similar statement holds for $\Ga'$-components in $K(\Ga)$.
\end{lem}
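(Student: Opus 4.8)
The plan is to reduce the assertion to the standard fact that the quotient of a subcomplex by the stabiliser of one of the components of its preimage recovers the subcomplex; the only real content is that $\Ga'$-components are the correct ``connected pieces'' for the quotient map and that $H$ permutes transitively those lying over a fixed $L$. I would work with $p\colon |\B|\to|\B|/H$; the argument for $p\colon Y(\Ga)\to K(\Ga)$ is word for word the same, using that $H$ acts on $Y(\Ga)$ by label-preserving cubical isomorphisms without inversion together with Lemma~\ref{rank and label preserving} and the fact that $\Ga'$-components of $Y(\Ga)$ are convex subcomplexes.

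Two preliminary remarks. First, since $H$ is label-preserving and acts without inversion, $p$ is a cubical map which is an isometry on each cube, preserves edge labels, and is \emph{link-surjective}: for a vertex $\tilde v\in|\B|$, every edge $e$ at $p(\tilde v)$ is the image of an edge at $\tilde v$ (pick any preimage edge $\tilde e'$ with endpoint $\tilde v'$; then $p(\tilde v')=p(\tilde v)$ forces $\tilde v'=g\tilde v$ for some $g\in H$, and $g^{-1}\tilde e'$ is the required lift). Second, two $\Ga'$-components of $|\B|$ (resp.\ of $|\B|/H$) that meet must coincide, because their union again satisfies (1)--(3) of Definition~\ref{components}, so maximality forces each to equal the union; in particular a vertex lies in at most one $\Ga'$-component. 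Now I would establish: (A) $p$ carries each $\Ga'$-component $K'$ of $|\B|$ onto a $\Ga'$-component of $|\B|/H$; (B) each connected component $K$ of $p^{-1}(L)$ satisfies $p(K)=L$ and is a $\Ga'$-component of $|\B|$; (C) $H$ acts transitively on the components of $p^{-1}(L)$. For (A): $p(K')$ is connected, has all edges labelled in $\Ga'$, and realises every vertex of $\Ga'$ as an edge label, so it lies in a unique $\Ga'$-component $L'$; were $p(K')\subsetneq L'$, connectedness of $L'$ would give an edge $e$ of $L'$ with one endpoint $v$ in $p(K')$ and $e\not\subset p(K')$, and lifting $v$ to $\tilde v\in K'$ and $e$ to an edge $\tilde e$ at $\tilde v$ (link-surjectivity; $\tilde e$ labelled in $\Ga'$ and $\tilde e\not\subset K'$) would make $K'\cup\tilde e$ a strictly larger complex satisfying (1)--(3), contradicting maximality; hence $p(K')=L'$. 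For (B): if $p(K)\subsetneq L$, choose similarly an edge $e$ of $L$ with endpoint $v=p(\tilde v)$, $\tilde v\in K$, and $e\not\subset p(K)$, and lift $e$ to $\tilde e$ at $\tilde v$; then $p(\tilde e)=e\in L$ gives $\tilde e\subset p^{-1}(L)$, so $\tilde e$ lies in the component $K$ and $e\subset p(K)$, a contradiction — hence $p(K)=L$, so $K$ has all edges labelled in $\Ga'$ and realises every vertex of $\Ga'$; moreover $K$ is maximal, for a strictly larger connected $M\supsetneq K$ with edges labelled in $\Ga'$ would contain an edge $e$ meeting $K$ with $p(e)$ labelled in $\Ga'$ and meeting $L$, so $L\cup p(e)$ would satisfy (1)--(3), forcing $p(e)\subset L$, then $e\subset p^{-1}(L)$ and $e\subset K$, a contradiction. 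For (C): given components $K_1,K_2$ of $p^{-1}(L)$, by (B) both map onto $L$; pick a vertex $v\in L$ and vertices $\tilde v_i\in K_i$ over $v$, and $h\in H$ with $h\tilde v_1=\tilde v_2$; then $hK_1$ is a $\Ga'$-component of $|\B|$ containing $\tilde v_2$, hence equals $K_2$.

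It remains to assemble the isomorphism. By (C), $p^{-1}(L)=\bigsqcup_{hH_K\in H/H_K}hK$, so the natural map $i\colon K/H_K\to L$ induced by $p|_K$ is well defined; it is onto because $p(K)=L$ by (B), and one-to-one because $p(x)=p(y)$ with $x,y\in K$ yields $h\in H$ with $hx=y$, whence $hK$ is a component of $p^{-1}(L)$ meeting $K$ at $y$, so $hK=K$, $h\in H_K$, and $x,y$ lie in a single $H_K$-orbit. Since $p$ is cubical and isometric on each cube and $H_K$ acts on $K$ without inversion, $i$ is a bijective cellular map restricting to an isometry on each cube, hence a cubical isomorphism; the ``in particular'' clause is (A). As noted, the whole argument transfers verbatim to $Y(\Ga)\to K(\Ga)$. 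I expect (B) to be the step needing the most care: showing that each component of $p^{-1}(L)$ maps \emph{onto} all of $L$ and is a \emph{full} $\Ga'$-component is precisely where link-surjectivity and the ``meet $\Rightarrow$ coincide'' property of $\Ga'$-components both enter, after which (C) and the quotient computation are purely formal.
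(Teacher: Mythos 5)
Your proposal is correct and follows essentially the same route as the paper: the paper likewise notes that each component of $p^{-1}(L)$ is a $\Ga'$-component, gets surjectivity of $i$ by lifting paths through the quotient map, and gets injectivity from the fact that label-preservation forces $h\cdot K\cap K\neq\emptyset\Rightarrow h\in H_K$. You simply spell out in more detail the steps the paper leaves implicit (that components of the preimage map \emph{onto} $L$ and are genuinely maximal), which is fine.
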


\begin{proof}
Note that each connected component of $p^{-1}(L)$ is a $\Ga'$-component. The cube complex structure of $K$ descends to $K/H_K$, and $i$ is a cubical map which maps cubes in $K/H_K$ to cubes in $L$ of the same dimension. To show $i$ is surjective, it suffices to show $p:p^{-1}(L)\to L$ is surjective. Since the action of $H$ respects cubical structure, for each point $x\in |\B|/H$ and $y\in p^{-1}(x)$, we can lift each piecewise linear path starting at $x$ to a path starting at $y$. Thus the surjectivity follows. For the injectivity of $i$, it suffices to show for any $h\in H$, if $h\cdot K\cap K\neq\emptyset$, then $h\in H_K$. This is true because $H$ is label-preserving. The $K(\Ga)$ case is similar. 
\end{proof}

We define \textit{standard $\Ga'$-components} in $K(\Ga)$ (or $|\B|/H$) to be those $\Ga'$-components which contain a rank 0 vertex. Since the canonical projection $\pi:Y(\Ga)\to |\B|$ is $H$-equivariant, it induces a cubical map $\pi_{H}:K(\Ga)\to |\B|/H$. 

\begin{remark}
\label{surjection on fundamental groups}
We define a natural map $h_{\ast}:H\to \pi_1(|\B|/H)$ as follows. Pick a base point $\star\in|\B|/H$ and one of its lifts $x\in|\B|$. For each $h\in H$, we pick a path $\omega_h$ connecting $x$ and $h\cdot x$, and map $h$ to the element in $\pi_1(|\B|/H,\star)$ represented by the image of $\omega_h$ in $|\B|/H$. Since $|\B|$ is simply connected, $h_{\ast}$ is well-defined and is a group homomorphism. Moreover, since we can lift piecewise linear paths from $|\B|/H$ to $|\B|$, $h_{\ast}$ is surjective.

When $H$ is torsion free, $h_{\ast}$ can be defined alternatively as follows. We can pick a base point $\bar{\star}\in K(\Ga)$ such that $\pi_{H}(\bar{\star})=\star$ and identify $\pi_1(K(\Ga),\bar{\star})$ with $H$ by choosing a lift of $\bar{\star}$ in $Y(\Ga)$. Then the map $(\pi_{H})_{\ast}:\pi_1(K(\Ga),\bar{\star})\to\pi_1(|\B|/H,\star)$ coincides with $h_{\ast}$ up to conjugation.
\end{remark}

\begin{lem}\
\label{correspondence between standard components downstairs}
\begin{enumerate}
\item The inverse image of a standard $\Ga'$-component in $|\B|/H$ under $\pi_H$ is a standard $\Ga'$-component.
\item The $\pi_H$-image of a standard $\Ga'$-component is a standard $\Ga'$-component.
\end{enumerate}
Thus $\pi_H$ induces a 1-1 correspondence between standard $\Ga'$-components in $K(\Ga)$ and standard $\Ga'$-components in $|\B|/H$.
\end{lem}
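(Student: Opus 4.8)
The plan is to lift the whole statement to the $H$-equivariant commutative square $\pi_H\circ q=p\circ\pi$, where $q\colon Y(\Ga)\to K(\Ga)$ and $p\colon|\B|\to|\B|/H$ are the quotient maps, and then reduce it to the correspondence already established upstairs in Lemma~\ref{correspondence of standard components}. Two auxiliary facts I would use repeatedly are: (a) $q$ and $p$ preserve labels and, by Lemma~\ref{rank and label preserving} and the discussion following it, ranks of vertices, so by Lemma~\ref{inverse image of Ga'-component} each of them maps $\Ga'$-components to $\Ga'$-components and sends rank~$0$ vertices to rank~$0$ vertices; and (b) two $\Ga'$-components with the \emph{same} support $\Ga'$ which have a point in common must coincide, which is immediate from the maximality clause in Definition~\ref{components}.

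For part (1), I would fix a standard $\Ga'$-component $\bar L\subset|\B|/H$ and a connected component $L$ of $p^{-1}(\bar L)$. By Lemma~\ref{inverse image of Ga'-component}, $L$ is a $\Ga'$-component mapped onto $\bar L$ by $p$, and since $\bar L$ contains a rank~$0$ vertex and $p$ preserves rank, so does $L$; hence $L$ is a \emph{standard} $\Ga'$-component of $|\B|$, and $\tilde L:=\pi^{-1}(L)$ is a standard $\Ga'$-component of $Y(\Ga)$ by Lemma~\ref{correspondence of standard components}(1). As $H$ permutes the components of $p^{-1}(\bar L)$ transitively (because $\bar L$ is connected; cf.\ the path-lifting argument in the proof of Lemma~\ref{inverse image of Ga'-component}), we get $p^{-1}(\bar L)=\bigcup_{h\in H}h\cdot L$, hence $\pi^{-1}(p^{-1}(\bar L))=\bigcup_{h\in H}h\cdot\tilde L$, and descending through the square, $\pi_H^{-1}(\bar L)=q(\tilde L)$. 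Then $q(\tilde L)$ is a $\Ga'$-component (Lemma~\ref{inverse image of Ga'-component} applied in $K(\Ga)$) containing a rank~$0$ vertex (the $q$-image of one in $\tilde L$), i.e.\ a standard $\Ga'$-component of $K(\Ga)$.

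For part (2), given a standard $\Ga'$-component $\bar N\subset K(\Ga)$ and a component $N$ of $q^{-1}(\bar N)$, the same reasoning makes $N$ a standard $\Ga'$-component of $Y(\Ga)$, so $\pi(N)$ is a standard $\Ga'$-component of $|\B|$ by Lemma~\ref{correspondence of standard components}(2), and therefore $\pi_H(\bar N)=\pi_H(q(N))=p(\pi(N))$ is a standard $\Ga'$-component of $|\B|/H$. To conclude that these two assignments are mutually inverse I would note that $\pi_H$ is surjective (the other three maps in the square are), so $\pi_H(\pi_H^{-1}(\bar L))=\bar L$, while $\bar N\subseteq\pi_H^{-1}(\pi_H(\bar N))$ with the right-hand side a standard $\Ga'$-component by part (1); since both have support $\Ga'$ and meet, fact (b) forces them to coincide.

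The step I expect to be the main obstacle is the identification $\pi_H^{-1}(\bar L)=q(\tilde L)$ --- equivalently, the assertion that $\pi_H^{-1}(\bar L)$ is \emph{connected} rather than a disjoint union of the quotients of the various translates $h\cdot\tilde L$. This is precisely the point at which transitivity of the $H$-action on the components of $p^{-1}(\bar L)$ must be used, and it rests on the fact (implicit in the proof of Lemma~\ref{inverse image of Ga'-component}) that piecewise-linear paths can be lifted through $p$ even when $H$ has torsion. Everything else should be routine bookkeeping with the commutative square and the label-, rank-, and maximality-properties of $\Ga'$-components.
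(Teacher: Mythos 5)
Your proposal is correct and follows essentially the same route as the paper: both lift the statement through the commutative square $\pi_H\circ q=p\circ\pi$, use Lemma \ref{inverse image of Ga'-component} to get transitivity of the $H$-action on the components of $p^{-1}(\bar L)$, and invoke Lemma \ref{correspondence of standard components} upstairs to conclude. The only (immaterial) difference is that the paper deduces part (2) from part (1) via the bijection on rank-$0$ vertices and the label-preservation of $\pi_H$ on edges, whereas you run the square argument a second time.
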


\begin{proof}
Note that for any edge $e\subset K(\Ga)$, either $\pi_{H}(e)$ is a point, or $\pi_{H}(e)$ is an edge whose label is the same as $e$. Moreover, $\pi_H$ induces a bijection between rank 0 vertices (since this is true for the canonical projection $\pi:Y(\Ga)\to |\B|/H$). Thus (2) follows from (1). Now we prove (1). Consider the following commuting diagram.
\begin{center}
$\begin{CD}
Y(\Ga)                        @>\pi>>       |\B|\\
@VVqV                              @VVpV\\
K(\Ga)                          @>\pi_H>>       |\B|/H
\end{CD}$
\end{center}
Pick a standard $\Ga'$-component $L\subset|\B|/H$. Then $p^{-1}(L)$ is a disjoint union of standard $\Ga'$-components. Lemma \ref{inverse image of Ga'-component} implies $H$ acts transitively on the components of $p^{-1}(L)$. By Lemma \ref{correspondence of standard components}, $\pi^{-1}(p^{-1}(L))$ is also a disjoint union of standard $\Ga'$-components, and $H$ acts transitively on these components. Thus $q\circ\pi^{-1}\circ p^{-1}(L)=\pi^{-1}_{H}(L)$ is a standard $\Ga'$-component. 
\end{proof}

The following is a consequence of Lemma \ref{intersection of standard components}.
\begin{lem}
\label{intersection of standard components downastairs}
For $1\le i\le n$, let $L_i$ be a standard $\Ga_i$-component in $K(\Ga)$ or $|\B|/H$. If $\cap_{i=1}^{n}L_i\neq\emptyset$, then it is a disjoint union of standard $\cap_{i=1}^{n}\Ga_i$-components.
\end{lem}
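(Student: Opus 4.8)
The plan is to prove this by induction on $n$, reducing everything to the case $n=2$ and then lifting to the blow-up building (resp.\ to $|\B|$), where Lemma~\ref{intersection of standard components} already controls intersections of standard components. Since the $Y(\Ga)$- and $|\B|$-versions of all the cited lemmas are available, I would treat $K(\Ga)$ and $|\B|/H$ uniformly; below I phrase the argument for $|\B|/H$, writing $p\colon|\B|\to|\B|/H$ for the quotient map.

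For the base case $n=2$, let $L_1,L_2$ be standard $\Ga_1$- and $\Ga_2$-components of $|\B|/H$ with $L_1\cap L_2\neq\emptyset$, and fix $x\in L_1\cap L_2$. First I would observe, using Lemma~\ref{inverse image of Ga'-component} together with the fact that $H$ preserves the rank of vertices of $|\B|$ (Lemma~\ref{rank and label preserving} and the remark following it), that each connected component of $p^{-1}(L_i)$ is a \emph{standard} $\Ga_i$-component of $|\B|$. Choosing a lift $\tilde x$ of $x$ and letting $\tilde L_i$ be the component of $p^{-1}(L_i)$ containing $\tilde x$, Lemma~\ref{intersection of standard components} applied in $|\B|$ shows that $\tilde N:=\tilde L_1\cap\tilde L_2$ is a standard $\Ga_1\cap\Ga_2$-component, nonempty since $\tilde x\in\tilde N$. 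The key point is then that the connected component of $L_1\cap L_2$ containing $x$ equals $p(\tilde N)$: the inclusion $p(\tilde N)\subseteq L_1\cap L_2$ is clear and $p(\tilde N)$ is connected, while conversely any path in $L_1\cap L_2$ issuing from $x$ lifts to a path issuing from $\tilde x$ which, since $p^{-1}(L_i)$ is the disjoint union of its connected components, must stay inside $\tilde L_1$ and inside $\tilde L_2$, hence inside $\tilde N$. By Lemma~\ref{inverse image of Ga'-component} the set $p(\tilde N)$ is a standard $\Ga_1\cap\Ga_2$-component of $|\B|/H$. As $x$ was arbitrary, every connected component of $L_1\cap L_2$ is of this form, and since distinct standard components with the same support are disjoint (maximality in Definition~\ref{components}), $L_1\cap L_2$ is a disjoint union of standard $\Ga_1\cap\Ga_2$-components.

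For the inductive step, assuming the claim for $n-1$, I would set $M:=\bigcap_{i=1}^{n-1}L_i$, which is then a disjoint union of standard $\bigcap_{i=1}^{n-1}\Ga_i$-components, and write $\bigcap_{i=1}^{n}L_i=M\cap L_n$ as the disjoint union over the connected components $M_0$ of $M$ of the sets $M_0\cap L_n$. Applying the $n=2$ case to the standard component $M_0$ and the standard $\Ga_n$-component $L_n$ expresses each nonempty $M_0\cap L_n$ as a disjoint union of standard $\big(\bigcap_{i=1}^{n-1}\Ga_i\big)\cap\Ga_n=\bigcap_{i=1}^{n}\Ga_i$-components, and assembling these over all $M_0$ gives the claim; the case $n=1$ is trivial.

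I expect the only nontrivial point to be the identification of the connected component of $L_1\cap L_2$ through $x$ with $p(\tilde N)$: one must ensure that a path in $L_1\cap L_2$ lifts to a path staying simultaneously inside the chosen lifts $\tilde L_1$ and $\tilde L_2$. This uses the disjoint-components structure of $p^{-1}(L_i)$ and the transitivity of $H$ on these components coming from Lemma~\ref{inverse image of Ga'-component}, but it introduces no new geometric input beyond Lemma~\ref{intersection of standard components}; everything else is bookkeeping.
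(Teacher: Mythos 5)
Your argument is correct and is essentially the paper's intended route: the paper states this lemma with no proof beyond declaring it a consequence of Lemma \ref{intersection of standard components}, and your write-up simply supplies the routine details (lifting a point of $\cap_i L_i$ to $|\B|$ or $Y(\Ga)$, applying Lemma \ref{intersection of standard components} upstairs, and using the path-lifting and component-mapping facts from Lemma \ref{inverse image of Ga'-component} together with rank preservation to identify each component of the intersection downstairs with the image of a standard $\cap_i\Ga_i$-component). No gaps; the reduction to $n=2$ and the induction are exactly the expected bookkeeping.
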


\subsection{Trivial holonomy and specialness}
\label{subsec_trivial holonomy and specialness}
For each vertex $v\in \P(\Ga)$, let $P_v$ be as in Definition \ref{v-parallel set} and let $\beta_{P_v}$ be as in Definition \ref{def_beta_K}. Suppose $H_v\le H$ is the stabilizer of $\beta_{P_v}$ and let $\beta_{P_v}=\beta_v\times \beta^{\perp}_v$ be as in Lemma \ref{parallel set in blow-up building}. Since $H_v$ preserves this decomposition, there is a factor action $\rho_v:H_v\acts \beta_{v}$. The action $H_v\acts \beta_{P_v}$ is \textit{reducible} if there is a decomposition $H_v=L_v\oplus\Z$ such that $L_v$ acts trivially on the $\beta_{v}$ factor and $\Z$ acts trivially on the $\beta^{\perp}_v$ factor.

\begin{lem}
\label{intersection and normal subgroup for reducible action}
Let $H'_v$ be a finite index torsion free normal subgroup of $H_v$ and let $H_1$ and $H_2$ be two finite index subgroups of $H'_v$. If the induced action $H_i\acts \beta_{P_v}$ is reducible for $i=1,2$, then 
\begin{enumerate}
\item the induced action of $H_1\cap H_2$ is also reducible;
\item the induced action of the largest normal subgroup of $H_v$ contained in $H_1$ is reducible.
\end{enumerate}
\end{lem}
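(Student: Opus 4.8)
The plan is to carry out the whole argument inside the product decomposition $\beta_{P_v}=\beta_v\times\beta^{\perp}_v$, using two preliminary observations. First, every element of $H_v$ is label-preserving and stabilizes $\beta_{P_v}$, so it preserves this decomposition --- the two factors carry disjoint sets of edge-labels, namely $\bar v$ and the vertices of $lk(\bar v)$ --- and hence $H_v$ comes with two factor homomorphisms $\rho_v\colon H_v\to\isom(\beta_v)$ and $h\colon H_v\to\aut(\beta^{\perp}_v)$, the pair $(\rho_v,h)$ being injective on $H'_v$ because $H'_v$ acts properly on the $CAT(0)$ cube complex $\beta_{P_v}$ and is torsion free. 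Second, for a reducible subgroup $K\le H'_v$, written $K=L\oplus\langle t\rangle$ with $L$ trivial on $\beta_v$ and $t$ trivial on $\beta^{\perp}_v$, the element $\rho_v(t)$ has infinite order: otherwise a power of $t$ is trivial on both factors, hence on $\beta_{P_v}$, hence trivial by faithfulness. Consequently $L=\ker(\rho_v|_K)$ and $\rho_v(K)=\langle\rho_v(t)\rangle\cong\Z$, so ``$K$ is reducible'' is equivalent to the clean statement that $K$ contains an element $t$ with $h(t)=1$ whose $\rho_v$-image generates $\rho_v(K)$; the decomposition $K=\ker(\rho_v|_K)\oplus\langle t\rangle$ is then automatic, the two pieces commuting because $\ker(\rho_v|_K)\subseteq L$.

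For part (1) I would set $G=H_1\cap H_2$ and write $H_i=L_i\oplus\langle t_i\rangle$ for the given reducible decompositions. Since each $H_i$ has finite index in $H'_v$, $G$ has finite index in $H_1$, so $\rho_v(G)$ is a finite-index subgroup of $\rho_v(H_1)\cong\Z$ and is therefore infinite cyclic; fix a generator $\gamma$. Because $\gamma\in\rho_v(H_i)=\langle\rho_v(t_i)\rangle$ for $i=1,2$, one can write $\gamma=\rho_v(s_1)=\rho_v(s_2)$ with $s_1$ a power of $t_1$ and $s_2$ a power of $t_2$, so that $h(s_1)=h(s_2)=1$. The decisive step: $s_1s_2^{-1}$ is trivial on $\beta_v$ (its $\rho_v$-image is $\gamma\gamma^{-1}=1$) and trivial on $\beta^{\perp}_v$ (its $h$-image is $1$), hence trivial on $\beta_{P_v}$, so faithfulness forces $s_1=s_2$; set $t:=s_1$. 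Then $t\in H_1\cap H_2=G$, $h(t)=1$, and $\rho_v(t)=\gamma$ generates $\rho_v(G)$, so by the criterion above $G$ is reducible.

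For part (2) I would first note that reducibility of a subgroup $K\le H'_v$ is preserved under conjugation by any $g\in H_v$: since $g$ preserves the factorization of $\beta_{P_v}$, writing $K=L\oplus\langle t\rangle$ gives $gKg^{-1}=(gLg^{-1})\oplus\langle gtg^{-1}\rangle$, where $gLg^{-1}$ is trivial on $\beta_v$ and $h(gtg^{-1})=h(g)h(t)h(g)^{-1}=1$; also $gKg^{-1}\le gH'_vg^{-1}=H'_v$ since $H'_v\triangleleft H_v$. Now let $N$ be the largest normal subgroup of $H_v$ contained in $H_1$. Since $H_1$ has finite index in $H_v$, $N$ is the intersection of the finitely many distinct conjugates $g_iH_1g_i^{-1}$, each of which is a finite-index reducible subgroup of $H'_v$ by the previous remark; applying part (1) inductively to these finitely many subgroups then yields that $N$ is reducible.

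The only step I expect to be genuine content rather than bookkeeping is the passage, in part (1), from the two lifts $s_1\in H_1$ and $s_2\in H_2$ of $\gamma$ to a single element of $H_1\cap H_2$: everything rests on the faithfulness of the $H'_v$-action on $\beta_{P_v}$, which makes the discrepancy $s_1s_2^{-1}$ --- invisible on both product factors --- actually trivial. The one other point to keep straight is that ``reducible'' demands an honest $\Z$ direct factor, which is exactly why finite index is needed: it guarantees $\rho_v$ of the subgroups in play remains infinite cyclic instead of collapsing to the trivial group.
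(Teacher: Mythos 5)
Your argument is correct and follows the same route as the paper's (much terser) proof: the whole lemma rests on faithfulness of the $H'_v$-action on $\beta_{P_v}=\beta_v\times\beta^{\perp}_v$, which you use to merge the two lifts $s_1,s_2$ into one element of $H_1\cap H_2$, and part (2) is reduced to part (1) via conjugation-invariance of reducibility exactly as in the paper. One tiny wording slip: in the backward direction of your reducibility criterion there is no $L$ given in advance, so the commutation of $\ker(\rho_v|_K)$ with $t$ should instead be justified by noting that $[k,t]$ has trivial image under both $\rho_v$ and $h$ and is therefore trivial by faithfulness --- the same argument you already use for $s_1s_2^{-1}$.
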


\begin{proof}
Since $H'_v$ is torsion free, the action $H'_v\acts \beta_{P_v}$ is faithful. Then (1) follows readily from the definition. Note that for any $g\in H_v$, $g H_1g^{-1}\subset H'_v$. Moreover, $gH_1g^{-1}$ is reducible. Thus (2) follows from (1).
\end{proof}

We caution the reader that Lemma \ref{intersection and normal subgroup for reducible action} is not true if we drop the first sentence of the lemma. For example, one can take $H_v=\Z\oplus\Z \oplus\Z/2\Z$ acting on $\Bbb R^2$, where then first and second $\Z$ factor act by translation along the $x$-axis and $y$-axis respectively, and the $\Z/2\Z$ acts trivially. Take $H_1=\langle 1,0,0\rangle\oplus \langle 0,1,0\rangle$ and $H_2=\langle 1,0,1\rangle\oplus \langle 0,1,1\rangle$. It is easy to check that $H_1$ and $H_2$ are reducible, but $H_1\cap H_2$ is not reducible.

Pick vertex $v\in\Ga$, it follows from Lemma \ref{Ga'-component} (2) and Lemma \ref{characterization of standard components} that each $St(v)$-component in $Y(\Ga)$ is of form $\beta_{P_w}=\beta_w\times \beta^{\perp}_w$ for some vertex $w\in\P(\Ga)$, hence each $St(v)$-component $L\subset K(\Ga)$ is of form $\beta_{P_w}/H_w$ by Lemma \ref{inverse image of Ga'-component}. If $K(\Ga)$ is special (recall that we require $H$ to be torsion free in our definition of specialness), then Lemma \ref{orientation and specialness} implies that $L$ is a fibre bundle over a branched circle, where the fibres come from the $\beta^{\perp}_w$ factor in $\beta_{P_w}$. The \textit{$v$-holonomy} of $L$ is defined to be the holonomy group of the connection on this fibre bundle induced by the cube complex structure on $L$. Note that $L$ has trivial $v$-holonomy if and only if $L$ is isomorphic (as cube complexes) to the product of a $v$-component in $L$ and $lk(v)$-component in $L$, such splitting is called a \textit{$v$-splitting}. Since the holonomy group is finite, $L$ has a finite sheet cover with trivial $v$-holonomy. The \textit{$v$-holonomy} of $K(\Ga)$ is the collection of $v$-holonomy of all of its $St(v)$-components.

\begin{definition}
$K(\Ga)$ \textit{has trivial holonomy} if $K(\Ga)$ is special and it has trivial $v$-holonomy for each vertex $v\in\Ga$. Similarly, if $S'(\Ga)$ is a finite cover of the Salvetti complex $S(\Ga)$, we define the $v$-holonomy of its $St(v)$-component in a similar way. Again, if all such holonomy vanishes, then $S'(\Ga)$ \textit{has trivial holonomy}.
\end{definition}

\begin{remark}
We caution the reader that the notion of trivial holonomy defined here is different from Haglund's notion \cite[Defition 5.6]{haglund2006commensurability} in the studying of right-angled building. In particular, our notion is not stable when passing to subgroups while Haglund's notion is.
\end{remark}
Suppose $K(\Ga)$ is special. Then $K(\Ga)$ has trivial holonomy if and only if $H_w\acts \beta_{P_w}$ is reducible for each vertex $w\in\P(\Ga)$. This together with Lemma \ref{intersection and normal subgroup for reducible action} imply the following result.

\begin{lem}
\label{normal subgroup and trivial holonomy}
Let $H'$ be a finite index torsion free normal subgroup of $H$. Suppose there exist finite index subgroups $H_1,H_2\le H'$ such that $Y(\Ga)/H_i$ has trivial holonomy for $i=1,2$. Then $Y(\Ga)/(H_1\cap H_2)$ has trivial holonomy. In particular, let $N\vartriangleleft H$ be the largest normal subgroup contained in $H_1$, then $Y(\Ga)/N$ has trivial holonomy. 
\end{lem}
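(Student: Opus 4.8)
The plan is to transfer the statement, one vertex of $\P(\Ga)$ at a time, to the reducibility criterion recorded in the sentence immediately preceding the lemma, and then to quote Lemma~\ref{intersection and normal subgroup for reducible action}. Recall that criterion: if $G\le H$ is a torsion-free finite-index subgroup and $Y(\Ga)/G$ is special, then $Y(\Ga)/G$ has trivial holonomy precisely when, for every vertex $w\in\P(\Ga)$, the stabilizer $G_w\le G$ of $\beta_{P_w}$ acts reducibly on $\beta_{P_w}\simeq\beta_w\times\beta^{\perp}_w$. So the whole argument reduces to checking that the two new groups are again torsion free, finite index, with special quotient, and have reducible factor actions at every $w$.

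First I would dispose of the easy parts. Both $H_1\cap H_2$ (an intersection of two finite-index subgroups) and $N$ (the normal core of the finite-index subgroup $H_1$) are finite index in $H$, and both lie in $H'$, hence are torsion free; so $Y(\Ga)/(H_1\cap H_2)$ and $Y(\Ga)/N$ are bona fide branched complexes of type $\Ga$. Since $H_1\cap H_2\le H_i$ and $N\le H_1$, these complexes are finite covers of the special cube complexes $Y(\Ga)/H_i$ and $Y(\Ga)/H_1$; as a cover of a special cube complex is special (compose the covering map, a local isometry, with a local isometry to a Salvetti complex), both are special and the criterion applies.

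Next I would run the vertexwise argument for $H_1\cap H_2$. Fix $w\in\P(\Ga)$, let $H_w$ be the stabilizer of $\beta_{P_w}$ in $H$, and set $H'_w:=H'\cap H_w$; since $H'\vartriangleleft H$ is finite index and torsion free, $H'_w$ is a finite-index torsion-free normal subgroup of $H_w$, so Lemma~\ref{intersection and normal subgroup for reducible action} applies with this as its ``$H'_v$''. The stabilizers $(H_i)_w=H_i\cap H_w$ are finite-index subgroups of $H'_w$, and since $Y(\Ga)/H_i$ has trivial holonomy the criterion says each $(H_i)_w$ acts reducibly on $\beta_{P_w}$; then Lemma~\ref{intersection and normal subgroup for reducible action}(1) gives that $(H_1\cap H_2)_w=(H_1)_w\cap(H_2)_w$ acts reducibly. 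As $w$ is arbitrary, the criterion applied to $Y(\Ga)/(H_1\cap H_2)$ now gives trivial holonomy. For the ``in particular'' statement I would write $N=\bigcap_{h\in H}hH_1h^{-1}$, a finite intersection of distinct conjugates; each $hH_1h^{-1}$ lies in $H'$ and is carried onto $H_1$ by the cubical automorphism $\rho'(h)$ of $Y(\Ga)$, so $Y(\Ga)/(hH_1h^{-1})\cong Y(\Ga)/H_1$ has trivial holonomy; then iterating the previous step (equivalently, applying Lemma~\ref{intersection and normal subgroup for reducible action}(1) repeatedly), or invoking Lemma~\ref{intersection and normal subgroup for reducible action}(2) at each $w$, shows $Y(\Ga)/N$ has trivial holonomy.

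The one point that needs care---and the reason the lemma is not immediate---is that trivial holonomy is \emph{not} inherited by arbitrary finite covers (this is exactly the content of the cautionary example following Lemma~\ref{intersection and normal subgroup for reducible action}), so one may not simply say ``$Y(\Ga)/N$ covers $Y(\Ga)/H_1$''. The argument only ever forms intersections and conjugates, never passes to an arbitrary subgroup; and the essential input that makes the reducibility bookkeeping valid is that the torsion-freeness of $H'$ renders the factor actions on each $\beta_{P_w}$ faithful, which is precisely the hypothesis required to apply Lemma~\ref{intersection and normal subgroup for reducible action}.
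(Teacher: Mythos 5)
Your proof is correct and is essentially the paper's own argument: the paper derives this lemma directly from the vertexwise criterion (trivial holonomy $\Leftrightarrow$ each $H_w\acts\beta_{P_w}$ is reducible) together with Lemma \ref{intersection and normal subgroup for reducible action}, applied with $H'_w=H'\cap H_w$ playing the role of the torsion-free normal subgroup and with $N$ written as a finite intersection of conjugates $hH_1h^{-1}$, each of whose quotients is isomorphic to $Y(\Ga)/H_1$ via $\rho'(h)$. One small caution about your parenthetical alternative: invoking Lemma \ref{intersection and normal subgroup for reducible action}(2) at each $w$ yields reducibility of the $H_w$-normal core of $(H_1)_w$, which may properly contain $N\cap H_w$, and since reducibility does not pass to finite-index subgroups this shortcut does not quite work --- but your main route (iterating part (1) over the conjugates) is exactly right.
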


\begin{lem}
\label{product decomposition of actions in building}
Let $L$ be a $\Ga'$-component in $|\B|$. Suppose $\Ga'$ admit a join decomposition $\Ga'=\Ga_1\circ \Ga_2$ where $\Ga_1$ is a clique with its vertices denoted by $\{v_i\}_{i=1}^{n}$. Let $L=\prod_{i=1}^{n}L_i\times L_{n+1}$ be the product decomposition such that $L_{i}$ corresponds to $v_i$ for $1\le i\le n$ and $L_{n+1}$ corresponds to $\Ga_2$. Let $H_{L}\le H$ be the stabilizer of $L$. If $K(\Ga)$ has trivial holonomy, then $H_L=\oplus_{i=1}^{n+1}H_i$ such that $H_i$ acts trivially on $L_j$ if $i\neq j$, and $H_i\cong \Z$ for $1\le i\le n$.
\end{lem}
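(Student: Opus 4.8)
The plan is to import only two facts from the blow-up building $Y(\Ga)$ --- that each action on a $v$-parallel set is reducible (this is exactly the trivial holonomy hypothesis, pushed down by the canonical projection $\pi\colon Y(\Ga)\to|\B|$) and that $H$ acts freely on $Y(\Ga)$ --- and then to splice together the $n$ reducible splittings coming from the vertices of the clique $\Ga_1=\{v_i\}_{i=1}^{n}$. Write $H_L=\stab_H(L)$ and fix $i\in\{1,\dots,n\}$. Since $\Ga'=\Ga_1\circ\Ga_2\subseteq St(v_i)$, the component $L$ lies inside the $St(v_i)$-component $P_i$ of $|\B|$, which is standard by Lemma~\ref{Ga'-component}(2) (as $St(v_i)\nsubseteq lk(v)$ for $v\notin St(v_i)$); hence by Lemma~\ref{characterization of standard components} $P_i=|\B|_{P_{w_i}}$ for some $w_i\in\P(\Ga)$ labelled $v_i$, and by Lemma~\ref{product decomposition of standard components}(2) it splits as $P_i\cong R_i\times Q_i$ along the join $St(v_i)=\{v_i\}\circ lk(v_i)$, with $R_i=\pi(\beta_{w_i})$ and $Q_i=\pi(\beta^{\perp}_{w_i})$. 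Because $H$ preserves labels and ranks of vertices of $|\B|$ it permutes standard $St(v_i)$-components, so $H_L\le\stab_H(P_i)=:H_{w_i}$. Since $K(\Ga)$ has trivial holonomy, $H_{w_i}$ acts reducibly on $\beta_{P_{w_i}}\cong\beta_{w_i}\times\beta^{\perp}_{w_i}$, and transporting this along the $H$-equivariant, factorwise map $\pi$ gives $H_{w_i}=A_i\oplus\Z_i$ with $\Z_i\cong\Z$, where $A_i$ acts trivially on $R_i$ and $\Z_i$ acts trivially on $Q_i$.

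Next I would pin down how $L$ meets $P_i$. The $\{v_i\}$-factor $L_i$ of the product decomposition of $L$ is itself a $\{v_i\}$-component of $|\B|$ (any $v_i$-labelled edge at a vertex of $L$ stays in $L$ since $v_i\in\Ga'$), and being contained in $P_i$ it equals a $\{v_i\}$-component $R_i\times\{q_i\}$ of the latter; since $L$ is a convex subcomplex of the $CAT(0)$ cube complex $P_i\cong R_i\times Q_i$ containing the slice $R_i\times\{q_i\}$, the half-space description of convex subcomplexes of a product forces $L=R_i\times L^{(i)}$, where $L^{(i)}$ is the image of $L$ in $Q_i$ and contains every $L_j$ with $j\neq i$ together with $L_{n+1}$. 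Two consequences follow: (i) $\Z_i$, acting trivially on $Q_i$, preserves $L=R_i\times L^{(i)}$, so $\Z_i\le H_L$, and $\Z_i$ acts on $L$ only through the $R_i$-factor, in particular trivially on each $L_j$ with $j\neq i$ and on $L_{n+1}$; (ii) $\Z_i$ acts \emph{faithfully} on $R_i$: a generator of $\Z_i$ acts on $\beta_{w_i}$ as a hyperbolic isometry whose axis is the core (it has no fixed point, as $H$ is torsion free and acts freely on $Y(\Ga)$), so its nonzero powers move every tip of $\beta_{w_i}$ off itself, hence move every leaf of the star $R_i$, the leaves corresponding $\Z_i$-equivariantly to the tips via the $\pi$-bijection of rank $0$ vertices. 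Faithfulness upgrades the choice of $A_i$ to the equality $A_i=\{h\in H_{w_i}:h|_{R_i}=\mathrm{id}\}$, and then (i) applied with index $j$ shows $\Z_j\le A_i$ for every $j\neq i$.

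With this in hand the group theory is routine. For each $i$ the factor $\Z_i$ is central in $H_{w_i}$, hence in $H_L$, and $H_L\cap A_i$ is a complement to $\Z_i$ inside $H_L$ (if $h=\ell z\in H_L$ with $\ell\in A_i$ and $z\in\Z_i$ then $\ell=hz^{-1}\in H_L$, so $\ell\in H_L\cap A_i$); thus $H_L=(H_L\cap A_i)\oplus\Z_i$. Since $\Z_{i+1},\dots,\Z_n\le H_L\cap A_i$ by the previous paragraph, iterating over $i=1,\dots,n$ yields $H_L=\Z_1\oplus\cdots\oplus\Z_n\oplus H_{n+1}$ with $H_{n+1}:=H_L\cap A_1\cap\cdots\cap A_n$. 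Setting $H_i:=\Z_i$ for $i\le n$ finishes the proof: $H_i\cong\Z$ for $i\le n$; each such $H_i$ acts trivially on $L_j$ for $j\neq i$ by (i); and $H_{n+1}\le A_i$ acts trivially on $L_i\subseteq R_i$ for all $i\le n$ because $A_i$ is the kernel of the action on $R_i$.

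The step I expect to be the main obstacle is the faithfulness claim (ii), together with its consequence $\Z_j\le A_i$ for $j\neq i$: without it the subgroups $\{h\in H_{w_i}:h|_{R_i}=\mathrm{id}\}$ might properly contain $A_i$, the $\Z_j$ need not lie in $A_i$, and the $n$ reducible splittings of the first paragraph would fail to assemble into a single direct sum decomposition of $H_L$. The remaining ingredients --- the standardness of $P_i$, the product decompositions of Lemma~\ref{product decomposition of standard components}, and the descent of reducibility along $\pi$ --- are bookkeeping.
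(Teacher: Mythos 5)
Your proof is correct and takes essentially the same route as the paper's: both descend the trivial-holonomy reducibility of each $H_{w_i}\acts\beta_{P_{w_i}}$ along the canonical projection $\pi$ and then assemble one $\Z$-splitting per vertex of the clique $\Ga_1$, the paper packaging the assembly as an induction on $n$ which your iterated decomposition $H_L=(H_L\cap A_i)\oplus\Z_i$ simply unrolls. The details you supply — the convexity argument giving $L=R_i\times L^{(i)}$ and the faithfulness of $\Z_i$ on $R_i$ (needed so that $\Z_j\le A_i$ for $j\neq i$) — are precisely the points the paper compresses into "this follows from the previous case by considering the $St(v)$-component that contains $L$."
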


\begin{proof}
First we look at the case $\Ga_1=\{v\}$ and $\Ga_2=lk(v)$. Then $L$ is a standard component by Lemma \ref{Ga'-component} (2) and $\pi^{-1}(L)=\beta_{P_w}$ for some vertex $w\in\P(\Ga)$ by Lemma \ref{correspondence of standard components} and Lemma \ref{characterization of standard components} ($\pi$ is the canonical projection). Moreover, the stabilizer is exactly $H_{L}$. Since $K(\Ga)$ has trivial holonomy, the action $H_{L}\acts \beta_{P_w}$ is reducible. Since $\pi$ respects the product decomposition of $\beta_{P_w}$, the action $H_{L}\acts L$ has the required decomposition. Now we look at the case $\Ga_1=\{v\}$ and $\Ga_2$ is any induced subgraph of $lk(v)$. This follows from the previous case by consider the $St(v)$-component that contains $L$. In general, for each vertex $v_i\in\Ga_1$, $\Ga'$ can be written as a join of $\{v_i\}$ and a induced subgraph of $lk(v_i)$. Thus we can induct on the number of vertices in $\Ga_1$ and apply case 2 to reduce the number of vertices.
\end{proof}

\begin{lem}
\label{trivial holonomy and special}
If $K(\Ga)$ has trivial holonomy, then $|\B|/H$ is a compact special cube complex. 
\end{lem}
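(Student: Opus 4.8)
The plan is to verify the three conditions in Definition \ref{special} directly for $|\B|/H$, using the canonical projection $\pi_H:K(\Ga)\to|\B|/H$ together with the trivial holonomy hypothesis to transfer specialness from $K(\Ga)$ to $|\B|/H$. First I would note that $K(\Ga)$ is special by hypothesis, so $H$ is torsion free and $H$ acts freely on $Y(\Ga)$; since the $H$-action on $|\B|$ is also label-preserving and rank-preserving (Lemma \ref{rank and label preserving} descends through $\pi$), the action $H\acts|\B|$ is free as well — indeed the stabilizer of a cube in $|\B|$ acts nontrivially on some fibre $\pi^{-1}(x)\cong\E^n$, but trivial holonomy forces the relevant $H_w\acts\beta_{P_w}$ to be reducible with $\Z^n$ acting by translations on the core, so a nontrivial stabilizer element would have to act with a fixed point on $\E^n$, contradicting faithfulness and the product structure. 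Hence $|\B|/H$ is genuinely a cube complex covered by $|\B|$.

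Next I would establish non-positive curvature of $|\B|/H$. The link of a rank $0$ vertex of $|\B|$ is a flag complex (it is the Davis chamber link), and since $H$ is label- and rank-preserving and acts freely, each vertex link of $|\B|/H$ is a quotient of a link in $|\B|$ by a group acting freely on it; but as in Lemma \ref{Ga'-component}(3) two edges at a vertex span a square iff their labels are adjacent in $\Ga$, and label-preservation means the link of any vertex of $|\B|/H$ is still a flag complex (the link injects into $F(\Ga)$ or a join factor thereof). So $|\B|/H$ is non-positively curved.

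For the three specialness conditions, the key observation is that hyperplanes of $|\B|$ (and of $|\B|/H$) carry a $\Ga$-labelling (Definition \ref{construction}), and $\pi$ maps the horizontal hyperplanes of $Y(\Ga)$ bijectively onto the hyperplanes of $|\B|$, compatibly with labels; passing to quotients, $\pi_H$ carries the horizontal hyperplanes of $K(\Ga)$ onto the hyperplanes of $|\B|/H$. Two-sidedness and embeddedness: a hyperplane $\bar h\subset|\B|/H$ labelled $v$ lifts to a horizontal hyperplane $h\subset K(\Ga)$; two-sidedness of $h$ in $K(\Ga)$ gives an $H$-invariant orientation on $v$-labelled edges of $Y(\Ga)$, which is exactly an orientation of $v$-labelled edges of $|\B|$, so $\bar h$ is two-sided. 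Embeddedness and no self-osculation: if $\bar h$ self-intersected or (directly or indirectly) self-osculated at a vertex $\bar x$, one could read this off inside the $St(v)$-component $L\subset|\B|/H$ through $\bar x$; by Lemma \ref{product decomposition of actions in building} (here is where trivial holonomy enters) $L$ splits as a product of a $v$-component (a circle, since $H_L$ has a $\Z$ factor acting by translations with no inversion) times an $lk(v)$-component, so the $v$-labelled edges through $\bar x$ form a single edge with a coherent orientation — ruling out self-intersection and self-osculation. No inter-osculation: if hyperplanes $\bar h_1,\bar h_2$ labelled $v_1,v_2$ interosculated, then $v_1,v_2$ would be adjacent in $\Ga$ at the osculation vertex but appear on a square at the crossing vertex — but by Lemma \ref{Ga'-component}(3) crossing already forces adjacency, and then working in the $St(\{v_1,v_2\})$-component, which by Lemma \ref{product decomposition of actions in building} is a product (circle)$\times$(circle)$\times$(rest), one sees $v_1$- and $v_2$-edges through the osculation vertex must span a square, a contradiction. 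Finally, since all three conditions hold, $|\B|/H$ is special; compactness is immediate from cocompactness of $H\acts|\B|$. The main obstacle I anticipate is the bookkeeping in the inter-osculation and direct-self-osculation arguments: one must carefully reduce each configuration to the corresponding $St(v)$- or $St(\{v_1,v_2\})$-component and invoke the product splitting from trivial holonomy (Lemma \ref{product decomposition of actions in building}) to contradict it — the subtlety being that $|\B|/H$ itself is not a priori special, so every argument must be routed through $K(\Ga)$ or through the rigid combinatorics of standard components rather than assumed of $|\B|/H$ directly.
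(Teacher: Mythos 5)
Your proof breaks down at its very first step: the claim that $H$ acts freely on $|\B|$ and that $|\B|\to|\B|/H$ is a covering map is false. The whole point of the blow-up construction is that the action on $|\B|$ is \emph{not} proper: the stabilizer of a vertex of rank $n$ (equivalently of the cube having it as minimal vertex) contains a $\Z^n$ which acts by translations on the fibre $\pi^{-1}(x)\cong\E^n$ — and acting freely on that fibre is perfectly compatible with fixing the vertex of $|\B|$, since the fibre is exactly what gets collapsed. Your attempted contradiction ("a nontrivial stabilizer element would have to act with a fixed point on $\E^n$") has the logic reversed; trivial holonomy makes the stabilizer split as a product with a $\Z^n$ translating the core, which is precisely a large vertex stabilizer downstairs, not evidence of freeness. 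Consequently your treatment of non-positive curvature ("each vertex link of $|\B|/H$ is a quotient of a link in $|\B|$ by a group acting freely on it", "the link injects into $F(\Ga)$") does not work: links of positive-rank vertices of $|\B|$ have infinitely many vertices with the same label, so they do not inject into $F(\Ga)$, and a quotient of a flag link by a nontrivial stabilizer action need not be flag. This is exactly where the hypothesis is needed — the paper's own example preceding the lemma ($\Ga$ an edge, $H$ generated by the translations $(1,1)$ and $(1,-1)$, giving $|\B|/H$ equal to two squares glued along two consecutive edges) shows $|\B|/H$ can fail to be non-positively curved when holonomy is nontrivial, yet your NPC argument never invokes trivial holonomy at all.

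The paper's proof of NPC goes through the vertex stabilizer: for a rank-$n$ vertex $x$ one works inside the $\Delta_x\circ\Delta_x^{\perp}$-component $L=\prod_{i=1}^{n}L_i\times L_{n+1}$, uses Lemma \ref{product decomposition of actions in building} (this is where trivial holonomy enters) to split $H_L=\oplus_{i=1}^{n+1}H_i$ acting factor-wise, observes that each $L_i/H_i$ is a tree and that any element of $H_{n+1}$ fixing $x_{n+1}$ fixes its whole link because distinct edges there have distinct labels; the link of $p(x)$ is then a join of flag complexes, hence flag. You need this (or an equivalent control of vertex stabilizers) before anything about hyperplanes can be said. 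Your second half — two-sidedness via a descent of orientation (the paper simply orients edges from lower to higher rank), no self-intersection from label-preservation, no inter-osculation from Lemma \ref{Ga'-component}(3), and no self-osculation by locating the hyperplane in a $St(v)$-component and using the product splitting — does follow the paper's route in outline, but even there your description of the quotient $v$-component as a circle is incorrect (it is a finite tree of diameter $2$, the image of an infinite tree of diameter $2$ whose central vertex is fixed), a slip that again traces back to the mistaken freeness assumption.
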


\begin{proof}
First we show $|\B|/H$ is a non-positively curved cube complex. Let $x\in |\B|$ be a vertex of rank $n$ and let $p:|\B|\to |\B|/H$ be the projection map. It suffices to show the link of $p(x)$ in $|\B|/H$ is flag.

Suppose $x$ is labelled by $\Delta_x$ with its vertex set denoted by $\{v_i\}_{i=1}^{n}$. Let $\Delta^{\perp}_x\subset\Ga$ be the induced subgraph spanned by vertices in $\Ga$ which are adjacent to each vertex in $\Delta_x$, and let $L$ be the $\Delta_{x}\circ\Delta^{\perp}_x$-component containing $x$. For each $1\le i\le n$, let $L_i$ be the $v_i$-component that contains $x$. Then each $L_i$ is a infinite tree of diameter 2. Let $L_{n+1}$ be the $\Delta^{\perp}_x$-component that contains $x$. Then $L$ admits a product decomposition $L=\prod_{i=1}^{n}L_i\times L_{n+1}$. By the construction of $|\B|$, each edge in $|\B|$ which contains $x$ is labelled by a vertex in $\Delta_{x}\circ\Delta^{\perp}_x$, thus a small neighbourhood of $x$ in $|\B|$ is contained in $L$. Let $H_L\le H$ be the stabilizer of $L$. By Lemma \ref{inverse image of Ga'-component}, it suffices to show the link of $p(x)$ in $L/H_L$ is flag.

By Lemma \ref{product decomposition of actions in building}, $H_L=\oplus_{i=1}^{n+1}H_i$ such that $H_i$ acts trivially on $L_j$ if $i\neq j$, and $H_i\cong \Z$ for $1\le i\le n$. Then $H/H_L$ and $\prod_{i=1}^{n+1} L_i/H_i$ are isomorphic as cube complexes. We represent $x$ as $(x_1,x_2,\cdots,x_{n+1})$ with respect to the product decomposition of $L$. Note that different edges in $H_{n+1}$ that contains $x_{n+1}$ have different labels (actually they are in 1-1 correspondence with vertices in $\Delta^{\perp}_x$). Since the action $H_{n+1}\acts L_{n+1}$ is label-preserving, any element in $H_{n+1}$ which fixes $x_{n+1}$ also fixes each edge emanating from $x$, hence fixes a small neighbourhood around $x_{n+1}$. Thus the link of $p(x_{n+1})$ in $L_{n+1}/H_{n+1}$ is isomorphic to the link of $x_{n+1}$ in $L_{n+1}$, hence is flag. On the other hand, all $L_i/H_i$'s are trees for $1\le i\le n$. Thus the link of $p(x)$ in $L/H_L$ is flag.

Now we show $|\B|/H$ is special. We can orient each edge of $|\B|$ from vertex of lower rank to vertex of higher rank. This orientation is compatible with parallelism. Since $H$ preserves the rank of vertices, it preserves this orientation, hence hyperplanes in $|\B|/H$ are two-sided. They do not self-interest since the action $H\acts|\B|$ preserve the labelling of hyperplanes. Inter-osculation is ruled out by Lemma \ref{Ga'-component} (3).

It remains to show hyperplanes in $|\B|/H$ do not self-osculate. Pick a hyperplane $h\subset |\B|/H$ and let $v$ be the label of an edge dual to $h$ (all edges dual to $h$ have the same label). Then Lemma \ref{Ga'-component} (3) implies there is a $St(v)$-component $L'\subset |\B|/H$ such that $L'$ contains each cube which intersects $h$. It suffices to show $h$ does not self-osculate in $L'$. Lemma \ref{product decomposition of actions in building} implies $L'$ admits a product decomposition $L'\cong L'_1\times L'_2$ which is induced by the join decomposition $St(v)=\{v\}\circ lk(v)$. Note that $L'_1$ is a finite tree of diameter 2 and $h$ is dual to some edge in the $L'_1$ factor, thus $h$ does not self-osculate.  
\end{proof}

\begin{definition}
\label{reduction}
For each special $K(\Ga)$ we associate a cube complex, which is called the \textit{reduction} of $K(\Ga)$ and is denoted by $S_K(\Ga)$, as follows. Let $f:X(\Ga)^{(0)}\to Y(\Ga)$ be the bijection between $X(\Ga)^{(0)}$ and rank 0 vertices of $Y(\Ga)$ in Definition \ref{construction}. Recall that we have identified $X(\Ga)^{(0)}$ with $G(\Ga)$, thus $f$ induces an action $H\acts G(\Ga)$ by flat-preserving bijections. Lemma \ref{orientation and specialness} and Theorem \ref{conjugate to left translation} imply that up to pre-composing $f$ with a flat-preserving bijection, we can assume $H\acts G(\Ga)$ is an action by left translations. Using the identification $X(\Ga)^{(0)}\cong G(\Ga)$ again, we have an isometric action $H\acts X(\Ga)$. Define $S_K(\Ga)=X(\Ga)/H$. Note that $S_K(\Ga)$ is a finite sheet cover of the Salvetti complex $S(\Ga)$.
\end{definition}

Pick vertex $w\in\P(\Ga)$. Note that $f:P_{w}^{(0)}\cong \Z_{w}\times Q_{v}^{(0)}\to \beta_{P_w}\cong\beta_w\times \beta^{\perp}_{w}$ respects the product decompositions. Thus $H_w\acts \beta_{P_w}$ is reducible if and only if $H_w\acts P_{w}^{(0)}$ is reducible. Hence $K(\Ga)$ has trivial holonomy if and only if its reduction $S_K(\Ga)$ has trivial holonomy. 

Let $\mathcal{C}_K$ be the category whose objects are standard $\Ga'$-components in finite covers of $K(\Ga)$ ($\Ga'$ can be any induced subgraph of $\Ga$), and morphisms are label and rank preserving local isometries. Let $\mathcal{C}_S$ be the category whose objects are $\Ga'$-components in finite covers of the reduction $S_K(\Ga)$, and morphisms are label-preserving local isometries. Note that $f$ is $H$-equivariant, and it maps the $0$-skeleton of a $\Ga'$-component in $X(\Ga)$ to vertices of rank 0 in a standard $\Ga'$-component of $Y(\Ga)$. Thus the following holds.

\begin{lem}
\label{trivial holonomy of reduction}
The map $f$ induces a functor $\Phi_f$ from $\mathcal{C}_S$ to $\mathcal{C}_K$. Moreover, an object in $\mathcal{C}_S$ has trivial holonomy if and only if its image in $\mathcal{C}_K$ has trivial holonomy.
\end{lem}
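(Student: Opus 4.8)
The plan is to read $\Phi_f$ off the $H$-equivariant bijection $f\colon X(\Ga)^{(0)}\to Y(\Ga)^{(0)}$ onto the rank $0$ vertices of $Y(\Ga)$. For any $\Ga'$-component $\widetilde N\subset X(\Ga)$, the subcomplex $\beta_{\widetilde N}\subset Y(\Ga)$ (Definition \ref{def_beta_K}) is a standard $\Ga'$-component (Lemma \ref{characterization of standard components}) and equals the convex hull of $f(\widetilde N^{(0)})$ (Lemma \ref{product decomposition of standard components}(1)), so $\stab_H(\widetilde N)=\stab_H(\beta_{\widetilde N})$ and $f$ carries the action of this stabilizer on $\widetilde N^{(0)}$ to its action on the rank $0$ vertices of $\beta_{\widetilde N}$. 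Now an object $N\in\mathcal C_S$ is a $\Ga'$-component of a finite cover $X(\Ga)/\widehat H$ of $S_K(\Ga)$ with $\widehat H\le H$ of finite index; choosing a lift $\widetilde N\subset X(\Ga)$ and putting $\Lambda=\stab_{\widehat H}(\widetilde N)$ we have $N=\widetilde N/\Lambda$, and I would set $\Phi_f(N)=\beta_{\widetilde N}/\Lambda$. Since $\Lambda=\stab_{\widehat H}(\beta_{\widetilde N})$ and $\widehat H$ preserves rank and labels, Lemma \ref{inverse image of Ga'-component} identifies $\Phi_f(N)$ with a standard $\Ga'$-component of the finite cover $Y(\Ga)/\widehat H$ of $K(\Ga)$; equivariance of $f$ makes the construction independent of the lift and of the presentation of the cover.

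\textbf{The functor on morphisms.}
A label-preserving local isometry $\phi\colon N_1\to N_2$ in $\mathcal C_S$ is $\pi_1$-injective and lifts to a label-preserving isometric embedding $\widetilde\phi\colon\widetilde N_1\hookrightarrow\widetilde N_2$ between a $\Ga_1$- and a $\Ga_2$-component of $X(\Ga)$ ($\Ga_1\subseteq\Ga_2$), equivariant with respect to $\pi_1(N_1)\hookrightarrow\pi_1(N_2)\le H$. The point to verify is that $\widetilde\phi$ is compatible with $f$, namely that it extends to a unique label-preserving embedding $\beta_{\widetilde\phi}\colon\beta_{\widetilde N_1}\hookrightarrow\beta_{\widetilde N_2}$ with $\beta_{\widetilde\phi}\circ f=f\circ\widetilde\phi$ on $\widetilde N_1^{(0)}$; granting this, $\beta_{\widetilde\phi}$ is $\pi_1(N_1)$-equivariant, descends to a label- and rank-preserving local isometry $\Phi_f(\phi)\colon\Phi_f(N_1)\to\Phi_f(N_2)$, and functoriality is inherited from that of $\widetilde{(\cdot)}$ and $\beta_{(\cdot)}$. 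I would check the existence of $\beta_{\widetilde\phi}$ on standard geodesics of $\widetilde N_1$: the compatibility conditions on the blow-up data in Definition \ref{construction}, together with the $H$-equivariance of the choice of data in Definition \ref{an equivariant construction} and the fact that specialness of $K(\Ga)$ forces each factor action $\rho_v\colon H_v\acts\Z_v$ to be conjugate to an action by translations (Lemma \ref{orientation and specialness}, Lemma \ref{conjugate to action by translations}) --- so that the branched lines $\beta_v$ are periodic and possess the automorphisms needed to match $f$ across $\widetilde\phi$ --- should give the extension. Making this step precise, i.e. establishing functoriality on arbitrary morphisms rather than merely on covering maps and on inclusions of subcomponents (where $\widetilde\phi$ is, up to an element of $H$, an inclusion of standard components and the extension $\beta_{\widetilde N_1}\subset\beta_{\widetilde N_2}$ is immediate from Definition \ref{def_beta_K}), is the main obstacle.

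\textbf{The holonomy equivalence.}
Let $N\in\mathcal C_S$ have support $\Ga'$ and fix $v\in\Ga'$. By definition the $v$-holonomy of $N$ vanishes exactly when every $(\Ga'\cap St(v))$-component of $N$ splits as the product of a $v$-component and a $(\Ga'\cap lk(v))$-component. Passing to $X(\Ga)$, such a subcomponent sits inside a $v$-parallel set $P_w$ (with $w\in\P(\Ga)$ the vertex of label $v$ it determines) compatibly with the splitting $P_w\cong\R_w\times Q_w$, and it has trivial holonomy precisely when the relevant finite-index subgroup $\Lambda_w\le\stab_H(P_w)$ acts reducibly for this splitting; under $\Phi_f$ the corresponding subcomponent of $\Phi_f(N)$ sits inside $\beta_{P_w}$, with trivial holonomy precisely when $\Lambda_w$ acts reducibly for $\beta_{P_w}\cong\beta_w\times\beta^{\perp}_w$. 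Because $f$ restricts to a $\Lambda_w$-equivariant bijection from $P_w^{(0)}$ onto the rank $0$ vertices of $\beta_{P_w}$ intertwining the splittings $P_w^{(0)}\cong\Z_w\times Q_w^{(0)}$ and $\beta_{P_w}\cong\beta_w\times\beta^{\perp}_w$ --- the observation recorded immediately before the statement --- reducibility of one action is equivalent to reducibility of the other. Running this over all $v\in\Ga'$ and the associated vertices $w$ then shows that $N$ has trivial holonomy if and only if $\Phi_f(N)$ does, which completes the proof.
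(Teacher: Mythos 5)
Your proposal is correct and takes essentially the same route as the paper, which in fact offers no written proof: it derives the lemma directly from the observation, recorded just before the statement, that the $H$-equivariant bijection $f$ carries the $0$-skeleton of each $\Ga'$-component onto the rank-$0$ vertices of the corresponding standard $\Ga'$-component and respects the product decompositions $P_w^{(0)}\cong\Z_w\times Q_w^{(0)}$ and $\beta_{P_w}\cong\beta_w\times\beta_w^{\perp}$, so that reducibility (hence trivial holonomy) transfers. The "obstacle" you flag about extending $\Phi_f$ to arbitrary label-preserving local isometries is a fair caveat, but it is not resolved in the paper either and is harmless for how the lemma is applied there (only to covering maps and inclusions of components, where your construction already works).
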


\subsection{Coverings of complexes with trivial holonomy}
\begin{lem}
\label{trivial holonomy and completion}
Suppose $K(\Ga)$ has trivial holonomy. Let $L\subset K(\Ga)$ be a $\Ga'$-component and let $L'\to L$ be a finite cover with trivial holonomy. Then the canonical complete $\C(L',K(\Ga))$ also has trivial holonomy. Moreover, $L'$ is wall-injective in $\C(L',K(\Ga))$.
\end{lem}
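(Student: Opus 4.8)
The plan. Since $\C(L',K(\Ga))$ is a finite cover of the special cube complex $K(\Ga)$, it is automatically special, so the content of the statement is (i) that $\C(L',K(\Ga))$ has trivial $v$-holonomy for every vertex $v\in\Ga$, and (ii) that $L'$ is wall-injective in $\C(L',K(\Ga))$. I would prove (i) first and then deduce (ii).

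For (i), fix a vertex $v\in\Ga$ and an $St(v)$-component $M_0\subset K(\Ga)$, and write $M_0=\beta_{P_w}/H_w$ with $\beta_{P_w}=\beta_w\times\beta^\perp_w$, so that the trivial holonomy of $K(\Ga)$ says exactly that $H_w\acts\beta_{P_w}$ is reducible; geometrically $M_0=C_0\times D_0$ with $C_0$ a branched circle ($v$-component) and $D_0$ an $lk(v)$-component. Let $M$ be a connected component of the preimage of $M_0$ in $\C(L',K(\Ga))$; I must show $M$ again splits as a branched circle times an $lk(v)$-component. The key step is to analyze the restriction of the canonical completion over the locally convex subcomplex $M_0$: the intersection graph of $M_0$ is the join of those of $C_0$ and $D_0$, and the elevation $A'$ of the portion of $L'$ lying over the carrier of $M_0$ is itself a product $A'=C'\times D'$, since $A'$ is assembled from $St(v)$-components of covers of $L'$, which are products by the trivial holonomy of $L'$. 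Unwinding the Case~4 construction through the Salvetti complexes of these intersection graphs and applying Lemma \ref{product and canonical completion} then gives $\C(C'\times D',\,C_0\times D_0)\cong\C(C',C_0)\times\C(D',D_0)$, so $M$, being a connected component of this product, is a branched circle times an $lk(v)$-component, i.e.\ has trivial $v$-holonomy. In group-theoretic terms this is the statement that each such $M$ equals $\beta_{P_w}/\Pi$ with $\Pi\le H_w$ obtained, after conjugation, as an intersection of $H_w$ with $\pi_1$ of an elevation of $L'$ (reducible by trivial holonomy of $L'$) and with $\pi_1(\C(L',K(\Ga)))$ (reducible through $H_w$ by trivial holonomy of $K(\Ga)$), so that reducibility of $\Pi$ is inherited via Lemma \ref{intersection and normal subgroup for reducible action}.

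For (ii), I would invoke Lemma \ref{wall-injective}: it suffices that the canonical retraction $\C(L',K(\Ga))\to L'$ be cubical, which holds once the preimage of each edge of the relevant Salvetti complex under $L'\to S(\Gamma)$ is a disjoint union of vertices and single edges — and this is what the trivial holonomy of $L'$ provides, after arranging $K(\Ga)$ to be directly special by Lemma \ref{directly special}. Alternatively, wall-injectivity follows directly from part (i): two hyperplanes of $L'$ identified in $\C(L',K(\Ga))$ necessarily carry the same label $v$, lie in $St(v)$-components of $L'$ which embed into a single $St(v)$-component $M=C\times D$ of $\C(L',K(\Ga))$ and are hyperplanes of the $C$-direction, and since a branched circle has no squares (hence distinct edges are non-parallel) they cannot become identified there.

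The main obstacle is the identification performed in part (i). The canonical completion $\C(L',K(\Ga))$ is built through the intersection graph of \emph{all} of $K(\Ga)$, not of $M_0$, so one must check that the pullback construction of Case~4 is compatible with restricting to the sub-Salvetti complex attached to $M_0$ and with the elevations of $L'\cap(\text{carrier of }M_0)$, and that the product decomposition of those elevations supplied by the trivial holonomy of $L'$ is precisely the input Lemma \ref{product and canonical completion} requires. The bookkeeping of base points and of connected components implicit in the notion of elevation is the delicate part of this argument.
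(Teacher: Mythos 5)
Your outline reproduces the architecture of the paper's proof: fix an $St(v)$-component $N=N_1\times N_2$ of $K(\Ga)$ (the splitting supplied by trivial holonomy of $K(\Ga)$), identify the lift $N'$ of $a(N)$ in $\C(L',S(\Lambda))$ as a canonical completion $\C(N'\cap L',a(N))$, apply Lemma \ref{product and canonical completion}, and get wall-injectivity from Lemma \ref{wall-injective}. But the step you yourself flag as ``the main obstacle'' is where the actual content lies, and your justification for it --- that the elevation $A'$ ``is assembled from $St(v)$-components of covers of $L'$, which are products by the trivial holonomy of $L'$'' --- is not correct as stated. The subcomplex $N'\cap L'$ projects to a connected component of the wall projection $\wpj_{K(\Ga)}(N\to L)$, and there is a genuine dichotomy that must be run. (a) If no edge of this component is parallel to an edge of $N_1$, it is in general not contained in any $St(v)$-component of $L$ (for instance $v$ need not even lie in $\Ga'$, so $L'$ may have no $v$-edges at all); here the map $i:N'\cap L'\to a(N_1)\times a(N_2)$ splits only in the degenerate sense that its image lies in a single $a(N_2)$-slice, which is what one feeds to Lemma \ref{product and canonical completion}. (b) If some edge is parallel to an edge of $N_1$, one must first show that this forces $p(N'\cap L')\subset N$ (using that every edge dual to a $v$-labelled hyperplane lies in the corresponding $St(v)$-component, Lemma \ref{parallel set in blow-up building}), so that $N'\cap L'$ sits inside an $St(v,\Ga')$-component of $L'$, and then use local convexity of $N'\cap L'$ to inherit the $v$-splitting that trivial holonomy of $L'$ provides. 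Without this case analysis the hypothesis of Lemma \ref{product and canonical completion} is simply not available.

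Your proposed group-theoretic shortcut does not repair this: the fundamental group of a component $M$ of $c^{-1}(N)$ is $H_w\cap g\,\pi_1(\C(L',K(\Ga)))g^{-1}$, and $\pi_1(\C(L',K(\Ga)))$ is a finite-index subgroup of $\pi_1(K(\Ga))$ that is much larger than any conjugate of $\pi_1(L')$; knowing that $H_w\cap\pi_1(L')$ is reducible (which is anyway typically of infinite index in $H_w$) says nothing about this intersection, and Lemma \ref{intersection and normal subgroup for reducible action} needs both intersectands to be finite-index reducible subgroups of a common torsion-free group --- which is precisely what must be proved about the completion, not assumed. On part (ii), your first route is the paper's, but the cubicality of the retraction comes from the $v$-splittings of the $St(v)$-components of $K(\Ga)$ itself (through each vertex of $N_1\times N_2$ there is at most one edge dual to a given $v$-hyperplane), not from trivial holonomy of $L'$, and not from replacing $K(\Ga)$ by a directly special cover --- that would change the complex the lemma is about. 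Your ``alternative'' direct argument for (ii) is also incomplete: wall-injectivity can fail because a single hyperplane of the ambient complex meets $L'$ in a disconnected set, and the observation that distinct edges of a branched circle are non-parallel does not exclude this.
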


\begin{proof}
Let $\Lambda$ be the intersection graph of $K(\Ga)$. Recall that $\C(L',K(\Ga))$ is the pull-back defined as follows (recall that we require $K(\Ga)$ to be special in our definition of trivial holonomy). 
\begin{center}
$\begin{CD}
\C(L',K(\Ga))                        @>>>       \C(L',S(\Lambda))\\
@VcVV                              @VbVV\\
K(\Ga)                          @>a>>       S(\Lambda)
\end{CD}$
\end{center}
Pick $St(v)$-component $N\subset K(\Ga)$. Since $K(\Ga)$ has trivial holonomy, let $N=N_1\times N_2$ be the $v$-splitting of $N$, where $N_1$ is a $\{v\}$-component in $N$. Note that $a(N)=a(N_1)\times a(N_2)$ and $a|_{N}$ splits as a product of two maps. Let $N'\subset \C(L',S(\Lambda))$ be a lift of $a(N)\subset S(\Lambda)$. We claim $N'$ has a similar cubical splitting as $a(N)$ and $b|_{N'}$ splits as a product of two maps. Then one deduce from this claim and the definition of pull-back that each component of $c^{-1}(N)$ admits a similar cubical splitting, which implies $\C(L',K(\Ga))$ has trivial holonomy.

Now we prove the claim. It follows from the definition of $\C(L',S(\Lambda))$ that $N'$ is of form $\C(N'\cap L',a(N))$, here the map $N'\cap L'\to a(N)$, which we denoted by $i$, is the restriction of the map $L'\to L\stackrel{a}{\to}  S(\Lambda)$ to $N'\cap L'$. By Lemma \ref{product and canonical completion}, it suffices to show $i$ has a splitting which is compatible with $a(N)=a(N_1)\times a(N_2)$. Let $p:L'\to L$ be the covering map. Note that $p(N'\cap L')$ is a connected component in the wall projection $\wpj_{K(\Ga)} (N\to L)$. If no edge of $p(N'\cap L')$ is parallel to some edge in $N_1$, then the image of $i$ is inside an $a(N_2)$-slice of $a(N)$ and $i$ splits trivially. If there is an edge $e\subset p(N'\cap L')$ parallel to an edge in $N_1$, then $e\subset N$. We also deduce that $p(N'\cap L')\subset N$. Thus $N'\cap L'$ is contained in some $St(v,\Ga')$-component of $L'$. Since $L'$ has trivial holonomy, this component admits a $v$-splitting. Since $N'\cap L'$ is locally convex, it inherits a splitting, and $i$ also splits as required.

Since each $St(v)$-component of $K(\Ga)$ has a $v$-splitting, the assumption of Lemma \ref{wall-injective} is satisfied. Thus $L'$ is wall-injective in $\C(L',K(\Ga))$.
\end{proof}

\begin{lem}
\label{finite cover with trivial holonomy}
Suppose $K(\Ga)$ is special. Then $K(\Ga)$ has a finite cover which has trivial holonomy.
\end{lem}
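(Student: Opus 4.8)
The plan is to pass to the reduction $S_K(\Ga)$ and to use that the Salvetti complex itself has trivial holonomy. Since $K(\Ga)$ is special, Lemma \ref{orientation and specialness} and Theorem \ref{conjugate to left translation} let us regard $H$ as a finite-index subgroup of $G(\Ga)$, with reduction $S_K(\Ga)=X(\Ga)/H$ a finite cover of $S(\Ga)$. By Lemma \ref{trivial holonomy of reduction}, together with the observation that every $St(v)$-component of a blow-up building is standard (Lemma \ref{Ga'-component}(2), as $St(v)\not\subseteq lk(u)$ for $u\notin St(v)$) and that covers of special cube complexes are special, it suffices to produce a finite-index subgroup $N\le H$ such that $X(\Ga)/N$ has trivial holonomy: then $Y(\Ga)/N\to K(\Ga)$ is a finite cover, and its $St(v)$-components, being the $\Phi_f$-images of the $St(v)$-components of $X(\Ga)/N$, all have trivial holonomy. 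Note that $S(\Ga)$ itself has trivial holonomy, since $St(v)=\{v\}\circ lk(v)$ makes each $St(v)$-component equal to the honest product $\Bbb S^1_v\times S(lk(v))$; so $N$ should be chosen so that this product structure survives in $X(\Ga)/N$.

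I would take $N$ normal in $G(\Ga)$, as follows. Let $H^{\circ}\vartriangleleft G(\Ga)$ be the normal core of $H$, let $\Z_v=\langle t_v\rangle\le G(\Ga)$ be generated by the standard generator corresponding to $v$, let $D$ be a common multiple of the indices $[\Z_v:H^{\circ}\cap\Z_v]$ over all vertices $v$, let $\phi\colon G(\Ga)\to(\Z/D\Z)^{V}$ (with $V$ the vertex set of $\Ga$) be the ``mod-$D$ abelianization'' sending each $t_v$ to the $v$-th basis vector, and set $N=\ker\phi\cap H^{\circ}$, a finite-index normal subgroup of $G(\Ga)$ contained in $H$. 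The key computation is then: for each vertex $v$, writing $G(St(v))=\Z_v\times G(lk(v))$, an element $t_v^a g\in N$ with $g\in G(lk(v))$ has $D\mid a$ (because $\phi(g)$ lies in the complement of the $t_v$-direction), hence $t_v^a\in H^{\circ}$ (since $[\Z_v:H^{\circ}\cap\Z_v]\mid D\mid a$), hence $g=t_v^{-a}(t_v^a g)\in H^{\circ}\cap G(lk(v))$ and $\phi(g)=0$; so $N\cap G(St(v))=D\Z_v\times\bigl(N\cap G(lk(v))\bigr)$ respects the splitting. Since $N\vartriangleleft G(\Ga)$, every $St(v)$-component of $X(\Ga)/N$ is isomorphic to $\widetilde{S(St(v))}/\bigl(N\cap G(St(v))\bigr)$, which by the above is a product of a circle and a finite cover of $S(lk(v))$. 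Hence $X(\Ga)/N$ has trivial $v$-holonomy for every $v$, i.e. trivial holonomy, and we are done.

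The only genuinely delicate point is why the naive choice $N=H^{\circ}$ does not suffice and where the factor $\ker\phi$ is used: a finite-index subgroup of $\Z_v\times G(lk(v))$ need not be a direct product respecting the splitting (it may be ``sheared'' along the $t_v$-direction), and --- as the example after Lemma \ref{intersection and normal subgroup for reducible action} shows --- this property is destroyed by intersecting with arbitrary finite-index subgroups; choosing $D$ divisible by every $[\Z_v:H^{\circ}\cap\Z_v]$ forces the $t_v$-exponent of any element of $N\cap G(St(v))$ to be large enough to kill the shear, which is exactly what the displayed identity records. The remaining steps --- realizing $H$ inside $G(\Ga)$, and transferring trivial holonomy from $X(\Ga)/N$ to $Y(\Ga)/N$ --- are supplied by Lemma \ref{orientation and specialness}, Theorem \ref{conjugate to left translation}, and Lemma \ref{trivial holonomy of reduction} respectively.
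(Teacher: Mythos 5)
Your argument is correct, and while the first step coincides with the paper's (both use Lemma \ref{trivial holonomy of reduction} to replace $K(\Ga)$ by its reduction, i.e.\ to reduce to finding a finite-index $N\le H\le G(\Ga)$ with $X(\Ga)/N$ of trivial holonomy), the heart of your proof is genuinely different. The paper proves the Salvetti-cover case by induction on the number of vertices of $\Ga$: for each vertex $v$ it builds a regular trivial-holonomy cover $A_v$ of $S(St(v))$, forms the canonical completion $\C(A_v,S(\Ga))$ and the pull-back $K_v$, and then invokes Lemma \ref{normal subgroup and trivial holonomy} and Lemma \ref{intersection and normal subgroup for reducible action} to check that fixing the $v$-holonomy does not destroy the $v'$-holonomy already achieved, repeating vertex by vertex. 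You instead write down a single congruence-type subgroup $N=\ker\phi\cap H^{\circ}$ and verify directly, by the computation in $G(St(v))=\Z_v\times G(lk(v))$, that $N\cap G(St(v))=D\Z_v\times\bigl(N\cap G(lk(v))\bigr)$ for every $v$ simultaneously; normality of $N$ in $G(\Ga)$ then handles all $St(v)$-components at once, and your choice of $D$ divisible by each $[\Z_v:H^{\circ}\cap\Z_v]$ is exactly what prevents the ``shear'' phenomenon illustrated by the example after Lemma \ref{intersection and normal subgroup for reducible action}. What your route buys: it is shorter, avoids the completion/pull-back machinery and the induction entirely, and produces an explicit cover that is moreover regular over $K(\Ga)$ (indeed corresponds to a subgroup normal in all of $G(\Ga)$), which makes subsequent regularity bookkeeping (e.g.\ via Lemma \ref{normal subgroup and trivial holonomy}) automatic. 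What the paper's route buys is mainly coherence with the toolkit it develops and reuses elsewhere (canonical completions, Lemma \ref{trivial holonomy and completion}), since the same completion arguments reappear in the construction of the covers in Section \ref{sec_construction of the finite cover}; as a standalone proof of this lemma, your algebraic argument is the more elementary one.
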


\begin{proof}
By Lemma \ref{trivial holonomy of reduction}, we can assume $K(\Ga)$ is a finite cover of $S(\Ga)$. We induct on the number of vertices in $\Ga$. The case where $\Ga$ has only one vertex is trivial. If there exists vertex $v\in\Ga$ such that $\Ga=St(v)$, then up to passing to a finite sheet cover we can assume $K(\Ga)$ is isomorphic to a product of a circle and a $lk(v)$-component. However, by induction, the $lk(v)$-component has a finite cover with trivial holonomy, thus $K(\Ga)$ has the required cover. Now we assume $St(v)\subsetneq\Ga$ for any vertex $v\in\Ga$.

By induction, there exists a finite sheet cover $A_v$ of the Salvetti complex $S(St(v))$ such that $A_v$ has trivial holonomy and $A_v$ factors through any $St(v)$-components of $K(\Ga)$. By Lemma \ref{normal subgroup and trivial holonomy}, we can assume $A_v$ is a regular cover. Let $K_v$ be the pull back as follows.
\begin{center}
$\begin{CD}
K_v                        @>>>       \C(A_v,S(\Ga))\\
@VVV                              @VVV\\
K(\Ga)                          @>>>       S(\Ga)
\end{CD}$
\end{center}
Then each $St(v)$-component of $K_v$ has trivial $v$-holonomy. 

We claim if we already know $K(\Ga)$ has trivial $v'$-holonomy for a vertex $v'\neq v$, then $K_v$ also has trivial $v'$-holonomy. Then the lemma follows by repeating the above argument for each vertex in $\Ga$. However, this claim follows from $\C(A_v,S(\Ga))$ has trivial holonomy (the proof is identical to Lemma \ref{trivial holonomy and completion}) and Lemma \ref{intersection and normal subgroup for reducible action}.
\end{proof}

\section{Control of retraction image}

\subsection{Wall projections in branched complexes}
\label{subsec_wall projection}
Let $K(\Ga)$, $Y(\Ga)$, $H$ and $|\B|$ be as in Section \ref{sec_branched complexes with trivial holonomy}. In this section we study wall projections in $K(\Ga)$. Assuming hyperbolicity, a powerful tool for this purpose is the following connected intersection theorem proved by Haglund and Wise.
\begin{thm}
\label{haglund-wise connected intersection}
$($\cite[Theorem 4.23]{haglund2012combination}$)$ Let $X$ be a compact special cube complex whose universal cover is Gromov-hyperbolic. Let $(B_0,...,B_n,A)$ be connected locally convex subcomplexes containing the basepoint of $X$. Suppose that $A\subset\cap_{j=0}^{n}B_j$.

Then there is a based finite cover $\bar{X}$ and base elevations $\bar{B}_0,...,\bar{B}_n$ with $\bar{A}\cong A$, such that $\cap_{j\in J}\bar{B}_{j}$ is connected for each $J\subset\{0,...,n\}$.
\end{thm}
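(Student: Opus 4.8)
The plan is to translate the statement into a question about finite-index subgroups of the hyperbolic group $G=\pi_{1}(X)$ and then feed it into the separability machinery available for compact special cube complexes with hyperbolic universal cover. Since $X$ is compact special and $\widetilde X$ is Gromov-hyperbolic, $G$ is word-hyperbolic and virtually special, so every quasiconvex subgroup of $G$ is separable and, by Minasyan's theorem, so is every product of finitely many quasiconvex subgroups and their translates. Fix a lift $\widetilde x$ of the basepoint and let $\widetilde A,\widetilde B_{0},\dots,\widetilde B_{n}\subset\widetilde X$ be the lifts through $\widetilde x$. As locally convex subcomplexes of the $CAT(0)$ complex $\widetilde X$ these are convex, with stabilizers $\pi_{1}(A)\le\bigcap_{j}\pi_{1}(B_{j})$, all quasiconvex in $G$. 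For a finite cover $\overline X=\widetilde X/G'$ with $G'\le G$ of finite index, the based elevation $\overline B_{j}$ is the image of $\widetilde B_{j}$ under $p:\widetilde X\to\overline X$, and for $J\subseteq\{0,\dots,n\}$ one has $\bigcap_{j\in J}\overline B_{j}=p\bigl(\bigcup\bigcap_{j\in J}g_{j}\widetilde B_{j}\bigr)$, the union running over tuples $(g_{j})_{j\in J}$ of elements of $G'$ for which this ``configuration'' is nonempty. Each configuration is again an intersection of convex subcomplexes, hence convex and connected, so $\bigcap_{j\in J}\overline B_{j}$ is connected precisely when every configuration surviving in $\overline X$ is glued, by the $G'$-action, to the ``main'' configuration through $\widetilde x$.

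The next ingredient is finiteness. Because $X$ is compact, the full preimage of each $B_{j}$ in $\widetilde X$ is a locally finite subcomplex, so only finitely many lifts of $B_{j}$ meet any given compact set; together with the cocompactness of the $\pi_{1}(B_{i})$-action on $\widetilde B_{i}$ this shows that $\{g\in G:\widetilde B_{i}\cap g\widetilde B_{j}\neq\emptyset\}$ is a finite union of $\pi_{1}(B_{i})$-$\pi_{1}(B_{j})$ double cosets, and more generally that for each $J$ there are only finitely many configurations $\bigcap_{j\in J}g_{j}\widetilde B_{j}$ up to the diagonal $G$-action. For each $J$, exactly one of these is the main configuration containing $\widetilde A$; the finitely many others are accounted for by finitely many subsets of $G$ of the form $\pi_{1}(B_{j_{1}})\,g\,\pi_{1}(B_{j_{2}})\cdots\pi_{1}(B_{j_{k}})$, none of which contains the identity.

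It then remains to assemble the cover. First, that $A$ embeds in a finite cover of $X$ with based elevation isomorphic to $A$ is just separability of the quasiconvex subgroup $\pi_{1}(A)$; concretely one may take the canonical completion $\C(A,X)$. The substantive step is to pass to a single further finite cover that simultaneously makes every $\bigcap_{j\in J}\overline B_{j}$ connected. This cannot be done by a naive induction on $n$ that repairs one intersection $\overline B_{i}\cap\overline B_{j}$ at a time, since passing to a further finite cover can disconnect an intersection that was already connected; the whole configuration must be controlled at once. The remedy is to use product-separability uniformly: by Minasyan's theorem one chooses a finite-index $G'\le G$ containing $\pi_{1}(A)$, disjoint from each of the finitely many ``bad'' product-sets isolated above, and compatible with the entire family $\{\pi_{1}(B_{j})\}$ together with all of its intersections in the structured sense of Haglund--Wise. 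For such a $G'$ the based elevation of $A$ is $A$ itself, and for every $J$ the only configuration surviving in $\overline X=\widetilde X/G'$ up to the $G'$-action is the main one, so each $\bigcap_{j\in J}\overline B_{j}$ is connected. I expect the bulk of the work -- and the main obstacle -- to be precisely this simultaneous control over all $J\subseteq\{0,\dots,n\}$: separability of the individual $\pi_{1}(B_{j})$ does not suffice, and one genuinely needs the structured separability of products and intersections of quasiconvex subgroups of a hyperbolic group to upgrade the pairwise statement to the full one.
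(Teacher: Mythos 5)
This statement is imported verbatim from Haglund--Wise (\cite[Theorem 4.23]{haglund2012combination}); the paper gives no proof of its own, so the only comparison available is with the cited argument, which proceeds via canonical completions/retractions and an induction that establishes separability statements tailored to the whole intersection pattern. Your reduction is the right one: passing to configurations $\bigcap_{j\in J}g_j\widetilde B_j$, noting each is convex hence connected, observing that quasiconvexity and cocompactness give finitely many configurations up to the diagonal $G$-action, and recognizing that some form of product/double-coset separability for quasiconvex subgroups of the hyperbolic virtually special group $G=\pi_1(X)$ is the engine. You also correctly flag where the difficulty sits.

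There are, however, two genuine gaps at exactly that point. First, a logical one: excluding $G'$ from the finitely many ``bad'' product sets only guarantees that every configuration $(g_j)_{j\in J}$ with $g_j\in G'$ is $G$-equivalent to the main one, i.e.\ that $\bigcap_{j\in J}g_j\pi_1(B_j)\neq\emptyset$. Connectedness of $\bigcap_{j\in J}\bar B_j$ requires $G'$-equivalence, i.e.\ that this set --- a coset of $\bigcap_{j\in J}\pi_1(B_j)$ --- actually meets $G'$. That does not follow from separability of the products $\pi_1(B_{j_1})\cdots\pi_1(B_{j_k})$; one either needs $G'$ to contain the relevant vertex groups (which conflicts with avoiding the bad cosets unless arranged carefully) or needs the finer, retraction-based control of Haglund--Wise. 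Second, the concluding step --- choosing $G'$ ``compatible with the entire family $\{\pi_1(B_j)\}$ together with all of its intersections in the structured sense of Haglund--Wise'' --- is not an argument but a pointer to the very result being proved; all of the content of the theorem (the simultaneous control over all $J$, and the passage from $G$-equivalence to $G'$-equivalence above) is concentrated in that unproved clause. As written, the proposal is a correct reduction plus an accurate diagnosis of the obstacle, but not a proof.
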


However, this result does not apply to $K(\Ga)$ directly since $Y(\Ga)$ is not hyperbolic in general. We first deal with this issue.

For a $CAT(0)$ cube complex $X$, we can endow each cube with the $l^{1}$-metric, which gives rise to a $l^{1}$-metric on $X$. An \textit{$l^{1}$-flat} in $X$ is an isometric embedding $\E^{n}\to X$ with respect to the $l^{1}$-metric such that its image is a subcomplex of $X$. An \textit{$l^{1}$-geodesic} is a 1-dimension $l^{1}$-flat. The following is a version of Eberlein's theorem (see \cite[Theorem II.9.33]{bridson1999metric}) in the cubical setting.

\begin{lem}
\label{Eberlin}
Let $X$ be a proper and cocompact $CAT(0)$ cube complex. Then $X$ is Gromov-hyperbolic if and only if $X$ does not contain a 2 dimensional $l^1$-flat.
\end{lem}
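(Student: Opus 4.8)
The plan is to prove both directions by contraposition, using the structure theory of $CAT(0)$ cube complexes (hyperplanes, Sageev's construction, and the flat torus / flat plane machinery). The easy direction is that an $l^1$-flat of dimension $2$ obstructs hyperbolicity: if $F \hookrightarrow X$ is an isometric embedding of $\E^2$ with respect to the $l^1$-metric whose image is a subcomplex, then because $F$ is a subcomplex of a $CAT(0)$ cube complex and $l^1$-isometrically embedded, $F$ is in fact convex (a subcomplex on which the $l^1$-metric is intrinsic is locally convex hence convex in the $CAT(0)$ cube complex), so $F$ with its induced $CAT(0)$ metric is a genuine $CAT(0)$ flat plane in $X$; a space containing an isometrically embedded flat plane is not Gromov-hyperbolic. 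Here one should note the standard fact that on a $CAT(0)$ cube complex the $l^1$- and $l^2$-metrics are bi-Lipschitz on each cube and quasi-isometric globally on proper cocompact complexes, so ``$l^1$-flat'' and ``$CAT(0)$-flat'' are interchangeable at the coarse level.

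The substantive direction is the converse: if $X$ is not Gromov-hyperbolic, then $X$ contains a $2$-dimensional $l^1$-flat. First I would invoke the flat plane theorem for $CAT(0)$ spaces (the cocompact version, as in \cite[Theorem II.9.33]{bridson1999metric} together with work of Bridson): a proper cocompact $CAT(0)$ space fails to be hyperbolic if and only if it contains an isometrically embedded flat plane $\E^2 \hookrightarrow X$ (with the $CAT(0)$ metric). So the task reduces to upgrading an arbitrary $CAT(0)$ flat plane to one that is ``combinatorial'', i.e.\ a subcomplex which is $l^1$-isometrically embedded. This is where the cube complex structure does real work: one uses that a cocompact $CAT(0)$ cube complex has only finitely many isometry types of cubes, that any geodesic flat plane has a neighbourhood crossed by a controlled family of hyperplanes, and that two hyperplanes either cross or are disjoint.

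The key step, and the main obstacle, is the ``straightening'' or ``combinatorialization'' of the flat plane. The cleanest route I can see is: take the $CAT(0)$ flat plane $E \subset X$, and consider the collection $\mathcal{W}$ of hyperplanes that cross $E$; by cocompactness and a pigeonhole / limiting argument (passing to a blow-up or rescaled limit of the action in the spirit of asymptotic cones, or more elementarily by a Ramsey-type argument on the directions in which hyperplanes cross $E$) one finds two transverse families of hyperplanes, each family pairwise disjoint, the two families pairwise crossing, and with bounded-gap combinatorics, so that Sageev's dual-cube-complex reconstruction applied to these two families produces an $l^1$-embedded copy of the $\Z^2$-grid — i.e.\ a $2$-dimensional $l^1$-flat — inside $X$. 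Equivalently one shows directly that $E$ is at finite Hausdorff distance from such a grid and then that the grid itself embeds as a convex subcomplex. The delicate points are (a) extracting two genuinely transverse bi-infinite families of pairwise-disjoint hyperplanes rather than families that eventually ``fan out'', and (b) controlling the combinatorics enough that the reconstructed subcomplex is genuinely $l^1$-isometrically embedded and not merely quasi-isometrically embedded; both are handled by the finiteness of cube-isometry-types and the separation properties of hyperplanes, but they require care. I expect step (a) to be the real heart of the argument — it is exactly the cubical analogue of finding an isometrically embedded (as opposed to merely quasi-isometrically embedded) flat, and it is why one needs the $l^1$ rather than $l^2$ viewpoint in the statement.
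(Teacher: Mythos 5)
The substantive direction of the lemma (not hyperbolic $\Rightarrow$ there is a $2$-dimensional $l^1$-flat) is exactly where your proposal stops being a proof. After invoking the flat plane theorem you reduce to ``combinatorializing'' a CAT(0) flat $E$, but the straightening step --- extracting two bi-infinite transverse families of pairwise-disjoint hyperplanes crossing $E$ and arguing that Sageev-type reconstruction yields an $l^1$-isometrically embedded grid subcomplex --- is the entire content of the lemma, and you leave it as a plan with self-acknowledged ``delicate points'' rather than an argument. Moreover, even granting such families, an infinite grid of pairwise crossings does not by itself produce a subcomplex isometric to $(\E^2,l^1)$: the squares witnessing the crossings need not assemble into a single plane (they can be spread among many distinct squares dual to the same pair of hyperplanes), and closing this gap is not a routine pigeonhole or Ramsey matter. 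The paper sidesteps the whole issue with a structural citation: given a CAT(0) $2$-flat $F$ (it uses \cite[Theorem C]{kleiner1999local}; your appeal to \cite[Theorem II.9.33]{bridson1999metric} plays the same role), it takes the smallest convex subcomplex $L$ containing $F$ and quotes \cite[Corollary 3.5]{huang2015cocompactly} to split $L$ as a product with at least two unbounded factors, each containing a geodesic line; one then only needs the easy observation that in a uniformly locally finite factor a CAT(0) geodesic line can be approximated by an $l^1$-geodesic line, and the product of two such lines is the desired $l^1$-flat. Without this splitting result, or an input of comparable strength, your sketch does not close.

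A secondary point: in the easy direction, your claim that an $l^1$-isometrically embedded subcomplex is locally convex, hence convex, hence a genuine CAT(0) flat, is false. A monotone staircase surface in the standard cubulation of $\E^3$ is a $2$-dimensional $l^1$-flat (intrinsic and ambient $l^1$-distances agree, and it unfolds $l^1$-isometrically onto $\E^2$), yet it is neither convex nor CAT(0)-flat. That direction is still fine for the reason you mention in passing: an $l^1$-flat is a quasi-isometrically embedded copy of $\E^2$, and a Gromov-hyperbolic space contains no $2$-dimensional quasi-flat; but the convexity shortcut should be dropped.
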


\begin{proof}
It suffices to show the only if direction. Suppose $X$ is not Gromov-hyperbolic, then \cite[Theorem C]{kleiner1999local} implies $X$ contains a 2-flat $F$ (with respect to the $CAT(0)$ metric). Let $L$ be the smallest convex subcomplex of $X$ that contains $F$. Then \cite[Corollary 3.5]{huang2015cocompactly} implies $L$ admits a splitting $L=K_1\times K_2\times\cdots K_{n}\times K$ such that $n\ge 2$ and each $K_i$ contains a geodesic line. Since each $K_i$ is uniformly locally finite, it is not hard to approximate a geodesic line in $K_i$ by an $l^{1}$-geodesic line. Hence $X$ contains a 2-dimension $l^{1}$-flat, which is a contradiction.
\end{proof}

\begin{lem}
\label{hyperbolicity of building quotient}
Suppose $K(\Ga)$ has trivial holonomy. If $\Ga$ does not contain any induced 4-cycle, then the universal cover $X$ of $|\B|/H$ does not contain a 2 dimensional $l^1$-flat, hence hyperbolic.
\end{lem}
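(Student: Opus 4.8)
The plan is to prove directly that $X$ contains no $2$-dimensional $l^1$-flat and then conclude via Lemma~\ref{Eberlin}. First I would fix the set-up. By Lemma~\ref{trivial holonomy and special}, $|\B|/H$ is a compact special cube complex, so $X$ is a proper cocompact $CAT(0)$ cube complex; the labelling of edges and hyperplanes of $|\B|$ by vertices of $\Ga$, together with the rank function on vertices, descend to $|\B|/H$ (see the discussion around Lemma~\ref{rank and label preserving}) and therefore lift to $X$. In $X$, crossing hyperplanes still have adjacent labels, and by Lemma~\ref{convexity of components} every $\Ga'$-component of $X$ is locally convex, hence $CAT(0)$. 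Finally, via the covering $q\colon X\to|\B|/H$ and Lemma~\ref{inverse image of Ga'-component}, every $\Ga'$-component of $X$ is a covering space of a $\Ga'$-component of $|\B|/H$, which is itself a quotient of a $\Ga'$-component of $|\B|$.

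Now suppose, for contradiction, that $E\subset X$ is a $2$-dimensional $l^1$-flat: a subcomplex isomorphic to the standard squaring of $\E^2$ whose inclusion is an $l^1$-isometric embedding. Let $A$ and $B$ be the sets of labels of the two families of hyperplanes of $E$ (those dual to the two parallel classes of edges of $E$). Inside $E$ every hyperplane of one family crosses every hyperplane of the other, hence also in $X$, so every vertex of $A$ is adjacent in $\Ga$ to every vertex of $B$; since no vertex of $\Ga$ is adjacent to itself, $A\cap B=\emptyset$. The key point is that neither $A$ nor $B$ is a clique. Suppose $A$ were a clique. A line of $E$ parallel to the first edge family carries only labels in $A$, hence lies in a $\Ga''$-component $N$ of $X$ for some clique $\Ga''\subseteq A$. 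By the reduction above, $N$ covers a $\Ga''$-component $M$ of $|\B|/H$, and $M$ is a quotient of a $\Ga''$-component $L$ of $|\B|$; by Lemma~\ref{Ga'-component}(1) and Lemma~\ref{characterization of standard components}, $L$ is isomorphic to the Davis realization of the right-angled building of the free abelian group $G(\Ga'')$. Such a realization is star-shaped around its unique maximal-rank vertex (every cube is a face of a cube containing that vertex), hence contractible; the quotient map to $M$ is induced by a rank-preserving action that fixes that vertex, so $M$ is contractible too, hence simply connected and bounded. Then $N\cong M$ is bounded, contradicting the fact that it contains an infinite $l^1$-geodesic. So neither $A$ nor $B$ is a clique.

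To finish, choose $a_1\neq a_2$ in $A$ with $a_1\not\sim a_2$ and $b_1\neq b_2$ in $B$ with $b_1\not\sim b_2$. These four vertices of $\Ga$ are distinct (since $A\cap B=\emptyset$), all four pairs $a_ib_j$ are edges of $\Ga$, and $a_1a_2$ and $b_1b_2$ are non-edges, so $\{a_1,b_1,a_2,b_2\}$ spans an induced $4$-cycle in $\Ga$, contrary to hypothesis. Hence $X$ has no $2$-dimensional $l^1$-flat, and is Gromov-hyperbolic by Lemma~\ref{Eberlin}.

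The step I expect to be the main obstacle is the boundedness of clique $\Ga'$-components of $X$: because $X$ is the universal cover of $|\B|/H$ rather than $|\B|$ itself — the action $H\acts|\B|$ is not free — one must verify that $X$ genuinely inherits the building-like local structure of $|\B|$ along these components, and, crucially, that the passage $|\B|\rightsquigarrow|\B|/H\rightsquigarrow X$ does not turn a bounded component into an unbounded one. This is exactly why it matters that $\Ga'$-components with $\Ga'$ a clique are already contractible in $|\B|/H$, so that passing to the universal cover leaves them unchanged.
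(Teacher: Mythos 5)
Your proof is correct and follows the same overall strategy as the paper: reduce to excluding $2$-dimensional $l^1$-flats via Lemma~\ref{Eberlin}, show that the label sets attached to a putative flat cannot both be contained in cliques because clique-supported components of $X$ are bounded, and then assemble an induced $4$-cycle from two non-adjacent pairs using the fact (Lemma~\ref{Ga'-component}(3), descended to $X$) that crossing hyperplanes carry adjacent labels. The one place where you genuinely diverge is the key sub-claim that a $\Ga''$-component of $X$ with $\Ga''$ a clique is bounded. The paper gets this from Lemma~\ref{inverse image of Ga'-component} together with Lemma~\ref{product decomposition of actions in building}: trivial holonomy forces the stabilizer of the corresponding component $L\subset|\B|$ to split as a direct sum acting factor-wise, so the quotient component in $|\B|/H$ is a product of finite trees of diameter $2$, hence contractible. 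You instead observe that $L$ is a product of stars, hence a union of cubes all containing its unique maximal-rank vertex $v_0$, which the rank-preserving stabilizer must fix, and conclude contractibility of the quotient directly — so your route does not need the holonomy-based splitting at this step (trivial holonomy is still used, as in the paper, via Lemma~\ref{trivial holonomy and special} to make $|\B|/H$ non-positively curved so that Lemma~\ref{Eberlin} applies upstairs). This is a nice simplification, but your justification ``the action fixes that vertex, so $M$ is contractible too'' is stated too quickly: a quotient of a contractible complex by an action with a fixed point is not contractible for free. Here it does work, and the clean way to say it is that $L$ is a convex subcomplex of $|\B|$, hence $CAT(0)$, the stabilizer acts by isometries fixing $v_0$, and the geodesic contraction of $L$ to $v_0$ is therefore equivariant and descends to a contraction of $M$ (simple connectivity of $M$ is all you actually need, since then the covering $N\to M$ is an isomorphism and $N$ is compact). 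With that one sentence added, your argument is complete.
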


\begin{proof}
$X$ has an induced edge-labelling from $|\B|/H$. Let $\ell\subset X$ be a $l^{1}$-geodesic line and let $\Ga_{\ell}\subset\Ga$ be the set of labels of edges of $\ell$. We claim the diameter of $\Ga_{\ell}$ is $\ge 2$. Suppose the contrary is true. Then there exists a clique $\Delta\subset\Ga$ which contains $\Ga_{\ell}$. It follows from Lemma \ref{inverse image of Ga'-component} and Lemma \ref{product decomposition of actions in building} that each $\Delta$-component in $|\B|/H$ is a product of trees of diameter 2, hence contractible. Thus each $\Delta$-component in $X$ is bounded, which is contradictory to that it contains an $l^{1}$-geodesic line $\ell$.

Since $K(\Ga)$ has trivial holonomy, $|\B|/H$ is non-positively curved by Lemma \ref{trivial holonomy and special}. Thus by Lemma \ref{Eberlin}, it suffices to show $X$ can not contain any 2 dimensional $l^{1}$-flat. Suppose the contrary is true and let $\ell_1$ and $\ell_2$ be two $l^{1}$-geodesic lines which spans a 2-flat. Note that Lemma \ref{Ga'-component} (3) is true for $|\B|/H$, hence for $X$. Thus each vertex in $\Ga_{\ell_1}$ is adjacent to every vertex in $\Ga_{\ell_2}$. By the previous claim, we can find a pair of non-adjacent vertices in both $\Ga_{\ell_1}$ and $\Ga_{\ell_2}$, which gives rise to an induced 4-cycle in $\Ga$. 
\end{proof}

\begin{cor}
\label{connected intersection}
Suppose $K(\Ga)$ has trivial holonomy and $\Ga$ does not contain induced 4-cycle. Let $(B_0,...,B_n,A)$ be a collection of standard components containing the basepoint of $K(\Ga)$. Suppose that $A\subset\cap_{j=0}^{n}B_j$.

Then there is a based finite cover $\bar{K}(\Ga)$ and base elevations $\bar{B}_0,...,\bar{B}_n$ with $\bar{A}\cong A$, such that $\cap_{j\in J}\bar{B}_{j}$ is connected for each $J\subset\{0,...,n\}$.
\end{cor}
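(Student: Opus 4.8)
The plan is to deduce Corollary~\ref{connected intersection} directly from the Haglund--Wise connected intersection theorem (Theorem~\ref{haglund-wise connected intersection}) by replacing $K(\Ga)$ with $|\B|/H$, which is the right place to apply a hyperbolicity hypothesis. First I would pass to a finite cover so that $K(\Ga)$ has trivial holonomy: by Lemma~\ref{finite cover with trivial holonomy} this is harmless, and under the no-induced-4-cycle hypothesis Lemma~\ref{trivial holonomy and special} and Lemma~\ref{hyperbolicity of building quotient} tell us that $|\B|/H$ is a \emph{compact special cube complex whose universal cover is Gromov-hyperbolic} --- exactly the ambient space required by Theorem~\ref{haglund-wise connected intersection}.

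Next I would transport the data across the map $\pi_H:K(\Ga)\to|\B|/H$ of Section~\ref{subsec_branched complex basics}. By Lemma~\ref{correspondence between standard components downstairs}, $\pi_H$ carries each standard $\Ga_j$-component $B_j$ (resp. the standard component $A$) to a standard component $\pi_H(B_j)$ (resp. $\pi_H(A)$) in $|\B|/H$, and standard components are connected and locally convex there (they are convex by Lemma~\ref{convexity of components} applied in $|\B|$, and this descends). From $A\subset\bigcap_{j=0}^n B_j$ we get $\pi_H(A)\subset\bigcap_{j=0}^n\pi_H(B_j)$, and all of these contain the image basepoint. Applying Theorem~\ref{haglund-wise connected intersection} in $|\B|/H$ produces a based finite cover $\overline{|\B|/H}$ with base elevations $\overline{\pi_H(B_0)},\dots,\overline{\pi_H(B_n)}$ such that $\overline{\pi_H(A)}\cong\pi_H(A)$ and every partial intersection $\bigcap_{j\in J}\overline{\pi_H(B_j)}$ is connected.

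I would then pull this cover back along $\pi_H$: set $\bar K(\Ga)$ to be the (based component of the) fibre product of $\overline{|\B|/H}\to|\B|/H$ with $\pi_H:K(\Ga)\to|\B|/H$. The induced map $\bar K(\Ga)\to\overline{|\B|/H}$ is again a $\pi_H$-type map, and $\pi_H$ pulls back to it; by Lemma~\ref{correspondence between standard components downstairs} the base elevations $\bar B_j\subset\bar K(\Ga)$ of the $B_j$ are precisely the preimages under this map of the $\overline{\pi_H(B_j)}$. The key point is that connectedness of intersections is preserved: $\bigcap_{j\in J}\bar B_j$ is the $\pi_H$-preimage of $\bigcap_{j\in J}\overline{\pi_H(B_j)}$ (using that $\pi_H$ is compatible with the standard-component correspondence on intersections, Lemma~\ref{intersection of standard components downastairs} together with Lemma~\ref{correspondence between standard components downstairs}), and the preimage of a connected standard component under a $\pi_H$-map is connected by Lemma~\ref{correspondence between standard components downstairs}(1). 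Likewise $\bar A$ maps isomorphically to $\overline{\pi_H(A)}\cong\pi_H(A)$; since $\pi_H$ restricted to the standard component $A$ is the canonical projection, which collapses only cores of flats and induces an isomorphism on the associated standard component, one recovers $\bar A\cong A$.

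The main obstacle is the last identification --- verifying that $\bar A\cong A$, not merely $\bar A\cong\pi_H(A)$ --- and more generally that pulling back along $\pi_H$ does not destroy the connectedness conclusion. Both hinge on the structural fact that $\pi_H$, and its analogue on covers, behaves like a restriction quotient: its fibres over interiors of cubes are Euclidean (Theorem~\ref{inverse image are flats}), so preimages of connected subcomplexes stay connected, and the bijection on rank-$0$ vertices (Lemma~\ref{correspondence between standard components downstairs}) forces the based elevation $\bar A$ to be the unique standard $\Ga'$-component over $\overline{\pi_H(A)}$, which is the copy of $A$. The remaining verifications --- that fibre products of coverings along $\pi_H$ are again coverings, and that base elevations pull back to base elevations --- are routine covering-space bookkeeping.
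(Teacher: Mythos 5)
Your argument is correct and follows essentially the same route as the paper: push the standard components forward to $|\B|/H$ via $\pi_H$, apply the Haglund--Wise connected intersection theorem there (justified by Lemma \ref{trivial holonomy and special} and Lemma \ref{hyperbolicity of building quotient}), and pull the resulting finite cover back along the $\pi_1$-surjective map $\pi_H$, using Lemma \ref{correspondence between standard components downstairs} and Lemma \ref{intersection of standard components downastairs} to see that connectedness of the intersections and the identification $\bar{A}\cong A$ survive the pullback. The only superfluous step is your initial passage to a cover with trivial holonomy, which is already a hypothesis of the corollary.
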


\begin{proof}
Let $\pi_{H}:K(\Ga)\to |\B|/H$ be the map induced by the canonical projection. Suppose $\pi_{H}(B_i)=D_i$ and $\pi_{H}(A)=C$. Then $(D_0,...,D_n,C)$ are standard components in $|\B|/H$ by Lemma \ref{correspondence between standard components downstairs}, in particular they are locally convex by Lemma \ref{convexity of components}. It follow from Lemma \ref{trivial holonomy and special} and Lemma \ref{hyperbolicity of building quotient} that $|\B|/H$ satisfies the assumption of Theorem \ref{haglund-wise connected intersection}. Thus there is a based finite cover $\bar{L}\to |\B|/H$ and based elevations $\bar{D}_0,...,\bar{D}_n$ with $\bar{C}\cong C$, such that $\cap_{j\in J}\bar{D}_{j}$ is connected for each $J\subset\{0,...,n\}$. Let $\bar{K}(\Ga)$ be the based cover of $K(\Ga)$ corresponding to the subgroup $H'=(\pi_{H})^{-1}_{\ast}(\pi_1(\bar{L},\star))$. This gives rise to the following commuting diagram between based spaces.
\begin{center}
$\begin{CD}
(\bar{K}(\Ga),\star)                      @>>\pi'>       (\bar{L},\star)\\
@VVV                              @VVV\\
(K(\Ga),\star)                        @>>\pi_H>       (|\B|/H,\star)
\end{CD}$
\end{center}
Let $(\bar{B}_0,...,\bar{B}_n,\bar{A})$ be the based lifts in $\bar{K}(\Ga)$. Then they are also standard components, $\pi'(\bar{B}_i)=\bar{D}_i$ and $\pi'(\bar{A})=\bar{C}$. Thus $\bar{A}\cong A$. Recall that $\pi_H$ induces a surjection on the fundamental groups, then $\pi'$ is exactly the $H'$-equivariant quotient of the canonical projection $\pi$ (see Remark \ref{surjection on fundamental groups}). Then the connectedness of $\cap_{j\in J}\bar{B}_{j}$ follows from Lemma \ref{intersection of standard components downastairs} and Lemma \ref{correspondence between standard components downstairs}.
\end{proof}

\begin{cor}
\label{projections are circles}
Let $\Ga'$ be a graph without induced 4-cycles. Pick vertex $v\in\Ga'$ and let $\Ga\subset\Ga'$ be the induced subgraph spanned by vertices in $\Ga'\setminus\{v\}$. Let $\Ga_0=lk(v,\Ga')\subset\Ga$. Suppose $K=K(\Ga)$ is special and pick a base point $p\in K$. Let $i:A\to K$ be a local isometry such that
\begin{enumerate}
\item the image of $A$ is a standard $\Ga_0$-component which contains the base point $p$;
\item the map $i:A\to i(A)$ is a covering map of finite degree.
\end{enumerate}
Then there exists a finite cover $A_{0}\to A$ such that for any further finite cover $\bar{A}\to A_{0}$ with trivial holonomy, there exists a finite directly special cover $\bar{K}\to K$ and an embedding $\bar{A}\to \bar{K}$ such that they fit into the following commutative diagram:
\begin{center}
$\begin{CD}
\bar{A}                           @>>>        \bar{K}\\
@VVV                                   @VVV\\
A          @>>>        K
\end{CD}$
\end{center}
Moreover, the following statements are true.
\begin{enumerate}
\item All elevations of $A\to K$ to $\bar{K}$ are embedded.
\item $\bar{A}$ is wall-injective in $\bar{K}$.
\item Let $\bar{B}\subset\bar{K}$ be any $\Ga_0$-component distinct from $\bar{A}$. Then the wall projection $\wpj_{\bar{K}}(\bar{B}\to\bar{A})$ is a disjoint union of branched tori of various dimension and isolated points.
\end{enumerate}
\end{cor}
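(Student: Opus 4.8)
The plan is to construct $\bar K$ as a canonical completion of $\bar A$ over a carefully chosen finite cover of $K$, and then to identify $\wpj_{\bar K}(\bar B\to\bar A)$ with a union of intersections of $\Ga_0$-components, each of which is a branched torus by Lemma~\ref{intersection}. First I would reduce to the case where $K=K(\Ga)$ has trivial holonomy, using Lemma~\ref{finite cover with trivial holonomy} and absorbing the cover into the promised $A_0$; here $\Ga_0\subseteq\Ga\subseteq\Ga'$ still has no induced $4$-cycle. Then Lemma~\ref{trivial holonomy and special} and Lemma~\ref{hyperbolicity of building quotient} give that $|\B|/H$ is a compact special cube complex with Gromov-hyperbolic universal cover, so the Haglund--Wise connected-intersection machinery is available through the canonical projection $\pi_H$, packaged as Corollary~\ref{connected intersection}. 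Applying it to the finitely many $\Ga_0$-components of $K$ through the basepoint, then re-establishing trivial holonomy via Lemma~\ref{finite cover with trivial holonomy} and Lemma~\ref{normal subgroup and trivial holonomy}, and iterating, I obtain a finite cover $K_\ast\to K$ with trivial holonomy in which all intersections of $\Ga_0$-components are connected; $A_0$ is then the corresponding elevation of $A$, which is still a finite cover of $A$ mapping by a local isometry onto a $\Ga_0$-component of $K_\ast$.

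Given any further finite cover $\bar A\to A_0$ with trivial holonomy, I would let $\bar K$ be the main component of $\C(\bar A,K_\ast)$ (Case~4 of Section~\ref{subset_the canonical comletion}). It is a finite cover of $K$ containing an embedded copy of $\bar A$ fitting into the required commutative diagram; by Lemma~\ref{trivial holonomy and completion} it again has trivial holonomy and $\bar A$ is wall-injective in it, which is statement~(2). Passing to a further cover (Lemma~\ref{directly special}, using wall-injectivity and the stability of direct specialness under covers) makes $\bar K$ directly special while keeping $\bar A$ embedded. For statement~(1), the preimage of $i(A)$ in $\bar K$ is a disjoint union of $\Ga_0$-components each realised by the canonical-completion construction as some $\C(-,-)$, hence embedded; combined with direct specialness this shows every elevation of $A\to K$ to $\bar K$ is embedded.

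For statement~(3), fix a $\Ga_0$-component $\bar B\ne\bar A$ of $\bar K$. Because parallel edges share a hyperplane, the positive-dimensional part of $\wpj_{\bar K}(\bar B\to\bar A)$ is a subcomplex of $\bar A$ closed under spanning cubes; I would show each connected component $P$ of it is a $\Ga_P$-component of $\bar A$, where $\Ga_P\subseteq\Ga_0$ is the set of edge-labels of $P$, and that $\Ga_P$ is a clique. Granting this, lifting to $Y(\Ga)$ identifies $P$ with the quotient of a standard branched flat $\beta_F$ (Lemma~\ref{Ga'-component}, Lemma~\ref{characterization of standard components}), hence a branched torus, and the remaining vertices of $\bar A$ are the isolated points. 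To prove the claim I would lift $P$ to $\tilde P\subset\tilde A\subset Y(\Ga)$, and, using that all intersections of $\Ga_0$-components in $K_\ast$ and their elevations are connected together with the hyperbolicity above, identify $P$ with an intersection $\bar A\cap\bar B'$ for a $\Ga_0$-component $\bar B'$ parallel to $\bar B$; since every $\Ga_0$-component of $Y(\Ga)$ is parallel to some $\beta_K$ with $K\subset X(\Ga)$ a $\Ga_0$-component (Lemma~\ref{Ga'-component}), the intersection $\tilde A\cap\tilde B'$ is a $\Ga''$-component whose fundamental group is an intersection of an edge group with a conjugate of an edge group inside the vertex group $\pi_1(K(\Ga))$, which is virtually free abelian by Lemma~\ref{intersection}; hence $\Ga''$ is a clique and $\tilde A\cap\tilde B'$ is a standard branched flat.

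The main obstacle is exactly the identification of each component $P$ of the wall projection with a full clique-$\Ga_P$-component of $\bar A$: a priori $P$ could be a proper sub-piece of a $\Ga_P$-component, or $\Ga_P$ could fail to be a clique, and ruling these out must use all three structural inputs simultaneously --- the connected-intersection cover (so that the hyperplanes of $\bar A$ crossing $\bar B$ fill up entire components), the hyperbolicity coming from the no-induced-$4$-cycle hypothesis (Lemma~\ref{hyperbolicity of building quotient}, controlling which $\Ga_0$-components can be crossed by a common clique of hyperplanes), and Lemma~\ref{intersection} (upgrading a connected intersection of edge-type subcomplexes to a branched torus). A secondary difficulty, which dictates the shape of the reductions above, is that trivial holonomy is \emph{not} inherited by arbitrary finite covers, so the connected-intersection and trivial-holonomy reductions have to be interleaved and the covers produced kept compatible with the embedding of $\bar A$.
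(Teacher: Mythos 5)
There is a genuine gap, and it sits exactly at the point you flagged as the main obstacle: the identification of the wall projection with something controlled. Your plan for (3) is to realize each component of $\wpj_{\bar K}(\bar B\to\bar A)$ as an intersection $\bar A\cap\bar B'$ of $\Ga_0$-components and then invoke Lemma \ref{intersection}. This cannot work as stated: distinct $\Ga_0$-components are disjoint by the maximality clause in Definition \ref{components}, so $\bar A\cap\bar B'=\emptyset$ whenever $\bar B'\neq\bar A$; for the same reason, applying Corollary \ref{connected intersection} to ``the $\Ga_0$-components through the basepoint'' is vacuous (there is exactly one, and the corollary requires all the $B_j$ to contain $A$). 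Moreover Lemma \ref{intersection} is only a heuristic stated in the introduction; its rigorous content \emph{is} this corollary, so using it as an input is circular. The paper's actual mechanism is different: for each $v_i\in\Ga_0$ it introduces the larger subgraph $\Ga_i=St(v_i)\cup\Ga_0$ and the $\Ga_i$-component $\bar K_i$ \emph{containing} $\bar A$; the no-induced-4-cycle hypothesis is used through the deleted vertex $v$ (a vertex $u\in(\Ga_i\cap\Ga_j)\setminus\Ga_0$ with $v_i\not\sim v_j$ would give the induced 4-cycle $u,v_i,v,v_j$ in $\Ga'$), so $\Ga_i\cap\Ga_j=\Ga_0$; Corollary \ref{connected intersection} applied to the $\bar K_i$ then forces $\bar K_i\cap\bar K_j=\bar A$ for non-adjacent $v_i,v_j$. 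Since an edge of $\wpj_{\bar K}(\bar B\to\bar A)$ labelled $v_i$ is parallel to an edge of $\bar B$, it places $\bar B$ inside $\bar K_i$; two labels $v_i\not\sim v_j$ would put $\bar B\subset\bar K_i\cap\bar K_j=\bar A$, a contradiction. Hence all labels lie in a clique and the branched-torus conclusion is immediate from the definition of wall projection. Your proposal never produces this clique constraint; neither hyperbolicity nor connectedness of intersections of $\Ga_0$-components substitutes for it.

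Two secondary problems. First, the order of operations: connectedness of intersections is not inherited by further covers, so arranging it in $K_\ast$ \emph{before} forming $\C(\bar A,K_\ast)$ does not give it in $\bar K$; the paper applies Corollary \ref{connected intersection} last, to the main component $\hat K$ of the completion $\C(\bar A,K_3)$, where the relevant $\Ga_i$-components all contain the already-embedded $\bar A$. Second, for statement (1) you need a preliminary step you omitted: since $i:A\to i(A)$ is a nontrivial finite covering, one first forms $\C(A,K)$ and passes to the smallest regular cover $K_2$ of $K$ factoring through each of its components; only then are all elevations of $A\to K$ to any further cover embedded. Your assertion that preimages of $i(A)$ in $\bar K$ are ``realised by the canonical-completion construction, hence embedded'' does not address the elevations corresponding to the non-main components.
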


In the following proof, for any vertex $v\in \Ga$, we always consider its link $lk(v)$ and its closed star $St(v)$ inside $\Ga$ rather than $\Ga'$.

\begin{proof}
By Lemma \ref{directly special}, we assume $K$ is directly special. Let $K_{1}$ be the canonical completion $\mathsf{C}(A,K)$. Take $K_{2}\to K$ to be the smallest regular cover factoring through each component of $K_{1}$ (more precisely, each component of $K_1$ gives rise to a finite index subgroup of $\pi_1(K,p)$, and $K_2$ is the regular cover corresponding to the intersection of all these subgroups, as well as their conjugates). It is clear that all elevations of $A\to K$ to $K_{2}$ are injective. Let $K_{3}\to K_2$ be a finite cover which has trivial holonomy (Lemma \ref{finite cover with trivial holonomy}). We can assume $K_3\to K$ is regular by Lemma \ref{normal subgroup and trivial holonomy}. Let $A_{0}\to K_{3}$ be an elevation of $A\to K$. We claim $A_{0}$ satisfies our condition.

Let $\C(\bar{A},K_3)$ be the canonical completion with respect to the map $\bar{A}\to A_0\to K_3$ and let $\hat{K}$ be its main component. Then $\hat{K}$ has trivial holonomy and $\bar{A}$ is wall injective in $\hat{K}$ by Lemma~\ref{trivial holonomy and completion}. 

Suppose $\{v_{i}\}_{i=1}^{n}$ are the vertices in $\Ga_0$. For each $i$, let $\Ga_i\subset\Ga$ be the induced subgraph spanned by $St(v_i)\cup\Ga_0$. We claim $\Ga_i\cap \Ga_j=\Ga_0$ if $v_i$ and $v_j$ are not adjacent. If there exists a vertex $u\in (\Ga_i\cap \Ga_j)\setminus\Ga_0$, then $u,v_i,v_j$ and $v$ will form a induced $4$-cycle in $\Ga'$, which yields a contradiction.

By Corollary \ref{connected intersection}, there exists a finite cover $\bar{K}\to\hat{K}$ such that we can find an elevation of $\bar{A}$ in $\bar{K}$ which is isomorphic to $\bar{A}$ (hence we also denote this elevation by $\bar{A}$) such that the standard $\Ga_i$-component $\bar{K}_i\subset\bar{K}$ that contains $\bar{A}$ satisfies $\cap_{i\in I}\bar{K}_i$ is connected for any $I\subset\{1,...,n\}$. Thus by Lemma \ref{intersection of standard components downastairs} and the previous paragraph, $\bar{K}_i\cap\bar{K}_j=\bar{A}$ if $v_i$ and $v_j$ are not adjacent. We claim $\bar{A}$ and $\bar{K}$ satisfy the requirements of the corollary.

Since the covering map $\bar{K}\to K$ factors through $K_2$, every elevation of $A$ to $\bar{K}$ is embedded. Since $\bar{K}$ covers $K$, it is directly special. Since $\bar{A}$ is wall-injective in $\hat{K}$ (Lemma \ref{wall-injective}), it is wall-injective in $\bar{K}$. Let $\bar{B}\neq\bar{A}$ be another $\Ga_0$-component. Pick two distinct edges $e_i$ and $e_j$ in $\wpj_{\bar{K}}(\bar{B}\to\bar{A})$ and suppose they are labelled by $v_i$ and $v_j$ in $\Ga_0$ respectively. Then $\bar{B}\subset\bar{K}_i$ and $\bar{B}\subset\bar{K}_j$. Thus $v_i=v_j$ or they are adjacent, otherwise we would have $\bar{K}_i\cap\bar{K}_j=\bar{A}$, which contradicts $\bar{B}\neq\bar{A}$. Thus the labels of all edges in $\wpj_{\bar{K}}(\bar{B}\to\bar{A})$ are contained in a clique of $\Ga$. On the other hand, by the definition of wall projection, if each edge in a corner of some cube is contained $\wpj_{\bar{K}}(\bar{B}\to\bar{A})$, then this cube and the smallest branched torus containing this cube is contained in $\wpj_{\bar{K}}(\bar{B}\to\bar{A})$. Thus (3) follows.
\end{proof}

\subsection{The modified completions and retractions}
\label{subsec_modified completeions and retractions}
Let $K=K(\Ga)=Y(\Ga)/H$ be a branched complex which is directly special. Recall that parallel edges have the same label, thus there is a well-defined labelling for hyperplanes of $K$. Note that each edge $e\subset K$ is contained in a $v$-component ($v\in\Ga$ is the label of $e$), which is a branched circle. If $e$ is contained in the core of this branched circle, then $e$ is called a \textit{core} edge. Since a core edge is parallel to another core edge, it makes sense to talk about \textit{core hyperplanes} in $K$. We orient each edge in $K$ such that
\begin{enumerate}
\item the orientation respects parallelism between edges;
\item for any circle in $K$ made of core edges, the orientation of each edge in the circle fits together to give an orientation of the circle.
\end{enumerate}
Condition (2) is possible by Lemma~\ref{orientation and specialness}. Namely for each standard branched line in $Y(\Ga)$, we can choose an orientation for its core. And we can require this choice is compatible with parallelism and is $H$-equivariant, thus it descends to orientation of corresponding circles in $K$.

Two hyperplanes of $K$ are \textit{equivalent} if and only if they are both core hyperplanes and they are dual to the same $v$-component for some vertex $v\in\Ga$. We claim this is indeed an equivalent relationship. Suppose for $i=1,2$, $h$ and $h_i$ are core hyperplanes which are dual to the same $v_i$-component. Then $v_1=v_2$, and $h$ and $h_i$ are in the same $St(v_i)$-component. Thus $h,h_1$ and $h_2$ are in the same $St(v_1)$-component. Recall that each $St(v_1)$-component is of form $\beta_{P_{\bar{v}}}/H_{\bar{v}}$ for some vertex $\bar{v}\in\P(\Ga)$ (Lemma \ref{Ga'-component} (2), Lemma \ref{characterization of standard components} and Lemma \ref{inverse image of Ga'-component}). Then $h_1$ and $h_2$ are equivalent by Lemma \ref{parallel set in blow-up building}.

For two equivalent classes of hyperplanes $\mathcal{C}_{1}$ and $\mathcal{C}_{2}$, if an element in $\mathcal{C}_{1}$ crosses an element in $\mathcal{C}_{2}$, then each element in $\mathcal{C}_{1}$ will cross every element in $\mathcal{C}_{2}$. Let $\Gamma_{K}$ be a graph on the equivalence classes of hyperplanes in $K$ such that vertices are adjacent if and only if the corresponding equivalent classes cross. 

For each class of hyperplanes $\mathcal{C}$, we define whether $\mathcal{C}$ \textit{$($directly or indirectly$)$ self-osculates} in the same way as the beginning of Section~\ref{subsec_special cube complex}, except we change the definition of $g_v$ there to be the graph made of all edges which are dual to some hyperplane in $\mathcal{C}$ and contain $v$. Similarly, the notion of \textit{inter-osculation} is also well-defined for two classes of hyperplanes. It follows from our definition of the equivalence relationship, as well as our choice of the edge orientation of $K$, that each equivalent class of hyperplanes does not directly self-osculate, and no two classes inter-osculate (note that Lemma \ref{Ga'-component} (3) is also true for $K(\Ga)$, and this excludes the inter-osculation of two classes). 

There is a natural map $K\to S(\Gamma_{K})$ by sending an oriented edge $\vec{a}$ in $K$ to the oriented edge in $S(\Ga_K)$ that corresponds to the hyperplane dual to $\vec{a}$. The above discussion implies that this map is a local isometry. Suppose $A\subset K$ is an $\Ga'$-component for $\Ga'\subset\Ga$, and let $A\to K\to S(\Gamma_{K})$ be the composition map. Let $\mathsf{C'}(A,K)$ be the pullback which fits into the following diagram:
\begin{center}
$\begin{CD}
\mathsf{C'}(A, K)                         @>>>       \mathsf{C}(A, S(\Gamma_{K}))\\
@VVV                                   @VVV\\
K         @>>>        S(\Gamma_{K})
\end{CD}$
\end{center}
Here $\mathsf{C}(A, S(\Gamma_{K}))$ is the canonical completion defined in Section~\ref{subset_the canonical comletion}. An edge of $S(\Ga_K)$ is a \textit{core edge} if it arises from a family of core hyperplanes. Inverse image of core edges in $S(\Ga_K)$ are defined to be \textit{core edges} in $\mathsf{C}(A, S(\Gamma_{K}))$.

There is a natural copy of $A$ inside $\mathsf{C'}(A, K)$. Let $r:\mathsf{C}(A, S(\Ga_K))\to A$ be the canonical retraction. Note that if $e\subset S(\Ga_K)$ is a core edge, then its inverse image under the map $A\to S(\Ga_K)$ is a disjoint union of circles or isolated points; if $e$ is not a core edge, then its inverse image is a disjoint union of edges or points (since $K(\Ga)$ is directly special). It follows that $r$ is a cubical map. Moreover, 

\begin{lem}
\label{retraction}
Let $e\subset \mathsf{C}(A, S(\Ga_K))$ be a core edge. If $r(e)$ is also an edge, then $r(e)=e$.
\end{lem}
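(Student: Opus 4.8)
The plan is to unwind the construction of the canonical retraction $r$ for the special case at hand, and use the fact that core edges come from an \emph{oriented} family of hyperplanes. Recall from Section~\ref{subset_the canonical comletion}, Case 2--3, how $r$ is built: the $1$-skeleton $S(\Ga_K)^{1}$ is a wedge of circles, one circle $c_u$ for each vertex $u$ of $\Ga_K$, and $\mathsf{C}(A^{1},S(\Ga_K)^{1})$ is assembled circle by circle from the $\mathsf{C}(A_u,c_u)$ of Case 1, where $A_u$ is the preimage of $c_u$ under $A\to S(\Ga_K)$. In Case 1 the retraction $\mathsf{C}(A_u,c_u)\to A_u$ is the identity on the copy of $A_u$ sitting inside $\mathsf{C}(A_u,c_u)$ and sends each \emph{extra} edge (the ones added to turn an arc or a point into a circle) to the component of $A_u$ it was attached along, i.e. collapses it to a single vertex. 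So, for an edge $e$ lying over $c_u$: if $e$ is one of the ``old'' edges coming from $A_u\subset\mathsf{C}(A_u,c_u)$ then $r(e)=e$ literally, and if $e$ is an ``extra'' edge then $r(e)$ is a vertex.

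Given that dichotomy, the statement becomes almost tautological once we observe: a core edge $e$ with $r(e)$ an edge must be an ``old'' edge of the corresponding $A_u$, hence $r(e)=e$. The only thing to check is that being a core edge is consistent with this bookkeeping, i.e. that the preimage of a core edge of $S(\Ga_K)$ under $A\to S(\Ga_K)$ is a disjoint union of circles and isolated points (never a proper arc), so that in Case 1 \emph{no} extra edges get added to the core circles. This is exactly the sentence stated just before Lemma~\ref{retraction}: since $A$ is itself a $\Ga'$-component of the directly special branched complex $K$, each $v$-component of $A$ is a branched circle, and the core edges of $A$ assemble into the cores of these branched circles — genuine circles — while the remaining preimage points (non-core vertices of $A$) are isolated in the fibre over a core edge. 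Hence Case 1 applied to a core circle $c_u$ produces $\mathsf{C}(A_u,c_u)=A_u$ with no extra edges, so every edge of $\mathsf{C}(A,S(\Ga_K))$ over a core circle is an ``old'' edge, and on old edges $r$ is the identity.

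Concretely I would organize the write-up as: (1) fix $u\in\Ga_K$ with $e$ over $c_u$, and record that $u$ is a core class; (2) invoke the structure of $A$ as a $\Ga'$-component of $K$ (Lemma~\ref{convexity of components}, the branched-circle description of $v$-components, direct specialness) to conclude $A_u$ is already a disjoint union of circles and points; (3) quote Case 1 of the construction to get $\mathsf{C}(A_u,c_u)=A_u$ and $r|_{A_u}=\mathrm{id}$; (4) note passing from $S(\Ga_K)^{1}$ to $S(\Ga_K)$ and then pulling back along $K\to S(\Ga_K)$ to form $\mathsf{C}'(A,K)$ does not alter the identification of edges over $c_u$ nor the retraction on them, since $r$ is defined cube-by-cube compatibly with these projections and is cubical here. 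The main (and only real) obstacle is step (2): making precise that the fibre of $A\to S(\Ga_K)$ over a core edge contains no arcs, which hinges on $A$ being a full $\Ga'$-component and on core edges of $A$ being precisely the core edges of its branched circles — once that is said cleanly, the rest is immediate from the explicit form of the canonical completion and retraction in Cases 1--3.
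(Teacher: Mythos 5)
Your proposal is correct and follows essentially the same route as the paper, which states Lemma \ref{retraction} without a separate proof precisely because it is immediate from the Case 1--3 construction of $\mathsf{C}(A,S(\Ga_K))$ together with the observation, recorded just before the lemma, that the preimage in $A$ of a core edge of $S(\Ga_K)$ is a disjoint union of circles and isolated points. One small correction: your claim that $\mathsf{C}(A_u,c_u)=A_u$ with no extra edges is not quite right, since the isolated-point components of $A_u$ do receive extra loop edges in Case 1; but those extra edges retract to vertices, so the old/extra dichotomy you state at the outset (old edge $\Rightarrow r(e)=e$, extra edge $\Rightarrow r(e)$ a vertex) already yields the lemma, and only the stronger assertion should be dropped from the write-up.
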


Let $r'$ be the composition $\mathsf{C'}(A, K)\to \mathsf{C}(A,S(\Ga_K))\to A$ which is also a retraction. Note that $\mathsf{C'}(A,K)$ is different from $\mathsf{C}(A,K)$ in general. $\mathsf{C'}(A,K)$ is called the \textit{modified completion} and $r'$ is called the \textit{modified retraction}.

\begin{lem}
\label{modified wpj}
Let $D\subset K$ be a $\Lambda$-component for a induced subgraph $\Lambda\subset\Ga$ and let $\hat{D}$ denote the preimage of $D$ in $\mathsf{C'}(A,K)$. Then $r'(\hat{D})\subset\wpj_{K}(D\to A)$.
\end{lem}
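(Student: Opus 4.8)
The plan is to follow the proof of Lemma~\ref{wpj} of Haglund--Wise, the one genuinely new feature being that the edges of $S(\Gamma_{K})$ record hyperplanes of $K$ only up to the equivalence relation introduced in Section~\ref{subsec_modified completeions and retractions}; so the Haglund--Wise argument will directly produce only an edge of $D$ that is \emph{equivalent} to a given edge of the retraction image, and the real work is to upgrade ``equivalent'' to ``parallel''. Throughout, identify the canonical copies of $A$ inside $\mathsf{C'}(A,K)$, inside $\mathsf{C}(A,S(\Gamma_{K}))$ and inside $K$. Since $r'$ is cubical, $r'(\hat{D})=\bigcup_{\sigma}r'(\sigma)$ with $\sigma$ running over the cubes of $\hat{D}$, and each $r'(\sigma)$ is a face of $A$; as $A^{0}\subset\wpj_{K}(D\to A)$, it suffices to prove that every edge $\bar e$ of $r'(\sigma)$ is parallel in $K$ to an edge of $D$. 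Such an $\bar e$ equals $r'(e)$ for an edge $e$ of $\sigma\subset\hat{D}$ that $r'$ does not collapse, and since $\hat{D}$ is the preimage of $D$, the edge $e_{K}:=p(e)$ lies in $D$, where $p\colon\mathsf{C'}(A,K)\to K$ is the covering.

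Write $q\colon\mathsf{C'}(A,K)\to\mathsf{C}(A,S(\Gamma_{K}))$ for the other projection from the pullback and $r\colon\mathsf{C}(A,S(\Gamma_{K}))\to A$ for the canonical retraction, so that $r'=r\circ q$ and $s\circ p=\pi\circ q$, where $s\colon K\to S(\Gamma_{K})$ is the natural local isometry and $\pi\colon\mathsf{C}(A,S(\Gamma_{K}))\to S(\Gamma_{K})$ is the covering. By the construction of the canonical completion of a Salvetti complex, together with Lemma~\ref{retraction} for core edges, $r$ sends any non-collapsed edge of $\mathsf{C}(A,S(\Gamma_{K}))$ to an edge lying over the same edge of $S(\Gamma_{K})$. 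Applying this to $q(e)$ gives $\pi(\bar e)=\pi(q(e))$, hence $s(p(\bar e))=s(e_{K})$ after restricting to the identified copies of $A$. Thus $p(\bar e)$ and $e_{K}$ are dual to equivalent hyperplanes of $K$.

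If this common equivalence class is non-core it is a singleton, so $p(\bar e)$ and $e_{K}$ are dual to the same hyperplane, i.e.\ $p(\bar e)$ is parallel to $e_{K}\in D$, and we are done. Otherwise $p(\bar e)$ and $e_{K}$ are core edges with a common label $v\in\Gamma$ whose dual hyperplanes lie in a common $St(v)$-component $L'$ of $K$; by Lemma~\ref{Ga'-component}(2), Lemma~\ref{characterization of standard components} and Lemma~\ref{inverse image of Ga'-component}, $L'=\beta_{P_{\bar v}}/H_{\bar v}$ with $\beta_{P_{\bar v}}=\beta_{\bar v}\times\beta^{\perp}_{\bar v}$ as in Lemma~\ref{parallel set in blow-up building}. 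Let $N\subset K$ be the $\{v\}$-component containing $e_{K}$. Since $N$ is connected with all edges labelled $v$, since $v\in\Lambda$ (it labels $e_{K}\subset D$), and since $e_{K}\in N\cap D$, maximality of the $\Lambda$-component $D$ forces $N\subset D$, and likewise $N\subset L'$. Lift $L'$ to $\beta_{\bar v}\times\beta^{\perp}_{\bar v}$: the edge $p(\bar e)$ lifts to a core edge $e_{0}\times\{pt_{1}\}$, whose dual hyperplane is by Lemma~\ref{parallel set in blow-up building}(2) the slice $\{m\}\times\beta^{\perp}_{\bar v}$ ($m$ the midpoint of $e_{0}$), and this slice crosses $e_{0}\times\{pt\}$ for every $pt\in\beta^{\perp}_{\bar v}$. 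Choosing $pt$ in a lift of $N$ (such a lift is a $\{v\}$-component $\beta_{\bar v}\times\{pt\}$ of $\beta_{P_{\bar v}}$), the image $e''$ of $e_{0}\times\{pt\}$ is an edge of $N\subset D$ dual to the same hyperplane of $K$ as $p(\bar e)$; hence $p(\bar e)$ is parallel to $e''\in D$, which finishes the proof.

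The main obstacle is exactly the core-hyperplane case in the last paragraph: the ``reduction to the intersection graph'' device underlying Lemma~\ref{wpj} only controls hyperplanes up to the coarser equivalence built into $S(\Gamma_{K})$, so one cannot conclude parallelism directly; instead one must exploit the product structure of $St(v)$-components from Lemma~\ref{parallel set in blow-up building} to see that equivalent core hyperplanes are ``parallel slices'' simultaneously dual to every $\{v\}$-component of a single $St(v)$-component, and combine this with the maximality of $\Lambda$-components to relocate the retraction edge onto one that genuinely lies in $D$. Everything else is a routine transcription of the Haglund--Wise argument.
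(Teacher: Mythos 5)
Your proof is correct and follows essentially the same route as the paper's: both use the pullback description of $\mathsf{C'}(A,K)$ so that a non-collapsed retracted edge lies over the same edge of $S(\Ga_K)$ as the corresponding edge of $D$ (hence has an equivalent dual hyperplane), and then split into the non-core case (singleton class, hence parallel) and the core case. Your treatment of the core case, via lifting to $\beta_{P_{\bar v}}=\beta_{\bar v}\times\beta^{\perp}_{\bar v}$ and the slice hyperplanes of Lemma~\ref{parallel set in blow-up building}, just makes explicit the product-structure fact that the paper packages as the unproved assertion $\wpj_{K}(C_1\to C_2)=C_2$ for the two core circles.
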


\begin{proof}
By the definition of pull-back, edges of $\mathsf{C'}(A,K)$ are of form $(b_{1},b_{2})$ where $b_{1}\subset K$ and $b_{2}\subset \mathsf{C}(A,S(\Ga_K))$ are sent to the same edge of $S(\Ga_K)$. Moreover, $r'(b_{1},b_{2})=r(b_{2})$ where $r:\mathsf{C}(A,S(\Gamma_{K}))\to A$ is the canonical retraction. Suppose $b_1\subset D$ and $r(b_{2})=b'_{2}$ is an edge.

\textit{Case 1:} $b_2$ is not a core edge. Then $b_1$ is not a core edge. In this case, $b_2$ and $b'_2$ are mapped to the same edge in $S(\Ga_K)$. Hence the same is true for $b'_2$ and $b_1$. Hence they are parallel.

\textit{Case 2:} $b_2$ is a core edge. Then $b_1$ is also a core edge. Moreover $b_{2}=b'_{2}\subset A$ by Lemma \ref{retraction}. Thus the hyperplane dual to $b_{1}$ and the hyperplane dual to $b_{2}$ are in the same equivalent class. For $i=1,2$, let $C_i$ be the circle made of core edges that contains $b_i$. Then $\wpj_{K}(C_1\to C_2)=C_2$. Moreover, $C_2\subset A$ and $C_1\subset D$. Thus $b'_2\subset \wpj_{K}(D\to A)$.
\end{proof}

\begin{remark}
\label{core edge}
It follows from the above proof that if $r'(e)$ is a core edge in $A$, then $e$ has to be a core edge in $\hat{D}$.
\end{remark}
\begin{lem}
\label{length of circle}
Let $C\subset K$ be a circle made of core edges. Then the inverse image of $C$ under $\mathsf{C'}(A,K)\to K$ is a disjoint union of circles whose combinatorial length are equal to the length of $C$.
\end{lem}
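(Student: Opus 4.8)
The plan is to trace through the construction of $\mathsf{C}'(A,K)$ as a pullback and reduce the statement to the corresponding fact about the canonical completion $\mathsf{C}(A,S(\Ga_K))$ of the circle $S^1_w$ (a single edge) for the relevant label $w$. First I would fix a core circle $C\subset K$ and let $w\in\Ga$ be the vertex labelling its edges; the equivalence class $\mathcal C$ of the core hyperplane dual to $C$ corresponds to a single vertex of $\Ga_K$, hence to a single core edge $c\subset S(\Ga_K)$, and the local isometry $K\to S(\Ga_K)$ sends $C$ onto $c$ by a covering map (indeed $C$ is a circle of core edges all dual to hyperplanes in $\mathcal C$, so $C\to c$ wraps around some number of times; but since parallel core edges of $K$ map to $c$ and the orientation condition (2) on edges was chosen compatibly, $C\to c$ is in fact a degree-one map, i.e. an isomorphism — this is where condition (2) in the definition of the orientation is used).

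Next I would invoke the pullback square defining $\mathsf{C}'(A,K)$: an edge of $\mathsf{C}'(A,K)$ over $c$ is a pair $(b_1,b_2)$ with $b_1\subset K$, $b_2\subset \mathsf{C}(A,S(\Ga_K))$ both mapping to $c$. Therefore the preimage of $C$ in $\mathsf{C}'(A,K)$ is naturally the fibre product of $C\to c$ with $\widehat c\to c$, where $\widehat c$ is the preimage of $c$ in $\mathsf{C}(A,S(\Ga_K))$. Since $C\to c$ is an isomorphism, this fibre product is isomorphic to $\widehat c$ itself, compatibly with the edge-path (combinatorial) metric. So it suffices to show that every connected component of $\widehat c$ is a circle whose combinatorial length equals the length of $c$, which equals the length of $C$. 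By construction of $\mathsf{C}(A,S(\Ga_K))$ as the pullback of $\mathsf{C}(A^1,S(\Ga_K)^1)\to S(\Ga_K)^1$ along $S(\Ga_K)\to S(\Ga_K)$ (Case 4 / Case 2–3 of Section~\ref{subset_the canonical comletion}), and since $c$ is a core edge so its preimage in $A$ under $A\to S(\Ga_K)$ is a disjoint union of circles and isolated points, the component of $\mathsf{C}(A^1,S(\Ga_K)^1)$ lying over $c$ is built (Case 1 of the construction) by taking those circles and isolated points and, for each non-circle component, attaching one extra edge to close it up; hence every component of $\widehat c$ is a circle. It remains to check the combinatorial length of each such circle is exactly the length of $c$.

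The length computation is the one genuinely content-bearing step. For the circle components of the preimage of $c$ in $A$: such a component is already a subcircle $C'\subset A$ mapping to $c$, and since $A\to S(\Ga_K)$ is a local isometry and $c$ is a single edge, $C'\to c$ is a covering, but the local isometry together with the fact that $A$ is an $\Ga'$-component of the directly special complex $K$ forces each core edge of $A$ labelled $w$ to map injectively, so $C'\to c$ is degree one and $C'$ has length equal to that of $c$. For the isolated-point components: the canonical completion (Case 1) attaches a single extra edge to close the point into a circle, giving a circle of combinatorial length $1$; but the length of $c$ as a cell complex is $1$ (it is a single edge), so these too have length equal to that of $c$. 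For a path component of the preimage — but $c$ being a single edge, a path component is either a single vertex (already handled) or is not possible as a nontrivial path; the construction in Case 1 is stated precisely for components which are a point, a circle, or a path, and after the local-isometry analysis only points and circles occur. Assembling these observations gives that every component of $\widehat c$, and hence of the preimage of $C$ in $\mathsf{C}'(A,K)$, is a circle of combinatorial length $\operatorname{length}(C)$. The main obstacle I anticipate is pinning down precisely the degree-one claim for $C\to c$ and $C'\to c$, i.e. verifying carefully that the orientation hypothesis (2) on edges of $K$ and the direct specialness of $K$ rule out a core circle wrapping multiply around its image edge; once that is in hand the rest is bookkeeping with the pullback.
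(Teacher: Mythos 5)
Your overall architecture (reduce to the preimage of the single edge $e$ of $S(\Ga_K)$ in the canonical completion, then pull back) matches the paper's, but your central degree claims are false and the argument breaks there. The edge $e$ is a one-edge circle of $S(\Ga_K)$, while $C$ has combinatorial length $l=\length(C)$ and all $l$ of its edges are dual to hyperplanes in the same equivalence class, so they all map to $e$: the map $C\to e$ is an $l$-fold covering, not an isomorphism. The orientation condition (2) only guarantees that consecutive edges of $C$ map consistently, i.e.\ that $C\to e$ is a genuine covering of circles; it does not make it degree one. The same error recurs for a circle $C'\subset A$ over $e$, which is an $\length(C')$-fold cover of $e$. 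Consequently your identification of the preimage of $C$ in $\mathsf{C}'(A,K)$ with $\widehat{c}$ fails, and your length bookkeeping is internally inconsistent: you produce components of $\widehat{c}$ of length $1$ from the isolated points, yet assert that all components have length $\length(C)$.

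More importantly, the genuinely content-bearing step is missing: one must show that every core circle $C'\subset A$ lying over $e$ has length exactly $l$. In the paper this follows because the hyperplanes dual to edges of $C'$ are \emph{equivalent} to those dual to edges of $C$, so $C$ and $C'$ are cores of $v$-components lying in the same $St(v)$-component of $K$; such a component has the form $\beta_{P_{\bar v}}/H_{\bar v}$ with $\beta_{P_{\bar v}}\cong\beta_{\bar v}\times\beta^{\perp}_{\bar v}$, and all core circles coming from the $\beta_{\bar v}$ factor have the same length. You replace this by the false assertion that $C'\to e$ has degree one. Granting the correct statement, the preimage of $e$ in $\C(A,S(\Ga_K))$ consists of circles of length $l$ (from core circles of $A$) and of length $1$ (from isolated points closed up by Case 1 of the completion), and the preimage of $C$ in $\mathsf{C}'(A,K)$ is the fibre product of the degree-$l$ cover $C\to e$ with these: a length-$1$ circle pulls back to one circle of length $l$, and a length-$l$ circle pulls back to $l$ circles of length $\operatorname{lcm}(l,l)=l$. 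That final fibre-product computation, which is what actually delivers the conclusion, is absent from your write-up because of the erroneous isomorphism claim.
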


\begin{proof}
Suppose $length(C)=l$. Let $e$ be the image of $C$ under $K\to S(\Ga_K)$ ($e$ is an edge). We claim the inverse image of $e$ under $\mathsf{C}(A, S(\Ga_K))\to S(\Ga_K)$ consists of circles of length $1$ or $l$. It suffices to show the inverse image of $e$ under $A\to S(\Ga_K)$ consists of circles of length $l$ or isolated points. Suppose $C'\subset A$ is a circle mapped to $e$. Then hyperplanes dual to edges in $C'$ are equivalent to hyperplanes dual to edges in $C$, hence edges of $C$ and $C'$ are labelled by the same vertex $v\in\Ga$, moreover, $C$ and $C'$ are in the same $St(v)$-component. Thus they have the same length. Now the lemma follows from the claim and the construction of $\mathsf{C'}(A,K)$.
\end{proof}

One can compare the above result with Remark~\ref{larger circle}.

\begin{cor}
\label{circle isomorphism}
Let $C\subset\mathsf{C'}(A,K)$ be a circle made of core edges and let $r':\mathsf{C'}(A,K)\to A$ be the modified retraction. Then either $r'(C)$ is a point, or $r'|_{C}$ is an isomorphism. More precisely, let $C_1\subset K$ be a circle made of core edges.
\begin{enumerate}
\item If $\wpj_{K}(C_1\to A)$ does not contain any edge, then $r'$ sends each component in the inverse image of $C_1$ under $\mathsf{C'}(A,K)\to K$ to a point.
\item If $\wpj_{K}(C_1\to A)$ contains at least one edge, then either $r'$ maps a component as above to a point, or $r'$ maps it isometrically into $A$. And there exists at least one component such that the latter case happens.
\end{enumerate}
\end{cor}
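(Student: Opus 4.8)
The plan is to first reduce everything to the two itemised statements about a fixed core circle $C_1\subset K$, since the opening dichotomy is the special case where $C_1$ is the image of $C$ under the covering $\mathsf{C'}(A,K)\to K$. I write $p_1\colon\mathsf{C'}(A,K)\to K$ and $p_2\colon\mathsf{C'}(A,K)\to\mathsf{C}(A,S(\Ga_K))$ for the two legs of the defining pull-back square, $q\colon\mathsf{C}(A,S(\Ga_K))\to S(\Ga_K)$ for the canonical completion, and $r\colon\mathsf{C}(A,S(\Ga_K))\to A$ for the canonical retraction, so that $r'=r\circ p_2$; note $r'$ is cubical, being the composite of a covering and the cubical map $r$. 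Let $e_0\subset S(\Ga_K)$ be the single edge that is the common image of the edges of $C_1$, and put $\ell=\length(C_1)$. From the proof of Lemma \ref{length of circle} I would record three facts: the preimage of $e_0$ in the embedded copy of $A$ is a disjoint union of core circles of length $\ell$ and isolated vertices; $q^{-1}(e_0)$ is a disjoint union of circles of length $1$ or $\ell$, with the length-$\ell$ components being exactly those core circles of $A$; and, because $\mathsf{C'}(A,K)$ is a pull-back, $p_1^{-1}(C_1)$ is the fibre product $C_1\times_{e_0}q^{-1}(e_0)$, each of whose components is a circle of length $\ell$ carried isomorphically onto $C_1$ by $p_1$.

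The first real step is to prove the dichotomy together with an identification of the retracted circle. Since $r'$ is cubical, for each component $C$ of $p_1^{-1}(C_1)$ either $r'$ collapses every edge of $C$, in which case $r'(C)$ is a point, or some edge $e=(b_1,b_2)$ of $C$ — with $b_1$ an edge of $C_1$ and $b_2$ a core edge of $\mathsf{C}(A,S(\Ga_K))$ — maps isometrically onto an edge of $A$. In the latter case $r'(e)=r(b_2)$ is an edge, so Lemma \ref{retraction} forces $r(b_2)=b_2$ and hence $b_2$ lies in the embedded copy of $A$; by the structure of $q^{-1}(e_0)$ recorded above, the component of $q^{-1}(e_0)$ through $b_2$ is then the core circle $C_2^{*}\subset A$ through $b_2$, a circle of length $\ell$. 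Since $r$ restricts to the identity on $C_2^{*}$, we get $r'|_C=p_2|_C\colon C\to C_2^{*}$, a covering between circles of equal length, hence an isomorphism. This proves the opening sentence and reduces the itemised statements to deciding which alternative occurs.

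Next I would connect the two alternatives to $\wpj_K(C_1\to A)$. If $r'|_C$ is an isomorphism onto $C_2^{*}\subset A$, then each edge of $C_2^{*}$ is a core edge whose dual hyperplane is equivalent to one dual to $C_1$ (both map to $e_0$), so the wall-projection computation in the proof of Lemma \ref{modified wpj}, applied to $C_1$ and to the core circle through that edge — namely $C_2^{*}$ — gives $\wpj_K(C_1\to C_2^{*})=C_2^{*}$, whence $C_2^{*}\subset\wpj_K(C_1\to A)$. Thus, if $\wpj_K(C_1\to A)$ contains no edge, the isomorphism alternative cannot occur and every component of $p_1^{-1}(C_1)$ is sent to a point, which is (1). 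For (2), if $\wpj_K(C_1\to A)$ contains an edge $b$, then $b$ is a core edge of $A$ (being parallel to a core edge of $C_1$) lying on a core circle $C_2^{*}\subset A$ that maps to $e_0$; since $C_2^{*}$ is a length-$\ell$ component of $q^{-1}(e_0)$, the portion $C_1\times_{e_0}C_2^{*}$ of the fibre product contributes a component $C$ of $p_1^{-1}(C_1)$ with $p_2(C)=C_2^{*}$, and by the first step $r'|_C$ is an isomorphism onto $C_2^{*}$; so the second alternative occurs for at least one component.

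The step I expect to be the main obstacle is the passage, inside the first step, from ``one edge of the retracted circle lies in the embedded copy of $A$'' to ``the whole retracted circle lies in $A$''. This rests on the precise structure of $q^{-1}(e_0)$ extracted from the proof of Lemma \ref{length of circle} — that any component of $q^{-1}(e_0)$ containing an edge of $A$ has length $\ell$ and therefore coincides with an actual core circle of $A$ — and on keeping the core-edge and hyperplane-equivalence bookkeeping consistent across $K$, $\mathsf{C}(A,S(\Ga_K))$ and $\mathsf{C'}(A,K)$. Once that is settled, the remainder is routine covering-space and cubical-map manipulation.
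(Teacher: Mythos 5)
Your proof is correct and follows essentially the same route as the paper's: the structure of $q^{-1}(e_0)$ extracted from the proof of Lemma \ref{length of circle}, Lemma \ref{retraction} to land the retracted circle inside the embedded copy of $A$, and the wall-projection computation from the proof of Lemma \ref{modified wpj} to tie the two alternatives to $\wpj_K(C_1\to A)$; you merely spell out the fibre-product bookkeeping that the paper leaves implicit. One cosmetic slip: $p_2\colon\mathsf{C'}(A,K)\to\mathsf{C}(A,S(\Ga_K))$ is a local isometry (a lift of $K\to S(\Ga_K)$), not a covering — it is $p_1$ that is the covering — but since $p_2$ is still cubical this does not affect the argument.
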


\begin{proof}
Suppose edges in $C_1$ are labelled by $v$. Then (1) follows by applying Lemma \ref{modified wpj} to the $v$-component that contains $C_1$. Under the assumption in (2), there exists a circle made of core edges $C_2\subset A$ such that $\wpj_{K}(C_1\to C_2)=C_2$. The images of $C_1$ and $C_2$ under $K\to S(\Ga_K)$ give rise to the same edge $e\subset S(\Ga_K)$. Thus the pair $(C_1,C_2)$ represents a circle in $\mathsf{C'}(A,K)$ which is mapped isometrically to $C_2$ under the map $r'$. The rest of (2) follows from the proof of Lemma~\ref{length of circle}.
\end{proof}

Let $A$ be as above. Now we want to temporarily forget about the fact that $A$ is inside a larger space $K$. We define two hyperplanes of $A$ are \textit{equivalent} if and only if there exists a vertex $v\in\Ga'$ such that these two hyperplanes are dual to core edges the same $v$-component. Let $\Gamma_{A}$ be a graph on the equivalence classes of hyperplanes in $A$ defined in a similar way to $\Ga_K$. We define $\mathsf{C}'(A,A)$ to be the pullback that fits into the following diagram:
\begin{center}
$\begin{CD}
\mathsf{C'}(A,A)                         @>>>       \mathsf{C}(A,S(\Gamma_{A}))\\
@VVV                                   @VVV\\
A         @>>>        S(\Gamma_{A})
\end{CD}$
\end{center}

\begin{lem}
\label{inverse image}
If the inclusion $A\to K$ induces an injective map of equivalence classes of hyperplanes, then there is a canonical isomorphism $\phi$ from $\mathsf{C'}(A,A)$ to the inverse image of $A$ under the covering map $\mathsf{C'}(A,K)\to K$. Moreover, let $r'_{1},r'_{2}$ be modified retractions, then the following diagrams commutes:
\begin{diagram}
\mathsf{C'}(A,A) &\rTo^{\phi} &\mathsf{C'}(A,K)     &\ \ \ \ \ \ \ \ \   &\mathsf{C'}(A,A) &\rTo^{\phi}           &\mathsf{C'}(A,K)\\
\dTo             &     &\dTo                 &\ \ \ \ \ \ \ \ \   &                 &\rdTo_{r'_{1}} &\dTo_{r'_{2}}\\
A                &\rTo     &K                &\ \ \ \ \ \ \ \ \   &                 &               &A
\end{diagram}
\end{lem}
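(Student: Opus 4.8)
The plan is to adapt the proof of \cite[Lemma 3.13]{haglund2012combination}, which treats the ordinary canonical completion, to the modified completion, using the injectivity hypothesis as the analogue of wall-injectivity and keeping track of core edges via Lemma \ref{retraction} and Lemma \ref{length of circle}. First I would record the basic structural consequence of the hypothesis. Since $A$ is a $\Ga'$-component it is connected (Definition \ref{components}) and locally convex (Lemma \ref{convexity of components}), so $A\to K$ is a local isometry and induces a map $\iota$ from equivalence classes of hyperplanes of $A$ to equivalence classes of hyperplanes of $K$; the hypothesis is exactly that $\iota$ is injective, i.e.\ two hyperplanes of $A$ are equivalent in $A$ if and only if their images are equivalent in $K$. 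Since a crossing of hyperplanes is witnessed by a square and squares of $A$ map to squares of $K$, $\iota$ underlies an injective cubical map $j\colon S(\Gamma_A)\to S(\Gamma_K)$ carrying the oriented edge of $S(\Gamma_A)$ labelled by a class $\mathcal{C}$ to the one labelled by $\iota(\mathcal{C})$ and carrying core edges to core edges; and because the edge orientation of $A$ is the restriction of that of $K$, the composite $A\to S(\Gamma_A)\xrightarrow{j}S(\Gamma_K)$ agrees with $A\to K\to S(\Gamma_K)$.

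Next I would identify the two spaces. By definition $\mathsf{C}'(A,K)=K\times_{S(\Gamma_K)}\mathsf{C}(A,S(\Gamma_K))$, so the preimage of $A$ under $\mathsf{C}'(A,K)\to K$ is $A\times_{S(\Gamma_K)}\mathsf{C}(A,S(\Gamma_K))$, which by the factorization above equals $A\times_{S(\Gamma_A)}W$ with $W:=S(\Gamma_A)\times_{S(\Gamma_K)}\mathsf{C}(A,S(\Gamma_K))$. The key step is to show $W$ is canonically isomorphic, as a cover of $S(\Gamma_A)$ respecting the tautological copies of $A$, to $\mathsf{C}(A,S(\Gamma_A))$; pulling this back along $A\to S(\Gamma_A)$ then produces the asserted isomorphism $\phi\colon\mathsf{C}'(A,A)\to A\times_{S(\Gamma_A)}W$. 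For the key step I would pass to $1$-skeleta: $S(\Gamma_K)^{1}$ and $S(\Gamma_A)^{1}$ are wedges of single-edge circles and $j$ maps each circle $c_{\mathcal{C}}$ of $S(\Gamma_A)^{1}$ isomorphically onto the circle $c_{\iota(\mathcal{C})}$ of $S(\Gamma_K)^{1}$, so over $c_{\mathcal{C}}$ the cover $W$ is the pullback $c_{\mathcal{C}}\times_{c_{\iota(\mathcal{C})}}\mathsf{C}(A_{\iota(\mathcal{C})},c_{\iota(\mathcal{C})})\cong\mathsf{C}(A_{\iota(\mathcal{C})},c_{\iota(\mathcal{C})})$, where $A_{\iota(\mathcal{C})}$ is the subcomplex of $A^{1}$ consisting of edges whose $K$-class is $\iota(\mathcal{C})$; by injectivity this subcomplex is precisely the set of edges of $A$ whose $A$-class is $\mathcal{C}$, so $\mathsf{C}(A_{\iota(\mathcal{C})},c_{\iota(\mathcal{C})})$ is the canonical completion of $c_{\mathcal{C}}$ for $A\to S(\Gamma_A)$, while the circles of $S(\Gamma_K)$ not meeting $\mathrm{im}(j)$ contribute nothing. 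Reassembling along the common vertex fibre recovers $\mathsf{C}(A^{1},S(\Gamma_A)^{1})$; since a cover of a Salvetti complex is determined as a cube complex by its $1$-skeleton-as-a-cover, this upgrades to $W\cong\mathsf{C}(A,S(\Gamma_A))$.

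Finally the diagrams: the left one commutes because $\phi$ is, from the fibre-product descriptions, a map over the inclusion $A\to K$. For the right one, both $r'_{1}$ and $r'_{2}$ factor through the canonical retractions $r\colon\mathsf{C}(A,S(\Gamma_A))\to A$ and $r\colon\mathsf{C}(A,S(\Gamma_K))\to A$, so under the identification $W\cong\mathsf{C}(A,S(\Gamma_A))$ it suffices to check that $r\colon\mathsf{C}(A,S(\Gamma_K))\to A$ restricted to the copy of $\mathsf{C}(A,S(\Gamma_A))$ equals $r\colon\mathsf{C}(A,S(\Gamma_A))\to A$ --- a naturality statement for the canonical retraction with respect to $j$, which holds because both retractions are built circle by circle and $j$ preserves circles, sending each core edge to a core edge lying on the same underlying circle inside a single $St(v)$-component of $A$ (Lemma \ref{retraction}, Lemma \ref{length of circle}). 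I expect the main technical obstacle to be exactly this circle-by-circle bookkeeping around core edges: in contrast to the ordinary completion, where a circle of core edges of length $n$ is blown up to a circle of length $n(n-1)$ (Remark \ref{larger circle}), the modified completion keeps its length, and one must verify that the pullback along $j$ and the canonical retraction interact with this finer structure in the way claimed --- that is, that injectivity of $\iota$ really prevents distinct core-hyperplane classes of $A$ from being amalgamated inside $\mathsf{C}(A,S(\Gamma_K))$.
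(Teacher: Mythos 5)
Your proposal is correct and follows essentially the same route as the paper: the paper's proof likewise uses the injectivity hypothesis to produce the natural embedding $S(\Gamma_{A})\to S(\Gamma_{K})$, identifies $\mathsf{C}(A,S(\Gamma_{A}))$ with the preimage of $S(\Gamma_{A})$ under $\mathsf{C}(A,S(\Gamma_{K}))\to S(\Gamma_{K})$ compatibly with the canonical retractions, and then concludes from the definitions of the pullback and the modified retraction. Your circle-by-circle verification over the $1$-skeleta simply supplies the details the paper delegates to the analogy with \cite[Lemma 3.13]{haglund2012combination}.
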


\begin{proof}
The proof is similar to \cite[Lemma 3.13]{haglund2012combination}. By assumption, there is a natural embedding $S(\Gamma_{A})\to S(\Ga_K)$ (this may not be a local-isometry). This induces an embedding $\mathsf{C}(A,S(\Gamma_{A}))\to \mathsf{C}(A,S(\Ga_K))$ such that the following diagram commutes ($r,r'$ are canonical retractions):
\begin{diagram}
\mathsf{C}(A,S(\Gamma_{A})) &\rTo &\mathsf{C}(A,S(\Ga_K))\\
&\rdTo_{r'} &\dTo_{r}\\
& &A
\end{diagram}
Moreover, $\mathsf{C}(A,S(\Gamma_{A}))$ is the inverse image of $S(\Gamma_{A})$ under $\mathsf{C}(A,S(\Ga_K))\to S(\Ga_K)$. Now the lemma follows from the definition of pullback and modified retraction. 
\end{proof}

\begin{lem}
The assumption of Lemma~\ref{inverse image} is satisfied if $A$ is wall-injective in $X$.
\end{lem}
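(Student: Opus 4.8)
The plan is to prove that, when $A$ is wall-injective in $K$, the inclusion $A\hookrightarrow K$ sends core hyperplanes of $A$ to core hyperplanes of $K$ and induces a \emph{well-defined, injective} map $\Phi$ on equivalence classes of hyperplanes --- this is exactly the hypothesis of Lemma~\ref{inverse image}, and it also furnishes the embedding $S(\Gamma_{A})\to S(\Gamma_{K})$ used in the proof of that lemma (preservation of crossings is automatic, since a square of $A$ is a square of $K$). The reduction I would start from is that $A$, being a $\Ga'$-component of the branched complex $K=Y(\Ga)/H$, is itself a branched complex of type $\Ga'$: its universal cover $\tilde A$ is a $\Ga'$-component of $Y(\Ga)$, hence by Lemma~\ref{Ga'-component}(1) and Lemma~\ref{characterization of standard components} isomorphic to a blow-up building of type $\Ga'$, with $A=\tilde A/H_A$, $H_A=\pi_1(A)$. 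Consequently all structural results of Section~\ref{sec_blow-up building} apply to $A$; in particular the argument of Section~\ref{subsec_modified completeions and retractions} showing that $\sim_K$ is an equivalence relation applies to $A$, identifying the equivalence classes of hyperplanes of $A$ with its $St(v,\Ga')$-components (and similarly for $K$).

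Next I would record two facts. \textbf{(F1)} If $e\subset A$ is a core edge, contained in the core of a $v$-component $C_A$ of $A$ ($v\in\Ga'$), and $C_K\supseteq C_A$ is the $v$-component of $K$ containing $e$, then $\operatorname{core}(C_A)=\operatorname{core}(C_K)$; in particular $e$ is a core edge of $K$. Indeed, lifting to $Y(\Ga)$, $C_A$ and $C_K$ lift to branched lines $\widetilde C_A\subset\widetilde C_K$ which are convex by Lemma~\ref{convexity of components}; the core of a branched line is its unique bi-infinite $l^1$-geodesic, and convexity makes $\operatorname{core}(\widetilde C_A)$ a bi-infinite geodesic of $\widetilde C_K$, forcing $\operatorname{core}(\widetilde C_A)=\operatorname{core}(\widetilde C_K)$; this descends to the quotient. \textbf{(F2)} Every hyperplane of $A$ labelled by a vertex $v$ is dual only to edges lying in a single $St(v,\Ga')$-component of $A$, and likewise for $K$. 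This follows from Lemma~\ref{parallel set in blow-up building}(2): upstairs the hyperplane dual to a $v$-labelled edge is the product factor $\beta_v^{\perp}$ of the ambient $St(v)$-parallel set $\beta_{P_v}\cong\beta_v\times\beta_v^{\perp}$, so every edge dual to it already lies in $\beta_{P_v}$; passing to the quotient, the carriers of all lifts of a hyperplane of $A$ descend into one $St(v,\Ga')$-component.

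Granting (F1), if $h\sim_A h'$ then $h,h'$ are dual to core edges $e,e'$ of a common $v$-component $C_A$ of $A$, hence to core edges of the common $v$-component $C_K\supseteq C_A$ of $K$, so $\bar h\sim_K\bar h'$; thus $\Phi$ is a well-defined, label-preserving map on equivalence classes. For injectivity, suppose $\Phi(\mathcal C_1)=\Phi(\mathcal C_2)$ with $\mathcal C_1\neq\mathcal C_2$; both are then labelled by the same vertex $v$. Choose core hyperplanes $h_i\in\mathcal C_i$, dual to core edges $e_i$ of $v$-components $C^i$ of $A$; by (F1) the $v$-component $D^i$ of $K$ through $e_i$ satisfies $\operatorname{core}(D^i)=\operatorname{core}(C^i)$. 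Applying (F2) for $K$ to $\bar h_1\sim_K\bar h_2$ shows $D^1,D^2$ lie in one $St(v,\Ga)$-component $M_K$ of $K$. Since $\mathcal C_1\neq\mathcal C_2$ we have $C^1\neq C^2$, and since a $v$-component is determined by its core this together with (F1) forces $D^1\neq D^2$, so $\operatorname{core}(D^1)$ and $\operatorname{core}(D^2)$ are disjoint. Now lift $M_K$ to $Y(\Ga)$, where by Lemma~\ref{parallel set in blow-up building} it is a product $\beta_v\times\beta_v^{\perp}$; each $v$-labelled core hyperplane of $M_K$ is then dual to a core edge in every core circle of $M_K$, in particular to core edges $e_1'\subset\operatorname{core}(D^1)=\operatorname{core}(C^1)\subset A$ and $e_2'\subset\operatorname{core}(D^2)=\operatorname{core}(C^2)\subset A$, with $e_1'\neq e_2'$ since these cores are disjoint. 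These two distinct edges of $A$ are dual to a common hyperplane of $K$, so by wall-injectivity they are dual to a common hyperplane $g$ of $A$; but then $g$ is dual to edges in the $St(v,\Ga')$-components of $A$ containing $C^1$ and containing $C^2$, so (F2) for $A$ forces those to coincide, whence $C^1,C^2$ lie in one $St(v,\Ga')$-component and $h_1\sim_A h_2$, contradicting $\mathcal C_1\neq\mathcal C_2$.

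The main obstacle is precisely this last step: reconciling the genuine product structure of $St(v)$-parallel sets \emph{upstairs} --- which is what supplies the two core edges dual to a single hyperplane --- with wall-injectivity \emph{downstairs}. The delicate point there, handled by (F1), is making sure the witnessing edges $e_1',e_2'$ genuinely lie inside $A$ (not merely in $K$) and are genuinely distinct edges, so that wall-injectivity can actually be invoked; everything else is the bookkeeping of the branched-complex structure already set up in Sections~\ref{sec_blow-up building}--\ref{sec_branched complexes with trivial holonomy}.
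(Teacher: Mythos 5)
Your argument is correct and rests on the same two ingredients as the paper's own proof: the product structure of $St(v)$-parallel sets from Lemma \ref{parallel set in blow-up building}, which makes every core hyperplane in an equivalence class dual to a core edge of \emph{each} $v$-component of the corresponding $St(v)$-component, together with wall-injectivity applied to distinct edges of $A$ dual to a common hyperplane of $K$. The paper's version is terser --- it notes that wall-injectivity makes $\mathcal{C}_1$ biject onto the image class $\mathcal{C}$ via the core circle, so any element of $\mathcal{C}_2$ must collide with an element of $\mathcal{C}_1$ --- but the mechanism is identical, and your extra verification that the map on classes is well defined (a $v$-component of $A$ is a full $v$-component of $K$ with the same core, so core edges and core hyperplanes are preserved) only makes explicit a point the paper leaves implicit.
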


\begin{proof}
Suppose there are two different classes of core hyperplanes $\mathcal{C}_1$ and $\mathcal{C}_2$ which are mapped to the same class $\mathcal{C}$ of hyperplanes of $X$. Then there is a circle made of core edges $C\subset A$ such that $\mathcal{C}_1$ and $\mathcal{C}$ are made of hyperplanes dual to edges in $C$. Since $A$ is wall-injective, there is 1-1 correspondence between elements in $\mathcal{C}_1$ and elements in $\mathcal{C}$. Thus there exist a hyperplane in $\mathcal{C}_1$ and a hyperplane in $\mathcal{C}_2$ which are mapped to the same hyperplane in $\mathcal{C}$. This yields a contradiction. 
\end{proof}

\begin{remark}
\label{label-preserving isomorphism}
Let $A\to A'$ be a label-preserving isomorphism. Then it induces an isomorphism $S(\Gamma_{A})\to S(\Gamma_{A'})$ which fits into the diagram below on the left. This induces the commuting digram in the middle, which gives rise to an isomorphism $\mathsf{C}'(A, A)\to \mathsf{C}'(A',A')$ that fits into the commuting diagram on the right, whose vertical maps can be both covering maps or both modified retractions.
\begin{diagram}
A                &\rTo   &A'     &\ \ \        &\C(A,S(\Ga_A))        &\rTo   &\C(A',S(\Ga_{A'}))         &\ \ \        &\mathsf{C'}(A,A)     &\rTo          &\mathsf{C'}(A',A')\\
\dTo             &         &\dTo  &\ \ \       &\dTo             &         &\dTo          &\ \ \        &\dTo                 &              &\dTo\\
S(\Ga_A)         &\rTo     &S(\Ga_{A'}) &\ \ \  &S(\Ga_A)         &\rTo     &S(\Ga_{A'})        &\ \ \        &A                    &\rTo          &A'
\end{diagram}
Actually, there is an injective homomorphism from the group of label-preserving automorphisms of $A$ to the group of label-preserving automorphisms of $\C'(A,A)$.
\end{remark}

\section{Construction of the finite cover}
\label{sec_construction of the finite cover}
\subsection{A graph of spaces}
Throughout this section, $\Ga$ will be a finite simplicial graph without induced 4-cycles. And $H$ is a group acting on the blow-up building $Y(\Ga)$ geometrically by label-preserving automorphisms. Our goal is to show $H$ has a finite index torsion free subgroup $H'\le H$ such that $Y(\Ga)/H'$ is a special cube complex. This can be reduced to the following claim.

\begin{claim}
\label{induction claim}
For every vertex $u\in\Ga$, there exists a finite index torsion free subgroup $\bar{H}\le H$ such that for any vertex $\bar{u}\in\P(\Ga)$ labelled by $u$, the factor action (see Section~\ref{subsec_quasi action}) $\rho_{\bar{u}}:\bar{H}_{\bar{u}}\acts\Z_{\bar{u}}$ is conjugate to an action by translations.
\end{claim}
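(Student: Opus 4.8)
The plan is to induct on the number of vertices of $\Ga$, so that for every proper induced subgraph $\Ga'\subsetneq\Ga$ (again without induced $4$-cycle) and every group acting geometrically on the blow-up building $Y(\Ga')$ by label-preserving automorphisms, the statement of Claim \ref{induction claim} is available. Fix $u\in\Ga$. I would first dispose of the case $\Ga=St(u)$, which includes $\Ga=\{u\}$: here $\Ga=\{u\}\circ lk(u)$, so $Y(\Ga)\cong\beta\times Y(lk(u))$ with $\beta$ a single branched line, all standard $u$-geodesics are parallel, and consequently there is a unique vertex $\bar u\in\P(\Ga)$ labelled by $u$, with $H_{\bar u}=H$. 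The factor action $\rho_{\bar u}:H\acts\Z_{\bar u}$ is then conjugate to an action by translations on a finite-index subgroup by Lemma \ref{conjugate to action by translations}, and we take $\bar H$ to be any finite-index torsion-free such subgroup. The point of this case is that the stabilizer already is all of $H$, so no further work is needed to globalize.

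Now assume $\Ga\neq St(u)$. I would choose a vertex $v\in\Ga\setminus St(u)$ and set $\Ga'=\Ga\setminus\{v\}$, so that $u$ and $v$ are non-adjacent and $St(u)\subseteq\Ga'$; since then $\Ga\neq St(v)$, Lemma \ref{Ga'-component}(2) shows every $\Ga'$-component of $Y(\Ga)$ is standard, hence of the form $\beta_K\cong Y(\Ga')$ (Lemma \ref{characterization of standard components}), and distinct $\Ga'$-components are disjoint (Lemma \ref{intersection of standard components}). As $G(\Ga)$ is an HNN-extension of $G(\Ga')$ along $G(lk(v))$, collapsing the $\Ga'$-components presents $Y(\Ga)$ as a tree of spaces over the Bass--Serre tree $T$, with vertex spaces the $\beta_K$'s and edge spaces the $lk(v)$-components; since $H$ is label-preserving it acts on $T$, and after passing to a finite-index subgroup (Lemma \ref{finite index subgroup without inversion}) we may assume it acts without inversion. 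Thus $K=Y(\Ga)/H$ inherits a finite graph-of-spaces structure with vertex spaces $V_i=\beta_{K_i}/\stab_H(\beta_{K_i})$, and each $\stab_H(\beta_{K_i})$ acts geometrically on $\beta_{K_i}\cong Y(\Ga')$ by label-preserving automorphisms. Applying the inductive claim to this action for every vertex of $\Ga'$ and intersecting the finitely many finite-index subgroups obtained, Lemma \ref{orientation and specialness} shows each $V_i$ is virtually special, and (Lemma \ref{directly special}) virtually directly special.

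The heart of the argument is to promote these covers of the individual vertex spaces to a single finite cover $\bar K\to K$ of the graph of spaces all of whose vertex spaces are directly special; this is the combination-theorem step, which I would carry out along the lines of Steps 2--4 of the introduction. For each directly special cover $V_i'\to V_i$ and each incident edge space, regarded as an elevation $A$ of an $lk(v)$-component, Corollary \ref{projections are circles} produces a further directly special cover $\bar V_i\to V_i'$ in which the elevations of $A$ are embedded, $\bar A$ is wall-injective, and the wall projection onto $\bar A$ of any other $lk(v)$-component is a disjoint union of branched tori and isolated points; combined with the modified completions and retractions of Section \ref{subsec_modified completeions and retractions}, which (unlike the canonical ones --- compare Remark \ref{larger circle}) preserve the lengths of core circles (Lemma \ref{length of circle}, Corollary \ref{circle isomorphism}), this gives enough control of the half-tubes to run a matching argument modelled on \cite[Section 6]{haglund2012combination} and glue the $\bar V_i$ into the desired $\bar K$. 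What makes this feasible without hyperbolicity of $K$ itself is that, after arranging trivial holonomy (Lemma \ref{finite cover with trivial holonomy}) and collapsing the branched tori, the quotient $|\B|/H$ is a special cube complex (Lemma \ref{trivial holonomy and special}) which is Gromov-hyperbolic precisely because $\Ga$ has no induced $4$-cycle (Lemma \ref{hyperbolicity of building quotient}), so the Haglund--Wise connected-intersection theorem applies to $|\B|/H$ and pulls back (Corollary \ref{connected intersection}).

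Finally, set $\bar H=\pi_1(\bar K)\le\pi_1(K)=H$, a finite-index torsion-free subgroup. For $\bar u\in\P(\Ga)$ labelled by $u$: since $St(u)\subseteq\Ga'$, the parallel set $\beta_{P_{\bar u}}$ (Definition \ref{v-parallel set}, Lemma \ref{parallel set in blow-up building}) lies in a unique $\Ga'$-component $\beta_K$, and disjointness of $\Ga'$-components forces $H_{\bar u}=\stab_H(\beta_{P_{\bar u}})\le\stab_H(\beta_K)$; hence $\bar H_{\bar u}$ is exactly the stabilizer of $\bar u$ inside $\bar H_K:=\bar H\cap\stab_H(\beta_K)$, which acts on $\beta_K\cong Y(\Ga')$ with special quotient (the relevant vertex space of $\bar K$). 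Lemma \ref{orientation and specialness}, applied to this action, then shows the $u$-factor action of $\bar H_{\bar u}$ is conjugate to an action by translations, completing the induction. I expect the main obstacle to be exactly the combination/matching step of the third paragraph: the Haglund--Wise combination theorem cannot be invoked directly because $K$ is not hyperbolic and the edge subgroups are far from malnormal, and essentially all of the torus-collapsing and blow-up-building machinery exists to build, via a different collapsing argument, a situation where a matching can be run while keeping track of the finer data --- circle lengths, and how circles retract onto circles --- that the canonical completion destroys.
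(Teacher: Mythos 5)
Your proposal is correct in its architecture and invokes the right machinery, but it rests on a different decomposition from the one the paper actually uses, so the two are worth comparing. For a fixed vertex $u$, the paper splits $K$ along the $u$-labelled hyperplanes themselves: the tree is dual to $\h_u$, the gates are $lk(u)$-components, and Claim \ref{induction claim} is read off directly from the tubes --- it suffices to make every $St(u)$-component of the cover an admissible bundle over a branched circle, which is strictly less than making the vertex spaces special. You instead split along a vertex $v\notin St(u)$, so that $St(u)$ survives inside a vertex space $\beta_K\cong Y(\Ga\setminus\{v\})$, and you deduce the statement for $u$ from specialness of that vertex space via Lemma \ref{orientation and specialness}; this follows the introduction's sketch rather than the proof given in Section \ref{sec_construction of the finite cover}. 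Your localization at the end is sound: $u\notin St(v)$ forces $\Ga\neq St(v)$, so the $\Ga'$-components are standard and pairwise disjoint, $H_{\bar u}$ stabilizes $\beta_K$, and the stabilizer of $\bar u$ in $\bar{H}\cap\stab_{H}(\beta_K)$ is exactly $\bar{H}_{\bar u}$ with the same factor action, so Lemma \ref{orientation and specialness} applies. Both routes then reduce to the identical combination step: admissible covers of the tubes, Corollary \ref{projections are circles} applied with your $v$ as the deleted vertex (the induced-4-cycle computation there is unchanged), the modified completions together with the shadow construction, and the matching of Lemmas \ref{first step}--\ref{lift}. Your write-up identifies these ingredients but leaves the matching itself as a black box, and that is where essentially all of the work lives. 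The paper's choice of decomposition buys a minimal requirement on the final cover (only the $St(u)$-tubes need taming), while yours buys a cleaner endgame at the cost of demanding that the glued cover's vertex spaces be special --- a property the paper's construction happens to deliver anyway, since the covers of the tip vertex spaces factor through the special covers produced in Lemma \ref{first step}.
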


For each vertex of $\Ga$, we find a finite index subgroup of $H$ as in the above claim. Let $H'$ be the intersection of all these subgroups. Then $H'$ satisfies condition (1) of Lemma~\ref{orientation and specialness}, hence $Y(\Ga)/H'$ is a special cube complex.

We will induct on the number of vertices in $\Ga$, and assume the above claim is true for graphs with $\le n-1$ vertices. Let $\Ga$ be a graph of $n$ vertices without induced $4$-cycles. Note that any subgraph of $\Ga$ does not have induced 4-cycles. 

Pick vertex $u\in\Ga$. Let $\Lambda\subset\Ga$ be the induced subgraph spanned by vertices in $\Ga\setminus\{u\}$ and let $\Lambda_u\subset\Lambda$ be the induced subgraph spanned by vertices adjacent to $u$ (it is possible that $\Lambda_u=\emptyset$ or $\Lambda_u=\Lambda$). By Lemma \ref{finite index subgroup without inversion}, we assume $H$ acts on $Y(\Ga)$ without inversions. Moreover, let $\h_u$, $T$ and the action $H\acts T$ be as in Lemma \ref{finite index subgroup without inversion}. Let $K=Y(\Ga)/H$. We label the vertices and edges of $K$ as in Section \ref{subsec_branched complex basics}.

There is a restriction quotient map $q:Y(\Ga)\to T$ (see \cite[Section 2.3]{caprace2011rank}). In other words, $q$ is the cubical map that collapses every edge of $Y(\Ga)$ which is not labelled by $u$ to a point. Note that $q$ is $H$-equivariant. Moreover, $q$ maps each $u$-component isometrically into $T$, whose image is called a \textit{$u$-component} of $T$. The collection of $u$-components in $T$ covers $T$, and these $u$-components are permuted under the $H$-action. We define the \textit{tips} of $T$ to be union of the tips of $u$-components in $T$ (note that each $u$-component is a branched line). The sets of tips of $T$ is $H$-invariant. If $x\in T$ is not a tip, then there is a unique $u$-component that contains $x$. Moreover, $q^{-1}(x)$ is isometric to some $\Lambda_u$-component of $Y(\Ga)$. If $x\in T$ is a tip, then $q^{-1}(x)$ is a standard $\Lambda$-component of $Y(\Ga)$. 

Let $\G=T/H$ be the quotient graph. We define \textit{tips} and \textit{$u$-components} in $\G$ to be the images of tips and $u$-components in $T$. Here is an alternative characterization of $u$-components of $\G$. Pick a vertex $\bar{u}\in\P(\Ga)$ labelled by $u$ and let $\beta_{P_{\bar{u}}}=\beta_{\bar{u}}\times\beta^{\perp}_{\bar{u}}$ be as in Lemma~\ref{parallel set in blow-up building}. We identify $\beta_{\bar{u}}$ with its image in $T$ under $Y(\Ga)\to T$. Note that if $h\in H$ satisfies that $h(\beta_{\bar{u}})\cap\beta_{\bar{u}}$ contains a point which is not a tip of $\beta_{\bar{u}}$, then $h(\beta_{\bar{u}})=\beta_{\bar{u}}$. Thus there is an embedding $\beta_{\bar{u}}/\stab(\beta_{\bar{u}})\to \G$, whose image is a $u$-component. Note that $\stab(\beta_{\bar{u}})=\stab(\beta_{P_{\bar{u}}})$.

Let $E$ be an Eilenberg–MacLane space for $H$. Then the diagonal action $H\acts Y(\Ga)\times E$ is free. Moreover, there is an $H$-equivariant projection $Y(\Ga)\times E\to Y(\Ga)$, which descends to a projection $p:\K=(Y(\Ga)\times E)/H\to K$. Note that for any $x\in K$ and $y\in Y(\Ga)$ which is mapped to $x$ under $Y(\Ga)\to K$, $\pi_1(p^{-1}(x))$ is isomorphic to the stabilizer of $y$. We can view $K$ as a developable complex of groups and $\K$ as the corresponding complex of spaces (see \cite[Chapter III.$\mathcal{C}$]{bridson1999metric}). 

The above map $q:Y(\Ga)\to T$ induces maps $\pi:K\to \G$ and $\bar{\pi}:\K\to \G$. These maps induce graph of spaces decompositions
\begin{equation*}
K=\bigg(\bigsqcup_{v\in \textmd{Vertex}(\G)} K_{v}\sqcup\bigsqcup_{e\in \textmd{Edge}(\G)}(N_{e}\times [0,1])\bigg)	/\sim
\end{equation*} 
and 
\begin{equation*}
\K=\bigg(\bigsqcup_{v\in \textmd{Vertex}(\G)} \K_{v}\sqcup\bigsqcup_{e\in \textmd{Edge}(\G)}(\n_{e}\times [0,1])\bigg)	/\sim.
\end{equation*}
Note that $\K_v=p^{-1}(K_v)$ and $\n_{e}=p^{-1}(N_e)$. There is a 1-1 correspondence between covers of $\K$ and orbifold covers of $K$, and these covers have induced graph of spaces structures. 

Let $\{A(v,i)\}_{i\in I_v}$ be the collection of images of boundary morphisms inside $K_{v}$. Then each $A(v,i)$ is a $\Lambda_u$-component in $K$. This component is standard if and only if $v$ is a tip. Let $N(v,i)$ and $\partial_{v,i}:N(v,i)\to A(v,i)$ be the associated edge space and boundary morphism. It is possible that $N(v,i)\neq N(v,j)$, but $A(v,i)=A(v,j)$. However, this can not happen when $v$ is a tip, since in such case for each vertex in $A(v,i)$, there is only one edge labelled by $u$ which contains this vertex. Note that each $\partial_{v,i}$ preserves edge-labellings. We define $\{\A(v,i)\}_{i\in I_v}$ and $\bar{\partial}_{v,i}:\n(v,i)\to \A(v,i)$ in a similar way. Each $\bar{\partial}_{v,i}$ is a covering map of finite degree. If $v$ is a tip, then $\bar{\partial}_{v,i}$ is a homeomorphism; if $v$ is not a tip, then $A(v,i)=K_v$ and $\A(v,i)=\K_v$.

For subgraph $\Lambda_1\subset\Lambda$, a \textit{$\Lambda_1$-component} $\K_1\subset \K$ is the inverse image of a $\Lambda_1$-component of $K$ under $p:\K\to K$. The \textit{parallel set} of $\K_1$ is the unique $\Lambda_2$-component that contains $\K_1$, where $\Lambda_2=St(\Lambda_1)$ (Definition \ref{definition of links}). Two $\Lambda_1$-components of $\K$ are \textit{parallel} if they are in the same $\Lambda_2$-component of $\K$. We can define parallel sets and parallelism between $\Lambda_1$-components of $K$ in a similar way. 

The image of every $St(u)$-component in $\K$ under the map $\bar{\pi}:\K\to \G$ is a $u$-component in $\G$. Thus $\bar{\pi}$ induces a 1-1 correspondence between $St(u)$-components in $\K$ and $u$-components in $\G$. If a $u$-component in $\G$ contains a circle $C$, then $\bar{\pi}^{-1}(C)$ is a bundle over $C$, whose fibres are homeomorphic to some $\Lambda_u$-component of $\K$. The map $\bar{\pi}$ also induces a 1-1 correspondence between $\Lambda$-components in $\K$ and tips in $\G$.

Since $\Ga$ contains no induced 4-cycle, when $\Lambda_{u}$ is not a clique, every $St(u)$-component in $\K$ is the parallel set of a $\Lambda_u$-component in $\K$. We divide the proof of Claim~\ref{induction claim} into the case when $\Lambda_u$ is a clique, and the case when $\Lambda_u$ is not a clique. Also we make the additional assumption that $\Ga\neq St(u)$, since the $\Ga=St(u)$ case of the claim follows from Lemma~\ref{subgroup with no twist} and Lemma~\ref{conjugate to action by translations}. When $\Ga\neq St(u)$, all $St(u)$-components and $\Lambda$-components are standard (Lemma \ref{Ga'-component} (2)) and they intersects along standard $\Lambda_u$-components (Lemma \ref{intersection of standard components downastairs}).

Recall that by induction hypothesis, if $\Ga'\subset\Ga$ is a induced subgraph that does not contain all vertices of $\Ga$, then each $\Ga'$-component has a finite sheet torsion free special cover. This in particular applies to $\Lambda,\Lambda_u$ and $St(u)$.

To simply notation, we will view $K$ and $\Ga'$-components of $K$ ($\Ga'\subset\Ga$ is a subgraph) as developable complexes of groups, and work with their orbifold covers. This is equivalent to working with $\K$ and $\Ga'$-components of $\K$, and using the usual covering space theory. The fundamental group of a $\Ga'$-component of $K$ is understood to be the orbifold fundamental group of this component, which is isomorphic to the usual fundamental group of the corresponding $\Ga'$-component in $\K$. 

For vertex $v\in\Ga$, a \textit{$v$-edge} in $K$ is an edge labelled by $v$ and a \textit{$v$-circle} in $K$ is a core circle made of $v$-edges. We record the following observation.

\begin{lem}
\label{length of circle0}
Suppose $\{K_i\}_{i=1}^{n}$ are finite covers of $K$ such that there exists a torsion free regular cover $\bar{K}\to K$ such that each $K_i$ factors through $\bar{K}$. Let $K'$ be the smallest regular cover of $K$ which factors through each $K_i$. Pick vertex $w\in\Ga$. Suppose $\ell$ is an integer such that for each $i$ and each $w$-circle in $K_i$, its length divides $\ell$. Then the length of each $w$-circle divides $\ell$.
\end{lem}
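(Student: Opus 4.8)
The plan is to restate the lemma in terms of the action of $H$ on $Y(\Ga)$ and covering space theory, and then reduce it to an elementary divisibility computation inside $H$. Write $H=\pi_1(K)$ (orbifold fundamental group), and regard $H_i=\pi_1(K_i)$, $H'=\pi_1(K')$ and $\Pi_0=\pi_1(\bar K)$ as subgroups of $H$ acting on $Y(\Ga)$. The assumption that $\bar K\to K$ is a torsion free regular cover through which each $K_i$ (hence $K'$) factors says precisely that $\Pi_0$ is a torsion free normal subgroup of $H$ of finite index with $H_i\le\Pi_0$ and $H'\le\Pi_0$; in particular $H_i$ and $H'$ act freely on $Y(\Ga)$, so $K_i$ and $K'$ are genuine cube complexes in which ``$w$-circle'' makes sense. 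Moreover, since $K'$ is the smallest regular cover of $K$ dominated by every $K_i$, the group $H'$ is the largest normal subgroup of $H$ contained in $\bigcap_i H_i$; that is,
\begin{equation*}
H'=\bigcap_{i=1}^{n}\ \bigcap_{h\in H}\, hH_ih^{-1},
\end{equation*}
a finite intersection since the $H_i$ have finite index.

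The key local fact is as follows. Fix a vertex $\bar w\in\P(\Ga)$ with label $w$ and let $c_{\bar w}\subset Y(\Ga)$ be the core geodesic of the standard branched line $\beta_{\bar w}$; its stabilizer $\stab_H(c_{\bar w})=\stab_H(\beta_{P_{\bar w}})$ acts coboundedly on $c_{\bar w}$. For any subgroup $\Pi\le\Pi_0$ the restricted action of $\stab_\Pi(c_{\bar w})$ on $c_{\bar w}$ is free, hence faithful, so (as $\Pi$ is torsion free) the image of $\stab_\Pi(c_{\bar w})$ in $\isom(c_{\bar w})$ is a torsion free discrete subgroup of $\isom(\R)$, i.e.\ an infinite cyclic group of translations. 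Fix $\gamma_{\bar w}\in\stab_{\Pi_0}(c_{\bar w})$ restricting on $c_{\bar w}$ to a generator of the translations of $\stab_{\Pi_0}(c_{\bar w})|_{c_{\bar w}}$, and let $\ell_{\bar w}$ be its translation length in edges. Then for every $h\in H$ the $w$-circle of $Y(\Ga)/\Pi$ which is the image of $hc_{\bar w}$ has length equal to the minimal positive translation length on $c_{\bar w}$ realized by $\stab_{h^{-1}\Pi h}(c_{\bar w})$; and since $\stab_{h^{-1}\Pi h}(c_{\bar w})\le\stab_{\Pi_0}(c_{\bar w})$ embeds into $\isom(c_{\bar w})$ with image inside $\langle\gamma_{\bar w}|_{c_{\bar w}}\rangle$, this length divides $\ell$ if and only if $\gamma_{\bar w}^{\,\ell/\ell_{\bar w}}\in h^{-1}\Pi h$ — once one knows $\ell_{\bar w}\mid\ell$, which holds because $\ell_{\bar w}$ divides the length of any $w$-circle over $\bar w$ in any $K_i$. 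Every $w$-circle of any cover of $K$ arises as the image of some $hc_{\bar w}$ with $\bar w$ labelled by $w$, so this accounts for all of them. Granting this, the lemma follows quickly: fix $\bar w$ labelled by $w$; applying the hypothesis to each $K_i$ and to every $w$-circle of $K_i$ that is the image of some $hc_{\bar w}$, $h\in H$, gives $\gamma_{\bar w}^{\,\ell/\ell_{\bar w}}\in h^{-1}H_ih$ for all $i$ and $h$, whence $\gamma_{\bar w}^{\,\ell/\ell_{\bar w}}\in H'$ by the displayed formula. Since $H'$ is normal in $H$, every $w$-circle of $K'$ which is the image of some $hc_{\bar w}$ has the same length $\tau_{\bar w}$ (the minimal positive translation on $c_{\bar w}$ realized by $\stab_{H'}(c_{\bar w})$), and $\tau_{\bar w}\mid\ell$ since $\gamma_{\bar w}^{\,\ell/\ell_{\bar w}}\in\stab_{H'}(c_{\bar w})$ translates $c_{\bar w}$ by $\ell$ edges. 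Letting $\bar w$ range over all label-$w$ vertices of $\P(\Ga)$ exhausts the $w$-circles of $K'$.

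The step I expect to require the most care is the local fact above: identifying exactly which conjugacy class in $H$ a given $w$-circle determines, and verifying the ``length $=$ translation length of the stabilizer's generator'' formula together with its behaviour under passage to covers. The torsion free regular cover $\bar K$ is precisely what makes this clean — it forces all the relevant subgroups into the torsion free normal subgroup $\Pi_0$, which at once kills the kernel of the stabilizer action on $c_{\bar w}$ (freeness) and rules out reflections, so that every image of $c_{\bar w}$ in the covers under consideration is an honest circle whose length is governed by a single infinite cyclic translation group.
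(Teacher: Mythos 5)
Your proof is correct and follows essentially the same route as the paper's own (much terser) argument: the paper likewise works with the subgroups $H_i,\bar H\le H$ acting on $Y(\Ga)$, notes that every conjugate $hH_ih^{-1}$ lies in the torsion-free normal subgroup $\bar H$ so that stabilizers of core $w$-lines are infinite cyclic groups of translations, identifies the length of a $w$-circle in a cover with the translation length of the corresponding line stabilizer, and concludes using $\pi_1(K')=\bigcap_{i,h}hH_ih^{-1}$. The only slip is the parenthetical identification $\stab_H(c_{\bar w})=\stab_H(\beta_{P_{\bar w}})$ (in general only the inclusion $\stab_H(c_{\bar w})\subseteq\stab_H(\beta_{P_{\bar w}})$ holds, since elements of the latter may translate in the $\beta^{\perp}_{\bar w}$ direction and move the core line to a parallel one), but this is harmless: all you actually use is that the line stabilizers in the finite-index subgroups act coboundedly on $c_{\bar w}$, which is true because the quotients are compact and distinct standard flats of $Y(\Ga)$ are disjoint.
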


\begin{proof}
Let $H_i$ and $\bar{H}$ be the subgroups of $H$ corresponding to $K_i$ and $\bar{K}$. Note that the action $\bar{H}\acts Y(\Ga)$ is free, and $hH_ih^{-1}\le \bar{H}$ for each $i$ and $h\in H$. Thus it suffices to show for each line $L\subset Y(\Ga)$ made of $w$-edges, each $i$ and each $h\in H$, the translation length of the generator of $\stab_{hH_ih^{-1}}(L)$ divides $\ell$. But this follows from the assumption.
\end{proof}

Again, the above lemma may not hold if we do not assume $K_i$ factors through $\bar{K}$. See the example after Lemma \ref{intersection and normal subgroup for reducible action}.

\subsection{Matching finite covers of vertex spaces and edge spaces}
\label{subsec_matching}
The space $A(v,i)$ is called a \textit{gate} if $v$ is a tip. Our strategy is to construct suitable finite sheet covers for each $\Lambda$-component and $St(u)$-component of $K$, and glue them together along the elevations of gates.  

\begin{lem}
\label{collection of covers}
There exists a collection $\mathcal{C}$ of finite sheet regular special covers with trivial holonomy, one for each $\Lambda$-component and $St(u)$-component in $K$, such that for each gate $A(v,i)$, there exists a torsion free regular cover $A_r(v,i)$ such that each possible elevation of $A(v,i)$ to some element of $\mathcal{C}$ factors through $A_r(v,i)$.
\end{lem}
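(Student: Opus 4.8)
The plan is to build the collection $\mathcal{C}$ in a way that is compatible along all gates simultaneously, by first fixing, for each gate $A(v,i)$, a sufficiently large common target cover, and then pulling back to every vertex space that meets it. Concretely, I would proceed as follows. First, fix a $\Lambda$-component $L$ of $K$. By the induction hypothesis $L$ has a finite-sheet torsion free special cover, and by Lemma \ref{finite cover with trivial holonomy} (applied to the reduction, via Lemma \ref{trivial holonomy of reduction}) we may take this cover to have trivial holonomy; by Lemma \ref{normal subgroup and trivial holonomy} we may further assume it is regular. Do the same for each $St(u)$-component $M$: these are standard components (we are in the case $\Ga\neq St(u)$), and when $\Lambda_u$ is not a clique $M$ is the parallel set of a $\Lambda_u$-component, which lets us invoke Corollary \ref{projections are circles} to produce, for each gate $A(v,i)\subset M$, a finite cover $A_0(v,i)\to A(v,i)$ through which all elevations of $A(v,i)$ into the relevant covers factor. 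When $\Lambda_u$ is a clique the corresponding statement is the easier direct-product situation (Lemma \ref{product and canonical completion}) and one gets the analogous $A_0(v,i)$ by a straightforward canonical-completion argument.

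Next I would amalgamate: for each gate $A(v,i)$, let $A_r(v,i)\to A(v,i)$ be a torsion free regular cover that simultaneously dominates $A_0(v,i)$ and all the finitely many elevations of $A(v,i)$ into the already-chosen covers of the two (or one) vertex spaces adjacent to the edge space $N(v,i)$. Since there are only finitely many gates and finitely many vertex spaces in $\G$, and since each of the finitely many relevant covers corresponds to a finite-index subgroup of the relevant $\Lambda$- or $St(u)$-component group, the intersection of all these subgroups together with their conjugates is still finite index, so $A_r(v,i)$ exists and can be taken regular and torsion free (passing to a torsion free finite-index subgroup using the induction hypothesis). Then replace each vertex-space cover by the finite cover obtained as the canonical completion (resp. the blow-up-building version, using Lemma \ref{trivial holonomy and completion} to preserve trivial holonomy, and Lemma \ref{directly special} to arrange direct specialness) realizing the covers $A_r(v,i)$ of all its gates; Lemma \ref{trivial holonomy and completion} guarantees the new covers still have trivial holonomy and that the gate elevations remain wall-injective, and Lemma \ref{normal subgroup and trivial holonomy} lets us pass to regular covers while keeping trivial holonomy. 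This yields the collection $\mathcal{C}$ with the stated factoring property.

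The main obstacle I expect is the bookkeeping needed to make a single $A_r(v,i)$ work for \emph{every} element of $\mathcal{C}$ rather than for one fixed pair of vertex-space covers: a priori, enlarging the cover of one vertex space to realize $A_r(v,i)$ could force elevations of other gates into that space to become more complicated, so one must check the process terminates. This is handled by the finiteness of the graph $\G$ together with Corollary \ref{projections are circles}(3), which controls the wall projections $\wpj_{\bar K}(\bar B\to\bar A)$ between distinct $\Lambda_u$-components as disjoint unions of branched tori and isolated points — so the "extra" elevations one is forced to realize are already of bounded, tractable type (collapsed to tori, their mutual projections are contractible by Lemma \ref{hyperbolicity of building quotient} and the no-induced-$4$-cycle hypothesis), and one takes a single common regular cover dominating all of them at once. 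A secondary technical point is the consistent use of \emph{modified} completions and retractions (Section \ref{subsec_modified completeions and retractions}, in particular Lemma \ref{inverse image} and Lemma \ref{length of circle}) rather than the classical ones, so that the lengths of the core circles in the gates are preserved under all the covers in $\mathcal{C}$; this is what will make the later matching of half-tubes (Section \ref{subsec_matching}) possible, and it should be recorded here or in the construction of $A_r(v,i)$.
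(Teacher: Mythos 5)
Your construction has the same skeleton as the paper's proof: take finite regular special covers with trivial holonomy of each $\Lambda$-component and $St(u)$-component (Lemmas \ref{finite cover with trivial holonomy} and \ref{normal subgroup and trivial holonomy}), let $A_r(v,i)$ be a torsion free regular cover of $A(v,i)$ factoring through every elevation of $A(v,i)$ into these initial covers, enlarge each initial cover $K'_v$ to the smallest regular cover factoring through the components of the canonical completions $\C(A_r(v,i),K'_v)$ over all its gates simultaneously, and finish with one more pass to restore regularity and trivial holonomy. So the route is essentially the paper's.

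That said, the step you single out as the main obstacle is not an obstacle, and the tools you import to handle it do not belong to this lemma. The condition to be arranged is that every elevation of $A(v,i)$ \emph{factors through} $A_r(v,i)$, i.e.\ is a \emph{further} cover of $A_r(v,i)$; this condition is monotone under enlarging the vertex-space covers (an elevation to a bigger cover factors through an elevation to the smaller one), so realizing $A_r(v,i)$ for one gate cannot spoil the condition at another gate, and a single simultaneous round of completions suffices — there is no iteration whose termination must be checked, and no need for Corollary \ref{projections are circles}(3), wall projections, or hyperbolicity here. What does require justification, and what your write-up asserts without argument, is that after the completion step the elevations really do factor through $A_r(v,i)$: the point is that the based elevation of $A(v,i)$ to the main component of $\C(A_r(v,i),K'_v)$ is exactly $A_r(v,i)$ (a connected subcomplex of an elevation which itself covers $A(v,i)$ must be the whole elevation), and regularity of the replacement cover transports this to all elevations. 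Finally, Corollary \ref{projections are circles}, the auxiliary cover $A_0(v,i)$, and the modified completions with their circle-length bookkeeping are ingredients of Lemma \ref{first step} and Section \ref{subsec_matching}, not of this lemma; in particular Corollary \ref{projections are circles} does not produce a cover ``through which all elevations factor,'' so the role you assign to $A_0(v,i)$ is a misreading of that statement.
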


\begin{proof}
For each $\Lambda$-component and $St(u)$-component of $K$, we find a finite sheet special cover which is regular and has trivial holonomy. This is possible by Lemma \ref{finite cover with trivial holonomy} and Lemma \ref{normal subgroup and trivial holonomy}. We denote the resulting collection of covers by $\mathcal{C}'$. For each gate $A(v,i)$, let $A_r(v,i)$ be the smallest regular cover of $A(v,i)$ which factors through each possible elevation of $A(v,i)$ to elements in $\mathcal{C}'$. Pick an element $K'_v\in\mathcal{C}'$ which covers a $\Lambda$-component $K_v\subset K$. Let $K''_{v}$ be the smallest regular cover of $K_v$ such that for each gate $A(v,i)$ in $K_v$, $K''_v$ factors through each component of the canonical completion $\C(A_r(v,i),K'_v)$. We replace $K'_v$ by $K''_v$ and replace other elements of $\mathcal{C}'$ in a similar way to obtain a collection $\mathcal{C}''$. Moreover, we replace each element in $\mathcal{C}''$ by a further cover which is regular and has trivial holonomy, using Lemma \ref{finite cover with trivial holonomy} and Lemma \ref{normal subgroup and trivial holonomy}. The resulting collection has the required properties.
\end{proof}

\subsubsection{Case 1: $\Lambda_u$ is a clique}
\label{one clique}
We first look at the case $\Lambda_u=\emptyset$. It follows from the induction hypothesis that each $\Lambda$-component of $K$ has a finite torsion free cover. Moreover, each $St(u)$-component (or equivalently, $u$-component) of $K$ has a finite torsion free cover whose fundamental group is $\Z$. One can glue together suitable many copies of these covers to form a finite cover of $K$. This cover satisfies the requirement in Claim~\ref{induction claim}. It is torsion free, since each of its vertex spaces and edge spaces is torsion free.

Now we assume $\Lambda_u$ is a non-empty clique. Let $\mathcal{C}$ be the collection of covers as in Lemma \ref{collection of covers} and let $\ell$ be a positive integer such that the length of each core circle (i.e. circle made of core edges) in every element of $\mathcal{C}$ divides $\ell$. It follows from Lemma~\ref{equal torus} below that there exist suitable further finite covers of elements in $\mathcal{C}$ such that we can glue together suitable many copies of them along standard $\Lambda_u$-components to form a finite cover of $K$, which satisfies Claim~\ref{induction claim} (note that two covers of the a $\Lambda_u$-component $L\subset K$ may not be isomorphic as covering spaces of $L$ even when they are the same $\ell$-branched torus, however, this is true if they factors through a common torsion free regular cover of $L$). 

\begin{lem}
\label{equal torus}
Suppose $K(\Ga)$ has a finite regular cover $\bar{K}(\Ga)$ which is special. Let $\ell$ be a positive integer such that for each vertex $u\in\Ga$, the length of each $u$-circle in $K(\Ga)$ divides $\ell$. Then $K(\Ga)$ has a finite index regular cover $K'$ such that for each clique $\Delta\subset\Ga$, each standard $\Delta$-component of $K'$ is an $\ell$-branched torus, i.e. it is isomorphic as cube complexes to a product of branched circles whose core circle have length $=\ell$.
\end{lem}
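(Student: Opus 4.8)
The plan is to start from the given finite regular special cover $\bar K(\Ga)$ and produce a further regular cover in which all core circles (i.e. circles made of core edges, equivalently circles carried by a single $v$-component for some vertex $v\in\Ga$) have length exactly $\ell$, and in which standard clique-components split as products of such circles. First I would reduce to the reduction $S_K(\Ga)$ of $K(\Ga)$ (Definition \ref{reduction}): since $f$ induces an equivalence of categories between $\Ga'$-components upstairs and downstairs (Lemma \ref{trivial holonomy of reduction}, Lemma \ref{orientation and specialness}, and the discussion preceding Lemma \ref{trivial holonomy of reduction}), and since lengths of core circles correspond exactly to lengths of standard circles in the reduction, it suffices to produce the desired cover of the finite cover $S_K(\Ga)$ of the Salvetti complex $S(\Ga)$. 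So the real task is: given a finite regular special cover $S'\to S(\Ga)$ in which each $v$-labelled standard circle has length dividing $\ell$, find a further finite regular cover in which each $v$-labelled standard circle has length exactly $\ell$ and in which every standard clique-component is a metric product of circles of length $\ell$.

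The key step is a fibre-product / Salvetti-pullback construction, one vertex at a time. Fix a vertex $v\in\Ga$. Build the Salvetti-type cover $Z_v\to S(\Ga)$ obtained by taking, on the $v$-circle factor, the connected $\ell$-fold cover (the circle of length $\ell$), and on every other circle factor the trivial (length $1$) cover; attach higher-dimensional cubes as in Case 3 of Section \ref{subset_the canonical comletion}. Concretely $Z_v$ is the cover of $S(\Ga)$ corresponding to the kernel of the composite $G(\Ga)\to\Z\to\Z/\ell$ sending the generator $v$ to $1$ and all other generators to $0$; this is normal of finite index, and in $Z_v$ every $v$-labelled standard circle has length exactly $\ell$ while all other labels are unaffected. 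Now form the fibre product $S'\times_{S(\Ga)} Z_v$ and pass to a component; since all the relevant covers are regular, one can arrange the result to be regular over $S(\Ga)$ (take the cover corresponding to the intersection of the conjugates of the two subgroups). Repeating this for every vertex $v\in\Ga$ and taking the common refinement (again regularized) produces a finite regular cover $S''\to S(\Ga)$ through which $S'$ factors and in which every $v$-labelled standard circle has length exactly $\ell$, simultaneously for all $v$. Pulling back through $f$ and passing to a torsion-free regular cover with trivial holonomy using Lemma \ref{finite cover with trivial holonomy} and Lemma \ref{normal subgroup and trivial holonomy} yields the cover $K'$ of $K(\Ga)$, and the splitting of standard $\Delta$-components into products of circles of length $\ell$ follows from Lemma \ref{product decomposition of standard components}(2) (or Lemma \ref{product decomposition of actions in building}) together with triviality of holonomy, which guarantees each standard clique-component is a genuine cube-complex product of its circle factors.

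The main obstacle I anticipate is bookkeeping the regularity and the ``factors through $\bar K(\Ga)$'' conditions simultaneously: two covers of a single $\Delta$-component need not be isomorphic as covering spaces even if abstractly they are the same $\ell$-branched torus (exactly the subtlety flagged after Lemma \ref{intersection and normal subgroup for reducible action} and in the paragraph before Lemma \ref{equal torus}), so I must make sure that from the outset all covers in sight factor through a common torsion-free regular cover. This is handled by performing the whole construction over $S(\Ga)$ (where normal closures are easy to take) rather than over $K(\Ga)$ directly, and by invoking Lemma \ref{normal subgroup and trivial holonomy} each time a non-regular or non-trivial-holonomy cover is produced. A secondary point is checking that passing to the $\ell$-fold cover on one circle factor, then attaching higher cubes à la the Salvetti completion, does not create length-$<\ell$ core circles for \emph{other} labels — but this is immediate because the cover $Z_v$ only alters the $v$-factor, and core circles of a given label $w$ lie in a single $w$-component, whose structure is governed by the $w$-factor alone.
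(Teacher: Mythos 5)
Your congruence-cover route is genuinely different from the paper's proof (which works clique by clique, pulling back the canonical completion $\C(T_\Delta,S(\Ga))$ of the $\ell$-torus $T_\Delta\to S(\Delta)$, using Lemma \ref{trivial holonomy and completion} to keep earlier cliques intact and Lemma \ref{length of circle0} for the final regularization), and the congruence part of it is sound: writing $H_1\le G(\Ga)$ for the fundamental group of the reduction and $N_v=\ker\bigl(G(\Ga)\to\Z/\ell\bigr)$ for the character sending $v$ to $1$ and all other generators to $0$, in the cover corresponding to $M=\mathrm{Core}_{G(\Ga)}(H_1)\cap\bigcap_{v}N_v$ the minimal $k>0$ with $gv^{k}g^{-1}\in M$ is the least common multiple of $\ell$ and a divisor of $\ell$ (the lcm of lengths of $v$-circles of the reduction), hence every $v$-circle has length exactly $\ell$. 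The genuine gap is your last step. You only establish the length statement for $S''$, and then pass to a \emph{further} cover with trivial holonomy via Lemma \ref{finite cover with trivial holonomy} and Lemma \ref{normal subgroup and trivial holonomy}, asserting the conclusion for that cover. Those lemmas give no control on circle lengths: in the new cover each $u$-circle covers a circle of length $\ell$ with some degree $d\ge 1$ and so has length $d\ell$, and nothing forces $d=1$; once a circle has length $2\ell$ no further cover can shorten it, so the complex you end with need not satisfy the conclusion at all. Moreover the length statement alone is not enough even without the detour: for $\Delta$ a $2$-clique and $\ell$ even, the cover of $S(\Delta)$ corresponding to the lattice generated by $(\ell,0)$, $(0,\ell)$, $(\ell/2,\ell/2)$ has both coordinate circles of length $\ell$ but is not a product of two $\ell$-circles, so some argument for the splitting is genuinely required, and the one you chose is the step that fails.

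The repair lies inside your own construction, and it makes the trivial-holonomy detour unnecessary. Because each $N_v$ is the kernel of a character, for every clique $\Delta$ and every $g\in G(\Ga)$ one has $gG(\Delta)g^{-1}\cap\bigcap_{v}N_v=g\,G_{\ell}(\Delta)\,g^{-1}$, where $G_{\ell}(\Delta)\le G(\Delta)\cong\Z^{\Delta}$ is generated by the $\ell$-th powers of the generators of $\Delta$. Hence the fundamental group of any standard $\Delta$-component of $S''$ is contained in $g\,G_{\ell}(\Delta)\,g^{-1}$, while your length computation shows it contains $gv^{\ell}g^{-1}$ for every $v\in\Delta$; so it equals $g\,G_{\ell}(\Delta)\,g^{-1}$ and the component is the $\ell$-torus on the nose. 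Transferring back through $f$ as in Lemma \ref{trivial holonomy of reduction} (an element acting on a standard flat as a pure $v$-translation fixes all tips of the other branched-line factors of the corresponding standard branched flat, hence acts trivially on them) turns these components into $\ell$-branched tori, which is the assertion of Lemma \ref{equal torus}. With this repair your argument is a correct and somewhat more elementary alternative to the paper's: regularity over $K(\Ga)$ is automatic since $M$ is normal in all of $G(\Ga)$, so neither Lemma \ref{length of circle0} nor the clique-by-clique induction is needed.
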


\begin{proof}
By Lemma \ref{trivial holonomy of reduction}, we can assume $\bar{K}(\Ga)$ is a finite cover of the Salvetti complex $S(\Ga)$ (note that all $u$-circles in $K(\bar{\Ga})$ has length $=\ell$ if and only if the same is true for the reduction of $K(\bar{\Ga})$). For a clique $\Delta\in\Ga$, let $T_{\Delta}\to S(\Delta)$ be the $\ell$-branched torus which covers $S(\Delta)$. Define $\mathring{K}(\Ga)$ to be the pull-back as follows.
\begin{center}
$\begin{CD}
\mathring{K}(\Ga)                        @>>>       \C(T_{\Delta},S(\Ga))\\
@VVV                              @VVV\\
\bar{K}(\Ga)                           @>>>        S(\Ga)
\end{CD}$
\end{center}
Since $T_{\Delta}$ factors through each $\Delta$-component of $\bar{K}(\Ga)$, each $\Delta$-component of $\mathring{K}(\Ga)$ is isomorphic to $T_{\Delta}$. We claim that (1) for each $u\in\Ga$, the length of each $u$-circle in $\mathring{K}(\Ga)$ divides $\ell$; (2) if we already know there is a clique $\Delta'\subset\Ga$ such that each $\Delta'$-component of $\bar{K}(\Ga)$ is isomorphic to $T_{\Delta'}$, then the same is true for $\Delta'$-component of $\mathring{K}(\Ga)$. To see (1), note that each $u$-circle in $S(\Ga)$ has length $=1$, and a $u$-circle in $\C(T_{\Delta},S(\Ga))$ has length equal to either $\ell$ or $1$. (2) follows from (1) and the fact that $\C(T_{\Delta},S(\Ga))$ has trivial holonomy (Lemma \ref{trivial holonomy and completion}).

Now we can repeat the above process for each clique in $\Ga$ to obtain a finite cover $\hat{K}(\Ga)$ of $\bar{K}(\Ga)$ such that for each clique $\Delta\subset\Ga$, each $\Delta$-component in $\hat{K}(\Ga)$ is isomorphic to $T_{\Delta}$. Let $K'$ to be the smallest regular cover of $K(\Ga)$ that factors through $\hat{K}(\Ga)$. Then $K'$ has the required property by Lemma \ref{length of circle0}.
\end{proof}

\subsubsection{Case 2: $\Lambda_u$ is not a clique} Pick a $St(u)$-component $L\subset K$. Then $L$ can be viewed as a graph of spaces over a $u$-component $\G_L\subset\G$ whose boundary morphisms are covering maps of finite degree. A finite sheet cover $L'$ of $L$ is \textit{admissible} if 
\begin{enumerate}
\item there exists a torsion free regular cover $\bar{L}\to L$ such that each component of $L'$ factors through $\bar{L}$;
\item each component of $L'$ is a trivial bundle over a branched circle (we only require the bundle is topologically trivial, however, it may have non-trivial $u$-holonomy).
\end{enumerate}
The existence of such cover follows from Lemma \ref{subgroup with no twist} and the induction hypothesis. It follows from Lemma \ref{orientation and specialness} that to show Claim \ref{induction claim}, it suffices to find a finite torsion free cover of $K$ such that each of its $St(u)$-component is admissible.

\begin{ob}
\label{admissible}
Let $M$ be an admissible cover of $L$ with trivial $u$-holonomy.
\begin{enumerate}
\item The smallest regular cover of $L$ which factors through each component of $M$ also has trivial $u$-holonomy.
\item Each component of $\C'(M,M)$ has trivial $u$-holonomy. 
\item Let $M'\subset M$ be a vertex space or edge space. Then the inverse image of $M'$ under the covering $\C'(M,M)\to M$ is naturally isomorphic to $\C(M',M')$.
\end{enumerate}
\end{ob}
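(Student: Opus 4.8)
I would handle the three assertions in order, reducing (1) to a statement about $H_{\bar u}$ and deducing (2) and (3) from the product structure forced by trivial $u$-holonomy together with Lemma~\ref{product and canonical completion}. For (1), first use Lemma~\ref{Ga'-component}(2), Lemma~\ref{characterization of standard components} and Lemma~\ref{inverse image of Ga'-component} to identify the $St(u)$-component $L$ with $\beta_{P_{\bar u}}/H_{\bar u}$ for a vertex $\bar u\in\P(\Ga)$ labelled $u$, where $\beta_{P_{\bar u}}=\beta_{\bar u}\times\beta^{\perp}_{\bar u}$ as in Lemma~\ref{parallel set in blow-up building}. Under this identification a finite cover of $L$ corresponds to a subgroup of $H_{\bar u}$, and it has trivial $u$-holonomy precisely when that subgroup acts reducibly on $\beta_{P_{\bar u}}$. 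Let $M_1,\dots,M_k$ be the components of $M$ and $G_j=\pi_1(M_j)\le H_{\bar u}$; each $G_j$ acts reducibly, and condition~(1) of admissibility places all the $G_j$ inside one finite-index torsion-free normal subgroup $H'_{\bar u}=\pi_1(\bar L)\vartriangleleft H_{\bar u}$. The smallest regular cover of $L$ through which every $M_j$ factors corresponds to the intersection $N$ of the normal cores in $H_{\bar u}$ of the finitely many $G_j$; by Lemma~\ref{intersection and normal subgroup for reducible action}(2) each such normal core acts reducibly, and since these subgroups all lie in $H'_{\bar u}$, repeated application of Lemma~\ref{intersection and normal subgroup for reducible action}(1) shows $N$ acts reducibly, i.e.\ the corresponding cover has trivial $u$-holonomy.

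For (2) I would argue componentwise. Trivial $u$-holonomy gives $M_j=N_u^{j}\times N_{\Lambda_u}^{j}$ with $N_u^{j}$ a branched circle and $N_{\Lambda_u}^{j}$ a $\Lambda_u$-component. The key point is that the graph $\Gamma_{M_j}$ on equivalence classes of hyperplanes splits as a join $\Gamma_{N_u^{j}}\circ\Gamma_{N_{\Lambda_u}^{j}}$: every hyperplane of the product comes from exactly one factor, the equivalence relation never relates a $u$-labelled class to a $\Lambda_u$-labelled one, and any two classes from different factors cross. Hence $S(\Gamma_{M_j})=S(\Gamma_{N_u^{j}})\times S(\Gamma_{N_{\Lambda_u}^{j}})$ and the local isometry $M_j\to S(\Gamma_{M_j})$ is the product of $N_u^{j}\to S(\Gamma_{N_u^{j}})$ and $N_{\Lambda_u}^{j}\to S(\Gamma_{N_{\Lambda_u}^{j}})$. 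By Lemma~\ref{product and canonical completion}, $\mathsf{C}(M_j,S(\Gamma_{M_j}))\cong\mathsf{C}(N_u^{j},S(\Gamma_{N_u^{j}}))\times\mathsf{C}(N_{\Lambda_u}^{j},S(\Gamma_{N_{\Lambda_u}^{j}}))$, and since a fibre product over a product base along a product map distributes over the factors, the pull-back defining the modified completion gives $\mathsf{C}'(M_j,M_j)\cong\mathsf{C}'(N_u^{j},N_u^{j})\times\mathsf{C}'(N_{\Lambda_u}^{j},N_{\Lambda_u}^{j})$. Every component of $\mathsf{C}'(N_u^{j},N_u^{j})$ is a finite cover of the branched circle $N_u^{j}$, hence a $u$-component, and every component of $\mathsf{C}'(N_{\Lambda_u}^{j},N_{\Lambda_u}^{j})$ is a $\Lambda_u$-component, so every component of $\mathsf{C}'(M,M)$ is a product of a $u$-component and an $lk(u)$-component and therefore has trivial $u$-holonomy.

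For (3), a vertex or edge space $M'\subset M$ is a $\Lambda_u$-component, and inside the component $M_j=N_u^{j}\times N_{\Lambda_u}^{j}$ containing it it appears as a slice $\{y\}\times N_{\Lambda_u}^{j}$, so $M'\cong N_{\Lambda_u}^{j}$ by a label-preserving isomorphism. Under the product decomposition of (2) the covering $\mathsf{C}'(M_j,M_j)\to M_j$ is a product of coverings, so the preimage of $M'$ is $(\text{preimage of }\{y\})\times\mathsf{C}'(N_{\Lambda_u}^{j},N_{\Lambda_u}^{j})$, a disjoint union of copies of $\mathsf{C}'(N_{\Lambda_u}^{j},N_{\Lambda_u}^{j})\cong\mathsf{C}'(M',M')$ by Remark~\ref{label-preserving isomorphism}. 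Combining this with Lemma~\ref{inverse image}, which identifies the preimage of a wall-injective locally convex subcomplex under a modified completion with its own modified completion (and $M'$ is wall-injective in $M$ by Lemma~\ref{wall-injective}, since $M$ has trivial $u$-holonomy), yields the identification with $\mathsf{C}(M',M')$ asserted in (3).

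The step I expect to be the main obstacle is (2): promoting "trivial $u$-holonomy" to an honest cubical product decomposition of each component of $M$ and verifying that the equivalence-class graph of such a product is the join of the two factor graphs, so that Lemma~\ref{product and canonical completion} applies; after that the pull-back bookkeeping for $\mathsf{C}'$ and the consequences for (2) and (3) are essentially formal. Part (1) is routine modulo Lemma~\ref{intersection and normal subgroup for reducible action}, the one essential input being that admissibility is exactly what forces the relevant subgroups into a single torsion-free normal subgroup — a hypothesis that cannot be dropped, by the example following Lemma~\ref{intersection and normal subgroup for reducible action}.
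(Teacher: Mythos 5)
Your proposal is correct, and for parts (2) and (3) it takes a more explicit route than the paper. Part (1) is the same argument the paper intends: reduce to subgroups of $H_{\bar u}$ acting on $\beta_{P_{\bar u}}=\beta_{\bar u}\times\beta^{\perp}_{\bar u}$, observe that admissibility condition (1) is exactly what places all the $\pi_1(M_j)$ inside a single finite-index torsion-free normal subgroup, and then apply Lemma \ref{intersection and normal subgroup for reducible action} to normal cores and their intersection. For (2), the paper merely says the claim ``follows from the argument in Lemma \ref{trivial holonomy and completion}'' (i.e.\ re-running the splitting-propagation argument through the pull-back), whereas you prove the stronger, cleaner statement that $\C'(M_j,M_j)\cong\C'(N_u^j,N_u^j)\times\C'(N_{\Lambda_u}^j,N_{\Lambda_u}^j)$, by checking that $\Ga_{M_j}$ is the join of the factor graphs and invoking Lemma \ref{product and canonical completion} plus the distributivity of pull-backs over products; this is a valid verification (the equivalence classes of hyperplanes never mix the two factors, and classes from different factors always cross). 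The payoff is that (3) then falls out of the same product decomposition, with the preimage of a slice $M'=\{y\}\times N_{\Lambda_u}^j$ being a disjoint union of copies of $\C'(M',M')$ — which is the form in which (3) is actually used later (for admissibility of the collection of $\C'_{v,i}(A',A')$'s), so your reading of the slightly loosely stated conclusion is the right one. By contrast, the paper's proof of (3) goes through wall-injectivity of $M'$ in the product $M$ together with Lemma \ref{inverse image}.

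Two small blemishes, neither of which creates a gap. First, you cite Lemma \ref{wall-injective} for ``$M'$ is wall-injective in $M$''; that lemma is about $A$ being wall-injective in $\C(A,X)$, not about slices of products — the correct (and elementary) reason is the product structure itself, which is what the paper says. Second, the closing appeal to Lemma \ref{inverse image} in your part (3) is redundant and does not literally apply as stated, since that lemma identifies the preimage of $A$ under $\C'(A,K)\to K$, not the preimage of a proper subcomplex $M'$ under $\C'(M,M)\to M$; your direct computation via the product decomposition already does the job, so you should simply drop that sentence.
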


Here (1) follows from Lemma \ref{intersection and normal subgroup for reducible action}. (2) follows from the argument in Lemma \ref{trivial holonomy and completion}. Since $M$ has trivial $u$-holonomy, it is a product. Thus $M'$ is wall-injective in $M$ and (3) follows from Lemma \ref{inverse image}.

A collection $\Sigma$ of (not necessarily connected) finite sheet covers, one for each gate, is called \textit{admissible} if there exists a collection $\Phi$ of admissible covers, one for each $St(u)$-component in $K$, such that for each gate, its inverse image in the element of $\Phi$ that covers this gate is a disjoint union of spaces, each of which is isomorphic to the element in $\Sigma$ that covers this gate, in the sense of covering spaces. $\Sigma$ \textit{has trivial $u$-holonomy} if we can choose the collection $\Phi$ such that each of its element has trivial $u$-holonomy. $\Sigma$ is \textit{regular admissible} if each element in $\Sigma$ is connected and each element in $\Phi$ is a regular cover. In generally, being a regular admissible collection is stronger than being an admissible collection of regular coverings.

In the rest of this section, we will use a modified version of the argument in \cite[Section 6]{haglund2012combination}. The reader could also consult \cite[Page 51-52]{wise2012riches} for a pictorial illustration of the strategy in the malnormal case.
\begin{lem}
\label{first step}
For each tip $v\in\G$ and each gate $A(v,i)$ in $K_v$, we can find a finite sheet base pointed cover $K(v,i)\to K_v$ (the base point of $K_v$ is in $A(v,i)$, and we allow the base point to change for different gates in $K_v$) such that the following properties hold.
\begin{enumerate}
\item Each $K(v,i)$ is torsion free and special. 
\item The based elevation $A'(v,i)\to K(v,i)$ of $A(v,i)\to K_v$ is wall-injective. 
\item Each $A'(v,i)$ has trivial holonomy. The collection of all such coverings $A'(v,i)\to A(v,i)$ is a regular admissible collection with trivial $u$-holonomy.
\item For each gate $A(v,i)$, there exists a torsion free regular cover $A_r(v,i)$ such that each elevation of $A(v,i)$ to $K(v,j)$ (it is possible that $j\neq i$) factors through $A_r(v,i)$.
\item $\wpj_{K(v,i)}(B\to A'(v,i))$ is a disjoint union of branched torus and isolated points for any $\Lambda_u$-component $B\subset K(v,i)$ different from $A'(v,i)$. 
\end{enumerate}
\end{lem}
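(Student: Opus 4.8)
The plan is to obtain Lemma~\ref{first step} as a bookkeeping consequence of Corollary~\ref{projections are circles}, which already supplies the only genuine geometric input — the control of the wall projections onto a gate — and to spend the rest of the argument matching base points, passing to regular covers, forcing trivial holonomy, and arranging the compatibility that makes the resulting family a \emph{regular admissible} collection. Thus (1), (2), (5) and the ``$A'(v,i)$ has trivial holonomy'' half of (3) will come essentially for free from the corollary, while the ``regular admissible with trivial $u$-holonomy'' half of (3) and property (4) will require coordinating the covers of all the gates and all the vertex spaces at once.

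First I would reduce to the situation where each vertex space is directly special. Since $\Lambda$ has $n-1$ vertices, the induction hypothesis (Claim~\ref{induction claim} for $\Lambda$) together with Lemma~\ref{orientation and specialness} gives, for every tip $v\in\G$, a finite sheet torsion free special cover of the $\Lambda$-component $K_v$, and Lemma~\ref{directly special} lets us take it directly special; after replacing $K_v$ by this cover every $\Lambda_u$-component of $K_v$ is standard and every elevation of a former gate is a finite cover of a gate, so it suffices to produce the covers $K(v,i)\to K_v$ with the stated properties. For each gate $A(v,i)$ I would place the base point of $K_v$ in a fixed elevation of $A(v,i)$ and apply Corollary~\ref{projections are circles}, with $\Ga'$ the ambient graph, the distinguished vertex equal to $u$, the corollary's $\Ga$ equal to $\Lambda$, $\Ga_0=\Lambda_u$, and $A$ the based $\Lambda_u$-component; this is legitimate because the ambient graph has no induced $4$-cycle. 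The corollary yields a finite cover $A_0(v,i)$ of this based component such that any finite cover $\bar A\to A_0(v,i)$ with trivial holonomy is realised as an embedded, wall-injective subcomplex of a finite directly special cover $\bar K\to K_v$ in which every elevation of $A(v,i)$ is embedded and every wall projection $\wpj_{\bar K}(B\to\bar A)$ is a disjoint union of branched tori and isolated points. Feeding in as $\bar A$ a regular cover $A'(v,i)\to A(v,i)$ with trivial holonomy, available from Lemma~\ref{finite cover with trivial holonomy} and Lemma~\ref{normal subgroup and trivial holonomy}, and calling the resulting $\bar K$ the space $K(v,i)$, we obtain (1), (2), (5) and the trivial-holonomy part of (3) for each individual $K(v,i)$.

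The remaining work — which I expect to be the main obstacle — is to make all these choices \emph{simultaneously}, because the definitions of the covers $A'(v,i)$, of the spaces $K(v,i)$, and of the witnessing collection $\Phi$ of covers of $St(u)$-components demanded by admissibility are mutually entangled, so the construction must be set up as a single pass to a common finite-index refinement. Concretely, by the induction hypothesis and Lemma~\ref{subgroup with no twist} (with Lemma~\ref{conjugate to action by translations}) each $St(u)$-component of $K$ admits a finite admissible cover that is a topologically trivial bundle over a branched circle, and by Lemma~\ref{finite cover with trivial holonomy} and Lemma~\ref{normal subgroup and trivial holonomy} one may take it regular with trivial $u$-holonomy, hence a genuine product of a $\Lambda_u$-component and a $u$-circle; Lemma~\ref{equal torus} and Lemma~\ref{length of circle0} then normalise all the relevant circle lengths. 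Restricting such a collection $\Phi$ to the gates, intersecting the result with the covers $A_0(v,i)$ produced by the corollary and with all the (finitely many) elevations of each gate appearing in the various $K(v,j)$, and passing to a common regular torsion free refinement, one gets for each gate a single cover $A'(v,i)=A_r(v,i)$ through which everything factors. Passing to this refinement preserves every property in play: direct specialness and embeddedness of elevations are inherited by covers, wall-injectivity is inherited by Lemma~\ref{wall-injective} and Lemma~\ref{trivial holonomy and completion}, trivial holonomy is inherited because it is stable under intersection by Lemma~\ref{intersection and normal subgroup for reducible action} and Lemma~\ref{normal subgroup and trivial holonomy} (this is precisely why those lemmas were proved), and the branched-torus description in (5) depends only on edge labels and so is inherited automatically. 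The admissibility of the normalised $\Phi$ then exhibits $\{A'(v,i)\to A(v,i)\}$ as a regular admissible collection with trivial $u$-holonomy, and instantiating Corollary~\ref{projections are circles} with this final $\bar A=A'(v,i)$ produces the covers $K(v,i)\to K_v$ satisfying (1)--(5).
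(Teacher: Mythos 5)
Your overall strategy is the paper's: Corollary \ref{projections are circles} supplies (1), (2), (5), and the covers $A'(v,i)$ are produced as gate elevations in regular covers of the $St(u)$-components with trivial $u$-holonomy, which is exactly how the paper gets the ``regular admissible'' half of (3). The problem is property (4), where your argument has a genuine gap.

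You propose to obtain (4) by ``intersecting \dots with all the (finitely many) elevations of each gate appearing in the various $K(v,j)$'' and setting $A'(v,i)=A_r(v,i)$. This fails for two reasons. First, it is circular: the spaces $K(v,j)$ are only defined after the $A'(v,j)$ have been fixed (each $K(v,j)$ is the output of Corollary \ref{projections are circles} applied to $\bar A = A'(v,j)$), so you cannot intersect with their gate elevations while still constructing the $A'(v,i)$. Second, the direction is wrong: a ``common regular torsion free refinement'' of a family of covers sits \emph{above} them (it factors through the elevations), whereas (4) requires a regular torsion free cover \emph{below} them, i.e.\ one through which every elevation factors. Such a cover cannot be manufactured a posteriori — the subgroup generated by the elevation subgroups and their conjugates need not be torsion free — so it has to be arranged in advance. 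The paper does this in Lemma \ref{collection of covers}: one first fixes a collection $\mathcal C$ of covers of all vertex and edge components, chooses $A_r(v,i)$ to be a torsion free regular cover factoring through every elevation of $A(v,i)$ to the preliminary collection, and then replaces each vertex-space cover $K'_v$ by one factoring through every component of the canonical completion $\C(A_r(v,i),K'_v)$; this canonical-completion step is precisely what forces every elevation of $A(v,i)$ to the new cover to factor through $A_r(v,i)$. Since every $K(v,j)$ is subsequently built as a further cover of an element of $\mathcal C$, its gate elevations cover gate elevations in that element and (4) is inherited for free. Your proposal never invokes this mechanism (you do not use Lemma \ref{collection of covers} at all), and without it there is no reason for the elevations of $A(v,i)$ to $K(v,j)$ for $j\neq i$ — which Corollary \ref{projections are circles} does not control — to admit the common torsion free regular cover needed later in Lemma \ref{length divide} via Lemma \ref{length of circle0}. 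A minor additional caveat: your remark that (5) ``is inherited automatically'' by further covers is not justified (the connected-intersection property underlying it need not pass to covers), but this does no harm since your final $K(v,i)$ is taken directly as the output of the corollary.
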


\begin{proof}
Let $\mathcal{C}$ be the collection in Lemma \ref{collection of covers} and let $L'\in \mathcal{C}$ be an element which covers a $St(u)$-component $L\subset K$. Pick a gate $A(v,i)\subset L$. Let $A_1(v,i)\to A(v,i)$ be a finite cover which factors through each elevation of $A(v,i)$ to $L'$ and $K'_v$ ($K'_v$ is the element $\mathcal{C}$ that covers $K_v$). We apply Corollary~\ref{projections are circles} to $A_1(v,i)\to K'_v$ to obtain a finite cover $A_2(v,i)\to A_1(v,i)$ such that any further finite cover of $A_2(v,i)$ with trivial holonomy will satisfy the conclusion of Corollary~\ref{projections are circles}. Let $L''\to L$ be a regular special cover with trivial holonomy such that for each gate $A(v,i)\subset L$, $L''$ factors through each component of the canonical completion $\C(A_2(v,i),L')$. We choose $A'(v,i)$ to be an elevation of $A(v,i)$ to $L''$ (since $L''$ is regular, all elevations are the same). (3) is clear since $L''$ has trivial holonomy. Since $A'(v,i)$ factors through $A_2(v,i)$, by Corollary~\ref{projections are circles}, we can find finite cover $K(v,i)\to K'_v$ which satisfies (1), (2) and (5). (4) follows from Lemma \ref{collection of covers}.
\end{proof}

For each $A'(v,i)$, we form the modified completions $\mathsf{C}'(A'(v,i),A'(v,i))$ and $\mathsf{C}'(A'(v,i),K(v,i))$, and denote them by $\C'_{v,i}(A',A')$ and $\C'_{v,i}(A',K)$ for simplicity. By Lemma \ref{inverse image}, there is a canonical inclusion $\C'_{v,i}(A',A')\to \C'_{v,i}(A',K)$. Moreover, it follows from Observation \ref{admissible} (3) that the collection of all $\C'_{v,i}(A',A')$'s is admissible.

For vertex $a\in \Ga$, we define $\ell_{a}$ to be the least common multiple of the lengths of all $a$-circles in the collection $\{\C'_{v,i}(A',K)\}_{v\in \textmd{Tip}(\G),i\in I_v}$. 

Let $\pi:\C'_{v,i}(A',K)\to K(v,i)$ be the covering map and let $r':\C'_{v,i}(A',K)\to A'(v,i)$ be the modified retraction. Pick a $\Lambda_u$-component $A'\subset\C'_{v,i}(A',K)$, by Corollary~\ref{projections are circles} and Lemma \ref{modified wpj}, if $\pi(A')\neq A'(v,i)$, then $r'(A')$ is contained in a branched torus. Let $T$ be the smallest branched torus containing $r'(A')$ and let $\{v_i\}_{i=1}^{n}$ be the label of edges in $T$. Since $A'(v,i)$ has trivial holonomy, $T$ has a cubical product decomposition $T=\prod_{i=1}^{n}C_i$ where each $C_i$ is a $v_i$-component in $T$. Let $C'_i$ be a cover of $C_i$ of degree $=\ell_{v_i}/n_i$ where $n_i$ is the length of the core of $C_i$, and let $T'=\prod_{i=1}^{n}C'_i$. We define the \textit{shadow} of $A'$ (when $\pi(A')\neq A'(v,i)$) to be one connected component of the pull-back $A'_S$ as follows.
\begin{center}
$\begin{CD}
A'_{S}                        @>>>       T'\\
@VVV                              @VVV\\
A'                            @>r'>>       T
\end{CD}$
\end{center}
It is possible that both $T'$ and $T$ are one point, in which case $A'_S=A'$. The definition of shadow does not depend on the choice of components in $A'_S$, since all of them give the same regular cover of $A'$. We record the following two observations.
\begin{remark}\
\label{circle in shadow}
(1) Let $T_1\to T$ be any cover such that for each $i$, the length of the core of $v_i$-components in $T_1$ divides $\ell_{v_i}$. Then $T'$ factors through $T_1$.

(2) Let $C'$ be a circle made of core edges in $A'$. By Corollary \ref{circle isomorphism}, either $r'(C')$ is a point, in which case all inverse images of $C'$ in $A'_{S}$ are circles which have the same length as $C'$; either $r'|_{C'}$ is an isomorphism, in which case the inverse image of $C'$ in $A'_{S}$ is a circle of length $\ell_a$ where $a$ is the label of edges in $C'$.
\end{remark}

Let $A(v,i)$ be a gate. Let $\dot{A}(v,i)\to A(v,i)$ be the smallest regular cover that factors through each elevation of $A(v,i)$ to $\C'_{v,j}(A',K)$ and the shadow of this elevation (if exists), here $(v,j)$ ranges over all pairs such that $A(v,j)\subset K_{v}$. 

Let $L$ be the $St(u)$-component that contains $A(v,i)$ and let $L'$ be the cover of $L$ induced by the admissible collection of $A'(v,i)$'s. Recall that $L'$ has trivial $u$-holonomy, hence it is isomorphic to a product of $A'(v,i)$ with a $u$-component in $L'$. Then $\dot{A}(v,i)\to A'(v,i)$ induces a cover of $L'$, which we denote by $\dot{L}(v,i)$. Let $\bar{L}$ be the smallest regular cover of $L$ which factors through each $\dot{L}(v',i')$, where $(v',i')$ is a pair such that $A(v',i')$ is a gate in $L$. Then $\bar{L}$ has trivial $u$-holonomy by Lemma \ref{intersection and normal subgroup for reducible action}. Let $\bar{A}(v,i)\to A(v,i)$ be the regular cover induced by $\bar{L}\to L$. Then the collection of all $\bar{A}(v,i)$'s is regular admissible with trivial $u$-holonomy.

It follow from Remark~\ref{circle in shadow} (2) that the length of each $a$-circle in the spaces involved in the construction of $\bar{A}(v,i)$ divides $\ell_a$, thus the lemma below follows from Lemma \ref{length of circle0} and Lemma \ref{first step} (4).

\begin{lem}
\label{length divide}
Let $a\in\Lambda_u$ be a vertex. Then the length of any $a$-circle in $\bar{A}(v,i)$ divides $\ell_a$.
\end{lem}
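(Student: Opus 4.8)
The plan is to expand the one-line argument sketched just before the statement: follow how the length of an $a$-circle behaves under each cover entering the construction of $\bar{A}(v,i)$, using only the definition of $\ell_a$ and Remark~\ref{circle in shadow}~(2), and invoking Lemma~\ref{length of circle0} each time a ``smallest regular cover'' is formed.

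First I would dispose of the base cases. By definition $\ell_a$ is a common multiple of the lengths of all $a$-circles appearing in the complexes $\C'_{v,j}(A',K)$; since an $a$-circle of a $\Lambda_u$-subcomplex is an $a$-circle of the ambient complex, every $a$-circle of any elevation of $A(v,i)$ to such a $\C'_{v,j}(A',K)$, and of any $\Lambda_u$-component $A'$ of it, has length dividing $\ell_a$. For a shadow $A'_S$ of such an $A'$: any $a$-circle of $A'_S$ is a full component of the preimage of some $a$-circle $C'\subset A'$, and the length of $C'$ divides $\ell_a$ by the previous sentence, so Remark~\ref{circle in shadow}~(2) (through Corollary~\ref{circle isomorphism}) says this component is either a circle of the same length as $C'$ (when $r'$ collapses $C'$) or a single circle of length exactly $\ell_a$ (when $r'|_{C'}$ is an isomorphism); either way its length divides $\ell_a$.

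Next I would propagate this. The cover $\dot{A}(v,i)\to A(v,i)$ is the smallest regular cover factoring through the elevations of $A(v,i)$ to the $\C'_{v,j}(A',K)$ and through their shadows; by Lemma~\ref{first step}~(4) each such elevation---and hence each shadow, being a regular cover of one of them---factors through the fixed torsion-free regular cover $A_r(v,i)$, so Lemma~\ref{length of circle0} applies and every $a$-circle of $\dot{A}(v,i)$ divides $\ell_a$. The cover $\dot{L}(v',i')$ of $L'$ is induced by $\dot{A}(v',i')\to A'(v',i')$, and since $L'$ has trivial $u$-holonomy it splits as $A'(v',i')\times C$ with $C$ a $u$-component; as $a\in\Lambda_u$ while $u\notin\Lambda_u$, each $\Lambda_u$-component of $\dot{L}(v',i')$ is a copy of $\dot{A}(v',i')$, so its $a$-circles again divide $\ell_a$. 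Finally $\bar{L}$ is the smallest regular cover of $L$ factoring through all the $\dot{L}(v',i')$, which factor through the torsion-free regular cover $L'$, so Lemma~\ref{length of circle0} once more gives that every $a$-circle of $\bar{L}$ divides $\ell_a$; since $\bar{A}(v,i)$ is the $\Lambda_u$-component of $\bar{L}$ lying over $A(v,i)$, its $a$-circles are $a$-circles of $\bar{L}$ and the lemma follows.

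The only genuinely nontrivial step is the shadow computation in the second paragraph. The shadow was defined---via the product decomposition $T=\prod_{i}C_i$ of the branched torus receiving $r'(A')$ together with the degree-$(\ell_{v_i}/n_i)$ covers $C'_i\to C_i$---precisely so that the modified retraction either normalizes a core circle's length to $\ell_a$ or leaves it untouched, never producing a length that fails to divide $\ell_a$; this is where the absence of induced $4$-cycles enters, through Corollary~\ref{projections are circles}, which is what guarantees $r'(A')$ lands in a branched torus at all. The remainder is bookkeeping, the recurring point being that each use of Lemma~\ref{length of circle0} requires the finite covers in play to factor through a common torsion-free regular cover, provided by Lemma~\ref{first step}~(4) at the level of gates and by $L'$ at the level of $St(u)$-components.
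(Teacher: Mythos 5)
Your proof is correct and is essentially the paper's own argument, fully expanded: the paper disposes of this lemma in the single sentence preceding it, citing exactly Remark \ref{circle in shadow}\,(2) for the base cases (elevations and their shadows), Lemma \ref{first step}\,(4) for the common torsion-free regular cover needed to invoke Lemma \ref{length of circle0}, and that lemma for each passage to a smallest regular cover. Your expansion, including the observation that $a$-circles of $\dot{L}(v',i')$ live in $\Lambda_u$-components isomorphic to $\dot{A}(v',i')$, fills in precisely the bookkeeping the paper leaves implicit.
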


For each pair $(v,i)$, we define the pull-back $\overline{\C'_{v,i}(A',K)}$ and $\overline{\C'_{v,i}(A',A')}$ as follows.
\begin{center}
$\begin{CD}
\overline{\C'_{v,i}(A',K)}                 @>>>      \bar{A}(v,i)@.\ \ \ \ \ \overline{\C'_{v,i}(A',A') }                 @>>>      \bar{A}(v,i)\\
@VVV                              @VVV                              @VVV                              @VVV\\
\C'_{v,i}(A',K)        @>>>       A'(v,i)@.\ \ \ \ \ \C'_{v,i}(A',A')        @>>>       A'(v,i)
\end{CD}$
\end{center}
Then one deduce from Lemma \ref{inverse image} that there is a naturally defined embedding $i_1:\overline{\C'_{v,i}(A',A')}\to \overline{\C'_{v,i}(A',K)}$ which fits into the following commuting diagram. Moreover, $f^{-1}(i_{2}(\C'_{v,i}(A',A')))=i_1(\overline{\C'_{v,i}(A',A')})$.
\begin{center}
$\begin{CD}
\overline{\C'_{v,i}(A',A')}                       @>i_1>>       \overline{\C'_{v,i}(A',K)}\\
@VVV                              @VVfV\\
\C'_{v,i}(A',A')                           @>i_2>>        \C'_{v,i}(A',K)  
\end{CD}$
\end{center}

We claim the collection of $\overline{\C'_{v,i}(A',A')}$'s is admissible. Pick a $St(u)$-component, and let $M_1,M_2$ and $M_3$ be the covers of this component induced by the collection of $\C'_{v,i}(A',A')$'s, $A'(v,i)$'s and $\bar{A}(v,i)$'s respectively. Recall that $M_2$ and $M_3$ have trivial $u$-holonomy, and the same is true for $M_1$ by Observation \ref{admissible} (2). By Observation \ref{admissible} (3), the modified retraction $M_1\to M_2$ is compatible with the retraction $\C'_{v,i}(A',A')\to A'(v,i)$. Then the claim follows by considering the pull-back of the covering $M_3\to M_2$ under the retraction $M_1\to M_2$.

\begin{lem}
\label{factor through}
Let $A(v,i)$ be a gate. Suppose $\bar{A}$ is either (1) an elevation of $A(v,i)$ to $\overline{\C'_{v,i}(A',K)}$ such that the image of $\bar{A}$ under the covering map $\overline{\C'_{v,i}(A',K)}\to K(v,i)$ is distinct from $A'(v,i)$; or (2) an elevation of $A(v,i)$ to $\overline{\C'_{v,j}(A',K)}$ with $i\neq j$. Then $\bar{A}(v,i)$ factors through $\bar{A}$.
\end{lem}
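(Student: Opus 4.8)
The plan is to trace through the chain of covers defined just above the statement and use the defining property of $\bar A(v,i)$ together with Lemma~\ref{length divide} and Remark~\ref{circle in shadow}. Recall that $\dot A(v,i)\to A(v,i)$ was constructed precisely as the smallest regular cover factoring through every elevation of $A(v,i)$ to $\C'_{v,j}(A',K)$ and through the shadow of such an elevation, as $(v,j)$ ranges over gates in $K_v$. Since $\bar L\to L$ factors through $\dot L(v,i)$, and $\dot L(v,i)$ restricts to $\dot A(v,i)$ on the relevant $\Lambda_u$-component, the induced cover $\bar A(v,i)\to A(v,i)$ factors through $\dot A(v,i)$, hence through every elevation of $A(v,i)$ to each $\C'_{v,j}(A',K)$ and through each of their shadows.

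First I would handle case (2), $i\neq j$. An elevation $\bar A$ of $A(v,i)$ to $\overline{\C'_{v,j}(A',K)}$ projects, under $\overline{\C'_{v,j}(A',K)}\to\C'_{v,j}(A',K)$, onto an elevation $A^{\flat}$ of $A(v,i)$ to $\C'_{v,j}(A',K)$. By construction $\dot A(v,i)$ — and therefore $\bar A(v,i)$ — factors through $A^{\flat}$. The remaining point is that $\bar A$ is not $A^{\flat}$ but a cover of it obtained by the pull-back along $\bar A(v,j)\to A'(v,j)$, i.e.\ essentially by imposing that $a$-circles have length dividing $\ell_a$ for $a\in\Lambda_u$; by Lemma~\ref{length divide} the cover $\bar A(v,i)$ already has this property, so the extra data cut out by passing from $A^{\flat}$ to $\bar A$ is absorbed, and $\bar A(v,i)$ factors through $\bar A$. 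Case (1) is similar but uses the shadow: if the image of $\bar A$ in $K(v,i)$ is a $\Lambda_u$-component distinct from $A'(v,i)$, then by Corollary~\ref{projections are circles} and Lemma~\ref{modified wpj} the modified retraction image of the corresponding elevation $A^{\sharp}\subset\C'_{v,i}(A',K)$ lies in a branched torus, the shadow $A^{\sharp}_S$ is defined, $\dot A(v,i)$ factors through $A^{\sharp}_S$ by definition, and $\bar A$ is obtained from $A^{\sharp}$ by a pull-back governed by $\bar A(v,i)\to A'(v,i)$; using Remark~\ref{circle in shadow}(1)--(2) (every relevant core circle in $A^{\sharp}_S$ has length dividing $\ell_a$, and Lemma~\ref{length divide} says the same for $\bar A(v,i)$) one checks that passing from $A^{\sharp}_S$ to $\bar A$ again imposes nothing new, so $\bar A(v,i)$ factors through $\bar A$.

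The main obstacle I expect is bookkeeping rather than a genuine conceptual difficulty: one must verify that the pull-back defining $\overline{\C'_{v,i}(A',K)}$ interacts correctly with elevations, i.e.\ that an elevation of $A(v,i)$ to $\overline{\C'_{v,i}(A',K)}$ is exactly the pull-back of the corresponding elevation $A^{\sharp}$ to $\C'_{v,i}(A',K)$ along the relevant branched-torus cover, so that ``factoring through $A^{\sharp}$ and through the shadow of $A^{\sharp}$, plus the length conditions of Lemma~\ref{length divide}'' really does imply ``factoring through $\bar A$''. This is where the trivial-holonomy hypotheses on $M_1,M_2,M_3$ (Observation~\ref{admissible}) and the compatibility of modified retractions with products get used: they guarantee that the $St(u)$-component covers are genuinely products, so the shadow construction on the $\Lambda_u$-factor is what controls the whole picture. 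Once that identification is in place, the factoring statement follows by the universal property of the smallest regular cover $\dot A(v,i)$ combined with Lemma~\ref{length divide}.
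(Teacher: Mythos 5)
Your overall strategy is the paper's: trace the elevation down the tower from $\overline{\C'_{v,j}(A',K)}$ to $A(v,i)$, use Corollary \ref{projections are circles} and Lemma \ref{modified wpj} to put the modified-retraction image of the elevation inside a branched torus, and then combine the shadow construction, Remark \ref{circle in shadow}(1) and Lemma \ref{length divide} with the defining property of $\dot A(v,i)$ (hence of $\bar A(v,i)$). Your case (1) is essentially the paper's argument (the paper writes out case (2) and declares case (1) similar), and your opening paragraph correctly isolates the universal property of $\dot A(v,i)$.

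Your case (2) as written, however, has a genuine gap. First, you never use the hypothesis $i\neq j$: the claim that passing from $A^{\flat}$ to $\bar A$ ``essentially imposes that $a$-circles have length dividing $\ell_a$'' is only justified after one knows that the retraction image $r'(A^{\flat})$ lies in a branched torus $T$, and that is exactly what $i\neq j$ buys — it forces the elevation $A_1\subset K(v,j)$ to be distinct from $A'(v,j)$, so Corollary \ref{projections are circles} and Lemma \ref{modified wpj} apply; without this, $\bar A\to A^{\flat}$ is pulled back from the full cover $\bar A(v,j)\to A'(v,j)$ and is not controlled by circle lengths at all. Second, even granting the torus, the inference ``$\bar A(v,i)$ has $a$-circles of length dividing $\ell_a$, hence it factors through $\bar A$'' is invalid: factoring is a containment of fundamental group images, and circle lengths in $\bar A(v,i)$ say nothing about where the composite $\bar A(v,i)\to A^{\flat}\stackrel{r'}{\to}T$ lands. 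The correct chain — the one you do run in case (1) — is: $\bar A\to A^{\flat}$ is pulled back from $\bar T\to T$, where $\bar T$ is a lift of $T$ in $\bar A(v,j)$ (so Lemma \ref{length divide} must be applied to $\bar A(v,j)$, not to $\bar A(v,i)$); Remark \ref{circle in shadow}(1) then shows that $T'$, and hence the shadow of the elevation, factors through $\bar A$; and $\bar A(v,i)$ factors through that shadow by the construction of $\dot A(v,i)$. Rewriting case (2) with this mechanism (plus the trivial subcase where $T$ is a point) yields the paper's proof.
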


\begin{proof}
We prove case (2). The proof of case (1) is similar. The following diagram indicate the history of $\bar{A}$:
\begin{center}
$\begin{CD}
\bar{A} @>>> A_{2} @>>> A_{1} @>>> A(v,i)\\
@VVV        @VVV         @VVV           @VVV\\
\overline{\C'_{v,j}(A',K)}  @>>> \C'_{v,j}(A',K)  @>>> K(v,j)  @>>> K_{v}
\end{CD}$
\end{center}
Since $i\neq j$, $A_{1}\neq A'(v,j)$. By Theorem \ref{projections are circles} and Lemma \ref{modified wpj}, $r'(A_{2})$ is contained in a branched torus. Let $T$ be the smallest such branched torus. The case $T$ is a point is clear. Otherwise, let $\bar{T}$ be any lift of $T$ in $\bar{A}(v,j)$. It follows from Lemma \ref{length divide} that the length of any $a$-circle in $\bar{T}$ divides $\ell_a$. It follows from Remark \ref{circle in shadow}(1) that the shadow of $A_{2}$ factors through $\bar{A}$, thus $\bar{A}(v,i)$ factors through $\bar{A}$. 
\end{proof}

For each tip $v\in\G$, let $\mathring{K}(v,i)$ be the smallest regular cover of $K_{v}$ factoring through each component of $\overline{\C'_{v,i}(A',K)}$ and let $\mathring{K}_{v}$ be the smallest regular cover of $K_{v}$ factoring through each $\mathring{K}(v,i)$. Let $\mathring{A}(v,i)$ be the smallest regular cover of $A(v,i)$ factoring through each component of $\overline{\C'_{v,i}(A',A')}$.

\begin{lem}\
\label{lift}
\begin{enumerate}
\item The collection $\{\mathring{A}(v,i)\}_{v\in \textmd{Tip}(\G),i\in I_v}$ is regular admissible.
\item Each elevation of $A(v,i)$ to $\mathring{K}_{v}$ is isomorphic to $\mathring{A}(v,i)$ as covering spaces of $A(v,i)$.
\end{enumerate}
\end{lem}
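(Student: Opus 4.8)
The two parts are closely intertwined, so I would prove them together by analyzing how the regular covers $\mathring{K}_v$ and $\mathring{A}(v,i)$ are built. First I would unwind the definitions: $\mathring{K}(v,i)$ is the smallest regular cover of $K_v$ factoring through each component of $\overline{\C'_{v,i}(A',K)}$, and $\mathring{K}_v$ is the smallest regular cover factoring through all the $\mathring{K}(v,i)$ simultaneously (as $i$ ranges over $I_v$). Dually $\mathring{A}(v,i)$ is the smallest regular cover of $A(v,i)$ factoring through each component of $\overline{\C'_{v,i}(A',A')}$. The key structural input is the diagram produced just before the lemma, together with the identity $f^{-1}(i_2(\C'_{v,i}(A',A')))=i_1(\overline{\C'_{v,i}(A',A')})$: this says that the preimage of $A(v,i)$ (via the relevant composite covering map) inside $\overline{\C'_{v,i}(A',K)}$ is exactly $\overline{\C'_{v,i}(A',A')}$.

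For part (2), I would argue as follows. Fix a gate $A(v,i)$ and consider an arbitrary elevation $\bar A$ of $A(v,i)$ to $\mathring{K}_v$. There are two cases for how $\bar A$ sits: either it maps (under $\mathring{K}_v\to K_v$ factoring through some $K(v,j)$) to the based copy $A'(v,j)$ with $j=i$, or it is one of the elevations covered by Lemma \ref{factor through}. In the second case Lemma \ref{factor through} gives that $\bar A(v,i)$ factors through $\bar A$; combined with the fact that $\mathring{A}(v,i)$ is assembled precisely from the components of $\overline{\C'_{v,i}(A',A')}$ — which are the relevant elevations inside $\overline{\C'_{v,i}(A',K)}$ — one gets that $\bar A$ and $\mathring{A}(v,i)$ are each covers of $A(v,i)$ factoring through one another, hence isomorphic by minimality/regularity. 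In the first case, the elevation is the based one over $A'(v,i)$, and since $\mathring{K}(v,i)$ is built to factor through every component of $\overline{\C'_{v,i}(A',K)}$, its restriction over $A'(v,i)$ factors through $\overline{\C'_{v,i}(A',A')}$, and again minimality of $\mathring{A}(v,i)$ forces the elevation to be isomorphic to $\mathring{A}(v,i)$. The point is that both candidate covers of $A(v,i)$ dominate each other, so regularity pins them down.

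For part (1), I would use that the collection $\{\overline{\C'_{v,i}(A',A')}\}$ is admissible (established in the paragraph before Lemma \ref{factor through}, by pulling back $M_3\to M_2$ along the modified retraction $M_1\to M_2$, using Observation \ref{admissible} that $M_1,M_2,M_3$ all have trivial $u$-holonomy). Passing from an admissible collection to the collection of its smallest-regular-cover completions preserves admissibility: one takes the collection $\Phi$ of admissible $St(u)$-space covers realizing $\{\overline{\C'_{v,i}(A',A')}\}$, replaces each element of $\Phi$ by its smallest regular cover factoring through all the relevant $\overline{\C'_{v,i}(A',A')}$-components (this stays admissible and, by Observation \ref{admissible}(1), retains trivial $u$-holonomy since the original did), and observes that the induced gate covers are precisely the $\mathring{A}(v,i)$. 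Trivial $u$-holonomy of the resulting $St(u)$-space covers then follows from Lemma \ref{intersection and normal subgroup for reducible action} exactly as in the construction of $\bar L$ earlier. The main obstacle I anticipate is the bookkeeping in part (2): carefully verifying that an arbitrary elevation to $\mathring{K}_v$ — not to a single $\overline{\C'_{v,i}(A',K)}$ — is still governed by Lemma \ref{factor through} and the mutual-domination argument, since $\mathring{K}_v$ is a common regular cover of several different $\mathring{K}(v,j)$'s and one must track elevations through all of them at once. The trivial-holonomy hypotheses, which make everything a genuine product and let wall-injectivity and the $\C'(A,A)$-as-preimage statements apply, are what make this bookkeeping go through.
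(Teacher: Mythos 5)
Your proposal is correct and follows essentially the same route as the paper: part (1) is deduced from the admissibility of the collection of $\overline{\C'_{v,i}(A',A')}$'s (passing to smallest regular covers), and part (2) is the mutual-domination argument, using Lemma \ref{inverse image} to identify elevations of $A(v,i)$ with components of $\overline{\C'_{v,i}(A',A')}$ in one direction and Lemma \ref{factor through} in the other. The bookkeeping issue you flag (tracking an elevation to $\mathring{K}_v$ through all the $\overline{\C'_{v,j}(A',K)}$'s at once) is exactly how the paper resolves it.
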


\begin{proof}
Since the collection of $\overline{ \C'_{v,i}(A',A')}$'s is admissible, (1) follows. Let $\mathring{A}$ be a lift of $A(v,i)$ to $\mathring{K}_{v}$. By Lemma \ref{inverse image}, the lift of $A(v,i)$ to $\C'_{v,i}(A',K')$ could be any component of $\C'_{v,i}(A',A')$, hence the lift of $A(v,i)$ to $\overline{\C'_{v,j}(A',K)}$ could be any component of $\overline{\C'_{v,j}(A',A')}$. Thus $\mathring{A}$ factors through $\mathring{A}(v,i)$. On the other hand, Lemma \ref{factor through} implies that $\mathring{A}(v,i)$ factors through each elevation of $A(v,i)$ to $\overline{ \C'_{v,j}(A',K)}$ for each $j$. Thus $\mathring{A}(v,i)$ factors through $\mathring{A}$.
\end{proof}

The collection in Lemma~\ref{lift} (1) induces a finite sheet cover for each $St(u)$-component in $K$. Lemma~\ref{lift} (2) enables us to glue suitable many copies of these covers together with copies of $\mathring{K}_{v}$'s to form a finite sheet cover of $K$, which satisfies Claim~\ref{induction claim}. Thus we have proved the following result.

\begin{cor}
\label{good cover}
Suppose $\Ga$ is a graph without induced $4$-cycle. Let $H\acts Y(\Ga)$ be a group acting geometrically by label-preserving automorphisms on a blow-up building $Y(\Ga)$. Then $H$ has a finite index subgroup $H'\le H$ such that $Y(\Ga)/H'$ is a special cube complex. 
\end{cor}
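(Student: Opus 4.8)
The plan is to prove the equivalent Claim~\ref{induction claim} by induction on the number of vertices of $\Ga$, since Claim~\ref{induction claim}, combined with Lemma~\ref{orientation and specialness}, immediately yields the corollary: intersecting over all vertices $u\in\Ga$ the finite-index subgroups produced by the claim gives a subgroup $H'\le H$ whose factor actions $\rho_{\bar u}\acts\Z_{\bar u}$ are all conjugate to translations, hence $Y(\Ga)/H'$ is special. The base case, $\Ga$ a single vertex, is trivial; and the case $\Ga=St(u)$ follows from Lemma~\ref{subgroup with no twist} and Lemma~\ref{conjugate to action by translations}. So fix a vertex $u$ with $St(u)\subsetneq\Ga$, pass (Lemma~\ref{finite index subgroup without inversion}) to a finite-index subgroup acting without inversions, and form the graph-of-spaces decomposition of $K=Y(\Ga)/H$ over $\G=T/H$ described in Section~\ref{sec_construction of the finite cover}, with vertex spaces the $\Lambda$- and $St(u)$-components (all standard, since $\Ga\neq St(u)$), edge spaces the $\Lambda_u$-components (the gates), and all boundary maps label-preserving finite covers. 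By induction, each $\Lambda$-, $\Lambda_u$- and $St(u)$-component admits a finite torsion-free special cover; I want to assemble these into a single finite cover $\bar K\to K$ in which every $St(u)$-component is \emph{admissible}, i.e. a topologically trivial bundle over a branched circle — which by Lemma~\ref{orientation and specialness} is exactly what Claim~\ref{induction claim} needs.

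The argument then splits according to whether $\Lambda_u$ is a clique. When $\Lambda_u$ is a clique, apply Lemma~\ref{collection of covers} to get a collection $\mathcal{C}$ of regular special covers with trivial holonomy, one per $\Lambda$- and $St(u)$-component, with compatible gate-covers $A_r(v,i)$; then invoke Lemma~\ref{equal torus} to pass to further covers in which every standard $\Delta$-component, $\Delta\subset\Ga$ a clique, is literally an $\ell$-branched torus for a fixed $\ell$. Since the gate $\Lambda_u$-components are now all the \emph{same} $\ell$-branched torus (and isomorphic as covering spaces, because they all factor through the torsion-free regular $A_r(v,i)$), one glues together finitely many copies along gates to build $\bar K$; torsion-freeness is inherited piecewise. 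When $\Lambda_u$ is not a clique, each $St(u)$-component is the parallel set of a $\Lambda_u$-component (this uses the no-induced-4-cycle hypothesis), so one can work with admissible covers of $St(u)$-components and follow the Haglund–Wise matching scheme of \cite[Section 6]{haglund2012combination}. Concretely: Lemma~\ref{first step} produces torsion-free special covers $K(v,i)\to K_v$ with wall-injective, trivial-holonomy gate elevations $A'(v,i)$ whose wall projections to other $\Lambda_u$-components are unions of branched tori and points (this is where Corollary~\ref{projections are circles}, hence Corollary~\ref{connected intersection} and the hyperbolicity Lemma~\ref{hyperbolicity of building quotient}, enter). Then form the modified completions $\C'_{v,i}(A',K)$ and $\C'_{v,i}(A',A')$ of Section~\ref{subsec_modified completeions and retractions}, introduce the \emph{shadow} construction to control the lengths of all relevant core circles by fixed integers $\ell_a$ (using Lemma~\ref{length of circle} in place of the blow-up of Remark~\ref{larger circle}), pass to the covers $\bar A(v,i)$ with the length property of Lemma~\ref{length divide}, pull back to get $\overline{\C'_{v,i}(A',K)}$, and verify via Lemma~\ref{factor through} and Lemma~\ref{lift} that the resulting collection $\{\mathring A(v,i)\}$ is regular admissible with matching elevations in the $\mathring K_v$'s. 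Gluing copies finishes $\bar K$.

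The main obstacle is the non-clique case: without hyperbolicity of $Y(\Ga)$ the Haglund–Wise connected-intersection machinery does not apply directly, and without malnormality of the gate subgroups the canonical retraction images are genuinely nontrivial. The resolution is precisely the chain Lemma~\ref{intersection} $\to$ Corollary~\ref{projections are circles} — controlling retraction images so they are no worse than branched tori — together with the passage from $K(\Ga)$ to the hyperbolic quotient $|\B|/H$ (Lemma~\ref{trivial holonomy and special}, Lemma~\ref{hyperbolicity of building quotient}) to legitimately invoke Theorem~\ref{haglund-wise connected intersection} in the form of Corollary~\ref{connected intersection}. A second subtlety, which the shadow construction and the modified completion of Section~\ref{subsec_modified completeions and retractions} are designed to handle, is that the matching of half-tubes must respect not just the homeomorphism type of the retraction tori but the combinatorial lengths of their core circles and the way circles retract onto one another; the ordinary canonical completion is ``too large'' (Remark~\ref{larger circle}) and would destroy this finer data, so Lemma~\ref{length of circle} and Corollary~\ref{circle isomorphism} are essential. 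The bookkeeping needed to keep the covers regular and torsion-free throughout — so that Lemma~\ref{length of circle0} and Lemma~\ref{intersection and normal subgroup for reducible action} apply and intersections of reducible/trivial-holonomy covers stay reducible — is routine but must be carried carefully, using Lemma~\ref{normal subgroup and trivial holonomy} at each replacement step.
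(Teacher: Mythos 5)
Your proposal is correct and follows essentially the same route as the paper: reduction to Claim~\ref{induction claim} via Lemma~\ref{orientation and specialness}, induction on the vertices of $\Ga$, the graph-of-spaces decomposition over $\G=T/H$, the clique/non-clique dichotomy for $\Lambda_u$ handled respectively by Lemma~\ref{equal torus} and by the modified Haglund--Wise matching with shadows and modified completions. The lemmas you cite are exactly the ones the paper assembles, in the same order, so there is nothing to add.
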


The next result follows from Definition \ref{an equivariant construction}, Lemma~\ref{label-preserving} and Corollary~$\ref{good cover}$:

\begin{thm}
Suppose $\Ga$ is a graph such that
\begin{enumerate}
\item $\Ga$ is star-rigid;
\item $\Ga$ does not contain induced 4-cycle;
\item $\out(G(\Ga))$ is finite.
\end{enumerate}
Then any finite generated group quasi-isometric to $G(\Ga)$ is commensurable to $G(\Ga)$.
\end{thm}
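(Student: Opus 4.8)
The plan is to assemble the pieces already developed in the excerpt rather than prove anything fundamentally new at this final stage. Let $H$ be a finitely generated group quasi-isometric to $G(\Ga)$. Since $\out(G(\Ga))$ is finite, Theorem~\ref{thm_intro_vertex_rigidity} upgrades the quasi-action $H\acts X^{(0)}(\Ga)$ (coming from the quasi-isometry) to an honest action $\bar\rho:H\acts X^{(0)}(\Ga)$ by flat-preserving bijections, which is cobounded and discrete. This induces a simplicial action $H\acts\P(\Ga)$. Because $\Ga$ is star-rigid, Lemma~\ref{label-preserving} furnishes a finite index subgroup $H_1\le H$ whose action on $\P(\Ga)$ is label-preserving; replacing $H$ by $H_1$ costs only a commensurable group. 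Next I would run the equivariant blow-up construction of Definition~\ref{an equivariant construction}: using the factor actions $\rho_v:H_v\acts\Z_v$ and Proposition~\ref{prop_intro_semiconjugacy} to choose $H$-compatible blow-up data $f_v$, one obtains a blow-up building $Y(\Ga)$ together with a geometric action $\rho':H\acts Y(\Ga)$ by label-preserving cubical automorphisms (item (4) of Definition~\ref{an equivariant construction} gives properness and cocompactness).

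At this point the problem is reduced to a statement purely about a group acting geometrically and label-preservingly on a blow-up building, which is exactly the hypothesis of Corollary~\ref{good cover}. Applying that corollary yields a finite index subgroup $H'\le H$ with $Y(\Ga)/H'$ a special cube complex. One should also intersect with a finite index torsion free subgroup (which exists since $H$ is virtually torsion free, being virtually a cube complex group, or directly by Selberg-type arguments on the cube complex action), so that $H'$ is torsion free and the hypotheses of Lemma~\ref{orientation and specialness} are met. Then Lemma~\ref{orientation and specialness} — specifically the implication ``$Y(\Ga)/H'$ special $\Rightarrow$ (1)'' together with its final clause — tells us that for each vertex $v\in\P(\Ga)$ the restricted factor action $\rho_v:H'_v\acts\Z_v$ is conjugate to an action by translations, and hence, invoking Theorem~\ref{conjugate to left translation}, that the action $H'\acts G(\Ga)$ is conjugate to one by left translations via a flat-preserving bijection. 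Consequently $H'$ is isomorphic to a finite index subgroup of $G(\Ga)$, so $H$ is commensurable to $G(\Ga)$.

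The only slightly delicate bookkeeping is to check that the successive passages to finite index subgroups are harmless: each of them ($H_1$ from star-rigidity, the torsion free subgroup, and $H'$ from Corollary~\ref{good cover}) has finite index, so their intersection is still finite index in $H$, and commensurability is transitive. I would also note that the edge case $G(\Ga)\simeq\Z$ (excluded from Theorem~\ref{thm_intro_vertex_rigidity}) is classical — any finitely generated group quasi-isometric to $\Z$ is virtually $\Z$ — and can be dispatched separately, or simply absorbed since a single-vertex $\Ga$ satisfies all three hypotheses and the blow-up machinery degenerates to the one-dimensional case handled in the induction base of Claim~\ref{induction claim}.

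\emph{Main obstacle.} All the real work has already been done: the hard part is Corollary~\ref{good cover}, whose proof is the content of Sections~\ref{sec_branched complexes with trivial holonomy}--\ref{sec_construction of the finite cover} (the combination-theorem argument with trivial holonomy, the modified completions and retractions controlling retraction images to be tori via the no-induced-$4$-cycle hypothesis and Lemma~\ref{intersection}, and the matching of vertex and edge spaces in Section~\ref{subsec_matching}). At the level of this theorem there is no remaining obstacle — it is a direct concatenation of Definition~\ref{an equivariant construction}, Lemma~\ref{label-preserving}, Corollary~\ref{good cover}, Lemma~\ref{orientation and specialness}, and Theorem~\ref{conjugate to left translation} — so the proof is short.
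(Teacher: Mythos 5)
Your proposal is correct and takes essentially the same route as the paper, which obtains the theorem precisely by concatenating Lemma~\ref{label-preserving}, Definition~\ref{an equivariant construction}, Corollary~\ref{good cover}, Lemma~\ref{orientation and specialness} and Theorem~\ref{conjugate to left translation}. One small remark: you do not need Selberg-type or general virtual-torsion-freeness arguments (which would be dubious for arbitrary cube complex groups), since the paper's definition of a special quotient $Y(\Ga)/H'$ and the proof of Claim~\ref{induction claim} already build torsion-freeness of $H'$ into Corollary~\ref{good cover}.
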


\section{Uniform lattices in $\aut(X(\Ga))$}
\label{sec_uniform lattice}
In this section we study groups acting geometrically on $X(\Ga)$.

\subsection{Admissible edge labellings of $X(\Ga)$}
Recall that we label circles in the 1-skeleton of the Salvetti complex $S(\Gamma)$ by vertices in $\Gamma$. This induces a $G(\Ga)$-invariant edge-labelling on the universal cover $X(\Gamma)$. Let $x$ be a vertex in $S(\Gamma)$ or $X(\Gamma)$. Then each vertex in the link of $x$ comes from an edge, hence inherits a label. Let $F(\Ga)$ be the flag complex of $\Ga$. Then there is a projection map which preserves the label of vertices.
\begin{equation}
\label{projection based at x}
p_x:Lk(x,X(\Ga))\to F(\Ga)
\end{equation}
See Definition \ref{definition of links} for $Lk(x,X(\Ga))$. Note that for each vertex $v\in F(\Ga)$, $p^{-1}_{x}(v)$ is a pair of vertices. We also pick an orientation for each edge in $S(\Ga)$, and lift them to orientations of edges in $X(\Ga)$ which is $G(\Ga)$-invariant. We fix a $G(\Ga)$-invariant labelling and a $G(\Ga)$-invariant orientation of edges in $X(\Ga)$, and call them the \textit{reference labelling} and \textit{reference orientation}.

Throughout this section, $\Ga$ will be a graph without induced 4-cycle. A vertex $v\in\Ga$ is of \textit{type I} if there exists a vertex $w\in\Ga$ not adjacent to $v$ such that $lk(v)\subset St(w)$. Otherwise $v$ is of \textit{type II}. Two vertices $v_1$ and $v_2$ of type I are \textit{equivalent} if they are adjacent. Now we verify this is indeed an equivalence relationship. Since $\Ga$ has no induced 4-cycle, the link of each vertex of type I has to be a (possibly empty) clique, thus $v_1$ and $v_2$ are adjacent if and only if $St(v_1)=St(v_2)$.

Given vertex $v$ of type I, let $[v]$ be the clique in $\Ga$ spanned by type I vertices which are equivalent to $v$ and let $lk([v])$ be the clique spanned by vertices in $St(v)\setminus [v]$. The definition of $lk([v])$ does not depend on the choice of representative in $v$. Any vertex in $lk([v])$ is of type II. Since the closed star of each vertex in $[v]$ is a clique, a vertex $w\in\Ga\setminus [v]$ is in $lk([v])$ if and only if $w$ is adjacent to one vertex in $[v]$ if and only if $w$ is adjacent to each vertex in $[v]$. 

Let $\Ga_i$ be the induced subgraph of $\Ga$ spanned by vertices of type $i$. Then $\Ga_1$ is a disjoint unique of cliques, one for each equivalent class of vertices of type I. Let $\Delta\subset\Ga_2$ be a clique. Let $\Ga_{1,\Delta}\subset\Ga_1$ be the union of all $[v]$'s such that $lk([v])=\Delta$. We allow $\Delta=\emptyset$. Note that $\Ga_{1,\emptyset}$ is always a (possibly empty) discrete graph. Also it follows from the definition that for cliques $\Delta_1,\Delta_2\subset \Ga_2$, if $\Delta_1\neq\Delta_2$, then $\Ga_{1,\Delta_1}\cap \Ga_{1,\Delta_2}=\emptyset$.

\begin{lem}
\label{standard geodesic line}
Pick $\alpha\in\aut(X(\Ga))$ and pick vertex $v\in\Ga_2$.
\begin{enumerate}
\item For any $v$-component $\ell\subset X(\Ga)$, $\alpha(\ell)$ is a $v'$-component for vertex $v'\in\Ga_2$. 
\item Let $\Delta\subset\Ga_2$ be a clique. Suppose $\alpha$ sends a $\Delta$-component to a $\Delta'$-component for $\Delta'\subset\Ga_2$ (this follows from (1)). Then $\Ga_{1,\Delta}$ is isomorphic to $\Ga_{1,\Delta'}$.
\end{enumerate}
\end{lem}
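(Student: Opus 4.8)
The plan is to exploit that $\alpha$, being an isometry of the $CAT(0)$ cube complex $X(\Ga)$, preserves parallel sets (\cite[Section~2.3.3]{kleiner1997rigidity}) and, being cubical, preserves the canonical cubical product decomposition of any convex subcomplex (\cite[Corollary~3.5]{huang2015cocompactly}), together with the two elementary facts about $X(\Ga)$ that (i) parallel edges carry the same label, and (ii) two edges sharing a vertex span a square if and only if their labels are adjacent in $\Ga$ (since $Lk(x,X(\Ga))$ maps onto $F(\Ga)$, see \eqref{projection based at x}). For (1), $\alpha(\ell)$ is again a geodesic line which is a subcomplex and which passes through vertices. By Lemma~\ref{parallel set} the parallel set of $\ell$ splits as $\ell\times\ell^{\perp}$, with $\ell^{\perp}$ an $lk(\bar v)$-component and $\bar v\in\Ga_2$ the label of $\ell$. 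Writing $St(\bar v)=\{\bar v\}\circ lk(\bar v)$ and grouping the one-vertex join-irreducible factors of $St(\bar v)$ into a clique $\Delta_0\ni\bar v$, one obtains a cubical product decomposition of this parallel set as $F\times Z_1\times\cdots\times Z_r$ in which $F$ is the standard flat supported on $\Delta_0$ --- it is the Euclidean de Rham factor, and $\ell$ is one of its coordinate geodesics --- and each $Z_j$ is de Rham irreducible and not a line. Since isometries preserve parallel sets and Euclidean de Rham factors, $\alpha(F)$ is the Euclidean de Rham factor of the parallel set of $\alpha(\ell)$, a convex subcomplex cubically isomorphic to $\E^{|\Delta_0|}$; as the only subcomplex geodesic lines of $\E^{n}$ are its coordinate geodesics, $\alpha(\ell)$ is a coordinate geodesic of $\alpha(F)$.

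It then remains to check that every coordinate direction of $\alpha(F)$ has constant label, so that $\alpha(F)$ is a standard flat and $\alpha(\ell)$ a $v'$-component, and that $v'\in\Ga_2$. If some coordinate direction had two consecutive edges with labels $u\neq u''$, these edges are collinear, hence --- by convexity of $\alpha(F)$ and fact~(ii) --- span no square, so $u\not\sim u''$. When $r\geq 1$, the edges of $\alpha(F)$ span squares with those of $\alpha(Z_1)$ (as $F$ and $Z_1$ are factors of the parallel set of $\ell$), and since a de Rham irreducible non-point defining graph is not a clique, $\alpha(Z_1)$ has two edge directions with non-adjacent labels $p,q$; propagating $p$ and $q$ along the twisting direction by fact~(i) yields $u\sim p$, $u\sim q$, $u''\sim p$, $u''\sim q$, an induced $4$-cycle $u\,p\,u''\,q$ in $\Ga$ --- a contradiction. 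When $r=0$ (so $lk(\bar v)$ is a clique and the parallel set of $\ell$ is the maximal standard flat $F$), one argues again by excluding induced $4$-cycles, now using that $\bar v$ is of type~II precisely when no flat $F^{\ast}$ meets $F$ exactly in the codimension-one face perpendicular to $\ell$ with $F^{\ast}\cup F$ not contained in a flat --- an $\alpha$-invariant condition expressing that no transvection of the $\ell$-direction is possible. Finally $v'\in\Ga_2$: for $r\geq 1$ the splitting $\alpha(F)\times\alpha(Z_1)\times\cdots$ of the parallel set of $\alpha(\ell)$ realizes $St(v')$ as a join with a non-clique factor, so $lk(v')$ is not a clique and $v'$ is of type~II; for $r=0$ it again follows from the above characterization of type~I vertices.

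For (2), applying (1) to each of the $|\Delta|$ standard geodesic factors, and using that ``spanning a standard $2$-flat'' is intrinsic to the cube structure, $\alpha$ sends the $\Delta$-flat $F$ to a $\Delta'$-flat with $\Delta'\subset\Ga_2$ a clique. Now $\Ga_{1,\Delta}$ is a disjoint union of cliques, one clique $[u]$ for each type~I class with $lk([u])=\Delta$, and such a class corresponds exactly to a standard flat $F^{+}\supsetneq F$ of the form $([u]\text{-flat})\times(\Delta\text{-flat})$ that is maximal among flats containing $F$ whose type~II part is still $F$ (no further type~II coordinate direction can be adjoined). Running the parallel-set and de Rham analysis of (1) on the parallel set of the $\Delta$-flat shows that the collection of such flats $F^{+}$, their dimensions $|[u]|+|\Delta|$, and their mutual incidences are preserved by the cubical isomorphism induced by $\alpha$ near $F$; this gives a dimension-preserving bijection between the classes $[u]$ with $lk([u])=\Delta$ and those with $lk([u'])=\Delta'$, hence an isomorphism $\Ga_{1,\Delta}\cong\Ga_{1,\Delta'}$.

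The main obstacle is the rigidity at the core of (1): an arbitrary cubical automorphism must not twist a type~II standard geodesic into a genuinely mixed-label geodesic. For type~I directions this does fail --- the tree factors of $X(\Ga)$ carry geodesics with mixed labels, which is exactly the mechanism behind the exotic lattices of \cite{wise1996non,burger_mozes} --- so the argument must genuinely use both hypotheses: type~II-ness of $\bar v$, through the impossibility of the transvection-style flat extension, and the absence of induced $4$-cycles, through the square/adjacency dichotomy that forbids a coordinate direction of $\alpha(F)$ from switching labels. Granting this, the de Rham bookkeeping above and the identification of $\Ga_{1,\Delta}$ in (2) are routine.
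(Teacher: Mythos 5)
Your argument for part (1) works, modulo a fixable point, only in the case $r\geq 1$, i.e.\ when $lk(\bar v)$ is not a clique; the case $r=0$ is a genuine case (e.g.\ $\Ga$ two triangles sharing a vertex: for a non-shared vertex $a$ one has $St(a)$ a $3$-clique but $a$ is of type II, and $\Ga$ has no induced $4$-cycle), and there your proposal has a real gap. In that case there is no irreducible factor $Z_1$ to supply the non-adjacent pair $p,q$, and the induced-$4$-cycle mechanism genuinely fails: if $\alpha(\ell)$ had consecutive edges with distinct (hence non-adjacent) labels $u,u''$, the other coordinate directions $w_1,\dots,w_{n-1}$ of $\alpha(F)$ are pairwise adjacent, so every $4$-cycle $u\,w_i\,u''\,w_j$ has the chord $w_iw_j$ and is not induced. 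Your substitute, the ``no transvection flat'' characterization of type II, is neither proved nor actually brought to bear on the point at issue, which is whether the coordinate line $\alpha(\ell)$ of the convex subcomplex $\alpha(F)\cong\E^{n}$ can switch labels at all; mixed-label combinatorial geodesics do exist in $X(\Ga)$, and nothing in the transvection condition excludes them. The paper's proof needs no case division and no irreducible factor: writing $L_1$ for the set of labels on $\alpha(\ell)$ and assuming $|L_1|\geq 2$, it takes the $L_1$-component $K\supset\alpha(\ell)$, notes every label of $K$ is adjacent to every label of $\alpha(\ell^{\perp})$, so that $K\times\alpha(\ell^{\perp})$ embeds in $X(\Ga)$, and pulls back by $\alpha^{-1}$ to get $\ell\times\ell^{\perp}\subset\alpha^{-1}(K)\times\ell^{\perp}$ with $\alpha^{-1}(K)$ not a line; an edge of $\alpha^{-1}(K)$ labelled $w\neq v$ then satisfies $w\notin lk(v)$ and $lk(v)\subset St(w)$, contradicting type II directly. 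You need either to adopt this kind of argument or to give an actual proof in the clique-star case.

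Two further points. In the $r\geq 1$ case you assert that $\alpha(Z_1)$ has two edge directions with non-adjacent labels ``since its defining graph is not a clique,'' but $\alpha$ is not label-preserving, so $\alpha(Z_1)$ carries no a priori defining graph; you must argue, say, that if all distinct labels on $\alpha(Z_1)$ were pairwise adjacent then $\alpha(Z_1)$ would be a convex subcomplex of a standard flat, hence a convex subset of a Euclidean space, contradicting that $Z_1$ contains a branching geodesic -- fixable, but missing. More seriously, in part (2) the claim that the family of flats $F^{+}$ (the $\Delta$-flat extended by type I directions) and their incidences are preserved by $\alpha$ is precisely what has to be proved: type I standard geodesics are \emph{not} individually preserved by automorphisms (already for $\Ga$ a path of length two, $X(\Ga)\cong\R\times T_4$ and a tree automorphism mixes the two type I labels along a line), so one cannot ``run the analysis of (1)'' on them. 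The paper instead works in the link of a single vertex $x\in F$: by (1) the induced link isomorphism preserves type II vertices, hence type I vertices; orthogonality to the image flat places the image label in some $\Ga_{1,\Delta_1'}$ with $\Delta_1'\supseteq\Delta'$, a second application of (1) to the $\Delta_1'$-flat via $\alpha^{-1}$ forces $\Delta_1'=\Delta'$, and the resulting isomorphism of the full subcomplexes of the links (the octahedralized $\Ga_{1,\Delta}$ and $\Ga_{1,\Delta'}$) yields the graph isomorphism. Your sketch of (2) needs an argument of this kind rather than an appeal to preserved ``de Rham bookkeeping.''
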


\begin{proof}
Let $P_{\ell}=\ell\times \ell^{\perp}$ be the $St(v)$-component containing $\ell$ with its natural splitting. Then $\alpha(P_{\ell})=\alpha(\ell)\times\alpha(\ell^{\perp})$. Let $L_1$ and $L_2$ be the collection of labels of edges in $\alpha(\ell)$ and $\alpha(\ell^{\perp})$ respectively. To prove (1), it suffices to show $L_1$ is made of one vertex. Suppose the contrary is true. Let $K$ be the $L_1$-component that contains $\alpha(\ell)$. Since each vertex in $L_1$ is adjacent to every vertex in $L_2$, there exists a copy of $K\times \alpha(\ell^{\perp})$ in $X(\Ga)$ which contains $\alpha(P_{\ell})$. Then $\ell\times \ell^{\perp}\subset \alpha^{-1}(K)\times\ell^{\perp}$. Thus the label of each edge in $\alpha^{-1}(K)$ is adjacent to every vertex in $lk(v)$. Since $\alpha^{-1}(K)$ is not isometric to a line, it has an edge whose label (denoted by $w$) is different from $v$. Thus $lk(v)\subset St(w)$ and $w\notin lk(v)$, which contradicts that $v$ is of type II.

Suppose edges of $\alpha(\ell)$ are labelled by $v'$. Now we show $v'\in\Ga_2$. If the contrary is true, then there is a vertex $w'\in\Ga\setminus St(v')$ such that $lk(v')\subset lk(w')$. Let $K$ be the $\{v',w'\}$-component containing $\alpha(\ell)$. Since $L_2\subset lk(v')$, $\alpha(\ell)\times\alpha(\ell^{\perp})\subset K\times\alpha(\ell^{\perp})\subset X(\Ga)$. Now we can reach a contradiction as in the previous paragraph.

Now we prove (2). The case $\Delta=\Delta'=\emptyset$ is trivial. We assume $\Delta\neq\emptyset$. Let $F$ and $F'=\alpha(F)$ be the $\Delta$-component and $\Delta'$-component as in (2). Pick vertex $x\in F$ and let $x'=\alpha(x)$. Note that $\alpha$ induces an isomorphism $\alpha_{x}:Lk(x,X(\Ga))\to Lk(x',X(\Ga'))$. Let $p_x$ and $p_{x'}$ be the maps defined as in (\ref{projection based at x}). A vertex $v$ in $Lk(x,X(\Ga))$ (or $Lk(x',X(\Ga'))$) is of \textit{type I} if $p_x(v)$ (or $p_{x'}(v)$) is of type I, otherwise $v$ is of \textit{type II}. It follows from (1) that $\alpha_x$ induces a bijection between vertices of type II in $Lk(x,X(\Ga))$ and $Lk(x',X(\Ga))$. Thus if we replace type II by type I in the previous sentence, it still holds. 

Pick a vertex $u\in Lk(x,X(\Ga))$ such that $p_x(u)\in \Ga_{1,\Delta}$. Let $e_u$ be the edge containing $x$ which gives rise to $u$ in the link of $x$. Note that $u'=\alpha_x(u)$ is a vertex of type I. Moreover, since $e_u$ is orthogonal to $F$, $e_{u'}=\alpha(e_u)$ is orthogonal to $F'$. Thus $p_{x'}(u')\in\Ga_{1,\Delta'_1}$ for some clique $\Delta'_1$ which contains $\Delta'$. We claim $\Delta'=\Delta'_1$. If $\Delta'\subsetneq\Delta'_1$, let $F'_1$ be the standard flat such that its support is $\Delta'_1$ and it contains $F'$. Let $F_1=\alpha^{-1}(F'_1)$. Then $F_1$ is a standard flat (this follows from (1) since $\Delta'_1\subset\Ga_2$), and $F\subsetneq F_1$. Since $e_{u'}$ is orthogonal to $F'_1$, $e_u$ is orthogonal to $F_1$. Thus $p_x(u)\in \Ga_{1,\Delta_1}$ for some $\Delta_1$ containing the support of $F_1$. This contradicts $p_x(u)\in \Ga_{1,\Delta}$ since $\Delta_1\neq \Delta$. Thus the claim holds. Let $V$ (or $V'$) be vertices in $Lk(x,X(\Ga))$ (or $Lk(x',X(\Ga))$) whose $p_x$-images (or $p_{x'}$-images) are in $\Ga_{1,\Delta}$ (or $\Ga_{1,\Delta'}$). Then the claims implies $\alpha_x$ induces a bijection between $V$ and $V'$, hence it also induced an isomorphism between the full subcomplexes of $Lk(x,X(\Ga))$ and $Lk(x',X(\Ga))$ spanned by $V$ and $V'$ respectively. However, these two full subcomplexes are isomorphic to the links of the base points in the Salvetti complexes $S(\Ga_{1,\Delta})$ and $S(\Ga_{1,\Delta'})$ respectively. Thus $\Ga_{1,\Delta}$ and $\Ga_{1,\Delta'}$ are isomorphic (however, the isomorphism between them may not be induced by $\alpha_x$).
\end{proof}

\begin{lem}
\label{edge-labelling}
Let $X$ be a $CAT(0)$ cube complex. Suppose it is possible to label the edges of $X$ by vertices of $\Gamma$ such that for each vertex $x\in X$, the link of $x$ in $X$ is isomorphic to the link of the base point in $S(\Gamma)$ in a label-preserving way. Then there is a label-preserving isomorphism $X(\Ga)\cong X$.
\end{lem}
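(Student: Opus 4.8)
The plan is to build a label-preserving isomorphism $X(\Ga) \to X$ by developing it cube-by-cube starting from a single vertex, using the fact that both complexes are simply connected $CAT(0)$ cube complexes whose vertex links all look like the link of the basepoint of $S(\Ga)$. First I would fix a basepoint $\tilde{x}_0 \in X(\Ga)$, a basepoint $x_0 \in X$, and a label-preserving isomorphism $\phi_0 : Lk(\tilde x_0, X(\Ga)) \to Lk(x_0, X)$; such a $\phi_0$ exists because both links are identified (label-preservingly) with $Lk(\text{basepoint}, S(\Ga))$. The goal is to extend $\phi_0$ to a label-preserving cubical isomorphism $\Phi$, and the cleanest way to do this is to construct simultaneously a label-preserving cubical map and verify along the way that it is a local isometry, hence (being a map between simply connected nonpositively curved complexes) a covering, hence an isomorphism.

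The key steps, in order: (1) \emph{Extend across edges and squares along paths.} Given an edge-path $\gamma$ in the $1$-skeleton of $X(\Ga)$ starting at $\tilde x_0$, transport $\phi_0$ along $\gamma$: each edge of $X(\Ga)$ carries a label, $\phi_0$ tells us which edge of $X$ at $x_0$ it goes to, and at the far endpoint the link isomorphism is updated using the canonical identification of both links with $Lk(\text{basepoint}, S(\Ga))$ via the labels. This produces, for each vertex $v$ reached by $\gamma$, a candidate image vertex $\Phi_\gamma(v) \in X$ together with a label-preserving link isomorphism $\phi_v$. (2) \emph{Well-definedness.} Show $\Phi_\gamma(v)$ and $\phi_v$ do not depend on $\gamma$. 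By simple connectivity of $X(\Ga)$ it suffices to check invariance under homotopies of $\gamma$ across a square and across backtracking; backtracking is immediate from the fact that $\phi_0$ and its transports are bijections, and the square case reduces to the statement that when two edges at a vertex span a square, their images (determined by the two labels, which must be adjacent in $\Ga$) span a square in $X$ — which holds because the links are isomorphic to $Lk(\text{basepoint}, S(\Ga))$, in which adjacent labels span a square, and conversely. This gives a well-defined label-preserving map $\Phi : X(\Ga)^{(0)} \to X^{(0)}$ on vertices. (3) \emph{$\Phi$ is cubical and injective on stars.} Since at every vertex $\Phi$ induces the link isomorphism $\phi_v$, it sends a collection of edges spanning a cube (equivalently, a clique of labels) to a collection spanning a cube of the same dimension, so $\Phi$ extends to a cubical map; and $\phi_v$ being an isomorphism of links shows $\Phi$ is a local isometry (no folding), i.e., locally injective and locally surjective. (4) \emph{Conclude.} A local isometry between $CAT(0)$ (in particular simply connected, nonpositively curved) cube complexes is a covering map (this is standard, cf.\ the theory of special cube complexes used throughout \cite{haglund2012combination}), and a covering of a simply connected space is a homeomorphism; since $\Phi$ is cubical and label-preserving throughout, it is the desired label-preserving isomorphism.

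The main obstacle I expect is Step (2), the path-independence: one must be careful that the identification of $Lk(v, X(\Ga))$ with $Lk(\text{basepoint}, S(\Ga))$ transported along a path is consistent, i.e., that carrying a label-based link-identification around a null-homotopic loop returns the original identification. Concretely this is a small ``no holonomy'' argument: the only relations among the generating loops come from squares, and across a square the transport is governed entirely by the pair of commuting labels, for which the identification with $S(\Ga)$'s link is canonical and symmetric; so the holonomy around any square-boundary is trivial, and simple connectivity finishes it. Once path-independence is in hand, the remaining steps are formal. (One should also note the hypothesis already forces $X$ to be nonpositively curved — the link condition says every vertex link is flag — so Step (4) applies without extra assumptions.)
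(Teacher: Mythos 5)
There is a genuine gap at the heart of your construction, namely the phrase ``the link isomorphism is updated using the canonical identification of both links with $Lk(\text{basepoint}, S(\Ga))$ via the labels.'' No such canonical identification exists: in the link of any vertex (of $S(\Ga)$, of $X(\Ga)$, or of $X$) each vertex of $\Ga$ occurs as a label \emph{twice} (the two ends of the corresponding edges), and swapping the two link-vertices carrying a fixed label, while fixing everything else, is a label-preserving automorphism of $Lk(\text{basepoint},S(\Ga))$. Consequently your path-transport is not well defined exactly where it matters: if two consecutive edges of $\gamma$ carry distinct labels $a,b$ with $a,b$ non-adjacent in $\Ga$, then after crossing the $a$-edge there are two $b$-labelled edges at the image vertex of $X$ and no square (nor any other canonical structure you have introduced) selecting one of them. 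For the same reason the ``no holonomy'' argument in your step (2) does not work as stated: transporting around a square only constrains the two labels spanning that square, and says nothing about how the identification of the remaining labels' edges is carried along, so triviality of the holonomy is not established but rather presupposed.

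What is missing is precisely the device the paper's proof builds before developing the map: using the link hypothesis one shows that each $a$-component of $X$ is a line, that it is convex, and that two $a$-lines are parallel if and only if they contain parallel edges; one then chooses an orientation of one $a$-line in each parallel class and extends by parallelism, obtaining a coherent orientation of all edges of $X$. With labels \emph{and} orientations in hand, each generator $a^{\pm1}$ determines a unique directed edge at every vertex of $X$ (and of $X(\Ga)$), so every word in $G(\Ga)$ develops unambiguously from the base points, and independence of the chosen word follows from the RAAG moves $waa^{-1}w'\to ww'$ and $wabw'\to wbaw'$ (the square case), which is the correct substitute for your holonomy argument. After that, your concluding steps --- the resulting vertex map extends to a label-preserving cubical local isometry, and a surjective local isometry onto the simply connected complex forces $X(\Ga)\cong X$ --- agree with the paper's ending and are fine. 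So the architecture of your argument is salvageable, but as written the transport step is ill-defined and the claimed canonicity is false; you need to first construct the consistent orientation (or an equivalent global choice) from the structure of the $a$-components of $X$.
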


\begin{proof}
Pick a vertex $a\in \Ga$. It follows from the assumption that each $a$-component in $X$ is a line, which is called an $a$-line. Moreover, the following are true:
\begin{enumerate}
\item Each $a$-line is a convex subcomplex of $X$.
\item Two $a$-lines $l_{1}$ and $l_{2}$ are parallel if and only if there exist edges $e_{1}\subset l_{1}$ and $e_{2}\subset l_{2}$ such that $e_{1}$ and $e_{2}$ are parallel.
\end{enumerate}

We orient each edge of $X$ as follows. For each vertex $a\in\Ga$, we group the collection of $a$-lines into parallel classes. In each parallel class, we choose an orientation for one $a$-line, and extent to other $a$-lines in the parallel class by parallelism. It follows from (2) of the previous paragraph that if two edges are parallel, then their orientation is compatible with the parallelism.

Pick base vertices $x\in X(\Gamma)$ and $x'\in X$. By looking at the oriented labelling on the edges of $X$ and $X(\Gamma)$, every word in $G(\Ga)$ corresponds to a unique edge path in $X(\Ga)$ (or $X$) starting at $x$ (or at $x'$), and vice verse. We define a map $f:X(\Gamma)^{(0)}\to X^{(0)}$ as follows. Pick vertex $y\in X(\Ga)$ and an edge path $\omega$ from $x$ to $y$, then there is an edge path $\omega'\subset X$ from $x'$ to $y'$ such that $\omega$ and $\omega'$ correspond to the same word. We define $f(y)=y'$. This definition does not depend on the choice of $\omega$. Actually if we pick two words $w_{1}$ and $w_{2}$ whose corresponding endpoints are $y$, then we can obtain $w_{2}$ from $w_{1}$ by performing the following two basic moves:
\begin{enumerate}
\item $waa^{-1}w'\to ww'$.
\item $wabw'\to wbaw'$ when $a$ and $b$ commutes.
\end{enumerate}
However, these two moves do not affect the position of $y'$. Note that $f$ is surjective and can be extended to a local isometry from $X(\Ga)$ to $X$. Thus $X(\Ga)\cong X$.
\end{proof}

\begin{definition}
A labelling of edges in $X(\Ga)$ is \textit{admissible} if it satisfies the assumption of Lemma~\ref{edge-labelling}.
\end{definition}

\begin{cor}
\label{conjugation and admissible labelling}
Suppose $H$ acts on $X(\Ga)$ by automorphisms such that it preserves an admissible labelling of $X(\Ga)$. Then there exists $g\in\aut(X(\Ga))$ such that $gHg^{-1}$ preserves the reference labelling of $X(\Ga)$.
\end{cor}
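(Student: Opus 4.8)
The plan is to reduce the statement to Lemma~\ref{edge-labelling}, after which it becomes a one-line conjugation argument. Write $\mathcal{L}_0$ for the reference labelling of $X(\Ga)$ and let $\mathcal{L}$ be the admissible labelling preserved by $H$. The point is that, by the very definition of admissibility, the $CAT(0)$ cube complex $X(\Ga)$ equipped with $\mathcal{L}$ satisfies the hypothesis of Lemma~\ref{edge-labelling} (with the same $\Gamma$); hence that lemma produces a cube-complex isomorphism
\[
\phi\colon\ \bigl(X(\Ga),\mathcal{L}_0\bigr)\ \xrightarrow{\ \cong\ }\ \bigl(X(\Ga),\mathcal{L}\bigr)
\]
which is label-preserving, i.e.\ $\mathcal{L}\bigl(\phi(e)\bigr)=\mathcal{L}_0(e)$ for every edge $e$ of $X(\Ga)$. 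Since the source and target of $\phi$ have the same underlying cube complex $X(\Ga)$ (only the labellings differ), $\phi$ is a genuine element of $\aut(X(\Ga))$.

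Next I would set $g=\phi^{-1}\in\aut(X(\Ga))$ and verify directly that $gHg^{-1}=\phi^{-1}H\phi$ preserves the reference labelling. For $h\in H$ and any edge $e$,
\[
\mathcal{L}_0\bigl(\phi^{-1}h\phi(e)\bigr)=\mathcal{L}\bigl(h\phi(e)\bigr)=\mathcal{L}\bigl(\phi(e)\bigr)=\mathcal{L}_0(e),
\]
where the first and last equalities use the intertwining property $\mathcal{L}\circ\phi=\mathcal{L}_0$, and the middle one uses that $h$ preserves $\mathcal{L}$. Thus every element of $gHg^{-1}$ preserves $\mathcal{L}_0$, which is exactly the conclusion.

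There is essentially no serious obstacle here: the only thing to be careful about is the bookkeeping around Lemma~\ref{edge-labelling}, namely that the isomorphism it supplies should be read as an automorphism of $X(\Ga)$ rather than an identification with some abstract model — and this is immediate because the admissibly-labelled complex \emph{is} $X(\Ga)$, only its edge labels have been changed. In particular, none of the type~I/type~II analysis of Lemma~\ref{standard geodesic line} enters; that material is used elsewhere. So the whole proof amounts to: apply Lemma~\ref{edge-labelling} to the admissible labelling, then conjugate $H$ by the resulting automorphism.
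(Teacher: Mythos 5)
Your proposal is correct and matches the paper's (implicit) argument: the corollary is intended to follow at once from Lemma \ref{edge-labelling}, which applied to the admissibly labelled complex yields a label-intertwining automorphism $\phi$ of $X(\Ga)$ with $\mathcal{L}\circ\phi=\mathcal{L}_0$, and conjugating $H$ by $\phi^{-1}$ gives exactly the computation you wrote. Your side remarks are also accurate; in particular, Lemma \ref{standard geodesic line} plays no role here.
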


\begin{lem}
\label{conjugation and admissible orientation}
Suppose $H$ acts on $X(\Ga)$ geometrically by automorphisms such that it preserves the reference labelling of $X(\Ga)$. If $\Ga$ does not have induced $4$-cycle, then there exist a torsion free finite index subgroup $H'\le H$ and an element $g\in \aut(X(\Ga))$ such that $gH'g^{-1}$ preserves both the reference labelling and the reference orientation. Hence $gH'g^{-1}\le G(\Ga)$.
\end{lem}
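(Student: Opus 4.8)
The plan is to use the combination theorem of the previous section to pass to a torsion free finite index subgroup whose factor actions are honest translations, then to manufacture from these translations a label- and orientation-preserving local isometry onto the Salvetti complex $S(\Ga)$, and finally to lift that map to the universal covers to get the conjugating automorphism $g$.

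First I would feed the given action into the equivariant construction of Definition~\ref{an equivariant construction}. Since $H$ acts on $X(\Ga)$ by automorphisms preserving the reference labelling, the induced action on $G(\Ga)=X(\Ga)^{(0)}$ is by flat-preserving bijections and preserves the labels of vertices of $\P(\Ga)$, so it yields a geometric label-preserving action of $H$ on a blow-up building $Y(\Ga)$. By Corollary~\ref{good cover} there is a finite index subgroup $H'\le H$, necessarily torsion free, with $Y(\Ga)/H'$ special. Lemma~\ref{orientation and specialness} then gives that for every vertex $v\in\P(\Ga)$ the factor action $\rho_v\colon H'_v\acts\Z_v$ is conjugate to an action by translations. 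Because $H'$ acts on $X(\Ga)$ by cube-complex automorphisms, $\rho_v$ is a genuine isometric action on the line $\R_v\subset X(\Ga)$; identifying kernels along an equivariant bijective quasi-isometry shows $\rho_v(H'_v)$ is isomorphic to a subgroup of $\Z$, hence trivial or infinite cyclic, hence generated by a translation of $\R_v$. So after this reduction we may assume that, for every $v$, $H'_v$ acts on $\R_v$ by orientation-preserving translations.

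Next I would build an $H'$-invariant orientation of the edges of $X(\Ga)$: choose a representative $v$ from each of the finitely many $H'$-orbits of vertices of $\P(\Ga)$, fix an $H'_v$-invariant orientation of $\R_v$ (possible since $H'_v$ acts by translations), spread it over the parallel class of $v$ by parallelism, and transport around the $H'$-orbit; this is well defined and respects parallelism. Passing to $K:=X(\Ga)/H'$, every $u$-labelled circle of $K$ has the form $\ell/\stab_{H'}(\ell)$ with $\stab_{H'}(\ell)$ acting by translations, hence is a consistently oriented circle; and since the $H'$-action is free the link of each vertex of $K$ is label-preservingly isomorphic to the link of the basepoint of $S(\Ga)$. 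Combining these, one obtains a cellular map $p\colon K\to S(\Ga)$ collapsing each $u$-edge onto the oriented $u$-circle of $S(\Ga)$, which restricts on links to the above isomorphisms; thus $p$ is a local isometry and a local homeomorphism, hence — $K$ and $S(\Ga)$ being compact and $S(\Ga)$ connected — a finite covering. It is therefore $\pi_1$-injective with image a finite index subgroup of $G(\Ga)=\pi_1(S(\Ga))$, and its lift to universal covers is a cube-complex isomorphism $g\colon X(\Ga)\to X(\Ga)$ with $g\rho(h)g^{-1}=\operatorname{deck}(p_\ast h)$ for $h\in H'$. Hence $gH'g^{-1}=p_\ast(H')\le G(\Ga)\le\aut(X(\Ga))$, and since the deck action of $G(\Ga)$ preserves the reference labelling and orientation, so does $gH'g^{-1}$.

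The main obstacle I expect is the orientation bookkeeping: verifying that specialness of $Y(\Ga)/H'$ genuinely upgrades to "each core line-stabilizer acts by orientation-preserving translations on $X(\Ga)$" with a single finite index subgroup (not a $v$-dependent one), and that this really produces an honest cellular map to $S(\Ga)$ — in particular that no $u$-hyperplane of $K$ directly self-osculates and that each vertex link carries exactly the two $u$-directions $u^{\pm}$ with compatible orientations, so that the link maps of $p$ are isomorphisms. Once $p$ is known to be a local homeomorphism, lifting it to a cube-complex automorphism $g$ (rather than the mere flat-preserving bijection furnished by Theorem~\ref{conjugate to left translation}) is formal, and is precisely what forces the conjugator to lie in $\aut(X(\Ga))$.
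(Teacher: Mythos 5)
Your proposal is correct and follows essentially the same route as the paper: apply Corollary \ref{good cover} and Lemma \ref{orientation and specialness} to get a torsion free finite index $H'$ whose line-stabilizers act by translations, hence an $H'$-invariant, parallelism-compatible orientation, and then produce the conjugator by the oriented-labelled-word argument of Lemma \ref{edge-labelling}. Your final step merely repackages that argument as a finite covering $X(\Ga)/H'\to S(\Ga)$ lifted to universal covers, which is the same construction in covering-space language.
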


\begin{proof}
By Corollary~\ref{good cover}, there is a finite index torsion free subgroup $H'\le H$ be such that $X(\Ga)/H'$ is special. Lemma~\ref{orientation and specialness} implies that we can orient each standard geodesic line in a way which is compatible with parallelism and $H'$-action. The argument in Lemma~\ref{edge-labelling} implies that there exists a label-preserving automorphism $g\in\aut(X(\Ga))$ which maps this orientation to the reference orientation. Thus the lemma follows.
\end{proof}

We recall the following result from \cite{bass1990uniform}.

\begin{thm}
\label{tree lattice}
Let $T$ be a uniform tree and let $H_1,H_2\le\aut(T)$ be two uniform lattices (i.e. they acts geometrically on $T$). Then there exists $g\in \aut(T)$ such that $gH_1g^{-1}\cap H_2$ is of finite index in both $H_2$ and $gH_1g^{-1}$.
\end{thm}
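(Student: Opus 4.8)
The plan is to deduce Theorem~\ref{tree lattice} from Leighton's theorem on common finite covers of graphs. Since $H_i$ acts geometrically on $T$, it acts with finite vertex stabilizers and finite quotient graph, so $T$ is locally finite and $H_i$ is finitely generated; by Bass--Serre theory a finitely generated group acting on a tree with finite stabilizers is virtually free, so $H_i$ contains a free subgroup $H_i'$ of finite index. As $[H_i:H_i']<\infty$, the subgroup $H_i'$ still acts cocompactly on $T$, and being torsion free it acts freely. Hence $X_i:=H_i'\backslash T$ is a finite graph (possibly with loops and multiple edges) whose universal cover is $T$, and the $H_i'$-action on $T$ is identified with the deck action of $\pi_1(X_i)$ on its universal cover.

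Next I would apply Leighton's graph covering theorem to $X_1$ and $X_2$, which have the common universal cover $T$: there is a finite graph $\hat X$ admitting finite covering maps $\phi_i\colon \hat X\to X_i$ for $i=1,2$. Fixing a basepoint $\hat b\in\hat X$ and a lift of it in the universal cover $\widetilde{\hat X}$, the group $\pi_1(\hat X,\hat b)$ acts on $\widetilde{\hat X}$ as deck transformations. Since $\widetilde{\hat X}$ is also the universal cover of each $X_i$, there is a tree isomorphism $\psi_i\colon \widetilde{\hat X}\to T$ respecting basepoints and the covering projections; under $\psi_i$ the deck action of the finite-index subgroup $L_i:=\phi_{i*}\pi_1(\hat X,\hat b)\le\pi_1(X_i)=H_i'$ is carried to the restriction of the $H_i'$-action on $T$. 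In particular $[H_i:L_i]=[H_i:H_i']\cdot\deg(\phi_i)<\infty$.

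Finally, put $g:=\psi_2\circ\psi_1^{-1}\in\aut(T)$. For each $\gamma\in\pi_1(\hat X,\hat b)$, writing $a_i(\gamma)\in\aut(T)$ for the image under $\psi_i$ of the deck transformation of $\gamma$, a direct computation gives $g\,a_1(\gamma)\,g^{-1}=a_2(\gamma)$, so $gL_1g^{-1}=L_2$ as subgroups of $\aut(T)$. Therefore $gH_1g^{-1}\cap H_2\supseteq gL_1g^{-1}=L_2$, which has finite index in $H_2$ because $[H_2:L_2]<\infty$, and finite index in $gH_1g^{-1}$ because $[gH_1g^{-1}:gL_1g^{-1}]=[H_1:L_1]<\infty$. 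This gives the statement.

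The main obstacle is Leighton's theorem itself, which is a substantial combinatorial result; an alternative that avoids the passage to torsion-free subgroups is to invoke instead the Bass--Kulkarni generalization of Leighton's theorem to finite graphs of finite groups (as in \cite{bass1990uniform}), but that is of comparable depth. The other ingredients---virtual freeness of tree lattices and the basepoint bookkeeping showing that the two realizations of $\pi_1(\hat X)$ in $\aut(T)$ are conjugate---are routine.
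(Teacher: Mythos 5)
Your argument is correct: passing to finite-index free subgroups, realizing the lattices as deck groups over finite graphs, invoking Leighton's common finite cover, and conjugating by $\psi_2\circ\psi_1^{-1}$ all go through, with the only substantial input being Leighton's theorem (or its Bass--Kulkarni graphs-of-groups generalization), which you correctly flag. The paper itself offers no proof of Theorem \ref{tree lattice} --- it is quoted directly from \cite{bass1990uniform} --- and your proof is essentially the standard deduction given there, so there is nothing to compare beyond noting that Bass--Kulkarni work with finite graphs of finite groups rather than first passing to a torsion-free subgroup.
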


\begin{cor}
\label{existence of admissible labelling on tree}
Suppose $\Ga$ is a disjoint of cliques. Let $H$ be a group acting geometrically on $X(\Ga)$ by automorphisms. Then there exist a finite index torsion free subgroup $H'\le H$ and an admissible labelling of $X(\Ga)$ which is invariant under $H'$.
\end{cor}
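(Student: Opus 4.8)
The plan is to organize the maximal flats of $X(\Ga)$ by a tree, pass to a finite-index subgroup of $H$ on which these flats are ``rigid'', and then transport the reference admissible labelling of $X(\Ga)$ (the one coming from the covering $X(\Ga)\to S(\Ga)$) to an $H$-invariant one.

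First I would set up the coarse structure. Write $\Ga=\Delta_1\sqcup\cdots\sqcup\Delta_n$ with each $\Delta_i$ a clique. By Lemma~\ref{standard geodesic line}, together with the fact that cubical isometries carry maximal Euclidean subcomplexes to maximal Euclidean subcomplexes of the same dimension, $\aut(X(\Ga))$ — hence $H$ — permutes the maximal flats of $X(\Ga)$, with finitely many orbits and preserving dimensions. The stabilizer in $H$ of a maximal flat $F\cong\E^{k}$ acts geometrically on $F$, hence is virtually $\Z^{k}$; the kernel of its homomorphism to the finite hyperoctahedral group of $F$ is a finite-index subgroup acting on $F$ by integer translations. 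Intersecting these conditions over the finitely many orbits — and also intersecting with a finite-index torsion-free subgroup, which exists since collapsing the maximal flats to points realizes $H$ as a group acting on a tree with finite edge stabilizers and virtually poly-$\Z$ vertex stabilizers, so $H$ is virtually torsion free — I obtain a finite-index torsion-free $H_1\le H$ all of whose maximal-flat stabilizers act by translations. Because $H_1$ is torsion free it acts freely on $X(\Ga)$, so \emph{any} assignment of labels that is constant along $H_1$-orbits of flats (transported by these translations) is automatically $H_1$-invariant on edges.

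Next I would produce the remaining data. Collapsing each maximal flat to a point turns $X(\Ga)$ into the Bass--Serre tree $T$ of the free-product decomposition $G(\Ga)=G(\Delta_1)\ast\cdots\ast G(\Delta_n)$; both $H_1$ and $G(\Ga)$ act on $T$, cocompactly and (for $H_1$) freely on edges, with flat-vertex stabilizers virtually $\Z^{k_i}$ (for $H_1$) and isomorphic to $\Z^{k_i}$ (for $G(\Ga)$). By Lemma~\ref{edge-labelling} and the description of the link of a vertex of $S(\Ga)$, an admissible labelling of $X(\Ga)$ amounts to a partition of the maximal flats into classes $\mathcal F_1,\dots,\mathcal F_n$ (with $\mathcal F_i$ consisting of $k_i$-dimensional flats and with exactly one flat of each class through every vertex of $X(\Ga)$) together with a coordinate bijection between the edge-directions of each $F\in\mathcal F_i$ and the vertices of $\Delta_i$. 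The reference covering supplies such a structure that is $G(\Ga)$-invariant, and the task is to find one invariant under a finite-index subgroup of $H_1$. I would do this by induction on $n$: for $n=1$ (a single clique) $H_1$ is crystallographic and every labelling of the Cayley complex of $\Z^{k}$ is admissible and translation-invariant; when every $\Delta_i$ is a single vertex, $X(\Ga)$ is itself a locally finite uniform tree and Theorem~\ref{tree lattice} conjugates $H$ to a group commensurable with $G(\Ga)=F_n$, whose reference labelling then pulls back; and for the inductive step one writes $\Ga=\Ga'\sqcup\Delta$ with $\Delta$ one of the cliques, uses the graph-of-spaces structure of $X(\Ga)/H_1$ associated with the splitting $G(\Ga)=G(\Ga')\ast G(\Delta)$ (the edge spaces are single vertices), applies the inductive hypothesis to the vertex spaces covering the $X(\Ga')$-pieces, notes that the vertex spaces covering the $\E^{|\Delta|}$-pieces are tori with translation holonomy (so any labelling is invariant), and performs a Haglund--Wise-style matching — substantially simpler than in Section~\ref{subsec_matching} because all edge spaces are points — to build a finite cover of $X(\Ga)/H_1$, i.e. a finite-index $H'\le H_1$, carrying a compatible type partition and coordinate bijections.

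Finally, the resulting $H'$-invariant admissible labelling feeds into Corollary~\ref{conjugation and admissible labelling} (conjugating $H'$ to a group preserving the reference labelling) and then Lemma~\ref{conjugation and admissible orientation} (legitimate since a disjoint union of cliques contains no induced $4$-cycle), which conjugates a further finite-index subgroup into $G(\Ga)$ and in particular re-confirms torsion-freeness; renaming that subgroup $H'$ finishes the proof. The step I expect to be the real obstacle is the matching: even though everything in sight is assembled from tori and trees, one must make the type partition and the coordinate bijections on all the maximal flats simultaneously consistent — ``rainbow'' at every vertex — and equivariant, and since the clean form of Theorem~\ref{tree lattice} applies directly only in the discrete case, the virtually-$\Z^{k_i}$ flat directions have to be carried through the induction by hand rather than disposed of by a single appeal to the tree-lattice theorem.
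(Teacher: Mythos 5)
Your reduction to a torsion-free finite-index subgroup whose maximal-flat stabilizers act by translations, and your reformulation of an admissible labelling as a type partition of the maximal flats together with coordinate bijections, both capture the right shape of the problem. But the inductive step has a genuine gap. Writing $\Ga=\Ga'\sqcup\Delta$ and invoking ``the graph-of-spaces structure of $X(\Ga)/H_1$ associated with the splitting $G(\Ga)=G(\Ga')\ast G(\Delta)$'' presupposes that $H_1$ preserves the partition of maximal flats into $\Delta$-flats and $\Ga'$-flats. When $\Ga'$ contains another clique of the same size as $\Delta$, an automorphism of $X(\Ga)$ may send a $\Delta$-flat to a flat of that other type --- $H_1$ is only guaranteed to preserve the \emph{dimension} of maximal flats --- so the Bass--Serre tree of that particular free-product splitting carries no induced $H_1$-action and the vertex spaces you want to induct on do not exist. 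This is not the ``matching'' difficulty you flag at the end; it sits upstream of any matching, because the type partition is itself part of the admissible labelling you are trying to construct. (Your torsion-freeness claim, that finite edge stabilizers plus virtually poly-$\Z$ vertex stabilizers make $H$ virtually torsion free, also needs an argument --- it is obtained by gluing compatible finite covers of vertex and edge spaces, not by a general principle --- but that part is repairable.)

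The paper's proof isolates exactly this issue. It first partitions $\Ga$ into sub-collections $\Ga_i$ consisting of the cliques of a fixed size; since distinct $\Ga_i$'s have distinct clique sizes, that coarser decomposition \emph{is} $H$-invariant, and a graph-of-spaces argument over the resulting tree reduces everything to the case where $\Ga$ is $p$ copies of a single $n$-clique (the case $n=1$ being handled by Theorem \ref{tree lattice}). In that case, instead of splitting off one clique at a time, the paper shows directly that the quotient $K$ --- a ``graph of $n$-tori'' with $p$ tori through each vertex --- has a finite cover which covers $S(\Ga)$: one replaces each vertex of $K$ by a $3$-sphere carrying a free $\Z/p$-action and reglues the $p$ tori at that vertex along a $\Z/p$-orbit, producing a complex $K'$ with the same fundamental group that visibly covers the analogous model $S_p(\Ga)$. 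That covering map is what manufactures a consistent, equivariant type partition and the coordinate bijections; some device of this kind is needed in place of your inductive step.
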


Actually in this case $H$ has a finite index subgroup which is isomorphic to a free product of finite generated free Abelian groups \cite[Theorem 2]{behrstock2009commensurability}. However, the corollary does not follow directly from this fact.

\begin{proof}
We form a partition $\Ga=\sqcup_{i=1}^{k}\Ga_i$ such that each $\Ga_i$ is made of cliques of the same size, and cliques in different $\Ga_i$'s have different size. We claim it suffices to prove the corollary for each $\Ga_i$. To see this, suppose $k\ge 2$. Let $t_{k}$ be the tree of diameter 2 with $k$ vertices of valance one. For each $i$, we glue $S(\Ga_i)$ to an endpoint of $t_k$ along the unique vertex in $S(\Ga_i)$ such that different $S(\Ga_i)$'s are glued to different endpoints of $t_k$. Then the resulting space $\bar{S}(\Ga)$ is homotopic equivalent to $S(\Ga)$. We pass to the universal cover $\bar{X}(\Ga)$, and collapse each elevation of $S(\Ga_i)$ ($1\le i\le k$) in $\bar{X}(\Ga)$. The resulting space is a tree, which we denote by $T$. 

Since the action $H\acts X(\Ga)$ maps $\Ga_i$-components to $\Ga_i$-components, there is an induced action $H\acts\bar{X}(\Ga)$, moreover, $H$ permutes the elevations of $S(\Ga_i)$'s. Thus there is an induced action $H\acts T$. By passing to a finite index subgroup, we assume $H$ acts on $T$ without inversion. Then there is a graph of spaces decomposition of the orbifold $\bar{X}(\Ga)/H$ along the graph $T/H$. The fundamental group of each edge orbifold is finite, and the fundamental group of each vertex orbifold is either finite, or isomorphic to a group acting geometrically on $X(\Ga_i)$. If the corollary is true for each $\Ga_i$, then we can argue as in Section \ref{sec_construction of the finite cover} to pass to suitable torsion free finite sheet covers of each edge space and vertex space, and glue them together to form a finite sheet cover of $\bar{X}(\Ga)/H$, which gives the required subgroup of $H$.

Now we look at the case when $\Ga$ is a disjoint union of $p$ copies of $n$-cliques. First we assume $n\ge 2$. By the argument of the previous paragraph (we consider the space obtained by attaching $p$ tori to the valance one vertices of $t_p$, then $H$ acts on the universal cover of this space), we can assume $H$ is torsion free by passing to a finite index subgroup. Moreover, we can assume $K=X(\Ga)/H$ is a union of tori (i.e. $K$ does not Klein bottles, however, we do not require these tori to be embedded). It suffices to show $K$ has a finite sheet cover which covers $S(\Ga)$. 

We pick a copy of $\Bbb S^{3}$ with a free $\Z/p$ action. Pick a $\Z/p$-orbit of $\Bbb S^3$ and we glue a $n$-torus to $\Bbb S^3$ along each point in the orbit. The resulting space has the same fundamental group as $S(\Ga)$, and admits a free $\Z/p$ action. Denote the quotient space by $S_p(\Ga)$. We modify the complex $K$ in a similar way. Namely for each vertex $x\in K$, there are $p$ tori containing $x$. We replace $x$ by a copy of $\Bbb S^3$ and re-glue those tori along a $\Z/p$-orbit in $\Bbb S^3$. The resulting complex $K'$ has the same fundamental group as $K$. Since $K'$ does not contain Klein bottles, there exists a finite sheet covering map $K'\to S_p(\Ga)$, which implies $K$ has a finite sheet cover which covers $S(\Ga)$. When $n=1$, it follows from \cite[Theorem 2]{behrstock2009commensurability} that $H$ has a finite index torsion free subgroup. The rest follows from Theorem \ref{tree lattice}. Alternatively, it is not hard to modify the above argument such that it also works for $n=1$. 
\end{proof}

\subsection{The conjugation theorem}
\begin{thm}
\label{conjugation}
Suppose $\Ga$ is a simplicial graph such that
\begin{enumerate}
\item $\Ga$ is star-rigid;
\item $\Ga$ does not contain induced 4-cycle.
\end{enumerate}
Let $H$ be a group acting geometrically on $(X(\Ga))$ by automorphisms. Then $H$ and $G(\Ga)$ are commensurable. Moreover, let $H_1, H_2\le\aut(X(\Ga))$ be two uniform lattices. Then there exists $g\in\aut(X(\Ga))$ such that $gH_1 g^{-1}\cap H_2$ is of finite index in both $gH_1 g^{-1}$ and $H_2$.
\end{thm}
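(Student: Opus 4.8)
The plan is to first reduce the two assertions to the label-preserving case already treated in Corollary \ref{good cover} and Lemma \ref{conjugation and admissible orientation}, and then run a Bass--Serre / graph-of-spaces matching argument (as in Section \ref{sec_construction of the finite cover}) to conjugate two lattices into a common finite-index subgroup. First I would handle the commensurability statement: given $H$ acting geometrically on $X(\Ga)$ by automorphisms, the action permutes the $\Ga'$-components of $X(\Ga)$ for each induced subgraph $\Ga'$, so it acts on $\P(\Ga)$ by simplicial automorphisms (via the correspondence between simplices of $\P(\Ga)$ and parallel classes of standard flats). Since $\Ga$ is star-rigid, Lemma \ref{label-preserving} gives a finite-index subgroup $H^\flat\le H$ whose induced action on $\P(\Ga)$ is label-preserving; combined with Lemma \ref{standard geodesic line} (which controls how automorphisms permute type I and type II vertices) one upgrades this to $H^\flat$ preserving an \emph{admissible} edge labelling of $X(\Ga)$ in the sense of Lemma \ref{edge-labelling}. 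By Corollary \ref{conjugation and admissible labelling} we may conjugate $H^\flat$ so that it preserves the reference labelling, and then Lemma \ref{conjugation and admissible orientation} (which uses the no-induced-$4$-cycle hypothesis through Corollary \ref{good cover}) produces a further finite-index torsion-free $H'$ and a conjugating $g$ with $gH'g^{-1}\le G(\Ga)$ of finite index. Hence $H$ is commensurable to $G(\Ga)$.

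For the conjugation statement, the plan is: apply the first part to each $H_i$ to find $g_i\in\aut(X(\Ga))$ and finite-index $H_i'\le H_i$ with $g_iH_i'g_i^{-1}\le G(\Ga)$. Replacing $H_1$ by $g_2^{-1}g_1 H_1 (g_2^{-1}g_1)^{-1}$ (which changes the conjugator $g$ we seek only by a fixed element), we may assume both $g_iH_i'g_i^{-1}$ sit inside $G(\Ga)$, so after conjugating the whole picture it suffices to treat the case where $H_1,H_2\le\aut(X(\Ga))$ both preserve the reference labelling and have finite-index subgroups inside $G(\Ga)$. Now pick a vertex $u\in\Ga$ and form the Bass--Serre tree $T$ associated with the HNN/amalgam splitting of $G(\Ga)$ along $St(u)$, as in Section \ref{sec_construction of the finite cover}; since $H_1,H_2$ are label-preserving they both act on $T$, and after passing to index-$2$ subgroups they act without inversion. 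One inducts on the number of vertices of $\Ga$: the vertex groups of these splittings act geometrically on $X(\Lambda)$-type complexes with fewer vertices, hence by induction any two such vertex actions are conjugate into a common finite-index subgroup; the edge groups are (virtually) $St(u)$-type, again covered by induction. One then glues the matching covers of vertex and edge spaces along elevations of gates, exactly as in Lemma \ref{collection of covers} through Lemma \ref{lift}, to build a finite cover of $X(\Ga)/H_i$ realizing a common finite-index overgroup; the base case $\Ga$ a disjoint union of cliques is Corollary \ref{existence of admissible labelling on tree} together with Theorem \ref{tree lattice}.

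The main obstacle I expect is the matching step at each stage of the induction: one must simultaneously conjugate the two vertex actions \emph{and} arrange that the induced identifications of the edge spaces (the elevations of gates) agree, so that the two local pictures can be glued into a single finite cover. This is precisely where the no-induced-$4$-cycle hypothesis is essential — via Corollary \ref{projections are circles} and the modified completions/retractions of Section \ref{subsec_modified completeions and retractions} — because it forces the wall projections of one gate onto another to be branched tori rather than something with free-group fundamental group, which is what makes the circle-length bookkeeping (Lemma \ref{length of circle0}, Lemma \ref{length divide}, Remark \ref{circle in shadow}) close up. A secondary technical point is keeping track of the conjugating element $g$: each reduction introduces a conjugation, and one must check these compose to a single $g\in\aut(X(\Ga))$ and that the resulting $gH_1g^{-1}\cap H_2$ is genuinely finite index in \emph{both} factors, which follows because at every step we only pass to finite-index subgroups and the gluing produces a common finite-index overgroup, not merely a common quotient.
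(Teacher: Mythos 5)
Your reduction of the first assertion to the label-preserving case has a genuine gap. You claim that $H$ "acts on $\P(\Ga)$ by simplicial automorphisms" because it "permutes the $\Ga'$-components of $X(\Ga)$", and you then invoke Lemma \ref{label-preserving}. Neither step is available here. First, Lemma \ref{label-preserving} has $\out(G(\Ga))$ finite as a hypothesis (its proof begins by using finiteness of $\out$ to produce a flat-preserving action), and Theorem \ref{conjugation} deliberately drops that hypothesis — that is the whole point of Section \ref{sec_uniform lattice}. Second, and more fundamentally, a cubical automorphism of $X(\Ga)$ need not preserve the set of standard geodesics or standard flats at all, so there is no induced action on $\P(\Ga)$: already for $\Ga$ two non-adjacent vertices (star-rigid, no induced $4$-cycle), $X(\Ga)=T_4$ and a generic tree automorphism sends a line of constantly-labelled edges to a line with mixed labels. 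Lemma \ref{standard geodesic line}, which you cite as supporting material, only controls $v$-components for $v$ of \emph{type II}; the type I vertices are exactly where the argument breaks, and the paper's proof spends most of its effort there: it uses star-rigidity to show the induced permutation of $\Ga_2$ is independent of the basepoint (hence gives a homomorphism $H\to\aut(\Ga_2)$ whose kernel one passes to), then decomposes $X(\Ga)$ via the auxiliary complex $\bar S(\Ga)$ into $\Ga_2$-components and $\Lambda_i$-components, builds "good" covers of the vertex spaces (Corollary \ref{good cover} for the $\Ga_2$ part, Lemma \ref{subgroup with no twist} plus the Bass--Kulkarni-based Corollary \ref{existence of admissible labelling on tree} for the type I parts), and only then assembles an invariant admissible labelling via Lemma \ref{equal torus} and the gluing of Section \ref{one clique}. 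Your sentence "one upgrades this to $H^\flat$ preserving an admissible labelling" elides precisely this construction.

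For the "Moreover" clause your plan is correct but over-engineered: once each $H_i$ has a finite-index subgroup conjugated by some $g_i$ into a finite-index subgroup of $G(\Ga)$, taking $g=g_2^{-1}g_1$ immediately gives that $gH_1g^{-1}\cap H_2$ contains a conjugate of the intersection of two finite-index subgroups of $G(\Ga)$, so the second Bass--Serre induction you sketch is unnecessary. The substantive issue to repair is the first part.
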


\begin{proof}
If $\Ga_2=\emptyset$, then $\Ga$ is a discrete set and the theorem follows from Corollary \ref{existence of admissible labelling on tree}. Now we assume $\Ga_2\neq\emptyset$. Pick vertex $v\in\Ga_2$, it follows from Lemma \ref{standard geodesic line} (1) that $\alpha$ sends each standard geodesic in $X(\Ga)$ labelled by a vertex in $\Ga_2$ to another standard geodesic labelled by a (possibly different) vertex in $\Ga_2$. Thus the collection of all $\Ga_2$-components in $X(\Ga)$ is $H$-invariant. It follows that the stabilizer of each $\Ga_2$-component acts cocompactly on itself.

Pick vertex $x\in X(\Ga)$. Then $\alpha$ induces an isomorphism between the links of $x$ and $\alpha(x)$, which gives rise to an automorphism $\alpha_x:\Ga_2\to\Ga_2$ by Lemma \ref{standard geodesic line} (1). We claim that $\alpha_x$ can be extended to an automorphism of $\Ga$. Recall that $\Ga_1$ is a disjoint union of $\Ga_{1,\Delta}$ with $\Delta$ ranging over cliques (including the empty clique) of $\Ga_2$. By Lemma~\ref{standard geodesic line} (2), we can specify an isomorphism $\Ga_{1,\Delta}\to\Ga_{1,\alpha_x(\Delta)}$ for each $\Delta$, and use them to define the extension of $\alpha_x$ as required. 

Pick another vertex $y\in X(\Ga)$ which is adjacent to $x$ along an edge $e$. We claim $\alpha_x=\alpha_y$. Let $v_e\in\Ga$ be the label of $e$. Then $\alpha_x(v)=\alpha_y(v)$ for any $v\in\Ga_2$ which is adjacent to $v_e$. Suppose $v_e\notin\Ga_2$. Then $\alpha_x(lk([v_e]))=\alpha_y(lk([v_e]))$. We can extend $\alpha_x$ and $\alpha_y$ to be automorphisms of $\Ga$ which agree on $St(v_e)$ (recall that $St(v_e)$ is a clique spanned by $[v_e]$ and $lk([v_e])$). Thus $\alpha_x=\alpha_y$ by condition (1) of the theorem. Suppose $v_e\in\Ga_2$ and let $v\in\Ga_1$ be a vertex adjacent to $v_e$. Then $lk([v])\subset \Ga_2\cap St(v_e)$. Thus $\alpha_x$ and $\alpha_y$ agree on $lk([v])$. Again we can extent $\alpha_x$ and $\alpha_y$ to automorphisms of $\Ga$ which agree on $St(v_e)$ and deduce $\alpha_x=\alpha_y$.

The above claim implies $\alpha$ determines a well-defined element in $\aut(\Ga_2)$. Thus we have a homomorphism $H\to \aut(\Ga_2)$. By replacing $H$ by the kernel of this homomorphism, we assume $H$ maps $v$-components to $v$-components for $v\in\Ga_2$. Thus for each clique $\Delta\subset\Ga_2$, $H$ preserves $\Delta$-components. Then it follows from the proof of Lemma \ref{standard geodesic line} (2) that for each edge $e\in X(\Ga)$ labelled by a vertex in $\Ga_{1,\Delta}$, the label of $\alpha(e)$ is also inside $\Ga_{1,\Delta}$. Thus $H$ preserves $\Ga_{1,\Delta}$-components.

Let $\{\Delta_i\}_{i=1}^{k}$ be the collection of cliques in $\Ga_2$ such that $\Ga_{1,\Delta_i}\neq\emptyset$. Let $\Lambda_i$ be the induced subgraph spanned by $\Delta_i$ and $\Ga_{1,\Delta_i}$. Now we define a new cube complex 
\begin{center}
$\bar{S}(\Ga)=(S(\Ga_2)\sqcup (\sqcup_{i=1}^{k}S(\Lambda_i)) \sqcup (\sqcup_{i=1}^{k}C_i))/\sim$.
\end{center}
Here $C_i=S(\Delta_i)\times [0,1]$ (if $\Delta_i=\emptyset$, then $S(\Delta_i)$ is one point), and we glue one end of $C_i$ to $S(\Delta_i)\subset S(\Ga_2)$, and another end to $S(\Delta_i)\subset S(\Lambda_i)$. There is a homotopy equivalence $\bar{S}(\Ga)\to S(\Ga)$ by collapsing the $[0,1]$ factor of each $C_i$, which lifts to a map between their universal covers $r:\bar{X}(\Ga)\to X(\Ga)$. If an edge of $\bar{X}(\Ga)$ is not degenerate under $r$, then it inherits a label from its image, otherwise we leave it unlabelled. $\bar{S}(\Ga)$ is non-positively curved (\cite[Proposition II.11.13]{bridson1999metric}), hence $\bar{X}(\Ga)$ is a $CAT(0)$ cube complex. Since the action of $H$ on $X(\Ga)$ preserves $\Lambda_i$-components and $\Ga_2$-components, there is an action $H\acts \bar{X}(\Ga)$ by automorphisms such that $r$ is $H$-equivariant. 

Let $\h$ be the collection of hyperplanes in $\bar{X}(\Ga)$ which comes from hyperplanes in $C_i$ dual to the $[0,1]$ factor. Note that different elements in $\h$ do not intersect. Let $T$ be the dual tree to the wallspace $(\bar{X}(\Ga),\h)$, i.e. each vertex of $T$ corresponds to a component of $\bar{X}(\Ga)\setminus \h$, and two vertices are adjacent if the corresponding two components are adjacent along an element of $\h$. Then there is an induced action $H\acts T$. Moreover, we have an $H$-equivariant cubical map $q:\bar{X}(\Ga)\to T$ by collapsing edges which are not dual to hyperplanes in $\h$. Pick point $s\in T$. If $s$ is a vertex, then $q^{-1}(s)$ is either a $\Lambda_i$-component or a $\Ga_2$-component. If $s$ is not a vertex, then $q^{-1}(s)$ is isometric to a Euclidean space.

Up to passing to a subgroup of index 2, we assume $H$ acts on $T$ without inversion. Let $\G=T/H$. Then the natural map $\bar{X}(\Ga)/H=K\to \G$ induces a graph of spaces decomposition of the orbifold $K$. Moreover, any cover of $K$ has an induced graph of spaces decomposition. Since the action $H\acts \bar{X}(\Ga)$ maps $\Lambda_i$-components to $\Lambda_i$-components for each $i$, and maps $v$-edges to $v$-edges for each vertex $v\in\Ga_2$, it makes sense to talk about $\Lambda_i$-components and $\Ga_2$-components in $K$.

Let $K_v\subset K$ be a vertex space. A finite sheet cover $K'_v$ of $K_v$ is \textit{good} if the following holds. If $K_v$ is a $\Ga_2$-component, then $K'_v$ is good if it is torsion free and special. Corollary~\ref{good cover} implies such cover exists. Suppose $K_v$ is a $\Lambda_i$-component. Then $K_v=X_v/G_v$, where $X_v$ is a $\Lambda_i$-component in $\bar{X}(\Ga)$ and $G_v\le H$ is the stabilizer of $X_v$. $X_{v}$ has a product decomposition $X_v=X(\Delta_i)\times X(\Ga_{1,\Delta_i})$. Note that $G_v$ maps $w$-edges to $w$-edges for each vertex $w\in\Delta_i$, however, this may not be true if $w\in\Ga_{1,\Delta_i}$. $K'_v$ is good if it corresponds to a subgroup $G'_v\le G_v$ such that (1) $G'_v=\Z^{n}\times L'_v$ where $\Z^{n}$ acts trivially on the $X(\Ga_{1,\Delta_i})$ and $L'_v$ acts trivially on the $X(\Delta_i)$ factor, here $n$ is the number of vertices in $\Delta_i$; (2) there exists a $L'_v$-invariant admissible labelling of $X(\Ga_{1,\Delta_i})$. Since $X(\Delta_i)\cong \E^{n}$, we can apply Lemma~\ref{subgroup with no twist} $n$ times to deduce the existence of cover satisfying (1). Since $\Ga_{1,\Delta_i}$ is a disjoint union of cliques, Corollary~\ref{existence of admissible labelling on tree} implies that there exists a good cover of $K_v$.

Using Lemma~\ref{equal torus} and the argument in Section~\ref{one clique}, we can construct a finite cover $K'\to K$ such that each vertex space of $K'$ is good. Then it is possible to put an admissible labelling on each $\Lambda_i$-component of $\bar{X}(\Ga)$ in a way which is $\pi_1(K')$-invariant. By applying the $H$-equivariant retraction map $r:\bar{X}(\Ga)\to X(\Ga)$, we obtain a $\pi_1(K')$-invariant admissible labelling of $X(\Ga)$. Up to conjugation, we can assume $\pi_1(K')$ preserves the reference labelling (Corollary \ref{conjugation and admissible labelling}). Then the theorem follows from Lemma \ref{conjugation and admissible orientation}.
\end{proof}

\section{Induced 4-cycle and failure of commensurability}
\label{sec_failure of commensurability}
\subsection{Label-preserving action on product of two trees}
\label{subsec_label-preserving action}
Pick two trees $T$ and $T'$, and let $H$ be a torsion free group acting geometrically on $T\times T'$ by automorphisms. $H$ is \textit{reducible} if it has a finite index subgroup which is a product of free groups, or equivalently, $(T\times T')/H$ has a finite sheet cover which is isomorphic to a product of two graphs. Otherwise $H$ is \textit{irreducible}.

Up to passing to a subgroup of index 2, we assume $H$ does not flip the two tree factors. Then there are factors actions $H\acts T$ and $H\acts T'$.

\begin{thm}$($\cite{burger2000lattices}$)$
\label{non-discrete}
Let $h_1:G\to \aut(T)$ and $h_2: G\to \aut(T')$ be the homomorphisms induce by the factor actions. Then the following are equivalent.
\begin{enumerate}
\item The image of $h_1$ is a discrete subgroup of $\aut (T)$.
\item The image of $h_2$ is a discrete subgroup of $\aut (T')$.
\item $G$ is reducible.
\end{enumerate}
\end{thm}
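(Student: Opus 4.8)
The plan is to follow Burger--Mozes \cite{burger2000lattices}, and I sketch the argument. As in the statement we may assume $G$ does not interchange the two tree factors, so the factor actions $G\acts T$ and $G\acts T'$ are defined, with $h_1,h_2$ the corresponding homomorphisms; set $N_i=\ker h_i$. Since $G$ acts properly and cocompactly on $T\times T'$ preserving the product, each $h_i(G)$ acts cocompactly on the corresponding (locally finite) tree. \textbf{Easy direction $(3)\Rightarrow(1),(2)$.} If $G$ is reducible, pass to a finite-index subgroup $G_0\le G$ with $(T\times T')/G_0$ isomorphic to a product of two finite graphs $Y\times Y'$. Then $G_0=\pi_1(Y)\times\pi_1(Y')$, where $\pi_1(Y)$ acts geometrically on $T\cong\widetilde Y$ as its deck group and trivially on $T'$, and symmetrically for $\pi_1(Y')$. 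Hence $h_1(G_0)=h_1(\pi_1 Y)$ is the image of a proper action on a locally finite tree, so it is discrete; as $h_1(G_0)$ is cocompact and of finite index in $h_1(G)$, the latter is discrete as well. The case of $h_2$ is symmetric.

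\textbf{Hard direction $(1)\Rightarrow(3)$} (then $(2)\Rightarrow(3)$ by the symmetry $T\leftrightarrow T'$, and $(1)\Leftrightarrow(2)$ follows). Suppose $\Lambda:=h_1(G)$ is discrete, hence a uniform lattice in $\aut(T)$ with finite vertex stabilizers $\Lambda_v$. Fix a vertex $v\in T$; since $G$ acts properly cocompactly on $T\times T'$, the stabilizer $G_v$ acts cocompactly on $\{v\}\times T'\cong T'$ (the image of $\{v\}\times T'$ in the compact quotient $(T\times T')/G$ is the compact space $G_v\backslash T'$). The normal subgroup $N:=N_1$ lies in $G_v$ with $G_v/N\hookrightarrow\Lambda_v$ finite, so $N$ acts cocompactly on $T'$; since $G$ is torsion free and acts freely, $N$ acts freely on $T'$, hence $N$ is finitely generated free and $T'\to T'/N$ is the universal cover with deck group $N$. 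Now $G$ normalizes $N$, so $G$ acts on the finite graph $T'/N$ with finite image; replacing $G$ by the finite-index subgroup $G^0:=h_1^{-1}(\Lambda^0)$, where $\Lambda^0\le\Lambda$ is a finite-index subgroup acting trivially on $T'/N$, we may assume $G^0$ acts trivially on $T'/N$. Then for each $\gamma\in G^0$ the automorphism $h_2(\gamma)$ of $T'$ covers the identity of $T'/N$, hence is a deck transformation, i.e.\ $h_2(\gamma)\in h_2(N)$. Thus $h_2(G^0)=h_2(N)$, so writing $Z^0:=(\ker h_2)\cap G^0$ we get $G^0=N\cdot Z^0$; since $[N,Z^0]\subset N_1\cap N_2=\{1\}$ and $N\cap Z^0\subset N_1\cap N_2=\{1\}$, this is a direct product $G^0=N\times Z^0$. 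Finally $Z^0$ acts freely on $T$ (as $\ker h_2$ does), so $Z^0$ is free; hence $G$ has a finite-index subgroup $G^0$ that is a direct product of two free groups, i.e.\ $G$ is reducible.

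\textbf{Main obstacle.} The crux lies in the two structural inputs of the hard direction: first, that discreteness of $h_1(G)$ forces $N=\ker h_1$ to act \emph{cocompactly} (hence freely and cocompactly) on $T'$; and second, the deck-transformation step upgrading ``$G^0$ acts trivially on $T'/N$'' to ``$h_2(G^0)\subseteq h_2(N)$'', which is what actually produces the splitting. Without the second point one only learns that $G^0$ is free-by-free, which need not be virtually a product (e.g.\ free-by-cyclic groups); it is precisely the identification $T'\simeq\widetilde{T'/N}$ together with normality of $N$ that closes the gap.
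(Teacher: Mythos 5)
Your proposal is correct, and it is essentially the argument of the cited source: the paper itself gives no proof of Theorem \ref{non-discrete} (it is quoted from \cite{burger2000lattices}), and your sketch reproduces the standard Burger--Mozes reasoning — discreteness of $h_1(G)$ forces $\ker h_1$ to act freely and cocompactly on $T'$, and the deck-transformation argument on $T'\to T'/N$ then splits a finite-index subgroup as a product of free groups. The only compressed spot is the cocompactness of $G_v$ on $\{v\}\times T'$, which is a standard consequence of properness/cocompactness of the $G$-action together with finiteness of $\{\lambda\in h_1(G): d(\lambda v,v)\le R\}$, so there is no genuine gap.
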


Let $T_n$ be the $n$-valence tree. When $\Ga$ is a 4-gon, $X(\Ga)\cong T_4\times T_4$. Thus we can label edges of $T_4\times T_4$ by vertices of $\Ga$. It is known that there is an irreducible group acting on $T_4\times T_4$ (\cite{janzen2009smallest}), however, such example is not label-preserving. In this section, we will construct an irreducible group acting on $T_4\times T_4$ by label-preserving automorphisms. Given a label-preserving action of $H$ on $T_4\times T_4$, we can pass to an action of $H$ on $T_3\times T_3$ by modifying each $T_4$ as follows.
\begin{center}
\includegraphics[scale=0.50]{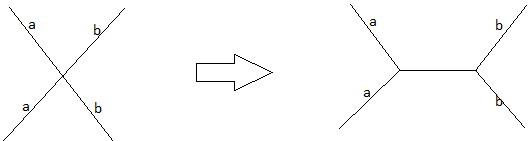}
\end{center}

The other direction is given as follows.
\begin{lem}
\label{T_3}
Suppose there exists an irreducible group $H$ acting geometrically on $T_{3}\times T_3$. Then there exists an irreducible group $H'$ acting geometrically on $T_4\times T_4$ in a label-preserving way.
\end{lem}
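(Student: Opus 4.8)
The plan is to reverse the modification $T_4 \leadsto T_3$ described just above the lemma. Passing to a subgroup of index at most $2$ we may assume $H$ does not interchange the two tree factors, so that there are genuine factor actions $H \acts T_3$ on each factor, and passing to a further finite-index subgroup we may assume $H$ is torsion free; then $Z := (T_3\times T_3)/H$ is a compact non-positively curved square complex, its $1$-skeleton splits as a horizontal graph $\Lambda_h$ and a vertical graph $\Lambda_v$, each $3$-regular, and every vertex link of $Z$ is the complete bipartite graph $K_{3,3}$. The goal is to find an $H$-invariant perfect matching $M_1$ on the first tree $T_3$, together with an $H$-invariant decomposition of the $4$-valent tree $T_4 := T_3/M_1$ into two spanning subforests each of whose vertices has valence $2$ (a ``$2$-factorization''); colouring these two forests by the letters $a$ and $c$ turns $T_4$ into the $1$-skeleton of the first factor of $X(C_4)$ with its standard labelling. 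Doing the same for the second factor with colours $b$ and $d$, and then collapsing $M_1$ in the first factor and $M_2$ in the second, carries $T_3 \times T_3$ to $T_4\times T_4$ together with an edge-labelling by the vertices of the $4$-cycle $C_4$, and the $H$-action descends precisely because $M_1$, $M_2$ and the colourings were chosen $H$-invariantly.

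Granting such invariant structures, the remaining verifications are routine. Collapsing the matching edges (at every vertical, resp.\ horizontal, level) collapses a product $M_i\times T_3$, i.e.\ a disjoint union of half-open strips deformation-retracting onto their $T_3$-slices, so $T_4\times T_4$ is again a $\mathrm{CAT}(0)$ cube complex and $Z' := (T_4\times T_4)/H$ satisfies $\pi_1(Z')\cong H$; thus $H$ acts on $T_4\times T_4$, and after passing to one more index-$2$ subgroup this action is free (no inversions of the collapsed edges) hence geometric. Since the colourings are $H$-invariant the action is label-preserving, and at every vertex the labelled link of $T_4\times T_4$ coincides with the labelled link of the basepoint of $S(C_4)$ --- each horizontal--vertical pair of labels is one of $ab,bc,cd,da$, i.e.\ an edge of $C_4$, because $C_4$ is the complete bipartite graph on $\{a,c\}$ and $\{b,d\}$ --- so Lemma~\ref{edge-labelling} identifies $T_4\times T_4$ label-preservingly with $X(C_4)$. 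Hence $H' := H$ acts geometrically and label-preservingly on $X(C_4) = T_4\times T_4$, and it is irreducible because it is the same group $H$ we began with; alternatively, were $H'$ reducible then by Theorem~\ref{non-discrete} its factor action on $T_4$ would be discrete, and pulling back along the collapse $T_3\to T_4$ would force the factor action of $H$ on $T_3$ to be discrete as well, contradicting irreducibility of $H$.

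The hard part is the first step. The factor actions $H\acts T_3$ need not be discrete, and in general they have infinite vertex stabilizers, so one cannot expect the invariant matching to be produced merely by passing to a finite-index subgroup: a vertex stabilizer of a factor action may permute its three incident edges cyclically, and no finite-index subgroup of $H$ can undo this. The way around this is to not insist on preserving the given action but to first replace it by a suitable irreducible action on $T_3 \times T_3$ whose factor actions carry the required combinatorial structure --- for instance by remodelling the tree factors (subdividing and re-grouping edge-ends), or by exhibiting an explicit irreducible example, such as the one obtained from \cite{janzen2009smallest} via the forward construction, for which the matchings and $2$-factorizations are manifestly present --- and only then running the collapse; when such a remodelling is available one chooses the matchings and the $2$-factorizations compatibly with the square structure of $Z$ (Petersen's theorem guarantees that a bridgeless $3$-regular graph has a perfect matching and that a $4$-regular graph has a $2$-factorization). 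Establishing that the reverse construction genuinely applies to every irreducible $H\acts T_3\times T_3$, rather than only to those arising from the forward construction, is the technical core of the lemma, after which the general case of a graph $\Ga$ containing an induced $4$-cycle is handled by promoting the $X(C_4)$-action to an $X(\Ga)$-action as in Section~\ref{sec_failure of commensurability}.
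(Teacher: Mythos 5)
Your plan hinges on producing an $H$-invariant perfect matching $M_1$ on the factor tree $T_3$ (and an invariant $2$-factorization after collapsing), and you correctly identify that this is exactly where the argument breaks: for an \emph{irreducible} $H$, Theorem~\ref{non-discrete} forces the factor actions $H\acts T_3$ to have non-discrete image, so the local action of a vertex stabilizer on the three incident edges is typically all of $S_3$ (or at least transitive), and no invariant matching can exist — nor can passing to a finite-index subgroup of $H$ repair this, since the closure of the image in $\aut(T_3)$ is unchanged. Your proposed remedies (``remodel the tree factors,'' ``use an explicit example for which the matchings are manifestly present'') do not close this gap: the lemma must apply to an arbitrary irreducible $H\acts T_3\times T_3$ (in the paper it is fed the specific group $\pi_1(Z')$ constructed from Figure~\ref{the example}, not one arising from any reverse construction), and you explicitly leave ``the technical core of the lemma'' unresolved. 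As written, the proposal is therefore incomplete.

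The paper's proof avoids the obstruction by abandoning the constraint $H'=H$, which your argument implicitly insists on. Instead of collapsing $T_3$ to $T_4$, one \emph{expands} it: replace each vertex of $T_3$ by an $a$-labelled $3$-cycle and each edge by a pair of consistently oriented $b$-edges, producing a $4$-valent graph $\G$ whose universal cover is $T_4$. Every automorphism of $T_3$ induces a \emph{unique} label- and orientation-preserving automorphism of $\G$ (no choices, hence no invariance problem), so the factor action $H\acts T_3$ yields $H\acts\G\times T_3$, and one takes $H'=G=\pi_1\bigl((\G\times T_3)/H\bigr)$, an extension $1\to\pi_1(\G)\to G\to H\to 1$ acting geometrically on $T_4\times T_3$. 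Irreducibility of $G$ follows from Theorem~\ref{non-discrete} because its factor action on the second tree has the same non-discrete image as that of $H$; repeating on the other factor gives the label-preserving irreducible action on $T_4\times T_4$. The moral difference is that the paper changes the group by a free-by-$H$ extension rather than trying to rigidify the given action, and this is what makes the construction work for every irreducible $H$.
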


\begin{proof}
We modify the first factor $T_3$ to obtain a 4-valence graph $\G$ as follows.
\begin{center}
\includegraphics[scale=0.50]{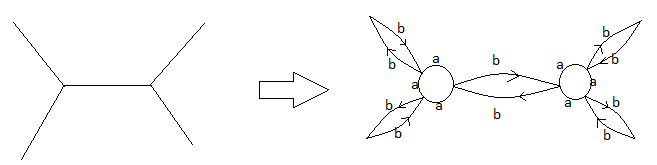}
\end{center}
More precisely, we replace each vertex by a 3-cycle with its edges labelled by $a$, and double each edge to obtain two edges labelled by $b$. We also orient each $b$-edge of $\G$ such that the orientations on two consecutive $b$-edges are consistent. Then each automorphism of $T_3$ induces a unique label-preserving automorphism of $\G$ which respects the orientation of $b$-edges. Thus the factor action $H\acts T_3$ induces a label-preserving action $H\acts \G$. In particular, we obtain an action $H\acts \G\times T_3$. This action is geometric, since there is an $H$-equivariant map $\G\times T_3\to T_3\times T_3$ induced by the natural map $\G\to T_3$. Let $G=\pi_1((\G\times T_3)/H)$. Then $G$ acts geometrically on $T_4\times T_3$, and there is an exact sequence $1\to \pi_1(\G\times T_3)\to G\to H\to 1$. By comparing the action of $H$ and $G$ on the second tree factor, we deduce the irreducibility of $G$ from the irreducibility of $H$ and Theorem \ref{non-discrete}. We can modify the second tree factor in a similar way.
\end{proof}

Now we construct an irreducible group acting on $T_3\times T_3$. First we consider the following modification of the main example in \cite{wise1996non}. Let $X$ be the graph in the top of Figure \ref{the example} below. The top left and top right picture indicate two different ways of labelling and orientating edges of $X$. Let $Y$ be the graph in the bottom. Then there are two different covering maps $f_1:X\to Y$ and $f_2:X\to Y$ induced by the edge labelling and orientation in the top left and top right respectively.

\begin{figure}[h!]
\includegraphics[scale=0.5]{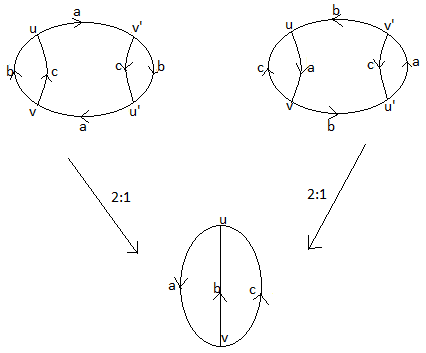}
\caption{}
\label{the example}
\end{figure}

Let $Z'=(X\times[0,1]\sqcup Y\times[0,1])\sim$, where for $i=0,1$, we identify $X\times\{i\}$ with $Y\times\{i\}$ via the covering map $f_{i+1}$ (i.e. $x$ and $f_{i+1}(x)$ are identified). One readily verify that with the natural cube complex structure on $Z'$, the universal cover of $Z'$ is isomorphic to $T_3\times T_3$. Now we collapse the $[0,1]$ factor in $Y\times [0,1]$, which gives a homotopy equivalence $Z'\to Z$. A minor adjustment of the argument in \cite[Chapter II.3.2]{wise1996non} implies $\pi_1(Z)$ is irreducible, hence $\pi_1(Z')$ is also irreducible. For the convenience of the reader, we give a detailed proof of the irreducibility of $\pi_1(Z)$ in the appendix.

\begin{thm}
\label{irreducible group exists}
There exists a torsion free irreducible group acting geometrically on $T_3\times T_3$. Hence there exists a torsion free irreducible group acting geometrically on $T_4\times T_4$ by label-preserving automorphisms. 
\end{thm}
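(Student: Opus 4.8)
The plan is to realize the required group as the fundamental group of an explicit compact non-positively curved square complex, following and slightly modifying Wise's classical construction.

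First I would fix the two finite graphs $X$ and $Y$ of Figure~\ref{the example} together with the two covering maps $f_1,f_2\colon X\to Y$ read off from the two edge labellings and orientations drawn there, and form the compact square complex $Z'=(X\times[0,1]\sqcup Y\times[0,1])/\!\sim$, in which $X\times\{i\}$ is glued to $Y\times\{i\}$ along $f_{i+1}$ for $i=0,1$. The routine point is the link computation: the link of every vertex of $Z'$ is the complete bipartite graph $K_{3,3}$, with one side the three ``vertical'' directions coming from the edges of $X$ (each identified in pairs with edges of $Y$ by the two coverings) and the other the three ``horizontal'' directions coming from the interval factors. By Gromov's link condition $Z'$ is non-positively curved, and since every link is a complete bipartite graph the universal cover splits as a product of two trees, namely $\widetilde{Z'}\cong T_3\times T_3$. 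As $Z'$ is a genuine square complex, not an orbifold, $\pi_1(Z')$ acts freely; being also cocompact, the action is geometric, and $\pi_1(Z')$ is torsion free.

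Next, collapsing the $[0,1]$-factor of $Y\times[0,1]$ gives a homotopy equivalence $Z'\to Z$, so $\pi_1(Z')\cong\pi_1(Z)$, and it suffices to show $\pi_1(Z)$ is irreducible. By Theorem~\ref{non-discrete} this is equivalent to the image of one of the two factor homomorphisms $\pi_1(Z)\to\aut(T_3)$ being non-discrete. I would prove this by adapting the argument of \cite[Chapter II.3.2]{wise1996non} to the present data $X,Y,f_1,f_2$: using the specific combinatorics of the two covering maps, one exhibits for every $n$ a nontrivial element of $\pi_1(Z)$ whose image in $\aut(T_3)$ fixes the ball of radius $n$ about a base vertex, which forces non-discreteness. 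This is the only step demanding real work; I would carry out the details in the appendix and merely invoke them in the main text. Granting it, $H=\pi_1(Z')$ is a torsion free irreducible group acting geometrically on $T_3\times T_3$, which is the first assertion.

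Finally, for the second assertion I would feed $H$ into Lemma~\ref{T_3}, obtaining an irreducible group $H'$ acting geometrically on $T_4\times T_4$ by label-preserving automorphisms. Inspecting the proof of that lemma, $H'$ is an iterated extension of $H$ by products of free groups, hence torsion free, and its irreducibility is transported from $H$ via Theorem~\ref{non-discrete} by comparing the two actions on the tree factor that is left unchanged. The main obstacle is thus concentrated entirely in the non-discreteness computation of the third paragraph; everything else is a routine check of links, a homotopy equivalence, and applications of the two cited theorems.
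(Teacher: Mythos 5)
Your construction is the same as the paper's: the same complex $Z'$ glued from $X\times[0,1]$ and $Y\times[0,1]$ via $f_1,f_2$, the link check giving $\widetilde{Z'}\cong T_3\times T_3$, the collapse $Z'\to Z$, and the final passage to $T_4\times T_4$ via Lemma~\ref{T_3}. The only divergence is in how irreducibility of $\pi_1(Z)$ is certified, and here your proposal stops short of the actual content. You propose to exhibit, for every $n$, a nontrivial element whose image in $\aut(T_3)$ fixes the $n$-ball and then invoke Theorem~\ref{non-discrete}; the paper instead argues directly from the definition of reducibility: if $Z$ had a finite cover splitting as a product of graphs, every vertical loop would virtually commute with every horizontal loop, so it suffices to produce one vertical loop $\sigma$ and one horizontal loop $e_v$ that do not virtually commute (an anti-torus). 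Both criteria are legitimate and are linked by Theorem~\ref{non-discrete}, but in either case the whole theorem rests on a concrete combinatorial computation tied to the specific maps $f_1,f_2$, and your proposal defers this entirely ("I would carry out the details in the appendix"). That computation is precisely what the paper's appendix supplies: one introduces the operator $\phi_1$ (lift through $f_1$, push down through $f_2$), observes that membership in $G_1=f_{1*}(\pi_1 X)$ is detected by the parity of $\sharp_b+\sharp_c$, proves that $\sigma\notin G_1$ forces $\phi_1(\sigma^2)\notin G_1$, and iterates to conclude that $\sigma^m$ and $e_v^n$ never commute. Without this (or an equivalent verification that your fixed-ball elements exist and are nontrivial), the argument is a correct outline rather than a proof; everything else you wrote is routine and matches the paper.
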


\subsection{The general case}
Let $G$ be a group. Recall that the \textit{profinite topology} on $G$ is the topology generated by cosets of its finite index subgroup. A subset $K\subset G$ is \textit{separable} if it is closed in the profinite topology. Equivalently, $K$ is separable if for any $g\notin K$, there exists a finite index normal subgroup $N\vartriangleleft G$ such that $\phi(g)\notin \phi(K)$ for $\phi:G\to G/N$. 

$G$ is \textit{residually finite} if the identity element is closed under profinite topology. Being residually finite is a commensurability invariant. If $G$ is residually finite, then the solution to any equation on $G$ is separable, since the group multiplication is continuous with respect to the profinite topology. In particular, the centralizer of every element in $G$ is separable. 

\begin{lem}
Let $\Ga$ be a 4-gon and let $H$ be an irreducible group acting geometrically on $X(\Ga)$ by label-preserving automorphisms. Then $H$ is not residually finite.
\end{lem}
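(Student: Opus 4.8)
The strategy is to exploit the classical dichotomy, recorded here as Theorem~\ref{non-discrete}, between discreteness of the factor actions and reducibility, together with the standard fact that residual finiteness forces centralizers to be separable. Suppose, for contradiction, that $H$ is residually finite. Since $\Ga$ is a $4$-gon, $X(\Ga)\cong T_4\times T_4$, and because $H$ acts in a label-preserving way it preserves the two tree factors up to passing to an index-two subgroup; I may therefore assume $H$ acts on $T_4\times T_4$ without flipping the factors, giving factor homomorphisms $h_1:H\to\aut(T_4)$ and $h_2:H\to\aut(T_4)$. Irreducibility of $H$ together with Theorem~\ref{non-discrete} shows the images $h_1(H)$ and $h_2(H)$ are both \emph{non-discrete} subgroups of $\aut(T_4)$.

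The plan is to derive a contradiction from non-discreteness of $h_1(H)$ using residual finiteness. First I would pick an element $g\in H$ acting as a hyperbolic isometry on the first tree factor $T$, translating along an axis $\ell\subset T$ (such $g$ exists: $H$ acts geometrically on $T\times T'$, so it contains elements whose first-factor component is hyperbolic — e.g. take any $\Z^2$-generator coming from a standard flat, or argue via coboundedness). Consider the centralizer $C_H(g)$. In $\aut(T)$, the centralizer of a hyperbolic element is contained in the stabilizer of its axis and in fact, after finite index, consists of translations along $\ell$ together with automorphisms fixing $\ell$ pointwise. The point is that non-discreteness of $h_1(H)$ produces, inside $h_1(H)$, a sequence of nontrivial elements fixing larger and larger balls around a point of $\ell$ — in particular elements that fix $\ell$ pointwise and hence commute with $h_1(g)$, but which are not controlled by any finite quotient: any finite-index normal subgroup $N\vartriangleleft H$ contains elements whose $h_1$-image fixes an arbitrarily large ball, so $\phi(g)$ has, in $H/N$, a centralizing element not accounted for by $C_H(g)$. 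More precisely, I would show: if $C_H(g)$ were separable, then for the element $g$ one could separate it (or rather separate some specific $x\notin C_H(g)$ that becomes central in every finite quotient) — contradicting that the group-multiplication/centralizer is closed in the profinite topology.

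The cleanest route is probably the following. Take $g\in H$ hyperbolic on the first factor $T$ and elliptic (bounded orbit) on the second factor $T'$, with axis $\ell$; such $g$ exists because the stabilizer in $H$ of a vertex $t'\in T'$ acts cocompactly on $T\times\{t'\}$ and hence contains a hyperbolic element. Then $C_H(g)\supseteq \langle g\rangle$, and I claim $C_H(g)$ acts properly on $\ell\times T'$; using non-discreteness of $h_2(H)$ restricted appropriately (or a direct ping-pong/Baire-category argument on the $T'$-factor of the stabilizer of $\ell$) one finds $x\in H\setminus C_H(g)$ such that for every finite-index normal $N\vartriangleleft H$, the images $\phi(x)$ and $\phi(g)$ commute in $H/N$: indeed $[g,x]$ lies in the kernel of the first-factor action restricted to a suitable finitely generated subgroup, and non-discreteness means the closure of $h_2(\langle g,x\rangle)$ in $\aut(T')$ contains commutators that are ``invisible'' in every finite quotient. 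Hence $x$ lies in the profinite closure of $C_H(g)$ but not in $C_H(g)$ itself, so $C_H(g)$ is not separable, so $H$ is not residually finite.

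\textbf{Main obstacle.} The delicate point is making the last paragraph rigorous: producing a concrete element $x\notin C_H(g)$ that is forced to commute with $g$ in \emph{every} finite quotient. The natural mechanism is that the kernel $K_2=\ker h_2$ maps onto a non-discrete subgroup of $\aut(T)$ (by symmetry and Theorem~\ref{non-discrete}), so $K_2$ contains elements fixing arbitrarily large balls in $T$ while being nontrivial; combining a well-chosen $k\in K_2$ near the identity in $\aut(T)$ with $g$ and passing to a finite quotient $H/N$, the relation $[g,k]\in N$ holds once the ball fixed by $h_1(k)$ is large enough relative to $N$, yet $k\notin C_H(g)$. I expect the bookkeeping here — matching ``size of ball fixed'' against ``index of $N$'' uniformly, and verifying $k$ can be chosen outside $C_H(g)$ — to be the technical heart of the argument, though it is essentially the Burger--Mozes non-residual-finiteness mechanism adapted to the label-preserving setting and to $X(\Ga)$ for a general $4$-gon rather than just a product of trees.
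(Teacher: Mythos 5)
There is a genuine gap, and it sits exactly where you place your ``main obstacle'': the claim that non-discreteness of the factor actions produces an element $x\notin C_H(g)$ (or $k\in\ker h_2$) with $[g,x]\in N$ for \emph{every} finite-index normal $N\vartriangleleft H$. Nothing forces an element whose $h_1$-image fixes a large ball of $T$ to lie in a given finite-index subgroup $N$: membership in $N$ is an algebraic condition on $H$, not a metric smallness condition in $\aut(T)$, and ``large fixed ball relative to $N$'' has no meaning without some additional structure tying $N$ to the geometry. In fact your argument, as outlined, uses only that $H$ acts geometrically, preserves the two factors, and is irreducible (hence has non-discrete factor images by Theorem \ref{non-discrete}); if that were enough, it would show that \emph{every} irreducible uniform lattice on a product of two trees fails to be residually finite. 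That is false: cocompact arithmetic lattices arising from quaternion algebras act irreducibly on products of trees (including $T_4\times T_4$), are linear, and hence residually finite. So the label-preserving hypothesis must enter in an essential way beyond merely guaranteeing factor preservation, and the Burger--Mozes mechanism you invoke cannot be made to work at this level of generality.

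The paper's proof runs in the opposite logical direction and uses residual finiteness constructively rather than trying to exhibit a non-separable centralizer. Assuming $H$ is residually finite, for each vertex $v\in\P(\Ga)$ one looks at the stabilizer $H_v$ of the parallel set $P_v=\ell_v\times\ell_v^{\perp}$ (cocompact on $P_v$ because $H$ is label-preserving), finds a finite-index $\Z\oplus L_v\le H_v$ and a generator $g$ of the $\Z$ factor, and uses separability of $C_H(g)$ to produce a finite-index subgroup of $H$ whose intersection with $H_v$ does not flip the two ends of $\ell_v$. Doing this for the finitely many orbits of parallel sets yields a finite-index $H'\le H$ for which all standard lines can be oriented $H'$-invariantly and compatibly with parallelism; together with label-preservation this conjugates $H'$ into the reference-label-and-orientation preserving group, i.e.\ into $G(\Ga)=F_2\times F_2$, so $H$ is reducible --- contradicting the hypothesis. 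If you want to salvage your write-up, you should replace the ``elements invisible in finite quotients'' step by this separability-of-centralizers argument, whose whole point is to straighten the action (kill end-flips and choose an invariant orientation) rather than to contradict residual finiteness head-on.
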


\begin{proof}
We argue by contradiction and assume $H$ is residually finite. For each vertex $v\in\P(\Ga)$, let $P_v$ be the $v$-parallel set defined in Definition \ref{v-parallel set} and let $H_v\le H$ be the stabilizer of $P_v$. Since $H$ is label-preserving, $H_v$ acts on $P_v$ cocompactly. Let $P_{v}=\ell_v\times \ell_{v}^{\perp}$ be the product decomposition of $P_{v}$, where $\ell_v$ is a standard geodesic line with $\Delta(\ell_v)=v$. 

Note that there exists a finite index subgroup $H'_v\le H_v$ such that $H'_v=\Z\oplus L_v$ with $\Z$ acting trivially on the $\ell_{v}^{\perp}$ factor and $L_v$ acting trivially on the $\Z$ factor. Let $g$ be a generator of the $\Z$ factor of $H'_v$. Then the centralizer $C_{H}(g)$ of $g$ in $H$ is of finite index in $H_v$. Since $H$ is residually finite, $C_{H}(g)$ is separable in $H$, hence there exists a finite index subgroup $H_1\le H$ such that $H_1\cap H_v=C_{G}(g)$. Note that the factor action $C_G(g)\acts \ell_v$ does not flip the two ends of $\ell_v$. 

Since there are only finitely many $H$-orbits of parallel sets of standard lines, we can repeat the above argument finitely many times to obtain a finite index subgroup $H'\le H$ such that for each vertex $v\in \P(\Ga)$, the factor action $H'_v\acts\ell_v$ does not flip the two ends of $\ell_v$. In this case, we can orient each standard line in $X(\Ga)$ in an $H'$-invariant way such that they are compatible with parallelism. Thus there exists $k\in\aut(X(\Ga))$ such that $kH'k^{-1}$ preserves both the reference labelling and the reference orientation, which implies $H'$ is reducible.
\end{proof}

\begin{thm}
Let $\Ga$ be any finite simplicial graph which contains an induced 4-cycle. Then there exists a group $H$ acting geometrically on $X(\Ga)$ by label-preserving automorphisms such that it is not residually finite. In particular, $H$ is not commensurable to $G(\Ga)$. 
\end{thm}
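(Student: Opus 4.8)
The plan is to reduce the general case to the 4-cycle case, which has already been settled by the preceding lemma (the existence of an irreducible, non-residually-finite label-preserving uniform lattice on $X(C_4)=T_4\times T_4$). Suppose $\Ga$ contains an induced $4$-cycle $C$ spanned by vertices $\{a,b,c,d\}$ with $a$ adjacent to $b$, $b$ to $c$, $c$ to $d$, $d$ to $a$, and no other edges among them. The induced subgraph $C$ gives an isometric embedding $S(C)\to S(\Ga)$, hence $G(C)=G(\{a,b,c,d\})\cong F_2\times F_2$ sits inside $G(\Ga)$ as the fundamental group of a locally convex subcomplex, and $X(C)\cong T_4\times T_4$ embeds as a $C$-component in $X(\Ga)$. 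Let $W\le\aut(X(C))$ be the torsion-free irreducible label-preserving uniform lattice produced above. The idea is to build a group $H$ acting geometrically on $X(\Ga)$ by label-preserving automorphisms that ``contains a copy of $W$'' in a way that forces non-residual-finiteness: one arranges that some $C$-component of $X(\Ga)/H$ has fundamental group a finite-index overgroup of (a conjugate of) $W$, or at least that $W$ (or a finite-extension of it that is still non-residually-finite) appears as a retract of a finite-index subgroup of $H$, and then invoke that residual finiteness passes to subgroups and that the relevant quotients detect the non-separability already present in $W$.

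The cleanest implementation is via a graph-of-spaces / orbifold construction exactly parallel to the ones used throughout Sections \ref{sec_construction of the finite cover} and \ref{sec_uniform lattice}. First I would take the standard decomposition of $\Ga$ realizing $G(\Ga)$ as built from $G(C)$ by successively adjoining the remaining vertices of $\Ga$ (HNN extensions and amalgamations along subgroups of the form $G(lk(v))$, as in Step 1 of the sketch). This exhibits $S(\Ga)$ as the total space of a graph of spaces one of whose vertex spaces is $S(C)$. Now replace the vertex space $S(C)$ by $Z' := X(C)/W$ — the compact special-at-the-edge-level (but non-special!) complex with $\pi_1(Z')=W$ and universal cover $T_4\times T_4$ — keeping all the other vertex and edge spaces and attaching maps as in the RAAG; since $W$ is label-preserving and acts with the same local structure on $X(C)$ as $G(C)$ does, the gluing maps (which only see edge labels and links along $G(lk(v))$-subcomplexes, all of which lie inside the ``tree directions'' of the product) extend. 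Concretely, along each edge space of the RAAG decomposition incident to the $S(C)$ vertex, the relevant subcomplex is a standard flat or a product of a tree factor with a point, and one uses that $W$ still acts on those subcomplexes compatibly — passing to a finite-index subgroup of $W$ if necessary to make the attaching data match, which is harmless since finite-index subgroups of an irreducible non-residually-finite group with separable amalgam pieces remain non-residually-finite (irreducibility and failure of residual finiteness are commensurability-stable here by Theorem \ref{non-discrete} and the discussion of residual finiteness as a commensurability invariant). Let $K$ be the resulting orbifold/complex, $H=\pi_1(K)$; by construction $H$ acts geometrically on the universal cover, which is label-preservingly isomorphic to $X(\Ga)$ (apply Lemma \ref{edge-labelling}: the cube complex one builds has all vertex links isomorphic to $Lk$ of the basepoint of $S(\Ga)$ in a label-preserving way, because away from the $C$-directions nothing changed and in the $C$-directions we used a complex with the correct links).

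It then remains to show $H$ is not residually finite. Here the argument mirrors the preceding lemma: the $C$-component of $X(\Ga)$ has stabilizer in $H$ containing a conjugate of $W$ as a finite-index subgroup (by the graph-of-spaces structure, the vertex group sitting over the $Z'$ vertex is exactly $W$ up to conjugacy/finite index). If $H$ were residually finite, so would be every subgroup, in particular $W$; but $W$ is not residually finite by the preceding lemma (an irreducible label-preserving lattice on $X(C_4)$ fails residual finiteness, because residual finiteness would force centralizers of the translation generators $g$ of the $\ell_v$-factors to be separable, producing a finite-index subgroup on which all the factor actions are non-flipping, hence giving a reducible finite-index subgroup — contradicting irreducibility via Theorem \ref{non-discrete}). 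This contradiction shows $H$ is not residually finite. Since $G(\Ga)$ is a RAAG, it is residually finite, and residual finiteness is a commensurability invariant, so $H$ is not commensurable to $G(\Ga)$.

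\textbf{The main obstacle} I anticipate is verifying that the attaching maps of the RAAG graph-of-spaces really do extend when the vertex space $S(C)$ is replaced by $Z'$ — i.e. that $W$ (or a finite-index subgroup) acts on each edge subcomplex $X(lk(v))\cap X(C)$ in a manner compatible with its given action on $X(C)$ and with the rest of the decomposition, so that the glued complex is genuinely developable with universal cover $X(\Ga)$. This is where one must be careful about which vertices $v\in\Ga\setminus C$ are adjacent to how many vertices of $C$: the no-induced-4-cycle hypothesis is \emph{not} available here (we are in the opposite regime), so the intersection patterns can be complicated, and one may need to choose the induced $4$-cycle, the order of adjoining vertices, and the finite-index subgroup of $W$ judiciously — or alternatively take the amalgam $H = G(\Ga) *_{G(C)} W'$ (or an HNN analogue) where $W'$ is a common finite-index subgroup of $W$ and $G(C)$, which automatically has the right geometric model and manifestly contains a non-residually-finite subgroup. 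The amalgam route sidesteps the developability bookkeeping and is probably the version I would actually write up; checking that this amalgam still acts \emph{geometrically} and \emph{label-preservingly} on $X(\Ga)$ (rather than some other complex) is then the one genuine technical point, handled again by Lemma \ref{edge-labelling}.
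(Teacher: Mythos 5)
Your high-level strategy --- locate an induced $4$-cycle $C$, take the torsion-free irreducible non-residually-finite label-preserving lattice $W$ on $X(C)\cong T_4\times T_4$ from the preceding results, and extend its action to a geometric label-preserving action on all of $X(\Ga)$ so that $W$ embeds in the resulting group --- is the paper's strategy, and your concluding step (residual finiteness passes to subgroups, RAAGs are residually finite, and residual finiteness is a commensurability invariant) is correct. The gap is precisely in the step you flag as the ``one genuine technical point'': neither of your two proposed constructions of the ambient group works. The amalgam $G(\Ga)*_{G(C)}W'$ ``where $W'$ is a common finite-index subgroup of $W$ and $G(C)$'' cannot be formed at all: $W$ is irreducible and not residually finite, whereas every finite-index subgroup of $G(C)\cong F_2\times F_2$ is reducible and residually finite, so $W$ and $G(C)$ share no finite-index subgroup --- their non-commensurability is exactly the phenomenon being exhibited. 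Even for an amalgam over some honestly common subgroup, Bass--Serre theory would produce an action on a tree of copies of $X(\Ga)$ and $X(C)$, not a geometric action on $X(\Ga)$ itself. The graph-of-spaces route, meanwhile, leaves unresolved the matching problem you name (the elevations of the attaching subcomplexes $S(lk(v)\cap C)$ to $X(C)/W$ need not be compatible with the remaining vertex and edge spaces), and this is not a routine verification --- it is the same kind of difficulty that occupies Section \ref{sec_construction of the finite cover} in the rigid direction.

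The paper sidesteps all of this with the canonical completion. Let $Y\subset X(\Ga)$ be a $C$-component; the covering $X(\Ga)\to S(\Ga)$ restricts to a local isometry $Y\to S(\Ga)$, and $Z=\C(Y,S(\Ga))$ is a cover of $S(\Ga)$ (hence has universal cover $X(\Ga)$) containing $Y$ and having the same vertex set as $Y$, so the label-preserving action $W\acts Y$ extends to a free cocompact action $W\acts Z$ by functoriality of the completion. Setting $H=\pi_1(Z/W)$ gives a group acting geometrically and label-preservingly on $X(\Ga)$ into which $W$ embeds, and the conclusion follows as you argue. To repair your write-up, replace the amalgam/gluing step with this completion argument (or actually carry out the matching); as it stands the existence of $H$ has not been established.
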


\begin{proof}
Let $\Ga'\subset\Ga$ be an induced $4$-cycle and pick a $\Ga'$-component $Y\subset X(\Ga)$. The covering map $X(\Ga)\to S(\Ga)$ induces a local isometry $Y\to S(\Ga)$. Let $Z$ be the canonical completion of $Y\to S(\Ga)$. Then the universal cover of $Z$ is isomorphic to $X(\Ga)$. Let $H$ be a torsion free irreducible group acting on $Y$ in a label-preserving way (Theorem \ref{irreducible group exists}). The $H\acts Y$ extends to a label-preserving free action $H\acts Z$ such that the inclusion $Y\to Z$ is $H$-equivariant. Let $G=\pi_1(Z/H)$. Then $G$ acts geometrically on $X(\Ga)$ by label-preserving automorphisms. Moreover, the inclusion $Y\to X(\Ga)$ induces an embedding $H\to G$. Since $H$ is not residually finite, so is $G$. However, each RAAG is residually finite \cite{haglund2008finite}, thus $H$ and $G(\Ga)$ are not commensurable.
\end{proof}

\begin{cor}
Let $\Ga$ be any finite simplicial graph which contains an induced 4-cycle. Then there exists a group $H$ quasi-isometric to $G(\Ga)$ such that it is not commensurable to $G(\Ga)$. 
\end{cor}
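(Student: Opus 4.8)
The plan is to read this off immediately from the theorem just proved, together with the Milnor--Švarc lemma. Let $\Ga$ be a finite simplicial graph containing an induced 4-cycle, and let $H$ be the group supplied by the preceding theorem, so that $H$ acts geometrically (i.e. properly and cocompactly) on $X(\Ga)$ by label-preserving automorphisms, and $H$ is not residually finite. The only points that need to be recorded are (a) that a group admitting a geometric action on $X(\Ga)$ is quasi-isometric to $G(\Ga)$, and (b) that $H$ is not commensurable to $G(\Ga)$, which is already contained in the previous theorem.

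For (a), first I would note that $X(\Ga)$, with the $CAT(0)$ metric coming from its cube complex structure, is a proper geodesic metric space, and that $G(\Ga)=\pi_1(S(\Ga))$ acts on it geometrically by deck transformations; hence the Milnor--Švarc lemma gives a quasi-isometry $G(\Ga)\simeq X(\Ga)$. Applying the same lemma to the geometric action $H\acts X(\Ga)$ yields $H\simeq X(\Ga)$, and composing the two quasi-isometries gives $H\simeq G(\Ga)$. (One may instead work with the $l^1$-metric on $X(\Ga)$; this changes nothing, since the two metrics are quasi-isometric on a uniformly locally finite complex.)

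For (b), the preceding theorem states precisely that $H$ is not commensurable to $G(\Ga)$: indeed $G(\Ga)$ is residually finite \cite{haglund2008finite}, residual finiteness is inherited by finite-index subgroups and by finite-index overgroups, so no finite-index subgroup of $H$ can be isomorphic to a finite-index subgroup of the residually finite group $G(\Ga)$, while $H$ itself is not residually finite. Combining (a) and (b) gives a finitely generated group $H$ quasi-isometric to $G(\Ga)$ but not commensurable to it, as desired. There is no genuine obstacle here: the corollary is a formal consequence of the theorem once one records that a geometric action produces a quasi-isometry, and all the substantive work is already done in constructing the non-residually-finite label-preserving lattice $H$.
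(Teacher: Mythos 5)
Your proposal is correct and is exactly the argument the paper intends: the corollary is stated without proof as an immediate consequence of the preceding theorem, the implicit reasoning being precisely that a geometric action of $H$ on $X(\Ga)$ yields a quasi-isometry $H\simeq G(\Ga)$ via Milnor--\v{S}varc, while non-residual-finiteness of $H$ versus residual finiteness of $G(\Ga)$ rules out commensurability. You have simply written out these routine steps, so there is nothing to correct.
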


\appendix

\section{Irreducibility of $Z$}
We show the fundamental group of $Z$ in Section \ref{subsec_label-preserving action} is irreducible. We will follow \cite[Chapter II.3.2]{wise1996non} very closely.

Let $X,Y,f_1,f_2$ be as in Section \ref{subsec_label-preserving action}. Note that there is a cubical map $r:X\times [0,1]\to Z$. An edge in $X\times [0,1]$ is \textit{vertical} if it is parallel to the $X$ factor, otherwise it is \textit{horizontal}. An edge in $Z$ is called \textit{vertical} or \textit{horizontal} if it is the $r$-image of a vertical or horizontal edge in $X\times [0,1]$. The \textit{vertical} or \textit{horizontal 1-skeleton} of $Z$, denoted by $Z^{(1)}_v$ or $Z^{(1)}_h$, is the union of all vertical edges or horizontal edges in $Z$. We identify the vertical 1-skeleton $Z_v^{(1)}$ with $Y$, hence edges in $Z_v^{(1)}$ inherit orientation and labelling from $Y$. The two vertices of $Z$ are in $Z_v^{(1)}$, and we also denote them by $u$ and $v$.

Recall that $X$ has 4 vertices $\{u,v,u',v'\}$ (see Figure \ref{the example}), which gives 4 horizontal edges in $X\times [0,1]$. The $r$-image of them are 4 horizontal loops in $Z$, which we denote by $e_u,e_v,e_{u'},e_{v'}$ respectively. The horizontal 1-skeleton of $Z$ has two connected components, one of form $e_u\cup e_{u'}$, and the other of form $e_v\cup e_{v'}$.

Then $Z$ has a graph of spaces decomposition, where the underlying graph is a circle with one vertex, the edge space is $X$, the vertex space is $Y$, and the two boundary morphisms are $f_1$ and $f_2$ respectively. 

Note that if $Z$ has a finite cover which is a product of two graphs, then every pair of vertical edge loop $\ell_1$ (i.e. an edge loop made of vertical edges) and horizontal edge loop $\ell_2$ should virtually commute in $\pi_1(Z)$, i.e. $\ell^{n}_1$ and $\ell^{m}_2$ commute for some non-zero integer $n$ and $m$. So it suffices to find a vertical loop and a horizontal loop which do not virtually commute.

We assume $\ell_1$ and $\ell_2$ are locally geodesic. Pick a lift $\tilde{v}$ of $v$ in the universal cover $\tilde{Z}$ of $Z$. For $i=1,2$, we lift the path $\ell^{\infty}_i$ (which is the concatenation of countably infinitely many $\ell_1$'s) to a geodesic ray $\tilde{\ell}_i$ in $\tilde{Z}$ emanating from $\tilde{v}$. Then $\tilde{\ell}_1$ and $\tilde{\ell}_2$ span a quarter-plane. We identify this quarter plane with $[0,\infty)\times [0,\infty)$ such that $\ell_1=\{0\}\times [0,\infty)$. For integers $n,m\ge 0$, we define $\varphi_{n}(\ell^{m}_1)=p(\{n\}\times [0,m])$ and $\varphi_{n}(\ell^{\infty}_1)=p(\{n\}\times [0,\infty))$ where $p:\tilde{Z}\to Z$.

Let $\omega\subset Y$ be an edge path based at $v$. There are two lifts of $\omega$ with respect to $f_1$, based at $v$ and $v'$ respectively. We map them back in $Y$ via $f_2$ and denote the resulting edge paths by $\phi_1(\omega)$ and $\phi_2(\omega)$ respectively. If $\omega=\ell_1$ is a locally geodesic loop and $\ell_2=e_ v$, then for any $n,m\ge 0$,
\begin{equation}
\label{conjugation appendix}
(\phi_1)^{n}(\omega^{m})=\varphi_n(\omega^{m}).
\end{equation}
Each $\omega$ gives rise to a word on $\{a^{\pm},b^{\pm},c^{\pm}\}$ (however, an arbitrary word may not give rise to an edge path based at $v$). Let $\sharp_{a}(\omega)$ be the number of $a$'s in $\omega$ (counted with sign). We define $\sharp_{b}(\omega)$ and $\sharp_{c}(\omega)$ in a similar way.

Let $\iota$ be the automorphism of $Y$ that flips the $a$-edge and $c$-edge, and fixes the $c$-edge. For $i=1,2$, let $G_i$ be the subgroup of $\pi_1(Y,v)$ induced by $f_i:X\to Y$. We record the following observations.

\begin{lem}
\label{calculation}
Let $\sigma$ be an edge loop in $Y$ based at $v$. 
\begin{enumerate}
\item $\sigma\in G_1\Leftrightarrow \sharp_{b}(\sigma)+\sharp_{c}(\sigma)\equiv_2 0$
\item $\phi_2=\iota\circ\phi_1$
\item $\sigma\notin G_1\Rightarrow \phi_1(\sigma^2)=\phi_1(\sigma)\cdot\phi_2(\sigma)$
\end{enumerate}
\end{lem}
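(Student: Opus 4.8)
The plan is to read everything off the explicit combinatorial data of the two double covers $f_1,f_2\colon X\to Y$ displayed in Figure \ref{the example}. Recall that $Y$ has the two vertices $u,v$ and that its oriented edges are labelled by $a,b,c$, so $\pi_1(Y,v)$ is free and each edge loop at $v$ reads off a word in $a^{\pm},b^{\pm},c^{\pm}$. Since $X$ is connected, each $f_i$ is a connected degree two cover, hence regular; let $\tau\colon X\to X$ be the nontrivial deck transformation of $f_1$, which interchanges the two vertices $v,v'$ over $v$ (and likewise $u,u'$ over $u$). All three assertions are statements about honest edge paths, and each reduces to chasing $\tau$ together with the two covering projections.

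For (1), the point is that $G_1=(f_1)_\ast\pi_1(X,v)$ is exactly the kernel of the monodromy homomorphism $\mu\colon\pi_1(Y,v)\to\mathrm{Sym}\{v,v'\}\cong\Z/2$ of $f_1$, which sends an edge loop to the parity of the number of edges along which the two sheets of $X$ are interchanged. Reading the top-left labelling in Figure \ref{the example}, the $a$-edges lift to $X$ without interchanging sheets, while the $b$- and $c$-edges do interchange them; hence $\mu(\sigma)\equiv_2\sharp_b(\sigma)+\sharp_c(\sigma)$ (signed counts agree with traversal counts modulo $2$), and so $\sigma\in G_1\iff\sharp_b(\sigma)+\sharp_c(\sigma)\equiv_2 0$.

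For (2), I would first verify, edge by edge from the two labellings in Figure \ref{the example}, the identity of combinatorial maps $f_2\circ\tau=\iota\circ f_2\colon X\to Y$, where $\iota$ is the automorphism of $Y$ that interchanges the $a$- and $c$-edges and fixes the $b$-edge. Then, given an edge path $\omega$ based at $v$, its $f_1$-lift $\widetilde\omega_{v'}$ based at $v'$ is $\tau$ applied to its $f_1$-lift $\widetilde\omega_v$ based at $v$; pushing both down by $f_2$ and using the displayed identity gives $\phi_2(\omega)=f_2(\tau\widetilde\omega_v)=\iota(f_2\widetilde\omega_v)=\iota(\phi_1(\omega))$. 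For (3), if $\sigma\notin G_1$ then by (1) its $f_1$-lift $\widetilde\sigma$ based at $v$ terminates at $v'$, so the $f_1$-lift of $\sigma^2$ based at $v$ is the concatenation $\widetilde\sigma\cdot\tau(\widetilde\sigma)$, the second factor being precisely the $f_1$-lift of $\sigma$ based at $v'$; applying $f_2$ and unwinding the definitions of $\phi_1,\phi_2$ yields $\phi_1(\sigma^2)=f_2(\widetilde\sigma)\cdot f_2(\tau\widetilde\sigma)=\phi_1(\sigma)\cdot\phi_2(\sigma)$.

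There is no conceptual difficulty here; the whole content is the bookkeeping behind (1) and (2), namely keeping the edge orientations and the two labellings of Figure \ref{the example} straight, checking that $X$ is connected so that $G_1$ is a proper subgroup, and — the one place an error could slip in — correctly pinning down that the automorphism of $Y$ induced by $\tau$ through $f_2$ is exactly $\iota$ rather than some other graph automorphism. Once (2) is secured in the form $f_2\circ\tau=\iota\circ f_2$, parts (2) and (3) are immediate, and (1) is a one line monodromy computation.
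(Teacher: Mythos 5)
Your argument is correct and amounts to exactly the verification the paper leaves implicit: Lemma \ref{calculation} is recorded there without proof, the intended justification being a direct check of the two double covers $f_1,f_2\colon X\to Y$ in Figure \ref{the example}. Your formalization via the monodromy homomorphism of $f_1$ (for (1)) and the deck-transformation identity $f_2\circ\tau=\iota\circ f_2$ (for (2), with (3) following from uniqueness of path lifts) is sound, and the only remaining content is the figure-dependent bookkeeping you yourself flag, namely which edges interchange the sheets of $f_1$ and that the automorphism of $Y$ induced through $f_2$ by $\tau$ is $\iota$.
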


\begin{lem}
\label{double}
Let $\sigma$ be an edge loop based at $v$. If $\sigma\notin G_1$, then $\phi_1(\sigma^{2})\notin G_1$.
\end{lem}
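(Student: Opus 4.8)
The plan is to convert the statement into a parity computation built from the three identities of Lemma~\ref{calculation}. By Lemma~\ref{calculation}(1), $\phi_1(\sigma^2)\in G_1$ if and only if $\sharp_b(\phi_1(\sigma^2))+\sharp_c(\phi_1(\sigma^2))$ is even, so it suffices to show this sum is odd whenever $\sigma\notin G_1$. First I would apply Lemma~\ref{calculation}(3): since $\sigma\notin G_1$, we have $\phi_1(\sigma^2)=\phi_1(\sigma)\cdot\phi_2(\sigma)$, and therefore $\sharp_x(\phi_1(\sigma^2))=\sharp_x(\phi_1(\sigma))+\sharp_x(\phi_2(\sigma))$ for each label $x$. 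Then I would substitute Lemma~\ref{calculation}(2), $\phi_2(\sigma)=\iota(\phi_1(\sigma))$; since $\iota$ interchanges the $a$- and $c$-edges and fixes the $b$-edge, this gives $\sharp_b(\phi_2(\sigma))=\sharp_b(\phi_1(\sigma))$ and $\sharp_c(\phi_2(\sigma))=\sharp_a(\phi_1(\sigma))$. Adding,
\[
\sharp_b(\phi_1(\sigma^2))+\sharp_c(\phi_1(\sigma^2))=2\sharp_b(\phi_1(\sigma))+\sharp_a(\phi_1(\sigma))+\sharp_c(\phi_1(\sigma))\equiv\sharp_a(\phi_1(\sigma))+\sharp_c(\phi_1(\sigma))\pmod 2 .
\]
Thus the lemma reduces to the claim that $\sharp_a(\phi_1(\sigma))+\sharp_c(\phi_1(\sigma))$ is odd whenever $\sigma\notin G_1$.

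To handle this reduced claim I would return to the covering maps. Let $\tilde\sigma\subset X$ denote the lift of $\sigma$ through $f_1$ starting at $v$; since $\sigma\notin G_1$, this path ends at $v'$, the other preimage of $v$. Reading off $f_1$-labels along $\tilde\sigma$ returns $\sigma$, and reading off $f_2$-labels returns $\phi_1(\sigma)$; hence, modulo $2$, $\sharp_x(\sigma)$ equals the number of edges of $\tilde\sigma$ with $f_1$-label $x$, and $\sharp_x(\phi_1(\sigma))$ equals the number with $f_2$-label $x$. Using the parity relation $\sharp_a+\sharp_b+\sharp_c\equiv 0\pmod 2$ valid for every based loop of $Y$ (all of whose edges join its two vertices), the hypothesis $\sigma\notin G_1$ says that $\tilde\sigma$ contains an odd number of edges with $f_1$-label $a$, and the desired conclusion says that $\tilde\sigma$ contains an odd number of edges with $f_2$-label $b$. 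So everything reduces to the combinatorial fact that the symmetric difference of $\{e\in E(X):\ f_1\text{-label of }e=a\}$ and $\{e\in E(X):\ f_2\text{-label of }e=b\}$ is a cut of $X$ leaving $v$ and $v'$ on the same side, i.e. it meets every edge path from $v$ to $v'$ in an even number of edges. This is a finite verification on the explicit graphs $X$, $Y$ and the two labellings drawn in Figure~\ref{the example}; alternatively it is a consequence of the evident $f_2$-analogue of Lemma~\ref{calculation}(1) together with the observation that $\sigma\notin G_1$ forces $\phi_1(\sigma)\notin G_2$ (its $f_2$-lift at $v$ is precisely $\tilde\sigma$, which does not close up).

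The routine algebra of the first part is not where the content sits; the crux — and the only step that actually uses the specific example of Figure~\ref{the example} — is this last parity bookkeeping, namely checking that the two relabelling edge sets differ by a cut that does not separate the two lifts of $v$. I expect that to be the main, though entirely mechanical, obstacle.
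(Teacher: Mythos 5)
Your argument is correct, and its first half is exactly the paper's parity computation: Lemma \ref{calculation}(1),(3),(2) reduce the lemma to the claim that $\sharp_a(\phi_1(\sigma))+\sharp_c(\phi_1(\sigma))$ is odd (the paper reaches the same intermediate quantity via $\sharp_a(\phi_1(\sigma^2))$ and the fact that $\phi_1(\sigma^2)$ has even length; the orientation ambiguity in $\iota$ is harmless mod $2$, as in the paper's $-\sharp_c$). The difference is only in how this last claim is discharged. The paper checks on Figure \ref{the example} that an edge of $\sigma$ has label in $\{b^{\pm 1},c^{\pm 1}\}$ if and only if the corresponding edge of $\phi_1(\sigma)$ has label in $\{a^{\pm 1},c^{\pm 1}\}$, whence $\sharp_a(\phi_1(\sigma))+\sharp_c(\phi_1(\sigma))\equiv_2\sharp_b(\sigma)+\sharp_c(\sigma)\equiv_2 1$; in your language this says that the set of edges of $X$ with $f_1$-label $a$ coincides with the set with $f_2$-label $b$, so the symmetric difference you propose to examine is empty and your cut condition holds trivially. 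Your second suggested finish is also valid and is arguably the cleaner route: $\sigma\notin G_1$ means the $f_1$-lift $\tilde\sigma$ ends at $v'$, and since $\tilde\sigma$ is precisely the $f_2$-lift of $\phi_1(\sigma)$ at $v$, indeed $\phi_1(\sigma)\notin G_2$; one then checks from the figure that the $f_2$-analogue of Lemma \ref{calculation}(1) is $\tau\in G_2\Leftrightarrow\sharp_a(\tau)+\sharp_c(\tau)\equiv_2 0$ (this does follow from the paper's edge-label correspondence together with the structure of $Y$), which gives the claim at once and does not even require your translation between ``$\sharp_a+\sharp_c$ odd'' and ``$\sharp_b$ odd.'' That translation, and your rephrasing of $\sigma\notin G_1$ as an odd $f_1$-label-$a$ count, use the additional fact that every loop in $Y$ has even length (all three edges of $Y$ join its two vertices); this is true from the figure, but note the paper's chain avoids needing it for $\sigma$ by working with $\phi_1(\sigma^2)$, whose even length is automatic. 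In every version the only non-formal input is the same finite inspection of how the two labellings of $X$ interact, which you correctly isolate as the crux.
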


\begin{proof}
By Lemma \ref{calculation} (1), it suffices to prove $\sharp_{b}(\phi_1(\sigma^{2}))+\sharp_{c}(\phi_1(\sigma^{2}))\equiv_2 1$.
\begin{align*}
&\sharp_{b}(\phi_1(\sigma^{2}))+\sharp_{c}(\phi_1(\sigma^{2}))\equiv_2\sharp_{a}(\phi_1(\sigma^{2}))=\\
&\sharp_{a}(\phi_1(\sigma)\cdot\phi_2(\sigma))=\sharp_{a}(\phi_1(\sigma))+\sharp_{a}(\iota\circ\phi_1(\sigma))=\\
&\sharp_{a}(\phi_1(\sigma))-\sharp_{c}(\phi_1(\sigma))\equiv_2\sharp_{a}(\phi_1(\sigma))+\sharp_{c}(\phi_1(\sigma))\equiv_2\\
&\sharp_{b}(\sigma)+\sharp_{c}(\sigma)\equiv_2 1.
\end{align*}
The first equality holds since there are even number of edges in $\phi_1(\sigma^{2})$. The second equality follows from Lemma \ref{calculation} (3). The third equality follows from Lemma \ref{calculation} (2). The fourth equality follows from the definition of $\iota$. Note that for any edge $e\subset\sigma$, the label of $e$ belongs to $\{b,c,b^{-1},c^{-1}\}$ if and only if the label of $\phi_1(e)$ in $\phi_1(\sigma)$ belongs to $\{a,c,a^{-1},c^{-1}\}$. Thus the sixth equality holds. The last equality follows from Lemma \ref{calculation} (1).
\end{proof}

In the rest of this section, we always pick $\ell_2=e_v$ in the definition of $\varphi_n$, and let $\sigma$ be the vertical loop based at $v$ of form $ba$ (see Figure \ref{the example}). We claim $\varphi_n(\sigma^{2^{n}})\notin G_1$ for any integer $n\ge 1$. By (\ref{conjugation appendix}), it suffices to prove $\phi^{n}_1((\sigma)^{2^n})\notin G_1$. Since $\sigma\notin G_1$, then case $n=1$ follows from Lemma \ref{double}. Note that when $n>1$, $\phi^{n-1}_1(\sigma^{2^n})=(\phi^{n-1}_1(\sigma^{2^{n-1}}))^{2}$. To see this, we start with $\phi_1(\sigma^{2^n})=(\phi_1(\sigma^{2}))^{2^{n-1}}$ (since $\sigma^2\in G_1$) and iterate. Thus the claim follows by iterating Lemma \ref{double}. The claim is also true if we replace $\sigma$ by $\sigma^{k}$ with $k$ odd, since $\sigma^{k}\notin G_1$ in this case.

It suffices to show there do not exist integers $n,m\neq 0$ such that $\sigma^{m}$ and $e^{n}_{v}$ commute in $\pi_1(Z,v)$. Suppose $\sigma^{m}$ and $e^{n}_{v}$ commutes. We can assume $m,n>0$. Suppose $m=k\cdot 2^{l}$ with $k$ odd. Then $\varphi_{2nl}((\sigma^{k})^{2^{2nl}}) \notin G_1$. Note that $\varphi_{2nl}((\sigma^{k})^{2^{2nl}})=\varphi_{2nl}(\sigma^{2^{2nl-l}\cdot m})$. Since $\sigma^{m}$ and $e^{n}_{v}$ commute, $\varphi_{2nl}(\sigma^{2^{2nl-l}\cdot m})=\sigma^{2^{2nl-l}\cdot m}\in G_1$ (note that $2^{2nl-l}\cdot m$ is even), which is a contradiction.

\bibliographystyle{acm}
\bibliography{1}

\end{document}